\newtheorem{thm}{Theorem}[section]
\newtheorem{lem}[thm]{Lemma}
\newtheorem{prop}[thm]{Proposition}
\newtheorem{cor}[thm]{Corollary}
\newtheorem{df}[thm]{Definition}
\newtheorem{rk}[thm]{Remark}
\newtheorem{conv}[thm]{Convention}
\newtheorem{propdef}[thm]{Proposition-Definition}
\newtheorem{notation}[thm]{Notation}
\newcommand{\N}{\mathbb N}
\def\epsilon{\varepsilon}
\def\phi{\varphi}
\def\hat{\widehat}
\newcommand{\Aut}{\mbox{Aut}}
\newcommand{\fp}{f_{prim}}
\newcommand{\fs}{f_{simp}}
\newcommand{\ff}{f_{fill}}
\newcommand{\dpr}{d_{prim}}
\newcommand{\dsi}{d_{simp}}
\newcommand{\dfi}{d_{fill}}
\newcommand{\cvn}{\mbox{cv}_N}
\newcommand{\cvnbar}{\overline{\mbox{cv}}_N}
\newcommand{\CVN}{\mbox{CV}_N}
\newcommand{\CVNbar}{\overline{\mbox{CV}}_N}
\DeclareMathOperator{\RF}{RF}
\DeclareMathOperator{\D}{D}
\newcommand\blfootnote[1]{%
  \begingroup
  \renewcommand\thefootnote{}\footnote{#1}%
  \addtocounter{footnote}{-1}%
  \endgroup
}
\begin{document}

\title[The primitivity index function for a free group]{The primitivity
  index function for a free group, and untangling closed curves on hyperbolic surfaces.\\ \emph{With an appendix by Khalid Bou-Rabee.}}

\author[Neha Gupta and Ilya Kapovich]{NEHA GUPTA\\ Department of Mathematics, FAS, Harvard University, One Oxford Street, Cambridge, MA 02138  \addressbreak
 e-mail \texttt{neha@math.harvard.edu}
 \and\ 
 ILYA KAPOVICH\\ Department of Mathematics, University of Illinois at Urbana-Champaign,\addressbreak
  1409 West Green Street, Urbana, IL 61801\addressbreak 
   email \texttt{kapovich@math.uiuc.edu}}

\maketitle  

\begin{abstract}
Motivated by the results of Scott and Patel about ``untangling" closed geodesics in finite covers of hyperbolic surfaces, we introduce and study primitivity, simplicity and non-filling index functions for finitely generated free groups. We obtain lower bounds for these functions and relate these free group results back to the setting of hyperbolic surfaces.
An appendix by Khalid Bou-Rabee connects the primitivity index function $\fp(n,F_N)$  to the residual finiteness growth function for $F_N$.
\end{abstract}

\thanks{The second author was partially supported by the NSF grants DMS-1405146 and DMS-1710868.  Both authors acknowledge support from U.S. National Science Foundation grants DMS 1107452, 1107263, 1107367 ``GEAR Network".}



\section{Introduction}

\blfootnote{2010 \emph{Mathematics Classification.}  Primary 20F65, Secondary  37D, 53C, 57M.}

Let $\Sigma$ be a compact connected surface  with a hyperbolic metric $\rho$ and with (possibly empty) geodesic boundary. In \cite{Sc1,Sc2} Scott proved that $\pi_1(\Sigma)$ is \emph{subgroup separable} or \emph{LERF}, meaning that for every finitely generated subgroup $K\le \pi_1(\Sigma)$ and every $g\in \pi_1(\Sigma)$ such that $g\not\in K$ there exists a subgroup $H\le \pi_1(\Sigma)$ of finite index in $\pi_1(\Sigma)$ such that $K\le H$ but $g\not\in H$. (Scott's result dealt with the case of a closed surface $S$ since in the case $\partial S\ne \emptyset$, the group $\pi_1(S)$ is free and hence known to be subgroup separable by a much older result of Hall~\cite{MH}).
In the same work~\cite{Sc1} Scott showed that  if $\gamma$ is a closed geodesic on $(\Sigma,\rho)$  then there exists a finite cover $\hat\Sigma\to\Sigma$ such that $\gamma$ lifts to a simple closed geodesic in $\hat \Sigma$, where $\hat\Sigma$ is given the hyperbolic structure obtained by the pull-back of $\rho$. As customary in the context of hyperbolic surfaces, the term ``closed geodesic" here assumes that the curve in question is not a proper power in the fundamental group of the surface.
Recently Patel~\cite{Patel} obtained quantitative versions of Scott's subgroup separability result and of his result about lifting a closed geodesic to a simple one in a finite cover.
Thus she proved that for every $\Sigma$ as above there exists a hyperbolic metric $\rho_0$ on $\Sigma$ such that every closed geodesic of length $L$ on $(\Sigma,\rho_0)$ lifts to a simple closed geodesic in some finite cover of $\Sigma$ of degree $\le 16.2L$.  Since the length functions on $\pi_1(\Sigma)$ coming from any two hyperbolic structures on $\Sigma$ are bi-Lipschitz equivalent, it follows that for any hyperbolic structure $\rho$ on $\Sigma$ there is some constant $c>0$ such that every closed geodesic of length $L$ on $(\Sigma,\rho)$ lifts to a simple closed geodesic in some finite cover of $\Sigma$ of degree $\le cL$. 
Motivated by these results, if $\rho$ is a hyperbolic structure on $\Sigma$, for every closed geodesic $\gamma$ on $(\Sigma,\rho)$ we denote by $\deg_{\Sigma,\rho}(\gamma)$ the smallest degree of a finite cover of $\Sigma$ such that $\gamma$ lifts to a simple closed geodesic in that cover. For $L\ge sys(\rho)$ (where $sys(\rho)$ is the shortest length of a closed geodesic on $(\Sigma,\rho)$) put $f_{\Sigma,\rho}(L)$ to be the maximum of $\deg_{\Sigma,\rho}(\gamma)$ taken over all closed geodesics $\gamma$ on $(\Sigma,\rho)$ of length $\le L$. Patel's result mentioned above implies that for every hyperbolic  structure $\rho$ on $\Sigma$ there is $c>0$ such that $f_{\Sigma,\rho}(L)\le cL$ for all $L\ge sys(\rho)$.


A simple closed geodesic on a hyperbolic surface is a particular example of a non-filling curve. Thus for a hyperbolic surface $(\Sigma,\rho)$ as above and for a closed geodesic $\gamma$ on $\Sigma$ we can also define $\deg_{\Sigma,\rho}^{fill}(\gamma)$ to be the smallest degree of a finite cover of $\Sigma$ such that $\gamma$ lifts to a non-filling closed geodesic in that cover. Then put $f_{\Sigma,\rho}^{fill}(L)$ to be the maximum of $\deg_{\Sigma,\rho}^{fill}(\gamma)$ taken over all closed geodesics $\gamma$ on $(\Sigma,\rho)$ of length $\le L$. Thus, in view of Patel's result, we have $f_{\Sigma,\rho}^{fill}(L)\le f_{\Sigma,\rho}(L)\le cL$ for all $L\ge sys(\rho)$.
However, up to now, nothing has been known about lower bounds for $f_{\Sigma,\rho}(L)$ or $f_{\Sigma,\rho}^{fill}(L)$. (Note that the first place where the question about quantitative properties of $f_{\Sigma,\rho}(L)$ was raised, although somewhat indirectly, appears to have been the paper of Rivin~\cite{Rivin}). 

In general, obtaining lower bounds for quantitative results related to residual finiteness is quite difficult, and is usually harder than obtaining upper bounds.
Recently there has been a significant amount of research regarding quantitative aspects of residual finiteness; see, for example~\cite{BR1,BBRKM,BRK,BRMR1,BRMR2, BRMR3,BRHP,BRS,GV,KaMa,KoTh,Patel,Rivin,Buskin,BRCor,BRMy,BRSt}.  We will discuss some of these results in more detail below.

Let $N\ge 2$ be an integer and let $F_N$ be the free group of rank $N$.  If $A$ is a free basis of $F_N$, for an element $g\in F_N$ we denote by $|g|_A$ the freely reduced length of $g$ over $A$ and we denote by $||g||_A$ the cyclically reduced length of $g$ over $A$.
A classic result of Marshall Hall~\cite{MH}, mentioned above, (see also \cite{KM} for a modern proof using Stallings subgroup graphs) proves that finitely generated free groups are subgroup separable. More precisely, Hall proved that if $K\le F_N$ is a finitely generated subgroup and $g\in F_N-K$ then there exists a subgroup $H\le F_N$ of finite index such that $g\not\in H$, $K\le H$, and, moreover, $K$ is a free factor of $H$.  It is not hard to adapt the proof of this result to show that for every $g\in F_N, g\ne 1$ there exists a subgroup $H\le F_N$ of finite index such that $g\in H$ and that $g$ is a \emph{primitive} element of $H$, that is, that $g$ belongs to some free basis of $H$. In fact, a simple argument using Stallings subgroup graphs (see Proposition~\ref{p:comp}  below) shows that if $A$ is a free basis of $F_N$ and $w$ is a nontrivial cyclically reduced word in $F(A)$ of length $n$  then there exists a subgroup $H\le F_N$ with $[F_N:H]=n$ such that $w\in H$ is a primitive element of $H$.
For a nontrivial element $g\in F_N$ we define the \emph{primitivity index} $\dpr(g)=\dpr(g;F_N)$ as the minimum of $[F_N:H]$ where $H$ varies over all subgroups of finite index in $F_N$ containing $g$ as a primitive element. Given a free basis $A$ of $F_N$, for $n\ge 1$ we then define $\fp(n)=\fp(n;F_N)$ as the maximum of $\dpr(g)$ where $g$ varies over all nontrivial freely reduced words of length $\le n$ in $F_N=F(A)$ which are not proper powers in $F_N$. It is not hard to see that $\fp(n)$ does not depend on the choice of a free basis $A$ of $F_N$; we call $\fp(n)$ the \emph{primitivity index function} for $F_N$.   
Thus $\fp(n)$ is the smallest monotone non-decreasing function such that for every nontrivial root-free $g\in F_N$ we have $\dpr(g)\le \fp(|g|_A)$. 

A nontrivial element $g\in F_N$ is called \emph{simple} in $F_N$ if $g$ belongs to some proper free factor of $F_N$. A nontrivial element $g\in F_N$ is called \emph{filling} in $F_N$ if $g$ does not belong to a vertex group of a nontrivial splitting of $F_N$ over the trivial or maximal infinite cyclic subgroup. See Section~\ref{s:elements} for more precise definitions and a discussion of these notions. Note that for $1\ne g\in F_N$, if $g$ is primitive then $g$ is simple, and if $g$ is simple then $g$ is non-filling.  For a nontrivial element $g\in F_N$ let $\dsi(g)=\dsi(g;F_N)$
be the smallest index $[F_N:H]$ where $H$ varies over all subgroups of
finite index in $F_N$ such that $g\in H$ and that $g$ is simple in $H$. Finally,  let $\dfi(g)=\dfi(g;F_N)$
be the smallest index $[F_N:H]$ where $H$ varies over all subgroups of
finite index in $F_N$ such that $g\in H$ and that $g$ is non-filling in $H$.  Thus by  definition, we have
$\dfi(g)\le \dsi(g)\le \dpr(g)$. For $n\ge 1$ we then define the \emph{simplicity index function} $\fs(n)=\fs(n;F_N)$ as the maximum
of $\dsi(g)$ where $g$ varies over all nontrivial freely reduced words
of length $\le n$ in $F_N=F(A)$ that are not proper powers in $F_N$. Also, for $n\ge 1$ we then define the \emph{non-filling index function} $\ff(n)=\ff(n;F_N)$ as the maximum
of $\dfi(g)$ where $g$ varies over all nontrivial freely reduced words
of length $\le n$ in $F_N=F(A)$ that are not proper powers in $F_N$.

In view of Proposition~\ref{p:comp}
mentioned above, for every nontrivial $g\in F_N$ we have $\dsi(g)\le
\dpr(g)\le ||g||_A\le |g|_A$, and hence $\ff(n)\le \fs(n)\le \fp(n)\le n$ (see
Lemma~\ref{l:aut} for details).

In general, we are interested in the following types of questions:

\begin{itemize}
\item Understand the actual asymptotics of the ``worst-case" index functions $\ff(n)$, $\fs(n)$,  $\fp(n)$ for free groups and of their geometric counterparts $f_{\Sigma,\rho}(L)$ or $f_{\Sigma,\rho}^{fill}(L)$.
\item Find specific sequences of elements in free groups or curves on surfaces realizing this ``worst-case" behavior or at least exhibiting reasonably fast growth of the corresponding index and degree functions.
\item Understand the asymptotics of the indexes $\dpr(g_n),\dsi(g_n),\dfi(g_n)$ and of $\deg_{\Sigma,\rho}(\gamma_n)$, $\deg_{\Sigma,\rho}^{fill}(\gamma_n)$ for various ``natural" sequences of group elements $g_n\in F_N$ or closed geodesics $\gamma_n$ on $(\Sigma,\rho)$.
\item Understand the relationship between the index functions for free groups and the degree functions for surfaces, and relate both to other functions measuring quantitative aspects of residual properties of free and surface groups.
\end{itemize}

Our first main result provides a lower bound for $\ff(n; F_N)$; see Theorem~\ref{t:newmain} below:

\begin{thm} \label{thm:A}
Let $N\geq2$ and let $F_N = F(A)$ where $A = a_1, \ldots, a_N$. Then there exists a constant $c > 0$ and an integer $M\ge 1$ such that 
for all $n \geq M$ we have
\[
\fp(n) \geq \fs(n) \geq \ff(n) \ge c \frac{\log n}{ \log \log n}.
\]
\end{thm}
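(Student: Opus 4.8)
The plan is to produce, for infinitely many values of $n$, a single root-free element $w\in F_N$ with $|w|_A\le n$ and $\dfi(w;F_N)\ge c\,\frac{\log n}{\log\log n}$. Since $\ff$ is monotone non-decreasing and $\frac{\log n}{\log\log n}$ grows slowly, the inequality then propagates to all $n\ge M$ for a suitable $M$; and since $\dpr(g)\ge\dsi(g)\ge\dfi(g)$ for every root-free $g$, the resulting bound on $\ff$ automatically yields the displayed bounds for $\fs$ and $\fp$ as well. The choice of $n$ is dictated by the shape of the target function: up to a multiplicative constant, $\frac{\log n}{\log\log n}$ is the largest possible number of distinct prime divisors of a positive integer $\le n$, so I would take $n=P_k:=p_1p_2\cdots p_k$, the product of the first $k$ primes, for which $\log P_k\sim k\log k$ and hence $k\sim\frac{\log P_k}{\log\log P_k}$.

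I would then construct $w=w_k$ of length comparable to $P_k$, written in $F_N$ using the letters $a_1,\dots,a_N$, as a concatenation of $k$ ``blocks'', the $i$-th block designed to encode a genuine period-$p_i$ constraint, and arrange that $w_k$ is root-free and that $w_k$ is filling in $F_N$. The latter I would check directly, by verifying that $w_k$ is not elliptic in any free splitting or any cyclic splitting of $F_N$ (equivalently, that its Whitehead graph with respect to every free basis is connected and has no cut vertex); it is precisely the block structure, rather than a single high power, that should make this hold, and some care is needed so that $w_k$ is filling in all of $F_N$ and not merely in the free factor $F(a_1,a_2)$.

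The core of the proof is the estimate $\dfi(w_k)\ge c'k$, which I would prove by contradiction using Stallings subgroup graphs. Suppose $H\le F_N$ has index $d$ and $w_k$ is non-filling in $H$. Realize $H$ as $\pi_1$ of its Stallings graph $\Gamma_H$, a degree-$d$ based cover of the rose $R_N$, in which the cyclic word $w_k$ reads a closed immersed path $\gamma$. The assumption that $w_k$ is elliptic in some free or cyclic splitting of $H$ translates into a decomposition of $\Gamma_H$ (a collapse to a graph of spaces) that is compatible with $\gamma$. The design of $w_k$ should force that making the $i$-th block compatible with such a decomposition requires the degree-$d$ cover to ``see'' its period $p_i$, which costs at least one additional vertex of $\Gamma_H$ per block; since the $p_i$ are pairwise coprime, these costs cannot be amortized against one another, and counting vertices gives $d\ge c'k$. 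The cyclic-splitting case — where $w_k$ might a priori be elliptic in an amalgam or HNN extension of $H$ over a maximal cyclic edge group — has to be handled separately, again playing the many-period structure of $w_k$ against the rank-one edge group.

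I expect the main obstacle to be exactly this last step: quantifying how a splitting of $H$ in which $w_k$ is elliptic must interact with the combinatorics of immersed paths in the cover $\Gamma_H$, and extracting the ``one extra vertex per prime'' bound. This is delicate because the blocks of $w_k$ must be chosen so that their periodicities are genuinely detected by small-index covers, while simultaneously keeping $w_k$ root-free and filling; balancing these requirements, and ruling out ``cheap'' ways for a cover of small index to unwind several blocks at once, is where the real work lies. Everything after that — the substitution $n=P_k$, the number-theoretic estimate $k\sim\frac{\log P_k}{\log\log P_k}$, and interpolation to all $n\ge M$ by monotonicity of $\ff$ — should be routine.
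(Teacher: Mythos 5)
Your proposal heads in a genuinely different direction from the paper, and the direction has a gap that I do not think can be filled along the lines you sketch. The paper's argument is a diagonalization, not a number-theoretic one: for a fixed $d$ it enumerates all $m\le d\,(d!)^N$ subgroups $H_1,\dots,H_m$ of index $\le d$, picks for each cover $\Gamma_i$ a ``filling-forcing'' freely reduced word $w_i$ of length $O(d^5)$ (Proposition~\ref{p:five}) with the property that reading $w_i$ from \emph{any} vertex of $\Gamma_i$ traverses a fixed path $\beta(\Gamma_i,T)$ whose $S_T$-label contains every length-$3$ word, hence represents a filling element of $H_i$ (Propositions~\ref{p:T} and \ref{p:ChM}). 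Concatenating $w_1,\dots,w_m$ with short separators gives a cyclically reduced $z_d$ of length $n_d\asymp d^6(d!)^N$ with $\dfi(z_d)>d$ by construction, and Stirling then yields $d\gtrsim\log n_d/\log\log n_d$. The $\log n/\log\log n$ here comes from inverting $(d!)^N$, not from counting prime factors; the numerical coincidence with the prime product $P_k$ is real but misleading.

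The concrete gap in your proposal is the step you yourself flag as ``where the real work lies'': the claim that $k$ blocks with pairwise coprime periods $p_1,\dots,p_k$ force $\dfi(w_k)\ge c'k$, via ``one extra vertex per prime.'' You give no mechanism for this, and I believe the mechanism does not exist for the non-filling index. The reason periodicity arguments work for residual finiteness and (sometimes) for primitivity is that those notions are sensitive to proper powers — compare Lemma~\ref{lem:powers}, which shows $\dpr(a_i^n)=n$. But Lemma~\ref{l:aut}(4) gives $\dfi(g^m)\le\dfi(g)$ and $\dsi(g^m)\le\dsi(g)$: the simplicity and non-filling indices do \emph{not} increase under taking powers, so a block built around a long period $p_i$ exerts no pressure at all on $\dfi$. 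Coprimality of the $p_i$ is the right tool when the obstruction is arithmetic (congruence quotients detecting the order of $g$); here the obstruction is combinatorial (whether the immersed loop $\gamma\subset\Gamma_H$ is homotopic into a proper subgraph or admits a cut vertex in a Whitehead graph over some basis of $\pi_1\Gamma_H$), and I see no reason a small-degree cover cannot simultaneously ``unwind'' all $k$ blocks by placing $\gamma$ in a subsurface, regardless of the periods. Absent a precise construction of the blocks and a proof of the per-prime vertex cost, the argument does not go through; you would be better served by the paper's strategy of explicitly engineering, for \emph{each} candidate cover of degree $\le d$, a subword that certifies filling inside that cover, and then taking a union over all such covers.
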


For a finitely generated group $G$ equipped with a finite generating set $A$, the \emph{residual finiteness growth function} $\RF_{G}(n)$ is defined as the smallest number $d$ such that for every nontrivial element $g\in G$ of word-length $\le n$ with respect to $A$ there exists a subgroup of index at most $d$ in $G$ that does not contain $g$.

In an appendix to this paper, for a free group $F_N$ with a free basis $A$,  Khalid Bou-Rabee relates $\fp(n,F_N)$ to the residual finiteness growth function $\RF_{F_N}(n)$. Namely, he shows in Theorem~\ref{t:a1} below
that for $n\ge 1$ one has $\fp(4n+4,F_N)\ge  \RF_{F_N}(n)$.  Using a recent result of Kozma and Thom~\cite{KoTh} about lower bounds for $\RF_{F_N}(n)$, Bou-Rabee then shows in Corollary~\ref{c:a2} below that for all sufficiently large $n$ one has \[
\fp(4n+4) \geq \exp \left( \left( \frac{\log(n)}{C \log\log(n)} \right)^{1/4} \right).
\]
Note that this lower bound behaves almost like $n^{1/4}$.  Moreover, if we assume Babai's Conjecture on the diameter of Cayley graphs of permutation groups, then for all sufficiently large $n$ we have an almost linear lower bound:
\[
\fp(4n+4) \geq n^{\frac{1}{C\log\log(n)}}.
\]
Bou-Rabee's homological trick used in Theorem~\ref{t:a1}  does not work for the index functions $\fs(n)$ and $\ff(n)$. Thus for these functions the lower bound given by Theorem~\ref{thm:A} remains the best known bound. 

We also obtain a bound from below on $\dsi(w_n)$ and $\dfi(w_n)$ where $w_n$ is a ``random" freely reduced word in $F(A)$ of length $n>>1$.

\begin{thm}\label{thm:B}
Let $N\ge 2$ and let $F_N=F(A)$ where $A=\{a_1,\dots, a_N\}$.  

Then there exist  constants $c(N)>0$, $D_1(N)>1$, $1>D_2(N)>0$  such that for $n\ge 1$ and for a freely reduced word $w_n\in F(A)$ of length $n$ chosen uniformly at random from the sphere $S(n)$ of radius $n$ in $F(A)$ we have
\[
1-P_{\mu_n}\left(\dsi(w_n)\ge c\log^{1/3} n\right)  = O\left((D_1)^{-n^{D_2}}\right)
\]
and 
\[
1-P_{\mu_n}\left(\dfi(w_n)\ge c\log^{1/5} n\right)  = O\left((D_1)^{-n^{D_2}}\right)
\]
so that
\[
\lim_{n\to\infty} P_{\mu_n}\left(\dsi(w_n)\ge c\log^{1/3} n\right) =1
\]
and 
\[
\lim_{n\to\infty} P_{\mu_n}\left(\dfi(w_n)\ge c\log^{1/5} n\right) =1
\]
\end{thm}

Here $\mu_n$ is the uniform probability distribution on the $n$-sphere $S(n)\subseteq F_N=F(A)$. See Convention~\ref{conv:O} for our use of the big-O notation.

It remains an interesting question to understand the actual behavior of $\dsi(w_n)$ and $\dfi(w_n)$ on ``random" elements $w_n\in F_N$ and, in particular, to see if $\dsi(w_n)$ and $\dfi(w_n)$ admit sublinear upper bounds.

Finally, in Section~\ref{s:surfaces} we relate the above results for free groups to the original motivating questions about the degree functions for hyperbolic surfaces.
Thus, using Theorem~\ref{thm:B}, we obtain (see Theorem~\ref{t:sigmafill} below):

\begin{cor}\label{cor:C}
Let $(\Sigma,\rho)$ be a compact connected hyperbolic surface with $b\ge 1$ geodesic boundary components. Then there exists $C'>0$ such that for all sufficiently large $L$ we have
\[
f_{\Sigma,\rho}(L)\ge f_{\Sigma,\rho}^{fill}(L)\ge C' \frac{\log L}{\log\log L}.
\]
\end{cor}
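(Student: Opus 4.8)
The plan is to transfer the lower bound on $\ff(n;\FN)$ from Theorem~\ref{thm:A} to the surface $\Sigma$ by means of two facts: a coarse comparison between the $\rho$-length of a closed geodesic and the cyclically reduced word length of the corresponding element of $\pi_1(\Sigma)$, and the observation that a non-filling closed geodesic on a hyperbolic surface with boundary is carried by a non-filling element of $\pi_1(\Sigma)$ in the sense of Section~\ref{s:elements}. First, since $\Sigma$ is compact with $\partial\Sigma\ne\emptyset$ and $\chi(\Sigma)<0$, the group $\pi_1(\Sigma)$ is free of rank $N=1-\chi(\Sigma)\ge 2$; since $\ff$ is independent of the choice of free basis and $\dfi$ is defined purely in terms of finite-index subgroups, Theorem~\ref{thm:A} applies verbatim to $\pi_1(\Sigma)$. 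Because $\partial\Sigma$ is geodesic, the universal cover $\widetilde\Sigma$ is a convex subset of $\mathbb{H}^2$ on which $\pi_1(\Sigma)$ acts properly discontinuously, cocompactly and isometrically, so by the Milnor--\v{S}varc lemma an orbit map gives a quasi-isometry between the Cayley graph of $(\FN,A)$ and $\widetilde\Sigma$. For $1\ne g\in\pi_1(\Sigma)$ the translation length of $g$ on $\widetilde\Sigma$ equals $\ell_\rho(\gamma_g)$, the length of the closed geodesic freely homotopic to $g$, while the stable translation length of $g$ on the Cayley graph equals $\|g\|_A$; hence there is a constant $\lambda\ge 1$, depending only on $\rho$ and $A$, with
\[
\ell_\rho(\gamma_g)\ \le\ \lambda\|g\|_A\ \le\ \lambda|g|_A\qquad\text{for all }1\ne g\in\pi_1(\Sigma).
\]
Since $f_{\Sigma,\rho}(L)\ge f_{\Sigma,\rho}^{fill}(L)$ (as noted in the introduction), it is enough to bound $f_{\Sigma,\rho}^{fill}(L)$ from below.

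The key step will be a lemma, to be proved in Section~\ref{s:surfaces}, asserting that $\dfi(g;\pi_1(\Sigma))\le\deg_{\Sigma,\rho}^{fill}(\gamma_g)$ for every root-free $g\in\pi_1(\Sigma)$. Here is the idea. Let $\hat\Sigma\to\Sigma$ be a finite cover realizing $d:=\deg_{\Sigma,\rho}^{fill}(\gamma_g)$, corresponding to a subgroup $H\le\pi_1(\Sigma)$ of index $d$; replacing $g$ by a conjugate (which changes neither its conjugacy class, nor $\|g\|_A$, nor $\dfi(g;\pi_1(\Sigma))$) we may assume $g\in H$, so that the lift $\hat\gamma$ of $\gamma_g$ is a non-filling closed geodesic on $\hat\Sigma$. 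A finite cover of a compact hyperbolic surface with non-empty geodesic boundary is again one, so $H=\pi_1(\hat\Sigma)$ is free. Non-filling of $\hat\gamma$ means that $\hat\gamma$ is freely homotopic to, or disjoint from, an essential, non-boundary-parallel, two-sided simple closed curve $\delta$ on $\hat\Sigma$. Cutting $\hat\Sigma$ along $\delta$ produces a nontrivial splitting of $H$ (an amalgam or an HNN extension) whose edge group is $\langle[\delta]\rangle$; since $\delta$ is a simple closed curve, $[\delta]$ is not a proper power in the free group $H$, so this edge group is a maximal infinite cyclic subgroup. Moreover $g$ lies in a vertex group of this splitting — the side of $\delta$ containing $\hat\gamma$, or, when $\delta$ is homotopic to $\hat\gamma$, because the edge group injects into each vertex group. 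Hence $g$ is non-filling in $H$ in the sense of Section~\ref{s:elements}, and therefore $\dfi(g;\pi_1(\Sigma))\le[\pi_1(\Sigma):H]=d$. (Finiteness of $\deg_{\Sigma,\rho}^{fill}(\gamma_g)$ follows from the theorems of Scott and Patel, since a simple closed geodesic on a surface with boundary is non-filling, but it is not needed above.)

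To conclude, let $c>0$ and $M\ge 1$ be the constants of Theorem~\ref{thm:A} for $\pi_1(\Sigma)$. Given $L$ large, set $n:=\lfloor L/\lambda\rfloor$, so that $n\ge M$ and $n\ge L/(2\lambda)$ once $L$ is large enough. By Theorem~\ref{thm:A} there is a root-free $g\in\pi_1(\Sigma)$ with $|g|_A\le n$ and $\dfi(g;\pi_1(\Sigma))=\ff(n;\pi_1(\Sigma))\ge c\,\tfrac{\log n}{\log\log n}$. Then $\gamma_g$ is a closed geodesic of $\rho$-length $\ell_\rho(\gamma_g)\le\lambda|g|_A\le\lambda n\le L$, so by the lemma $\deg_{\Sigma,\rho}^{fill}(\gamma_g)\ge\dfi(g;\pi_1(\Sigma))\ge c\,\tfrac{\log n}{\log\log n}$. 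Since $\log n\ge\tfrac12\log L$ and $\log\log n\le\log\log L$ for large $L$, this yields $f_{\Sigma,\rho}^{fill}(L)\ge\deg_{\Sigma,\rho}^{fill}(\gamma_g)\ge\tfrac{c}{2}\,\tfrac{\log L}{\log\log L}$; combined with $f_{\Sigma,\rho}(L)\ge f_{\Sigma,\rho}^{fill}(L)$ this gives the corollary with $C'=c/2$.

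The main obstacle is the lemma of the second paragraph: converting the topological hypothesis ``$\hat\gamma$ is non-filling'' into the group-theoretic conclusion ``$g$ is non-filling in $H$'' requires care with the precise definition of a filling curve on a surface with non-empty boundary, with the existence of a suitable essential, non-peripheral, two-sided disjoint curve $\delta$ (including degenerate low-complexity cases such as pairs of pants and one-holed tori), and with the verification that cutting along $\delta$ yields a \emph{nontrivial} splitting over a \emph{maximal} infinite cyclic subgroup. By contrast, the Milnor--\v{S}varc length comparison and the final elementary estimate are routine.
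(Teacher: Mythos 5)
Your proof is correct and follows the same route as the paper's own argument for Theorem~\ref{t:sigmafill}: a word-length/geodesic-length comparison coming from the convexity of $\widetilde\Sigma$ in $\mathbb H^2$, a comparison lemma $\dfi(g;\pi_1(\Sigma))\le\deg_{\Sigma,\rho}^{fill}(\gamma_g)$ (this is exactly Lemma~\ref{l:nonfill}(2), proved there via Lemma~\ref{l:nonfill}(1) by cutting $\hat\Sigma$ along the boundary of a proper geodesic subsurface containing $\hat\gamma$ rather than along a single curve $\delta$, but the two are equivalent), and then Theorem~\ref{thm:A}. The low-complexity worries you flag at the end are real in the abstract but harmless here: the elements produced by Theorem~\ref{thm:A} have large cyclically reduced length, so the resulting geodesics are never peripheral, and the paper's subsurface formulation sidesteps the issue in the same way.
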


Similarly, using Theorem~\ref{thm:A}, we obtain (see Theorem~\ref{t:main'} below):

\begin{cor}\label{cor:D}
Let $\Sigma$ be a compact  connected surface  with a hyperbolic structure $\rho$ and with (possibly empty) geodesic boundary.
Let $\Sigma_1\subseteq \Sigma$ be a compact connected subsurface with  $\ge 3$ boundary components, each of which is a geodesic in $(\Sigma,\rho)$. 
Let $x\in \Sigma_1$ and let $A$ be a free basis of $\pi_1(\Sigma_1,x)$.

Let $w_n\in F(A)=\pi_1(\Sigma_1,x)$ be a freely reduced word of length $n$ over $A^{\pm 1}$ generated by a simple non-backtracking random walk on $F(A)=\pi_1(\Sigma_1,x)$.
Let $\gamma_n$ be the closed geodesic on $(\Sigma,\rho)$ in the free homotopy class of $w_n$.

Then there exist  constants $c>0, K'\ge 1$ such that

\[
\lim_{n\to\infty} Pr( \deg_{\Sigma,\rho}(\gamma_n) \ge c \log^{1/3} n) =1
\]
and such that with probability tending to $1$ as $n\to\infty$
we have that $w_n\in \pi_1(\Sigma,x)$ is not a proper power and that $n/K'\le \ell_\rho(\gamma_n)\le K'n$.
\end{cor}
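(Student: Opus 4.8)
The plan is to transport the free-group statement of Theorem~\ref{thm:A} (more precisely, the random-word version underlying Theorem~\ref{thm:B}, since we want a conclusion about a \emph{specific} random sequence $\gamma_n$) through the inclusion $\pi_1(\Sigma_1,x)\hookrightarrow\pi_1(\Sigma,x)$ and then compare the combinatorial index function with the geometric covering degree $\deg_{\Sigma,\rho}$. First I would record that, since $\Sigma_1$ has at least $3$ boundary components (each a geodesic of $(\Sigma,\rho)$), the subsurface $\Sigma_1$ is $\pi_1$-injective with free fundamental group $F(A)$ of rank $N\ge 3$ (more boundary components only raise the rank), so $A$ is indeed a free basis and all the free-group machinery applies. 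Then I would invoke the key geometric comparison: if $H\le\pi_1(\Sigma,x)$ has finite index and $w\in H$ is non-filling in $H$, then the cover $\hat\Sigma\to\Sigma$ corresponding to $H$ has degree $[\pi_1(\Sigma):H]$ and the lift of $\gamma_n$ is a non-filling closed geodesic in $\hat\Sigma$; hence $\deg_{\Sigma,\rho}^{fill}(\gamma_n)\le[\pi_1(\Sigma):H]$. Taking the minimum gives $\deg_{\Sigma,\rho}^{fill}(\gamma_n)\le \dfi(w_n;\pi_1(\Sigma,x))$, and the analogous inequality $\deg_{\Sigma,\rho}(\gamma_n)\le\dsi(w_n;\pi_1(\Sigma,x))$ for the simple (non-filling of $\pi_1$) version. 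The point is that we want this in the other direction, i.e.\ a \emph{lower} bound, so I would instead argue: any finite cover in which $\gamma_n$ lifts simply gives a finite-index subgroup $H$ of $\pi_1(\Sigma,x)$ containing $w_n$ as a simple element (a simple closed geodesic on the cover surface is in particular non-filling, and a non-filling curve on a surface with boundary lies in a proper free factor of the surface group, which is free), whence $\dsi(w_n;\pi_1(\Sigma,x))\le\deg_{\Sigma,\rho}(\gamma_n)$, and symmetrically for the filling version.

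The next step is the descent from $\pi_1(\Sigma,x)$ to the free factor $F(A)=\pi_1(\Sigma_1,x)$. If $w_n\in F(A)$ is simple (resp.\ non-filling) in a finite-index subgroup $H\le F(A)$, then, restricting a finite-index subgroup of $\pi_1(\Sigma,x)$ to $F(A)$, one gets $\dsi(w_n;F(A))\le C\,\dsi(w_n;\pi_1(\Sigma,x))$ for a constant $C$ depending only on the pair $(\Sigma,\Sigma_1)$ — this is the standard fact that indices can only shrink by a bounded factor when passing to a subgroup, packaged as in the earlier sections; I would cite the relevant lemma (the analogue of Lemma~\ref{l:aut}) for exact statements. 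Combining this with the previous paragraph yields
\[
\dsi(w_n;F(A))\le C\,\deg_{\Sigma,\rho}(\gamma_n),\qquad \dfi(w_n;F(A))\le C\,\deg_{\Sigma,\rho}^{fill}(\gamma_n).
\]
Now Theorem~\ref{thm:B} (applied inside $F(A)$, legitimate since $N\ge 3\ge 2$) says that for a word $w_n$ produced by a simple non-backtracking random walk of length $n$ — which is exactly a uniformly random element of the sphere $S(n)$ — we have $\dsi(w_n;F(A))\ge c_0\log^{1/3}n$ with probability $\to 1$. Dividing by $C$ and renaming the constant gives $\deg_{\Sigma,\rho}(\gamma_n)\ge c\log^{1/3}n$ with probability $\to1$, which is the displayed conclusion.

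It remains to handle the two auxiliary claims: that $w_n$ is not a proper power in $\pi_1(\Sigma,x)$, and that $n/K'\le\ell_\rho(\gamma_n)\le K'n$, both with probability $\to1$. For the first, a random word of length $n$ in $F(A)$ is cyclically reduced of length $\Omega(n)$ with probability $\to1$ and, being generated by a non-backtracking walk, fails to be a proper power in $F(A)$ with probability $\to1$ (a proper power $u^k$ of cyclically reduced length $n$ forces a period structure that occurs with exponentially small probability); since $F(A)$ is a free factor hence a retract of $\pi_1(\Sigma,x)$, and the ambient group is torsion-free, $w_n$ is not a proper power in $\pi_1(\Sigma,x)$ either. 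For the length bound, the hyperbolic length $\ell_\rho(\gamma_n)$ is bi-Lipschitz comparable to the word length $|w_n|_A$ via the $\pi_1$-injective quasi-isometric embedding $\widetilde\Sigma_1\hookrightarrow\widetilde\Sigma$ (both are CAT$(-1)$, $\Sigma_1$ convex), and $|w_n|_A=n$ by construction, so $\ell_\rho(\gamma_n)\asymp n$ with the implicit constants absorbed into $K'$; one also uses that cyclic reduction changes length by $o(n)$ with high probability. The main obstacle, and the place requiring the most care, is the geometric comparison lemma asserting that a simple closed geodesic in a finite cover forces $w_n$ to be simple in the corresponding finite-index subgroup of $\pi_1(\Sigma,x)$ — i.e.\ that ``embedded, hence non-filling curve on a bordered surface $\Rightarrow$ lies in a proper free factor of the surface group'' — together with the bounded-distortion descent to $F(A)$; everything else is routine probabilistic estimation on random walks.
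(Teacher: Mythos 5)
Your high-level plan (use Theorem~\ref{thm:B} in $F(A)$ and compare the combinatorial index to the covering degree) is the same as the paper's, and your handling of the two auxiliary claims (non-proper-power, length comparison via convexity of $\widetilde\Sigma_1\subseteq\widetilde\Sigma$) is essentially right. However, the central comparison is carried out by a different and, I think, unworkable route, and that is where the gap lies.

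You propose to argue via the ambient group: from a degree-$d$ cover $\hat\Sigma\to\Sigma$ in which $\gamma_n$ lifts simply, you pass to the index-$d$ subgroup $H\le\pi_1(\Sigma,x)$, claim $w_n$ is simple in $H$, and so get $\dsi(w_n;\pi_1(\Sigma,x))\le\deg_{\Sigma,\rho}(\gamma_n)$, then ``descend'' to $F(A)$ by a bounded-distortion argument. This fails on three counts. First, the corollary allows $\partial\Sigma=\emptyset$; in that case $\pi_1(\Sigma,x)$ and $\pi_1(\hat\Sigma)$ are closed surface groups, not free groups, and neither $\dsi(\cdot;\pi_1(\Sigma,x))$ nor ``lies in a proper free factor of the surface group'' is defined. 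Second, even if $\Sigma$ has nonempty boundary, the cover $\hat\Sigma$ may have only \emph{one} boundary component, and then a simple closed curve need not lie in a proper free factor (e.g.\ the boundary of a one-holed torus represents $[a,b]$, which is not simple in $F(a,b)$); Lemma~\ref{lem:simple} in the paper genuinely requires $\ge 2$ boundary components, and your argument gives no control over the boundary count of $\hat\Sigma$. Third, the ``bounded-distortion descent'' $\dsi(w_n;F(A))\le C\,\dsi(w_n;\pi_1(\Sigma,x))$ is not a standard fact (and the cited analogue of Lemma~\ref{l:aut} says nothing of the sort): $F(A)$ is typically of infinite index in $\pi_1(\Sigma,x)$, simplicity is not inherited under intersections with subgroups, and there is no uniform constant relating these indices.

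The paper's Proposition~\ref{p:conn} avoids all three issues at once by never leaving the free group $\pi_1(\Sigma_1,x)$: given a degree-$d$ cover $\hat\Sigma\to\Sigma$ in which $\gamma_n$ lifts simply, one takes the connected component $\hat\Sigma_1$ of $p^{-1}(\Sigma_1)$ containing the lift $\hat\gamma_n$. Then $\hat\Sigma_1\to\Sigma_1$ is a cover of degree $d'\le d$, and because $\Sigma_1$ was assumed to have $\ge 3$ boundary components, $\hat\Sigma_1$ automatically has $\ge 2$ boundary components, so Lemma~\ref{lem:simple} applies to $\hat\gamma_n\subset\hat\Sigma_1$ and yields $\dsi(w_n;\pi_1(\Sigma_1,x))\le d'\le d=\deg_{\Sigma,\rho}(\gamma_n)$ directly, with constant $C=1$. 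That is the step you would need to supply; as written, the detour through $\pi_1(\Sigma,x)$ does not produce it.
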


In the original November 2014 version of this paper we used Corollary~\ref{cor:D} to obtain, for all sufficiently large $L$, a lower bound 
\[
f_{\Sigma,\rho}(L)\ge c \log^{1/3} L,
\]
where $(\Sigma,\rho)$ is a closed hyperbolic surface. At the time this was the only known lower bound for $f_{\Sigma,\rho}(L)$. Motivated by our work, Jonah Gaster~\cite{Gas} subsequently obtained a linear lower bound $f_{\Sigma,\rho}(L)\ge cL$ and exhibited a specific sequence of curves $\gamma_n$ in $\Sigma$, living in a pair-of-pants subsurface of $\Sigma$, realizing this lower bound. Since these curves are already non-filling in $\Sigma$ and have $\deg_{\Sigma,\rho}^{fill}(\gamma_n)=1$, Gaster's proof does not provide any lower bounds for $f_{\Sigma,\rho}^{fill}(L)$. Thus for the moment the lower bound for $f_{\Sigma,\rho}^{fill}(L)$ given by Corollary~\ref{cor:C} remains the best bound known.
In Section~\ref{s:surfaces} we also relate our results to the versions of $f_{\Sigma,\rho}(L)$ and $f_{\Sigma,\rho}^{fill}(L)$ that do not involve a hyperbolic metric and use the geometric intersection number $i([\gamma],[\gamma])$ instead of the hyperbolic length of $\gamma$ in their definitions.

Also, in Section~\ref{sect:alg} we prove algorithmic computability of the indexes $\dpr(g,F_N)$ $\dsi(g,F_N)$, $\dfi(g,F_N)$ and of the corresponding index functions $\fp(n), \fs(n), \ff(n)$; see Theorem~\ref{t:algorithm}  and Theorem~\ref{t:algorithm1}  below.

A recent paper of Puder~\cite{Puder} (see also \cite{PP,PuderWu} for related work) introduces the notion of a \emph{primitivity rank} $\pi(g)$ for an element $g\in F_N$. Namely, $\pi(g)$ is defined as the smallest rank of a subgroup $H\le F_N$ such that $g\in H$ but $g$ is not primitive in $H$. Puder proves in \cite[Corollary 4.2]{Puder} that for an element $g\in F_N$ one has either $\pi(g)=\infty$ or $0\le \pi(g)\le N$, and that every integer between $0$ and $N$ does occur as a value of $\pi(g)$ for some $g$. He also defines and studies the primitivity rank $\pi(H)$ for a finitely generated subgroup $H\le F_N$, where $\pi(H)$ is defined as the minimum rank of $J$ such that $H\le J\le F_N$ and that $H$ is not a proper free factor of $J$. These notions are related to and in some sense dual to our definitions of $\dpr(g)$ and $\dsi(g)$, but the precise connection of our results with Puder's work remains to be understood.  Malestein and Putman~\cite{MP} obtained a number of lower bound results (in terms of $k$) for the minimal self-intersection number of  nontrivial elements of the $k$-term of the lower central series and the derived series of a surface group. It would be interesting to see if their methods can be used to obtain lower bounds for the function $f_{\Sigma,\rho}$. It would also be interesting to investigate if looking inside the lower central series and the derived series of $F_N$ may produce new lower bounds for $\fp(n)$ and $\fs(n)$. 

We are grateful to Yuliy Baryshnikov for providing us with a proof of Lemma~\ref{lem:Barysh}. We then used the idea of the proof of Lemma~\ref{lem:Barysh} to obtain Proposition~\ref{p:v}, which plays a crucial role in the proof of our main results. We are also grateful to Igor Rivin for suggesting to try to apply our free group results to untangling closed curves on hyperbolic surfaces, and to Priyam Patel for suggesting to apply our results to the degree functions based in the self-intersection number rather than the length of a curve. We thank Kasra Rafi for the suggestion to consider $\deg_{\Sigma,\rho}^{fill}$ and $f_{\Sigma,\rho}^{fill}$.  We thank Nathan Dunfield and Chris Leininger for many useful conversations.
We are grateful to Andreas Thom,  Gady Kozma,  Doron Puder and Khalid Bou-Rabee for helpful feedback.  We are particularly grateful to the referee of the original version of this paper for pointing out that our methods implied a much better lower bound for $\fp(n)$ and $\fs(n)$ than the one we originally had in mind. We are also grateful to the referee of the current version for numerous detailed helpful suggestions.

\section{Preliminaries}

\subsection{Graphs and Edge Paths}

The exposition in this subsection follows that of~\cite{KP}. 

\begin{df}\label{d:Graph}
A \textit{graph} is a 1-dimensional cell-complex. The 0-cells of $\Gamma$ are called \textit{vertices} and we denote the set of vertices of $\Gamma$ by $V\Gamma$. The open 1-cells of $\Gamma$ are called \textit{topological edges} of $\Gamma$ and the set of topological edges are denoted by $E_{top}\Gamma$. 
\end{df}
Every topological edge of $\Gamma$  is homeomorphic to the open interval $(0, 1)$ and thus, when viewed as a 1-manifold,
admits two possible orientations. An \textit{oriented edge} of $\Gamma$ is a topological edge with a choice of orientation
on it. We denote by $E\Gamma$ the set of all oriented edges of $\Gamma$. If $e \in E\Gamma$ is an oriented edge, we denote by $\bar{e}$ the
same underlying edge with the opposite orientation. Note that for every $e \in E\Gamma$ we have $\bar e \neq e$ and $\bar{\bar{e}} = e$;
thus $\ \bar { }: E\Gamma \rightarrow E\Gamma$ is an involution with no fixed points.

Since $\Gamma$ is a cell-complex, every oriented edge $e \in E\Gamma$ comes equipped with the orientation-preserving
attaching map $j_e : [0, 1] \rightarrow \Gamma$ such that $j_e$ maps $(0, 1)$ homeomorphically to $e$ and such that $j_e(0)$, $j_e(1) \in V\Gamma$ (though not necessarily distinct). For $e \in E\Gamma$ we call $j_e(0)$ the \textit{initial vertex} of e, denoted $o(e)$, and we call $j_e(1)$ the
\textit{terminal vertex} of e, denoted $t(e)$. Thus, by definition, $o(\bar{e}) = t(e)$ and $t(\bar{e}) = o(e)$.

For any vertex $x \in V\Gamma$, the \textit{degree of $x$} in $\Gamma$ denoted by $deg(x)$ is the cardinality of the set $\{e \in E\Gamma | o(e)= x \}$. 

An \emph{orientation} on a graph $\Gamma$ is a partition $E\Gamma = E_+\Gamma \sqcup E_-\Gamma$ such that for an edge $e \in E\Gamma$ we have $e\in E_+\Gamma$ if and only if $\bar{e} \in E_-\Gamma$.
If $\Gamma$ is a graph with an orientation, and $\Delta\subseteq \Gamma$ is a subgraph, then $\Delta$ inherits an \emph{induced orientation} from $\Gamma$ by setting $E_+\Delta:=E_+\Gamma\cap E\Delta$ and $E_-\Delta:=E_-\Gamma\cap E\Delta$. Whenever we are dealing with a graph, equipped with an orientation, and a subgraph of that graph, we will always assume that the subgraph is given the induced orientation. 

An \emph{edge-path} $p$ in $\Gamma$ is a sequence of edges $e_1, e_2, \ldots, e_k$ with $e_i \in E\Gamma$ for all $i$ and $o(e_j)=t(e_{j-1})$ for all $2 \leq j \leq k$. The length $|p|$, of the path $p$ is the number of edges in $p$, that is $|p|=k$. We put $o(p)=o(e_1)$, and $t(p)= t(e_k)$. We define $p^{-1}:= e_k, e_{k-1}, \ldots, e_1$. 
A path $p$ in a graph $\Gamma$ is \emph{reduced} if it does not contain any sub-paths of the form $e,e^{-1}$ where $e \in E\Gamma$ is an edge. 

Note that if $\Gamma$ is a graph and $x\in V\Gamma$ is a vertex, there is a canonical identification of $\pi_1(\Gamma,x)$ with the set of reduced edge-paths from $x$ to $x$ in $\Gamma$. We will use this identification throughout the paper.

\begin{df}\label{d:Graph-map}
For two graphs $\Gamma_1$ and $\Gamma_2$, a morphism or a graph-map $f : \Gamma_1 \rightarrow \Gamma_2$ is a continuous
map $f$ such that $f(V\Gamma_1) \subseteq V\Gamma_2$ and such that the restriction of $f$ to any topological edge $e \in \Gamma_1$ is a homeomorphism between $e$ and some topological edge $e'$ of $\Gamma_2$. Thus a morphism $f : \Gamma_1 \rightarrow \Gamma_2$ naturally defines functions $f : E\Gamma_1 \rightarrow E\Gamma_2$ and $f: V\Gamma_1 \rightarrow V\Gamma_2$ such that for any $e \in E\Gamma_1$ we have $f(\bar{e}) = \overline{f(e)} \in E\Gamma_2$, $o(f(e))= f(o(e))$ and $t(f(e))= f(t(e))$.
\end{df}

\begin{df}\label{d:Core-Graph}
Let $\Gamma$ be a graph and $x \in V\Gamma$. Then the core of $\Gamma$ at $x$ is defined as:
$$Core(\Gamma, x) = \cup\{p\, |\,  \text{where} \, p \, \text{is a reduced path in} \, \Gamma \, \text{from} \, x \, \text{to} \, x \}.$$

Note that $Core(\Gamma, x)$ is a connected subgraph of $\Gamma$ containing $x$. If $Core(\Gamma, x) = \Gamma$ we say
that $\Gamma$ is a \emph{core graph with respect to $x$}.  The graph $Core(\Gamma, x)$ has no degree 1 vertices except possibly $x$ itself. 

We say that a graph $\Gamma$ is a \emph{core graph}  if $\Gamma$ is connected and for every vertex $x\in V\Gamma$ we have $Core(\Gamma, x) = \Gamma$.
\end{df}

If a graph $T$ is a tree then for vertices $v,v'\in VT$ we denote by $[v,v']_T$ the unique reduced edge-path from $v$ to $v'$ in $T$.

\begin{propdef} \label{d:dual}
Let $\Gamma$ be a connected graph, and $x \in V\Gamma$. 
Choose a maximal subtree $T \subseteq \Gamma$, and an orientation $E\Gamma= E_+\Gamma\sqcup E_-\Gamma$. For $e \in E\Gamma$ define $[x, o(e)]_T$ to be the unique reduced path in $T$ from $x$ to $o(e)$, and let $s_e := [x, o(e)]_T \, e \,  [t(e),x]_T$. Let $S_T := \{s_e \, | \, e \in E_+\Gamma - E_+T \}$. 
Then $\pi_1(\Gamma, x)$ is free and $S_T$ is a free basis of $\pi_1(\Gamma, x)$. 

We call $S_T$ the free basis of $\pi_1(\Gamma, x)$ \emph{dual to $T$}.
\end{propdef}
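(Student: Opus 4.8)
The plan is to build an explicit isomorphism $\phi\colon F(S_T)\to\pi_1(\Gamma,x)$ from the abstract free group on the set $S_T$, working throughout in the edge-path formalism set up above: I identify $\pi_1(\Gamma,x)$ with the set of reduced edge-loops at $x$ under the operation ``concatenate, then freely reduce'', recalling that two edge-loops based at $x$ are homotopic rel $x$ iff they have the same free reduction. The one structural fact about $T$ used repeatedly is that a \emph{maximal} subtree is spanning ($VT=V\Gamma$), so $[x,v]_T$ is defined for every $v\in V\Gamma$, the concatenation $[u,v]_T[v,w]_T$ freely reduces inside $T$ to $[u,w]_T$, and $[u,v]_T^{-1}=[v,u]_T$.

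Define $\phi$ by $s_e\mapsto[s_e]$ for $e\in E_+\Gamma-T$; this is well defined because $F(S_T)$ is free on $S_T$. For \emph{surjectivity}, take a reduced edge-loop $p=f_1f_2\cdots f_k$ at $x$ and insert the null-homotopic segment $[t(f_i),x]_T\,[x,o(f_{i+1})]_T$ at each junction (legitimate since $t(f_i)=o(f_{i+1})$) to obtain $p\simeq\prod_{i=1}^k\big([x,o(f_i)]_T\,f_i\,[t(f_i),x]_T\big)$; a factor with $f_i\in T$ is a loop inside the tree $T$ and hence trivial, while a factor with $f_i\notin T$ equals $s_{f_i}^{+1}$ or $s_{\bar f_i}^{-1}$ according to whether $f_i$ or $\bar f_i$ lies in $E_+\Gamma-T$. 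Hence $[p]\in\mathrm{Im}(\phi)$.

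\emph{Injectivity} is the step that requires genuine care, and it is where I expect the real work to be. Let $w=s_{e_1}^{\epsilon_1}\cdots s_{e_m}^{\epsilon_m}$ be a nonempty freely reduced word in $F(S_T)$; I must show that the concatenated edge-loop $L=s_{e_1}^{\epsilon_1}\cdots s_{e_m}^{\epsilon_m}$ does not freely reduce to the empty path. Call an oriented edge \emph{special} if it does not lie in $T$. When $L$ is written out as an edge-path, each syllable $s_{e_i}^{\epsilon_i}$ contributes exactly one special edge, namely $e_i^{\epsilon_i}$ (with $e^{+1}:=e$, $e^{-1}:=\bar e$), every other edge of the syllable lying in $T$; and between consecutive special edges $e_i^{\epsilon_i}$ and $e_{i+1}^{\epsilon_{i+1}}$ sits precisely the tree segment $[t(e_i^{\epsilon_i}),x]_T\,[x,o(e_{i+1}^{\epsilon_{i+1}})]_T$. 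Since free reduction of edge-paths is confluent (the reduced form is unique), it suffices to show that no sequence of reduction moves ever deletes a special edge. Suppose some sequence does, and consider the first such deletion: the special edge is cancelled against an adjacent occurrence of its inverse, which must itself be special (a tree edge cannot be the inverse of a special edge), hence must be one of the two neighbouring special occurrences $e_{i\pm1}^{\epsilon_{i\pm1}}$; since this is the \emph{first} deletion of a special edge, the only way the two became adjacent is that the intervening tree segment reduced away entirely. But that segment lies in $T$, so it reduces to $[t(e_i^{\epsilon_i}),o(e_{i+1}^{\epsilon_{i+1}})]_T$, which is empty only if $t(e_i^{\epsilon_i})=o(e_{i+1}^{\epsilon_{i+1}})$, and then the cancellation of $e_i^{\epsilon_i}$ against $e_{i+1}^{\epsilon_{i+1}}$ forces $e_{i+1}=e_i$ and $\epsilon_{i+1}=-\epsilon_i$, contradicting that $w$ is freely reduced. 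Hence every special edge survives, the reduction of $L$ is a nonempty edge-loop (it still contains $e_1^{\epsilon_1},\dots,e_m^{\epsilon_m}$ in order), and $\phi(w)\neq1$. Thus $\phi$ is an isomorphism, which simultaneously shows that $\pi_1(\Gamma,x)$ is free and that $S_T$ is a free basis of it.

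(Alternatively one can argue topologically: $T$ being spanning is contractible, so the collapse $\Gamma\to\Gamma/T$ is a homotopy equivalence onto a wedge of circles indexed by $E_+\Gamma-T$, whose fundamental group is free on those circles by van Kampen, and tracing a loop around the $e$-th circle back through the equivalence produces exactly $s_e$; injectivity then follows because collapsing the tree portions of $L$ yields the reduced edge-loop $e_1^{\epsilon_1}\cdots e_m^{\epsilon_m}$ in the wedge. I would present the combinatorial argument as the main proof, since it stays inside the edge-path language already developed and directly delivers the identification of the basis with $S_T$ that is needed later.)
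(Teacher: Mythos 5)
The paper states Proposition--Definition~\ref{d:dual} as a standard background fact and does not supply a proof, so there is no in-paper argument to compare yours against; it implicitly relies on the classical treatment found, for example, in Serre's \emph{Trees}, Lyndon--Schupp, or Stallings' work, which the paper's exposition follows.

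Your proof is correct, and it is the standard combinatorial one. In particular, the injectivity argument is handled carefully at the one spot where care is genuinely needed: using that tree edges only ever cancel against tree edges, you correctly localize all reductions prior to a hypothetical first special-edge cancellation to the individual tree segments $\tau_i=[t(e_i^{\epsilon_i}),x]_T\,[x,o(e_{i+1}^{\epsilon_{i+1}})]_T$, each of which reduces inside $T$ to $[t(e_i^{\epsilon_i}),o(e_{i+1}^{\epsilon_{i+1}})]_T$, and you correctly deduce that a special-special cancellation would force $e_{i+1}=e_i$, $\epsilon_{i+1}=-\epsilon_i$ (using that both $e_i,e_{i+1}\in E_+\Gamma-T$), contradicting that $w$ was reduced in $F(S_T)$. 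The surjectivity decomposition $p\simeq\prod_i[x,o(f_i)]_T\,f_i\,[t(f_i),x]_T$, with tree-edge factors null-homotopic and non-tree factors equal to $s_{f_i}^{\pm1}$, is likewise exactly the right bookkeeping, and the fact that a maximal subtree of the connected component of $x$ is spanning is the one structural input you correctly isolate. Your parenthetical topological alternative (collapse $T$ to obtain a homotopy-equivalent wedge of $|E_+\Gamma-T|$ circles, then trace the homotopy inverse to land on $s_e$) is the other standard route and is equally valid; your preference for the edge-path version is well aligned with the paper's Stallings-graph framework, and in particular with Proposition~\ref{p:rw-basis}, which is really a corollary of the explicit surjectivity/injectivity bookkeeping you carried out.
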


We need to explicitly say how to rewrite elements of $\pi_1(\Gamma,x)$ in terms of the basis $S_T$, both as freely reduced words and cyclically reduced words.

\begin{prop}\label{p:rw-basis} 
Let $\Gamma$ be a connected graph, let $x \in V\Gamma$ and let $T \subseteq \Gamma$ be a maximal subtree.
Suppose $E_+\Gamma - E_+T = \{e_1, \ldots, e_m \}$ where $e_i\ne e_j$ for $i\ne j$, so that $S_T= \{s_{e_i} | 1 \leq i \leq m \}$. Then:
\begin{enumerate}
\item \textit{Rewriting $\gamma$ as a freely reduced word in $S_T$}: Delete from $\gamma$ all edges of $T$ and replace each $e_i^{\pm 1}$ by $s_{e_i}^{\pm 1}$. The result is a freely reduced word over $S_T$ representing $\gamma\in\pi_1(\Gamma,x)$.
\item \textit{Rewriting $\gamma$ as a cyclically reduced word in $S_T$}: First cyclically reduce the edge-path $\gamma$ by removing  the maximal initial and terminal segments of $\gamma$ that cancel in the concatenation $\gamma\gamma$. The result is a subpath $\gamma_1$ of $\gamma$ such that $\gamma_1$ is a closed cyclically reduced path (though $\gamma_1$ maybe based at a vertex different from $x$). Now apply the previous procedure to $\gamma_1$: delete all edges of $T$ and replace each $e_i^{\pm 1}$ by $s_{e_i}^{\pm 1}$. The result is the cyclically reduced form of $\gamma\in \pi_1(\Gamma,x)$ over $S_T$. 
\end{enumerate}

\end{prop}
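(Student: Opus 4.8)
The plan is to verify both statements directly from the definitions in Proposition-Definition~\ref{d:dual}, using the standard fact that collapsing the maximal subtree $T$ induces a homotopy equivalence $\Gamma \to \Gamma/T$, where $\Gamma/T$ is a wedge of $m$ circles, one for each edge $e_i \in E_+\Gamma - T$, and that under this equivalence the basis $S_T$ corresponds to the obvious generators of $\pi_1(\Gamma/T, \bar x)$ (the free group on $\{e_1,\dots,e_m\}$). First I would set up notation: for a closed edge-path $\gamma = f_1 f_2 \cdots f_k$ at $x$, let $\Phi(\gamma)$ denote the word over $\{s_{e_1},\dots,s_{e_m}\}$ obtained by the stated procedure — delete every $f_j$ lying in $T$, and replace each remaining $f_j = e_i^{\pm 1}$ by $s_{e_i}^{\pm 1}$.

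For part (1), I would argue in two steps. First, $\Phi(\gamma)$ represents the same element of $\pi_1(\Gamma,x)$ as $\gamma$: inserting, between consecutive surviving letters $e_i^{\pm 1}$ and $e_j^{\pm 1}$ of $\gamma$, the $T$-path from the terminal vertex of the first to the initial vertex of the second only changes $\gamma$ within its based homotopy class (one is inserting a path and its inverse, up to homotopy inside the tree $T$, which is contractible rel endpoints); after these insertions $\gamma$ literally becomes the concatenation $s_{e_{i_1}}^{\pm 1} \cdots s_{e_{i_r}}^{\pm 1}$ of the basis loops, which is exactly $\Phi(\gamma)$. Second, I must check $\Phi(\gamma)$ is freely reduced over $S_T$ when $\gamma$ is reduced as an edge-path. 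Here the key point is that two consecutive surviving letters $e_i, e_i^{-1}$ (or $e_i^{-1}, e_i$) cannot occur: between them in $\gamma$ there would be only edges of $T$ forming a reduced closed $T$-path, which must be empty since $T$ is a tree, so $\gamma$ would contain $e_i e_i^{-1}$, contradicting reducedness. (One also notes $e_i \ne \bar e_j$ for $i \ne j$ since distinct elements of $E_+\Gamma - T$ give distinct topological edges.) Hence no cancellation $s_{e_i} s_{e_i}^{-1}$ appears in $\Phi(\gamma)$.

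For part (2), I would first recall that cyclically reducing the edge-path $\gamma$ — stripping the longest common segment $\delta$ with $\delta^{-1}$ appearing at the start of $\gamma\gamma$ — replaces $\gamma$ by $\gamma_1$ with $\gamma = \delta \gamma_1 \delta^{-1}$, so $\gamma$ and $\gamma_1$ are conjugate in $\pi_1(\Gamma)$ via $\delta$, and $\gamma_1$ is a reduced closed path that does not begin with the inverse of its last edge (this is exactly cyclic reducedness as an edge-path). Then I apply part (1) to the loop $\gamma_1$ based at its own vertex $y = o(\gamma_1)$, using a maximal subtree containing $T$ (it already is maximal) and the same orientation: $\Phi(\gamma_1)$ is a freely reduced word over $S_T$ representing the conjugacy class of $\gamma$. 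It remains to see $\Phi(\gamma_1)$ is cyclically reduced, i.e.\ its first and last letters are not mutually inverse. If they were, the first surviving edge of $\gamma_1$ and the last surviving edge would be $e_i^{\pm 1}$ and $e_i^{\mp 1}$; the portion of $\gamma_1$ before the first survivor and after the last survivor consists only of $T$-edges, and cyclic reducedness of $\gamma_1$ as an edge-path forces that this wrap-around $T$-path be a reduced closed $T$-path at $y$, hence trivial, so $\gamma_1$ would begin with $e_i^{\pm 1}$, end with $e_i^{\mp 1}$ and these would cancel in $\gamma_1\gamma_1$, contradicting cyclic reducedness. Therefore $\Phi(\gamma_1)$ is the cyclically reduced form of $\gamma$ over $S_T$.

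I expect the only mild obstacle to be bookkeeping in part (2): carefully relating "cyclically reduced as an edge-path in $\Gamma$" to "the wrap-around $T$-subpath is a reduced closed path, hence trivial", and making sure the base vertex change for $\gamma_1$ does not affect which word over $S_T$ comes out (it does not, because $S_T$ is defined via $T$ and the orientation only, not via a choice of loops at a fixed vertex — the loops $s_e$ are genuine elements of $\pi_1(\Gamma)$, and conjugating a loop changes its edge-path representative but not its cyclically reduced form over $S_T$). Everything else is a direct unwinding of Proposition-Definition~\ref{d:dual}.
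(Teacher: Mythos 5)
The paper states Proposition~\ref{p:rw-basis} without proof, as a standard consequence of Proposition-Definition~\ref{d:dual}; so there is no paper argument to compare against, only the question of whether your argument is sound, and it is. Your reasoning for part (1) is the usual one: replacing each $T$-segment of $\gamma$ between consecutive surviving edges by the homotopic-rel-endpoints $T$-path through $x$ turns $\gamma$ literally into the concatenation of the $s_{e_i}^{\pm 1}$, and the tree argument (a reduced closed $T$-path is trivial) rules out an adjacent $s_{e_i}s_{e_i}^{-1}$.

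For part (2), the structure is also correct, though two phrasings deserve tightening. First, the ``wrap-around'' $T$-segment $t_r t_0$ is, under your hypothesis that the first and last surviving letters of $\gamma_1$ are mutually inverse, a closed reduced $T$-path based at $t(e_{i_r}^{\epsilon_r})$ (not at the basepoint $y$ of $\gamma_1$); trivality then forces $t_0$ and $t_r$ to be empty, giving the desired contradiction with cyclic reducedness of $\gamma_1$. Second, you argue $\Phi(\gamma_1)$ is a cyclically reduced word conjugate to $\gamma$, hence ``the'' cyclically reduced form; this is adequate for the paper's uses (which only need the cyclic word and its length), but if one wants to see that $\Phi(\gamma_1)$ is exactly the word obtained by cyclically reducing the freely reduced form from part (1), one should note that $\gamma=\delta\gamma_1\delta^{-1}$ as concatenated edge-paths implies $\Phi(\gamma)=\Phi(\delta)\,\Phi(\gamma_1)\,\Phi(\delta)^{-1}$ literally as words, that this word is reduced by part (1), and that a reduced word of the form $u v u^{-1}$ with $v$ cyclically reduced has cyclic reduction $v$. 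Your remark that $\Phi$ is defined purely from $T$ and the orientation (independent of basepoint) is the right way to dispel the worry about $\gamma_1$ being based at $y$ rather than $x$.
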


\subsection{Graphs and subgroups}

In a seminal paper from 1983 Stallings~\cite{St1} used labeled graphs to study subgroups of free groups. We give a brief exposition of the relevant definitions and results below and refer the reader to\cite{KM} for details.

Recall that we fix for the free group $F_N=F(A)=F(a_1,\dots, a_N)$ (where $N\ge 2$), a distinguished free basis $A=\{a_1,\dots, a_N\}$. If $w$ is a word in $\Upsilon= A \sqcup A^{-1}$, we will denote by $\underline{w}$ the freely reduced word in $\Upsilon$ obtained from $w$ by performing all possible (if any) free reductions. 

\begin{df}\label{d:A-graph}
An $A$-graph $\Gamma$ consists of an underlying oriented graph where every edge $e \in E\Gamma$ is labeled by a letter $\mu(e) \in  A\sqcup A^{-1}$ in such a way that $\mu(\bar{e})= (\mu(e))^{-1}$. Multiple edges between vertices and loops at a vertex are allowed. An $A$-graph $\Gamma$ is said to be \textit{folded} if there does not exist a vertex $x$ and two distinct edges $e_1$, $e_2$ with $o(e_1)= o(e_2)= x$ such that $\mu(e_1) = \mu(e_2)$. Otherwise $\Gamma$ is said to be \textit{non-folded}.
\end{df}

An $A$-graph $\Gamma$ is said to be \emph{$A$-regular} if for every vertex $x \in V\Gamma$ and for every $a_i$, there is precisely one outgoing edge at $x$ labeled by $a_i$ and precisely one incoming edge at $x$ labeled by $a_i$ (thus, in particular, an $A$-regular graph is folded).

If $\Gamma$ is an $A$-graph and $p=e_1,\dots, e_k$ is an edge-path in $\Gamma$,  then  $p$ has a label which is a word in $A \sqcup A^{-1}$ and we denote this label by $\mu(p)= \mu(e_1)\mu(e_2)\ldots \mu(e_k)$.  The definitions immediately imply:
\begin{lem}
An $A$-graph $\Gamma$ is folded if and only if the label of every reduced path in $\Gamma$ is a freely reduced word.
\end{lem}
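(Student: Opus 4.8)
The statement to prove is the elementary Lemma characterizing foldedness of an $A$-graph $\Gamma$ via the property that the label of every reduced edge-path in $\Gamma$ is a freely reduced word in $A \sqcup A^{-1}$. The plan is to prove the two implications separately by unwinding the definitions, using the contrapositive for one direction.

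First I would prove that if $\Gamma$ is non-folded then some reduced path has a non-reduced label. By Definition~\ref{d:A-graph}, non-folded means there is a vertex $x$ and two distinct edges $e_1 \ne e_2$ with $o(e_1)=o(e_2)=x$ and $\mu(e_1)=\mu(e_2)$. Consider the edge-path $p = \bar e_1, e_2$. This is a legitimate edge-path since $t(\bar e_1) = o(e_1) = x = o(e_2)$. Because $e_1 \ne e_2$, we have $\bar e_1 \ne \overline{e_2}$, so $p$ contains no subpath of the form $e, e^{-1}$ and is therefore reduced. But its label is $\mu(\bar e_1)\mu(e_2) = \mu(e_1)^{-1}\mu(e_1)$, which is not freely reduced. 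This proves the ``only if'' direction in contrapositive form.

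Conversely, I would show that if $\Gamma$ is folded then every reduced path has a freely reduced label. Let $p = e_1, \dots, e_k$ be a reduced path. If its label $\mu(e_1)\cdots\mu(e_k)$ were not freely reduced, there would be an index $i$ with $\mu(e_{i+1}) = \mu(e_i)^{-1} = \mu(\bar e_i)$. Now $o(e_{i+1}) = t(e_i) = o(\bar e_i)$, so $\bar e_i$ and $e_{i+1}$ are two edges with the same initial vertex and the same label. Since $\Gamma$ is folded, this forces $\bar e_i = e_{i+1}$, i.e. $e_{i+1} = e_i^{-1}$, contradicting the assumption that $p$ is reduced. Hence the label of $p$ is freely reduced.

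This lemma is entirely definitional, so there is no real obstacle; the only point requiring a little care is the bookkeeping with orientations — making sure that ``distinct edges with the same origin and label'' in the foldedness condition is correctly matched up, in the converse direction, with the pair $(\bar e_i, e_{i+1})$, and in the forward direction with the pair $(\bar e_1, e_2)$ forming a genuinely reduced path. I would state both implications in a single short paragraph each, with no displayed computation needed.
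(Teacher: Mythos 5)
Your proof is correct and supplies exactly the routine definitional argument that the paper leaves to the reader (the paper simply introduces this lemma with ``The definitions immediately imply:'' and gives no proof). Both implications, including the careful orientation bookkeeping identifying the non-reduced subword with the pair $(\bar e_i, e_{i+1})$ and conversely constructing the reduced path $\bar e_1, e_2$ with a non-reduced label, are handled properly.
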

 
\begin{df}\label{d:A-morph}
For any two $A$-graphs $\Gamma_1$ and $\Gamma_2$, a map $f: \Gamma_1 \rightarrow \Gamma_2$ is an $A$-morphism if $f$ is a graph-map  such that $\mu (e)= \mu(f(e))$.   
\end{df}


For $F_N= F(a_1, \ldots, a_N)$ we define the \textit{standard $N$-rose} $R_N$ to be the wedge of $N$ loop-edges  each labeled by $a_1, \ldots, a_N$ respectively, at a vertex $x_0$. Then $F(A) = \pi_1(R_N, x_0)$.  

For $\Gamma$ an $A$-graph, $x \in V\Gamma$ and $\mu$ as before, we can define a map $\mu_{\#} : \pi_1(\Gamma, x) \rightarrow F(A)$ as $p \mapsto \underline{\mu(p)}$. This map is a group homomorphism. 
\begin{notation}
For $\Gamma$ an $A$-graph, $x \in V\Gamma$ we say that $(\Gamma, x)$ represents the subgroup $H:= \mu_{\#}(\pi_1(\Gamma, x)) \leq F(A)$.
\end{notation}

\begin{propdef}\label{pd:grep}\cite{St1,KM}
Let $H \leq F(A)$. Then there exists a connected, folded $A$-graph $\Gamma$ with $x_0 \in V\Gamma$ such that $\Gamma= Core(\Gamma, x_0)$ and $(\Gamma, x_0)$ represents $$H = \{\mu(p) \, | \, p \, \text{is a reduced path in} \, \Gamma \, \text{from} \, x_0 \, \text{to} \, x_0 \} \leq F(A)$$
Moreover, such a $(\Gamma, x_0)$ is unique. This graph $(\Gamma, x_0)$ is called the \emph{Stallings subgroup graph of $H$} with respect to $A$.  
\end{propdef}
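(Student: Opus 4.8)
The plan is to prove both existence and uniqueness by comparing an arbitrary candidate graph with a single canonical model, namely the coset Schreier graph of $H$ in $F(A)$; everything then reduces to the fact that in a folded $A$-graph ``reading a word from a vertex'' is a deterministic operation.

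\textbf{Existence.} Let $\mathrm{Sch}=\mathrm{Sch}(F(A),H)$ be the Schreier coset graph: its vertex set is $\{Hg : g\in F(A)\}$, and for each $a\in A$ and each vertex $Hg$ there is an $a$-labeled edge from $Hg$ to $Hga$. This is a connected, $A$-regular (hence folded) $A$-graph with distinguished vertex $v_0:=H\cdot 1$. A standard analysis (using the lemma that in a folded graph free reductions of an edge-path's label correspond exactly to edge-backtrack removals) shows that a freely reduced word $w$ labels a reduced closed path at $v_0$ precisely when $w\in H$, so that $(\mathrm{Sch},v_0)$ represents $H$. Now put $\Gamma:=Core(\mathrm{Sch},v_0)$. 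Every reduced closed path at $v_0$ in $\mathrm{Sch}$ lies, by definition of the core, inside $\Gamma$, so $(\Gamma,v_0)$ represents the same subgroup $H$; $\Gamma$ is connected, contains $v_0$, is folded (being a subgraph of a folded graph), and satisfies $\Gamma=Core(\Gamma,v_0)$ by idempotence of the core operation. Taking $x_0:=v_0$ gives the required graph. (In the finitely generated case one may instead argue constructively: wedge finitely many subdivided loops spelling generators of $H$, perform Stallings folds until the graph is folded, and pass to the core; this terminates in a finite graph. For infinitely generated $H$ this folding is transfinite, or a direct limit of finite approximations, and the Schreier-core description above sidesteps these complications.)

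\textbf{Uniqueness.} Let $(\Gamma,x_0)$ be \emph{any} connected folded $A$-graph with $\Gamma=Core(\Gamma,x_0)$ representing $H$. The idea is to reconstruct the canonical model from it. Form $\hat\Gamma\supseteq\Gamma$ by repeatedly attaching, at each vertex that lacks an outgoing $a$-edge for some $a\in A$, a new edge with a new terminal vertex, iterating over newly created vertices. Then $\hat\Gamma$ is a connected $A$-regular $A$-graph, and $\hat\Gamma\smallsetminus\Gamma$ is a disjoint union of trees, each meeting $\Gamma$ in a single vertex. A reduced path in $\hat\Gamma$ that starts and ends at $x_0\in V\Gamma$ cannot traverse any edge of $\hat\Gamma\smallsetminus\Gamma$, since on entering one of the attached trees it would have to backtrack to leave, contradicting reducedness. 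Hence reduced closed paths at $x_0$ in $\hat\Gamma$ are exactly those in $\Gamma$, so $(\hat\Gamma,x_0)$ also represents $H$, and $Core(\hat\Gamma,x_0)=\Gamma$ (inclusion ``$\subseteq$'' is the no-backtracking observation, ``$\supseteq$'' is $\Gamma=Core(\Gamma,x_0)\subseteq Core(\hat\Gamma,x_0)$). Finally, a connected $A$-regular $A$-graph with a base vertex is the same datum as a pointed transitive $F(A)$-set: $F(A)$ acts on $V\hat\Gamma$ by letting $a\in A$ send $v$ to the terminus of the unique outgoing $a$-edge at $v$ (a bijection by $A$-regularity), the action is transitive by connectedness, and the stabilizer of $x_0$ is exactly the subgroup represented by $(\hat\Gamma,x_0)$, namely $H$. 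Orbit--stabilizer then yields a unique $A$-graph isomorphism $(\hat\Gamma,x_0)\xrightarrow{\ \cong\ }(\mathrm{Sch}(F(A),H),v_0)$ sending $x_0\mapsto v_0$; restricting it to cores gives $(\Gamma,x_0)\cong(Core(\mathrm{Sch}(F(A),H),v_0),v_0)$. Applying this to two such graphs $(\Gamma,x_0)$ and $(\Gamma',x_0')$ and composing the resulting isomorphisms produces an $A$-graph isomorphism $(\Gamma,x_0)\cong(\Gamma',x_0')$, which is the asserted uniqueness.

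\textbf{Main obstacle.} The existence half is essentially bookkeeping. The substance lies in uniqueness, and the delicate point is the passage to the $A$-regular completion: one must verify carefully that attaching the ``missing'' edges changes neither the represented subgroup nor the core (this is exactly the no-backtracking lemma), so that the canonical Schreier model is recoverable from any candidate $(\Gamma,x_0)$. Checking that the $F(A)$-action on the vertices of a connected $A$-regular graph is well defined, transitive, and has the represented subgroup as the base-point stabilizer — and that this identification is natural enough to yield a canonical isomorphism — is the most delicate (though entirely routine) part; it is precisely here that the folded hypothesis, which makes reading a word from a vertex deterministic, is used.
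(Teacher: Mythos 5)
The paper gives no proof of this proposition; it cites Stallings \cite{St1} and Kapovich--Myasnikov \cite{KM} and moves on. Your proof is essentially correct and complete, and it follows a route — construct the canonical model as $\mathrm{Core}(\mathrm{Sch}(F(A),H),v_0)$ from the Schreier coset graph, then prove uniqueness by completing an arbitrary candidate to an $A$-regular graph and identifying it with the Schreier graph via the correspondence between connected $A$-regular $A$-graphs and pointed transitive $F(A)$-sets — that is somewhat different in flavor from the treatment usually attributed to \cite{KM}. There the emphasis for existence is on performing a (possibly transfinite) sequence of Stallings folds on a wedge of subdivided loops, and for uniqueness on constructing canonical $A$-morphisms between the two candidates directly by lifting labels (read a reduced path to a vertex, read its label into the other graph, check independence of the path). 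Your route trades the morphism-lifting argument for the orbit--stabilizer identification with the Schreier graph, which handles the infinitely generated case uniformly and makes the canonicity of the isomorphism automatic; the folding route is more constructive and yields the algorithmic content that much of \cite{KM} (and this paper) subsequently relies on.

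Two small imprecisions, neither fatal. First, in the $A$-regular completion you say you add an edge at each vertex missing an outgoing $a$-edge for $a\in A$; to achieve $A$-regularity as defined in the paper you must do this for every missing outgoing $\alpha$-edge with $\alpha\in A\cup A^{-1}$ (equivalently, also supply missing incoming $a$-edges), always attaching to a fresh vertex so that the new part stays a forest. Second, ``$\hat\Gamma\smallsetminus\Gamma$ is a disjoint union of trees'' should strictly refer to the closures of the components of the complement (or to the subgraph spanned by the new edges together with their endpoints), each of which is a tree meeting $\Gamma$ in exactly one vertex; with that reading, your no-backtracking argument that reduced closed paths at $x_0$ cannot enter the forest, and hence that $\mathrm{Core}(\hat\Gamma,x_0)=\Gamma$ and the represented subgroup is unchanged, is correct.
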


If $(\Gamma,x_0)$ is the Stallings subgroup graph for $H$, then the labeling map $\mu: \pi_1(\Gamma,x_0)\to H$ is a group isomorphism. If $T\subseteq \Gamma$ is a maximal tree and $S_T=\{s_e| e \in E_+(\Gamma - T)\}$ is the dual free basis of $\pi_1(\Gamma,x_0)$, then $\mu(S_T)= \{\mu(s_e) | e \in E_+(\Gamma-T)\}$ is a free basis of $H$.

\subsection{Primitive, simple and non-filling elements}\label{s:elements}

\begin{df}[Primitive and simple elements]
In the free group $F_N$, a non-trivial element $g \in F_N$ is called
\textit{primitive} in $F_N$ if $g$ belongs to some free basis of $F_N$.

In the free group $F_N$, a non-trivial element $g \in F_N$ is called
\textit{simple} in $F_N$ if $g$ belongs to a proper free factor of $F_N$. 
\end{df}

\begin{df}[Non-filling elements]\label{df:nonfill}
An element $g\in F_N$ is said to be \emph{non-filling} in $F_N$ if there exists
a splitting of $F_N$ as an amalgamated free product $F_N=K\ast_C L$ or
as an HNN-extension $F_N=\langle K, t | t^{-1}Ct=C'\rangle$, such that
$C\le F_N$ is either trivial or a maximal cyclic subgroup, such that in
the  $F_N=K\ast_C L$ case $C\ne K, C\ne L$, and such that $g\in K$. 

An element $g\in F_N$ is said to be \emph{filling} in $F_N$ if $g$ is
not non-filling. 
\end{df}

\begin{rk}\label{r:ps}
Note that if $g \in F_N$ is primitive, then it is also simple. 
Similarly, if $g \in F_N$ is simple, then $g$ is non-filling.

Also, for elements of $F_N$ the properties of being primitive, being simple and being
non-filling  are preserved under applying arbitrary automorphisms of $F_N$.
\end{rk}

The following known key fact  relates the property of being filling in
$F_N$ to the compactification $\CVNbar$ of the projectivized Culler-Vogtmann Outer
space $\CVN$. This
compactification consists of the projective classes of all minimal
``very small'' isometric actions of $F_N$ on $\mathbb R$-trees.
An isometric action of a group $G$ on an $\mathbb R$-tree $T$ is called \emph{very small} if for every nondegenerate segment of $T$ the setwise stabilizer of that segment in $G$ is either trivial or maximal infinite cyclic in $G$, and if the  setwise stabilizer  of every tripod in $T$ is trivial. For example, the Bass-Serre tree corresponding to a splitting of $F_N$ as an amalgamated product or an HNN-extension over a maximal infinite cyclic subgroup is a very small $F_N$-tree.  See~\cite{KL,BF} for a more detailed explanation of the relevant terminology. Also, if $T$ is an $\mathbb R$-tree equipped with an isometric action of a group $G$, for $g\in G$ we denote $||g||_T:=\inf_{x\in T} d(x,gx)$; the quantity $||g||_T$ is called the \emph{translation length} of $g$ in $T$.

\begin{prop}\cite{KL,Solie}\label{prop:filling}
Let $1\ne g\in F_N$. Then the following conditions are equivalent:
\begin{enumerate}
\item The element $g$ is filling in $F_N$.
\item For every minimal very small isometric action of $F_N$ on a
  nontrivial simplicial tree $T$ we have $||g||_T>0$.
\item For every  minimal very small isometric action of $F_N$ on a
  nontrivial $\mathbb R$-tree $T$ we have $||g||_T>0$.
\end{enumerate}
\end{prop}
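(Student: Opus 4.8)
The plan is to prove the equivalences (1)$\Leftrightarrow$(2)$\Leftrightarrow$(3) by a chain of implications, using Definition~\ref{df:nonfill} to translate ``non-filling'' into the existence of a nontrivial splitting of $F_N$ over a trivial or maximal cyclic subgroup with $g$ elliptic.

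First I would observe that (3)$\Rightarrow$(2) is immediate, since a simplicial tree with a minimal very small $F_N$-action is in particular an $\R$-tree with such an action, and the translation length of $g$ is computed the same way. So the content is in (2)$\Rightarrow$(1) and (1)$\Rightarrow$(3).

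For (2)$\Rightarrow$(1) I would argue by contrapositive: if $g$ is non-filling, then by Definition~\ref{df:nonfill} there is a splitting of $F_N$ as $K\ast_C L$ (with $C\ne K$, $C\ne L$) or as an HNN-extension $\langle K,t\mid t^{-1}Ct=C'\rangle$, with $C$ trivial or maximal cyclic and $g\in K$. The Bass--Serre tree $T$ of this splitting is a simplicial tree with a minimal $F_N$-action; minimality follows because the splitting is nontrivial, and ``very small'' holds precisely because the edge stabilizers are trivial or maximal cyclic (and the conditions $C\ne K,L$ rule out the degenerate cases) — this is exactly the standard characterization of very small simplicial actions as those with trivial-or-maximal-cyclic edge stabilizers and no inversions, so I would just cite \cite{KL,BF} for it. Since $g\in K$ and $K$ fixes a vertex of $T$, we get $||g||_T=0$, contradicting (2). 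Hence (2)$\Rightarrow$(1).

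For (1)$\Rightarrow$(3) I would again use the contrapositive: suppose there is a minimal very small action of $F_N$ on a nontrivial $\R$-tree $T$ with $||g||_T=0$. Then $g$ fixes a point of $T$. The point is to produce from $T$ an actual \emph{splitting} of $F_N$ over a trivial or maximal cyclic group in which $g$ is elliptic; this is where I expect the main obstacle to lie, and it is really the substance of the cited results of \cite{KL,Solie}. The mechanism is the Rips machine / the analysis of stable actions on $\R$-trees: a minimal very small action of $F_N$ on an $\R$-tree, after passing to a suitable decomposition (surface pieces, axial/Levitt pieces, simplicial pieces), always yields a nontrivial very small \emph{simplicial} splitting of $F_N$, and one can arrange the elliptic subgroups of the simplicial tree to contain any prescribed subgroup that was elliptic in $T$ — in particular the cyclic-or-trivial subgroup $\langle g\rangle$. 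Concretely: if $T$ is already simplicial we are essentially done after checking the edge-stabilizer conditions; if not, collapsing the non-simplicial parts and/or invoking the structure theorem produces a simplicial $\R$-tree $T'$ with a minimal very small $F_N$-action and $||g||_{T'}=0$, and a nontrivial simplicial tree with very small action is exactly the Bass--Serre tree of a splitting of the type demanded in Definition~\ref{df:nonfill}. Thus $g$ is non-filling, contradicting (1). I would present this step at the level of ``by \cite{KL,Solie}'' rather than reproving the Rips-machine input, since the proposition is explicitly attributed to those papers and flagged as not used in the sequel.

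The one bookkeeping point worth spelling out in all three implications is the dictionary between Definition~\ref{df:nonfill}'s algebraic formulation (amalgam or HNN over trivial/maximal-cyclic $C$ with $C\ne K,L$) and the geometric formulation (nontrivial minimal very small simplicial $F_N$-tree with $g$ elliptic): an edge group is maximal cyclic or trivial iff the simplicial action is very small, a one-edge splitting is nontrivial iff the tree is not a point iff the action is minimal and the tree has an edge, and the vertex-group conditions $C\ne K,L$ are exactly what prevents the Bass--Serre tree from collapsing to a point or a line. With that dictionary in hand the equivalences are formal modulo the Rips-machine input in (1)$\Rightarrow$(3).
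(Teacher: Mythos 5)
Your proposal is correct and follows essentially the same route as the paper: (3)$\Rightarrow$(2) is trivial, (2)$\Rightarrow$(1) is the observation that the Bass--Serre tree of a splitting as in Definition~\ref{df:nonfill} is very small and simplicial with $g$ elliptic, and (1)$\Rightarrow$(3) reduces an $\R$-tree action to a simplicial one via the Rips-machine-flavored theory cited. Two small calibration points worth flagging. First, the paper's proof of (1)$\Rightarrow$(3) does not ``collapse the non-simplicial parts'' of $T$ --- that operation can easily yield a point (for instance, when $T$ is dual to a measured lamination with minimal support and has no simplicial part at all); what it actually cites is Guirardel's approximation theorem \cite{Gui}, which produces a \emph{new} nontrivial very small minimal simplicial $F_N$-tree $T'$ with $\|g\|_{T'}=0$, rather than a quotient of $T$. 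Your hedge ``and/or invoking the structure theorem'' reaches for the right thing, but the ``collapsing'' intuition you lead with is the one place your sketch could genuinely mislead. Second, once $T'$ is in hand, the Bass--Serre graph of groups $T'/F_N$ may have several edges, whereas Definition~\ref{df:nonfill} demands a one-edge splitting; the paper explicitly collapses all edges but one, which preserves ellipticity of $g$ and the trivial/maximal-cyclic edge group condition, and this collapse step should be made explicit rather than asserting that ``a nontrivial simplicial tree with very small action is exactly the Bass--Serre tree of a splitting of the type demanded in Definition~\ref{df:nonfill}.''
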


\begin{proof}
The proof of this statement is implicit in \cite{KL,Solie} but we
sketch the argument for completeness.

Part (3) directly implies part (2). Since the simplicial splittings that appear
in Definition~\ref{df:nonfill} are very small, part (2) also directly
implies part (1). 

To see that part (1) implies part (3), suppose that $1\ne g\in F_N$ is
filling but that there exists a minimal very small isometric action of $F_N$ on a
  nontrivial $\mathbb R$-tree $T$ we have $||g||_T=0$. Then a result
  of Bestvina and Feighn~\cite{BF} (see also a paper of Guirardel~\cite{Gui}) implies that there exists a very small
  minimal simplicial $F_N$-tree $T'$ with $||g||_{T'}=0$. Taking the
  quotient graph of groups $T'/F_N$ and collapsing all edges except
  one in this graph gives us a splitting of $F_N$ as in
  Definition~\ref{df:nonfill} such that $g$ is conjugate to a vertex
  group element for that splitting. This contradicts the assumption
  that $g$ is filling in $F_N$. Thus (1) implies (3), as required.

\end{proof}

\subsection{Whitehead Graphs}\label{sect:Whgraph}

We now describe the relationship between simple elements, primitive elements, and Whitehead graphs.

\begin{df}\label{d:whgraph}[Whitehead graph]
Let $F_N= F(A)$ be as before and let $w \in F_N$ be a nontrivial cyclically reduced word.  Let $c$ be the first letter of $w$. The word $wc$ is then freely reduced. 

The \emph{Whitehead graph of $w$} with respect to $A$, denoted by $Wh_A(w)$, is an undirected graph whose set of vertices $V(Wh_A(w))= \Upsilon$. Edges are added as follows: For $a,b \in V(Wh_A(w))$, there is an undirected edge joining $a^{-1}$ and $b$ if $ab$ or $b^{-1}a^{-1}$ occurs as a subword of $wc$.      

Note that if $\tilde w$ is a cyclic permutation of $w$ or of $w^{-1}$ then $Wh_A(w)=Wh_A(\tilde w)$.

For an arbitrary $1\ne g\in F_N$, we put $Wh_A(g):=Wh_A(w)$, where $w$ is the cyclically reduced form of $g$ in $F(A)$.

\end{df}

Recall that a \emph{cut vertex} in a graph $\Delta$ is a vertex $x$ such that $\Delta-\{x\}$ is disconnected. Note that if $\Delta$ has at least one edge and is disconnected, then $\Gamma$ does possess a cut vertex; namely any end-vertex of an edge of $\Delta$ is a cut vertex in this case. 

Generalizing a result of Whitehead, Stallings established the relationship between simple elements and Whitehead graphs~\cite{St}:

\begin{prop}\label{p:cut}~\cite{St}
Let $F_N=F(A)$, where $N\ge 2$ and let $g \in F(A)$ be a cyclically reduced word. If $g$ is simple, then the Whitehead graph $Wh_A(g)$ has a cut vertex.
\end{prop}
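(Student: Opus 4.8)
The plan is to prove the contrapositive-free direction directly: assuming $g$ is simple, I need to produce a cut vertex in $Wh_A(g)$. The standard approach, going back to Whitehead and adapted by Stallings, is to use Whitehead automorphisms to reduce the cyclic length of $g$, and to extract the cut vertex from the terminal (length-minimal) stage together with the fact that length can be reduced.

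First I would recall the basic facts about Whitehead automorphisms of $F_N = F(A)$: the Whitehead graph $Wh_A(w)$ records, for a cyclically reduced $w$, which two-letter subwords (cyclically) occur in $w$; the number of edges of $Wh_A(w)$ incident to a vertex $x\in\Upsilon$ equals the number of occurrences in $w$ of a letter adjacent (in the cyclic word) to $x^{-1}$, so the total edge count is $\|w\|_A$. The key computational lemma (Whitehead's) is that if one applies a Whitehead automorphism $\varphi$ of the second kind, determined by a subset $S\subseteq\Upsilon$ and a multiplier $a\in S$ with $a^{-1}\notin S$, then the change in cyclic length $\|\varphi(w)\|_A - \|w\|_A$ is computed by a formula involving the number of edges of $Wh_A(w)$ that cross the partition $S\sqcup(\Upsilon\setminus S)$ versus the degree of $a$ in $Wh_A(w)$. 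So: if some such partition cuts fewer edges than $\deg(a)$, cyclic length strictly drops.

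Next, since $g$ is simple, $g$ lies in a proper free factor; applying an automorphism of $F_N$ (which changes neither simplicity nor, up to isomorphism of the decorated graph, the structure of $Wh_A$ in the relevant way — cf. Remark~\ref{r:ps}) and Whitehead's theorem on minimality, I can assume $g$ is represented so that it does not fill $F_N$, equivalently $g$ is, after an automorphism, a word not involving all of $A^{\pm1}$, or more precisely one can reach a word $w$ of minimal cyclic length in its orbit that visibly misses some generator. The cleanest route: since $g$ is simple, there is an automorphism $\alpha$ with $\alpha(g)$ contained in $\langle a_1,\dots,a_{N-1}\rangle$; by Whitehead's peak-reduction theorem the minimal-cyclic-length representative $w$ in the $\Aut(F_N)$-orbit of $g$ can be connected to $\alpha(g)$ by a chain of length-non-increasing Whitehead moves, and at $w$ either $w$ is already length-minimal and still simple, in which case $w$ misses a generator pair so $Wh_A(w)$ has an isolated vertex (a trivial cut vertex once there is $\ge 1$ edge, invoking the remark about end-vertices of edges), or one has a Whitehead move at $w$ that does not increase length. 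In the latter, borderline, case the length-change formula forces the existence of a partition $S\sqcup S^c$ of $\Upsilon$ cutting at most $\deg_W(a)$ edges for the multiplier $a$, and the nonexistence of a length-reduction plus a small combinatorial argument shows every such "balanced" cut must pass through a single vertex, i.e. a cut vertex; this is exactly Stallings' refinement of Whitehead's lemma.

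The main obstacle I anticipate is precisely this last step: deriving from "cyclic length is minimal but $g$ is non-filling/simple" the conclusion that the Whitehead graph is \emph{disconnected after removing one vertex}, rather than merely that it admits some sub-threshold partition. Concretely, one must handle the case where $Wh_A(w)$ is connected and the length-reducing formula gives equality (no strict drop) for every Whitehead move: here the argument is to pick a partition $S$ realized by a proper free factor decomposition compatible with simplicity, show the number of crossing edges is at most the degree of the multiplier, and then argue that because no move strictly reduces length, the crossing edges are all incident to the multiplier vertex $a^{-1}$, whence $a^{-1}$ is a cut vertex of $Wh_A(w)$. Transporting the cut vertex back from $w$ to the original cyclically reduced $g$ requires care because the conjugating/automorphism steps change the graph; the safe formulation is to prove the statement for the length-minimal $w$ and then note that the hypothesis of the proposition already allows us to replace $g$ by any cyclically reduced word in its automorphic orbit, since "$g$ simple" is automorphism-invariant and the proposition as stated is applied in that generality. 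I would present the proof by quoting Whitehead's peak-reduction lemma and the explicit length-change formula as black boxes from the literature (\cite{St}, and classical Whitehead), and spend the bulk of the write-up on the equality case yielding the cut vertex.
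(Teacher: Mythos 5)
The paper does not prove Proposition~\ref{p:cut}; it cites it to Stallings~\cite{St}, so there is no in-paper argument to compare against. Your plan assembles the right ingredients but contains a logical gap that invalidates it as written.

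You pass to a Whitehead-minimal representative $w$ in the $\Aut(F_N)$-orbit of $g$, exhibit a cut vertex in $Wh_A(w)$, and then attempt to ``transport back'' to $g$ by observing that simplicity is automorphism-invariant. This transport is not legitimate: the Whitehead graph is \emph{not} an automorphism invariant, so a cut vertex of $Wh_A(w)$ says nothing about $Wh_A(g)$ when $g\ne w$. The proposition is a universal statement about \emph{every} cyclically reduced simple word, and by restricting to the minimal orbit representative you leave the claim unproven in exactly the non-trivial case, namely a simple cyclically reduced $g$ that is not Whitehead-minimal. (Your sense of where the difficulty lies is also reversed: the minimal case is the easy one.) The correct structure works directly with $g$ and splits into two cases. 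If $g$ is not Whitehead-minimal, some second-kind Whitehead automorphism strictly reduces $\|g\|_A$, and the classical Whitehead cut-vertex lemma for strictly length-reducible cyclically reduced words --- the genuinely substantive step, which needs no simplicity hypothesis and which you rightly propose to black-box --- gives a cut vertex in $Wh_A(g)$ itself. If $g$ is Whitehead-minimal, then simplicity forces $g$ to miss some $a_i^{\pm1}$ (this is exactly part (3) of Lemma~\ref{l:mps}, which you would need to cite or reprove rather than assert); then $a_i$ and $a_i^{-1}$ are isolated vertices of $Wh_A(g)$, which has at least one edge since $g\ne 1$ is cyclically reduced, so $Wh_A(g)$ is disconnected and has a cut vertex by the remark in the paper preceding the proposition. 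Rerouting your two cases through $g$ rather than through $w$ repairs the argument.
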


Notice that Remark \ref{r:ps} implies that if $g \in F(A)$ is primitive, then $Wh_A(g)$ has a cut vertex. 

\begin{rk}
Stallings' definition of Whitehead graphs differs slightly from our definition. Assume the same setting as in Definition \ref{d:whgraph}. Stallings adds an edge from $a^{-1}$ to $b$ for \textit{each} occurrence of a subword $ab$ in $wc$. Let us call the Whitehead graph of a cyclically reduced word $w$ under Stallings' definition $\Gamma$, and the corresponding graph under our definition $\Gamma_1$. It is clear that $V(\Gamma)= V(\Gamma_1)$. Further it is easily checked that $x \in V(\Gamma)$ is a cut-vertex in $\Gamma$ if and only if $x\in V(\Gamma_1)$ is a cut-vertex in $\Gamma_1$. Thus Proposition~\ref{p:cut} holds for our definition of Whitehead graphs just as well. 
\end{rk}

Finally, note that if a graph has a reduced circuit that contains all the vertices, then the graph can not have a cut vertex. This observation applies, for instance, to the Whitead graph of an element $g\in F_N$ when the string $a_N^2a_1^2a_2^2\ldots a_N^2$ occurs as a subword of a cyclically reduced form of $g$. In this case $g$ is not simple (and hence not primitive) as its Whitehead graph does not have a cut vertex. We state this fact explicitly as a corollary of Proposition~\ref{p:cut}:

\begin{cor}\label{c:np}
Let $F_N=F(A)$, where $N\ge 2$ and $A=\{a_1,\dots, a_N\}$. 
If a cyclically reduced word $w \in F(A)$ contains the subword $a_N^2a_1^2a_2^2\ldots a_N^2$ then $w$ is not simple (and hence not primitive) in $F(A)$. 
\end{cor}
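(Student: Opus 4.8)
The plan is to deduce Corollary~\ref{c:np} directly from Proposition~\ref{p:cut} together with the structural observation, noted just before the statement, that a graph possessing a circuit through all of its vertices cannot have a cut vertex. So the proof reduces to a single combinatorial verification: if a cyclically reduced word $w\in F(A)$ contains $a_N^2a_1^2a_2^2\dots a_N^2$ as a subword, then $Wh_A(w)$ contains a Hamiltonian circuit, i.e.\ a circuit visiting every vertex of $\Upsilon = A\sqcup A^{-1}$ exactly once.

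First I would set up the edge-recording convention from Definition~\ref{d:whgraph}: each occurrence of a two-letter subword $xy$ in $wc$ (where $c$ is the first letter of $w$, appended so that one also reads the ``wrap-around'' letter pair) contributes the edge $\{x^{-1},y\}$ to $Wh_A(w)$. Then I would simply list the consecutive length-two subwords occurring inside the block $a_N^2a_1^2a_2^2\dots a_N^2$ and read off the corresponding edges. The square $a_i^2 = a_ia_i$ contributes the edge $\{a_i^{-1},a_i\}$ for each $i$, and the junction $a_i a_{i+1}$ between consecutive squares (and $a_N a_1$ at the start) contributes $\{a_i^{-1}, a_{i+1}\}$. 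Reading $a_N^2 a_1^2 a_2^2 \dots a_N^2$ from left to right, the edges obtained, in order, are
\[
\{a_N^{-1},a_N\},\ \{a_N^{-1},a_1\},\ \{a_1^{-1},a_1\},\ \{a_1^{-1},a_2\},\ \{a_2^{-1},a_2\},\ \dots,\ \{a_{N-1}^{-1},a_N\},\ \{a_N^{-1},a_N\}.
\]
Concatenating these gives a closed walk
\[
a_N \to a_N^{-1} \to a_1 \to a_1^{-1} \to a_2 \to a_2^{-1} \to \cdots \to a_{N-1} \to a_{N-1}^{-1} \to a_N,
\]
which passes through all $2N$ vertices $a_1,a_1^{-1},\dots,a_N,a_N^{-1}$, each exactly once before returning to the start, so it is a Hamiltonian circuit in $Wh_A(w)$. (The two copies of the edge $\{a_N^{-1},a_N\}$ produced by the first and last squares collapse to a single edge under our definition of $Wh_A$, which records only whether a subword occurs, not how many times; this causes no problem since we only need the circuit to exist.)

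Having exhibited the Hamiltonian circuit, I conclude that $Wh_A(w)$ has no cut vertex: removing any single vertex from a graph containing a circuit through all vertices leaves the remaining vertices joined by the surviving arc of the circuit, hence connected. By the contrapositive of Proposition~\ref{p:cut}, $w$ is not simple in $F(A)$, and by Remark~\ref{r:ps} (primitive $\Rightarrow$ simple) it is not primitive either. There is no real obstacle here; the only thing to be careful about is getting the edge-orientation bookkeeping in Definition~\ref{d:whgraph} exactly right (which inverse goes on which endpoint) and correctly handling the ``$wc$'' convention so that the subword block, wherever it sits inside $w$, does genuinely contribute all the listed consecutive pairs — this is automatic as long as the block has length $2N+1 \ge 3$ and lies within $w$, since then every internal length-two window of the block is a length-two window of $w$ itself.
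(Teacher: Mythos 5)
Your proposal is correct and takes the same route the paper does: it invokes Proposition~\ref{p:cut} via the observation that a Whitehead graph containing a circuit through all $2N$ vertices has no cut vertex, and then explicitly reads off the edges contributed by the block $a_N^2a_1^2\cdots a_N^2$ to exhibit such a circuit. The paper states the Hamiltonian-circuit observation in the paragraph preceding Corollary~\ref{c:np} and leaves the edge bookkeeping implicit; you have merely filled in that verification (the minor slip ``length $2N+1$'' should read $2N+2$, but this is immaterial to the argument).
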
  

The Whitehead graph, as defined above, records the information about
two-letter subwords in the cyclically reduced form $w$ of a nontrivial
element $g\in F_N=F(A)$. There are also generalizations of the
Whitehead graph recording the information about $k$-letter  subwords
of $w$, where $k\ge 2$ is a fixed integer. These generalizations are
commonly known as ``Rauzy graphs'' or ``initial graphs'' and naturally
occur in the study of geodesic currents on free groups~\cite{Ka1,Ka2,Kapovich}.

We do not formally define these ``level $k$'' versions of the
Whitehead graph in this paper, because we only need the following
specific statement related to the $k=3$ case:

\begin{prop}\label{p:ChM}\cite{ChM}
Let $F_N=F(A)$, where $N\ge 2$ and $A=\{a_1,\dots, a_N\}$. 
Let $w$ be a nontrivial cyclically reduced word  in $F(A)$ such that
for every freely reduced word $v\in F(A)$ with $|v|=3$ the word $v$
occurs as a subword of a cyclic permutation of $w$ or of $w^{-1}$.

Then $w$ is filling in $F_N$ (and, in particular, $w$ is non-simple
and non-primitive in $F_N$).
\end{prop}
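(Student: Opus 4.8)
The plan is to deduce Proposition~\ref{p:ChM} from the level-$3$ analogue of Whitehead's/Stallings' cut-vertex criterion, which is the substance of the cited work \cite{ChM}. First I would recall that, just as the ordinary Whitehead graph $Wh_A(w)$ records $2$-letter subwords of the cyclic word $w$ and has a cut vertex whenever $w$ is simple (Proposition~\ref{p:cut}), the appropriate ``level $3$'' object --- the Rauzy/initial graph $\mathcal R_3(w)$ on the set of freely reduced $2$-letter words (equivalently, the line graph carrying all $3$-letter transitions of the biinfinite word determined by the cyclic word $w$) --- has an analogous connectivity obstruction: if $w$ is non-filling in $F_N$, then $\mathcal R_3(w)$ must have a cut vertex (or, in the formulation actually used in \cite{ChM}, fails to be ``$3$-connected'' in the relevant combinatorial sense). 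This is the one external input; everything else is a purely combinatorial verification.

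The key step is then to show that the full-coverage hypothesis --- that every freely reduced word $v$ of length $3$ appears as a subword of a cyclic permutation of $w$ or of $w^{-1}$ --- forces $\mathcal R_3(w)$ to be so highly connected that it cannot have a cut vertex, and hence $w$ cannot be non-filling. Concretely, I would argue as follows. The vertices of $\mathcal R_3(w)$ are the freely reduced $2$-letter words over $A^{\pm 1}$, and there is an edge between $uv'$-type and $v'v''$-type vertices precisely when the corresponding $3$-letter word $uv'v''$ (freely reduced) occurs in the cyclic word; the hypothesis says \emph{every} freely reduced length-$3$ word occurs, hence \emph{every} admissible edge is present, i.e. $\mathcal R_3(w)$ is the \emph{complete} level-$3$ transition graph of the free group. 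One then checks directly that this universal graph has no cut vertex: removing any single vertex $xy$ (a freely reduced $2$-word), any two remaining vertices can be joined by a path, since from any vertex $ab$ one can move to $bc$ for any admissible $c$, and with $N \ge 2$ letters there is always enough room to route around the deleted vertex (e.g. pick a letter $d \ne x^{-1}$ and $d \ne$ the relevant forbidden letters, and step $ab \to bd \to \dots$; a short case analysis on whether the endpoints share letters with $xy$ completes it). Since $w$ non-filling would force a cut vertex in $\mathcal R_3(w)$ by \cite{ChM}, and none exists, $w$ must be filling; being filling implies non-simple and non-primitive by Remark~\ref{r:ps} (contrapositive), giving the parenthetical conclusions.

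Alternatively, and perhaps more honestly, the proof may simply cite \cite{ChM} directly: the statement of Proposition~\ref{p:ChM} is essentially a special case of a theorem there characterizing filling elements (or filling currents) in terms of the supports of their $k$-letter statistics, and the ``every length-$3$ word occurs'' hypothesis is exactly the condition that places $w$'s current in the interior of a simplex whose projective class is filling. In that case the ``proof'' is a one-paragraph translation: restate the relevant theorem of \cite{ChM} in the language of reduced words, observe that full coverage of length-$3$ words is the hypothesis of that theorem, and invoke Remark~\ref{r:ps} for the non-simple/non-primitive addendum.

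The main obstacle I expect is bookkeeping rather than conceptual: making the level-$3$ Whitehead/Rauzy graph formalism precise enough to quote cleanly, while honoring the stated intent of \emph{not} formally defining it in this paper. In practice the cleanest route is to keep the level-$3$ graph implicit and lean entirely on the black-box statement from \cite{ChM}, so that the only real content here is (i) correctly matching our ``every reduced length-$3$ word occurs'' hypothesis to the hypothesis of the cited theorem, and (ii) the trivial observation via Remark~\ref{r:ps} that filling $\Rightarrow$ non-simple $\Rightarrow$ non-primitive. If instead one wants a self-contained connectivity argument, the minor subtlety is handling the free-reduction constraints (no $c c^{-1}$) in the edge set of the universal level-$3$ graph, which slightly complicates the ``no cut vertex'' check but never obstructs it for $N \ge 2$.
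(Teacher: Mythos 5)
The paper gives no proof of this proposition; it is recorded with a citation to \cite{ChM} only, after explicitly declining to formalize the level-$k$ Whitehead/Rauzy graphs. Your second, ``more honest'' alternative --- translate the relevant theorem of \cite{ChM}, match hypotheses, and invoke the filling $\Rightarrow$ non-simple $\Rightarrow$ non-primitive chain via Remark~\ref{r:ps} --- is therefore exactly what the paper does, so there is no in-paper argument to compare your sketch against.

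One caution on your first, self-contained sketch: the black box you would need, ``non-filling $\Rightarrow$ cut vertex in the level-$3$ graph,'' is strictly stronger than the Stallings/Whitehead statement reproduced in Proposition~\ref{p:cut}, which relates cut vertices of the ordinary Whitehead graph to \emph{simplicity}, not to non-filling. Indeed the analogous implication already fails at level $2$: the commutator $[a,b]\in F_2$ is non-filling (it lies, up to conjugacy, in a vertex group of the HNN splitting of the once-punctured-torus group along a non-separating simple closed curve), yet $Wh_A([a,b])$ is the $4$-cycle $a$--$b$--$a^{-1}$--$b^{-1}$--$a$ with no cut vertex. This does not contradict the proposition, since $[a,b]$ does not satisfy the full-coverage hypothesis on length-$3$ subwords, but it does mean the exact form of the level-$3$ criterion cannot safely be extrapolated from Proposition~\ref{p:cut} and must genuinely be taken from \cite{ChM}. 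The paper, and your recommended ``cleanest route,'' both sidestep this by citing.
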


\section{Primitivity, Simplicity, and Non-Filling Index Functions}

In 1949 Marshall Hall Jr. proved in~\cite{MH} that any finitely generated subgroup of a free group $F_N$ is a free factor of a finite index subgroup of $F_N$. We state the result in a more precise form, as stated in~\cite{St1}:

\begin{prop}\label{p:MH}\cite{St1}
Let $\alpha_1, \ldots, \alpha_k, \beta_1, \ldots, \beta_l$ be elements of a free group $F_N$. Let $S$ be the subgroup of $F_N$ generated by $\{\alpha_1, \ldots, \alpha_k\}$. Suppose $\beta_i \notin S$ for $i=1, \ldots, l$. Then there exists a subgroup $S'$ of finite index in $F_N$, such that $S\subset S'$, $\beta_i \notin S'$ for $i=1, \ldots, l$, and there exists a free basis of $S'$ having a subset that is a free basis of $S$.
\end{prop}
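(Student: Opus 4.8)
\emph{Proof proposal.} The plan is to give the standard proof via Stallings subgroup graphs, using the fixed free basis $A$ of $F_N=F(A)$. First I would take the Stallings subgroup graph $(\Gamma,x_0)$ of $S=\langle\alpha_1,\dots,\alpha_k\rangle$ with respect to $A$, furnished by Proposition-Definition~\ref{pd:grep}: it is a finite, connected, folded $A$-graph with $\Gamma=Core(\Gamma,x_0)$ and $S=\mu_{\#}(\pi_1(\Gamma,x_0))$. Since $\Gamma$ is folded, at each vertex there is at most one edge with a prescribed label and orientation, so a freely reduced word can be read from a vertex in at most one way, and any path so obtained is reduced; conversely the label of every reduced path in $\Gamma$ is freely reduced. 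Hence a nontrivial $g\in F(A)$ lies in $S$ if and only if its freely reduced form $\underline g$ reads a path from $x_0$ that is entirely defined and returns to $x_0$. Thus the hypothesis $\beta_i\notin S$ says that for each $i$ the word $\underline{\beta_i}$, read from $x_0$ in $\Gamma$, either cannot be read to the end, or is read to the end and stops at some vertex $\ne x_0$.

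Next I would enlarge $\Gamma$ to a finite folded $A$-graph $\Gamma_1\supseteq\Gamma$, based at the same $x_0$ and still representing $S$, in which for every $i$ the word $\underline{\beta_i}$ reads a (reduced) path from $x_0$ terminating at a vertex $w_i\ne x_0$. I process $\beta_1,\dots,\beta_l$ one at a time: in the current graph, read $\underline{\beta_i}$ from $x_0$ as far as possible; if it is read completely and stops away from $x_0$, do nothing; if it stalls at a vertex $v$ with unread suffix $s$, attach at $v$ a new linear segment of fresh vertices and edges spelling $s$. This keeps the graph folded, because stalling at $v$ means no edge labeled by the first letter of $s$ (with the right orientation) was present at $v$, while all later new edges meet only new vertices; and it does not change the represented subgroup, because each new endpoint has degree one, so no reduced closed path based at $x_0$ can ever enter one of the attached forests. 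Since there are finitely many $\beta_i$, each contributing a segment of length at most $|\underline{\beta_i}|$, the graph $\Gamma_1$ is finite.

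I would then complete $\Gamma_1$ to a finite, connected, $A$-regular (hence folded) graph $\hat\Gamma\supseteq\Gamma_1$ on the same vertex set $V\Gamma_1$: for each $a\in A$, the partial injection on the finite set $V\Gamma_1$ given by following the outgoing $a$-edge extends to a permutation of $V\Gamma_1$, and one adds the corresponding $a$-labeled edges. Set $S':=\mu_{\#}(\pi_1(\hat\Gamma,x_0))$. A finite connected $A$-regular based graph is a Schreier coset graph, so $[F_N:S']$ equals the finite number of vertices of $\hat\Gamma$. Since $\Gamma\subseteq\hat\Gamma$ and a path being reduced does not depend on the ambient graph, every reduced closed path at $x_0$ in $\Gamma$ is still one in $\hat\Gamma$, whence $S\le S'$. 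Also $\beta_i\notin S'$ for each $i$: in the folded graph $\hat\Gamma$, reading $\underline{\beta_i}$ from $x_0$ traverses exactly the same edges as in $\Gamma_1$ (at each vertex the unique edge with the required label is the one already present in $\Gamma_1$), so the path ends at $w_i\ne x_0$, and therefore $\underline{\beta_i}$ labels no closed path at $x_0$ in $\hat\Gamma$.

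For the free-factor conclusion I would choose a maximal tree $T$ of $\Gamma$, extend it by the added pendant segments to a maximal tree $T_1$ of $\Gamma_1$, and extend $T_1$ by completion edges to a maximal tree $\hat T$ of $\hat\Gamma$, so that $\hat T\cap E\Gamma=T$. Applying Proposition-Definition~\ref{d:dual} in $\hat\Gamma$, for an edge $e\in E_+\Gamma-T$ the tree-paths $[x_0,o(e)]_{\hat T}$ and $[t(e),x_0]_{\hat T}$ stay inside $\Gamma$ and coincide with $[x_0,o(e)]_T$ and $[t(e),x_0]_T$, by uniqueness of reduced tree-paths and $T\subseteq\hat T$; hence the dual basis $S_T$ of $\pi_1(\Gamma,x_0)$ is literally a subset of the dual basis $S_{\hat T}$ of $\pi_1(\hat\Gamma,x_0)$. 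Since $\mu$ restricts to an isomorphism of $\pi_1$ of a connected folded $A$-graph onto the subgroup it represents, $\mu(S_T)$ is a free basis of $S$ contained in the free basis $\mu(S_{\hat T})$ of $S'$, so $S$ is a free factor of $S'$ and some free basis of $S'$ contains a free basis of $S$, as required. The step I expect to be the real obstacle is guaranteeing $\beta_i\notin S'$: a naive completion of $\Gamma$ could manufacture a closed path at $x_0$ spelling some $\beta_i$, and the device of first enlarging $\Gamma$ to $\Gamma_1$ so that each $\underline{\beta_i}$ is read all the way to a vertex $\ne x_0$ is precisely what makes that read robust under the subsequent completion; the purely combinatorial completion step (extending a partial injection of a finite set to a permutation) is routine.
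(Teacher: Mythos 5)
Your proof is correct and is the standard Stallings-folding argument, essentially the one in the cited source \cite{St1}; the paper itself does not reprove Proposition~\ref{p:MH} but uses exactly this machinery (Stallings subgroup graphs, completion to an $A$-regular cover as in Lemma~\ref{l:co}, and the ``subgraph gives free factor'' fact as in Lemma~\ref{l:ff} and Proposition~\ref{p:comp}) throughout. All the steps check out: attaching hanging paths keeps the graph folded and does not change the represented subgroup; the partial $a$-injection on the finite vertex set extends to a permutation, giving a connected $A$-regular completion on the same vertices; foldedness of $\hat\Gamma$ forces the unique $\underline{\beta_i}$-path from $x_0$ to coincide with the one in $\Gamma_1$ and hence end at $w_i\neq x_0$; and since the spanning tree of $\Gamma_1$ (which equals that of $\hat\Gamma$, the vertex sets being identical) restricts to the chosen spanning tree of $\Gamma$, the dual basis of $\pi_1(\Gamma,x_0)$ sits inside the dual basis of $\pi_1(\hat\Gamma,x_0)$, giving the free-factor conclusion.
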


If we pick $g \neq 1 \in F_N$ and apply the above result to the
infinite cyclic subgroup $S=\langle g\rangle$, we get that there must
exist a finite index subgroup $S'$ of $F_N$ such that $g$ is a
primitive element in $S'$ (and hence $g\in S'$ is non-simple and
non-filling in $S'$).

This fact motivates the following definition:

\begin{df}\label{d:pif}[Primitivity, simplicity and non-filling
  indexes] 

Let $N\ge 2$ be an integer and let $F_N$ be a free group of rank $N$.
Let $1\ne g\in F_N$.  

Define the \emph{primitivity index} $\dpr(g)= \dpr(g, F_N)$ of $g$ in $F_N$ to be the
smallest possible index for a subgroup $L\le F_N$ containing $g$ as a
primitive element.

Define the \emph{simplicity index} $\dsi(g)= \dsi(g, F_N)$ to be the
smallest possible index for a subgroup $L\le F_N$ containing $g$ as a
simple element.

Finally, define the  \emph{non-filling index} $\dfi(g)= \dfi(g, F_N)$ to be the
smallest possible index for a subgroup $L\le F_N$ containing $g$ as a
non-filling element.

\end{df}

As noted above, Proposition~\ref{p:MH} implies that for every
nontrivial $g\in F_N$ we have $\dfi(g)\le \dsi(g)\le \dpr(g)<\infty$.

\begin{df}[Primitivity, simplicity and non-filling index functions]
Let $F_N$ be a free group of rank $N\ge 2$ and let $A$ be a free basis of $F_N$.
For any $n \geq 1$ define the \emph{primitivity index function} for $F_N$ as  
\[
\fp(n)=\fp(n;F_N):= \max_{\substack {1 \leq |g|_A \leq n, \,  g \neq 1 \\
                                                 g \, \text{not a proper power in } F_N}} \dpr(g)
\]

Similarly, for $n\ge 1$ 
 define the \emph{simplicity index function} for $F_N$ as  
\[
\fs(n)=\fs(n;F_N):= \max_{\substack {1 \leq |g|_A \leq n, \,  g \neq 1 \\
                                                 g \, \text{not a proper power in } F_N}} \dsi(g)
\]                     

Finally, for for $n\ge 1$ 
 define the \emph{non-filling index function} for $F_N$ as  
\[
\ff(n)=\ff(n;F_N):= \max_{\substack {1 \leq |g|_A \leq n, \,  g \neq 1 \\
                                                 g \, \text{not a proper power in } F_N}} \dfi(g)
\]                  																								
\end{df}
It is easy to see that the definitions of $\fp(n;F_N)$, $\fs(n;F_N)$
and $\ff(n;F_N)$ do not depend on the choice of a free basis $A$ of $F_N$.
Note that $\fp(n)$ is the smallest monotone non-decreasing function
such that for every non-trivial root-free $g \in F_N$ we have $\dpr(g)
\leq \fp(|g|_A)$; similar reformulations hold for $\fs(n)$ and $\ff(n)$.  We recall the following well-known fact, which is Lemma~8.10 in~\cite{KM}:

\begin{lem}\label{l:co}
Let $\Gamma$ be a finite folded $A$-graph. Then there exists a finite folded $A$-regular graph $\Gamma'$ such that $\Gamma$ is a subgraph of $\Gamma'$ and such that $V\Gamma = V\Gamma'$.
\end{lem}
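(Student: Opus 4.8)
The plan is to complete $\Gamma$ one letter at a time, using the elementary fact that a partial injection of a finite set extends to a permutation, and exploiting that an $A$-regular graph is automatically folded.

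Fix $a_i\in A$. Since $\Gamma$ is folded, every vertex of $\Gamma$ is the initial vertex of at most one edge labeled $a_i$ and the terminal vertex of at most one edge labeled $a_i$. Hence the positively oriented $a_i$-edges of $\Gamma$ define a partial injective map $\sigma_i$ on the finite set $V\Gamma$, with $\sigma_i(o(e))=t(e)$ for each such edge $e$; its domain is the set of vertices having an outgoing $a_i$-edge and its image is the set of vertices having an incoming $a_i$-edge. The next step is to observe that any partial injection $\sigma_i$ of a finite set $V$ extends to a bijection $\bar\sigma_i\colon V\to V$. Indeed, the functional digraph of $\sigma_i$ is a disjoint union of directed simple paths and directed cycles; each maximal directed path has a first vertex (not in the image of $\sigma_i$) and a last vertex (not in the domain of $\sigma_i$), where a vertex lying outside both the domain and the image counts as a path of length zero. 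Listing the maximal paths as $P_1,\dots,P_r$ with first vertex $u_j$ and last vertex $v_j$, the assignment $v_j\mapsto u_{j+1}$ (indices taken mod $r$), together with $\sigma_i$ on all remaining vertices, is a bijection of $V$ extending $\sigma_i$; if $r=0$ there is nothing to add.

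Now, for each pair $(v,\bar\sigma_i(v))$ that is not already realized by an $a_i$-labeled edge of $\Gamma$ (equivalently, each pair in the graph of $\bar\sigma_i$ but not of $\sigma_i$), adjoin to $\Gamma$ a new edge $e$ with $\mu(e)=a_i$, $o(e)=v$, $t(e)=\bar\sigma_i(v)$, together with $\bar e$ labeled $a_i^{-1}$. No vertices are created, and since $\bar\sigma_i$ is a bijection each vertex ends up with exactly one outgoing and exactly one incoming $a_i$-edge. Carrying this out successively for every $a_i\in A$ — note that adjoining $a_i$-labeled edges does not change the partial injection $\sigma_j$ associated to any $a_j$ with $j\ne i$, since edges carry fixed labels — produces a finite $A$-graph $\Gamma'$ with $V\Gamma'=V\Gamma$ and $\Gamma\subseteq\Gamma'$ in which, at every vertex and for every $a_i$, there is precisely one outgoing and one incoming edge labeled $a_i$. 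Thus $\Gamma'$ is $A$-regular, and, as noted right after Definition~\ref{d:A-graph}, an $A$-regular graph is folded. This proves the lemma.

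The only point requiring any care is the extension of a partial injection of a finite set to a full permutation, which is the classical ``close up the paths into cycles'' argument sketched above; the remaining content is bookkeeping to ensure that no new vertices are introduced and that the completions carried out for distinct letters do not interfere with one another.
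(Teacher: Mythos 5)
Your proof is correct. The paper itself gives no proof here, merely citing Lemma~8.10 of Kapovich--Myasnikov, and your argument---viewing each $a_i$ as inducing a partial injection on $V\Gamma$ by foldedness, extending each partial injection to a permutation via the ``close up the paths'' argument, adding the corresponding edges letter by letter, and noting that $A$-regularity implies foldedness---is exactly the standard proof from that reference.
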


\begin{prop}\label{p:comp}
For every non-trivial cyclically reduced word $w \in F(A)$ of length
$n$, there exists a finite index subgroup $H \leq F(A)$ of index $n$
such that $w \in H$ is primitive in $H$.   
\end{prop}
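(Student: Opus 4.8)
The plan is to construct the finite index subgroup $H$ explicitly using Stallings subgroup graphs, exploiting the fact that $w$ is a cyclically reduced word. First I would let $w = x_1 x_2 \cdots x_n$ with each $x_i \in \Upsilon = A \sqcup A^{-1}$ and $\underline{w} = w$, and consider the edge-path reading $w$ as a cycle: form a graph $\Gamma_0$ that is a single oriented cycle with $n$ vertices $v_0, v_1, \ldots, v_{n-1}$ (with $v_n = v_0$), where the edge from $v_{i-1}$ to $v_i$ is labeled $x_i$. Since $w$ is cyclically reduced, the label of every reduced path in this cycle is freely reduced, so $\Gamma_0$ is a folded $A$-graph, and $(\Gamma_0, v_0)$ represents the cyclic subgroup $\langle w \rangle \le F(A)$; moreover $\Gamma_0 = Core(\Gamma_0, v_0)$. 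The key point is that in $\Gamma_0$ the single ``missing'' edge relative to a spanning tree — here one can take the spanning tree $T$ to be the path $v_0, v_1, \ldots, v_{n-1}$ omitting the last edge $e$ from $v_{n-1}$ back to $v_0$ — gives the dual basis element $s_e$, whose $\mu$-image is exactly $w$ (or a cyclic permutation, hence a conjugate; arranging the labels so the spanning tree path spells $x_1\cdots x_{n-1}$ makes it literally $w$).

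Next I would apply Lemma~\ref{l:co}: since $\Gamma_0$ is a finite folded $A$-graph, there is a finite folded $A$-regular graph $\Gamma'$ with $\Gamma_0 \subseteq \Gamma'$ and $V\Gamma' = V\Gamma_0$, so $\Gamma'$ has exactly $n$ vertices. An $A$-regular graph on $n$ vertices represents a subgroup $H := \mu_\#(\pi_1(\Gamma', v_0)) \le F(A)$ of index $[F(A):H] = |V\Gamma'| = n$ (this is the standard correspondence between $A$-regular graphs and finite index subgroups via the coset action). Since $\Gamma_0 \subseteq \Gamma'$ and $\Gamma_0$ represents $\langle w\rangle$, we get $w \in H$.

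It remains to see that $w$ is primitive in $H$, and this is where the core of the argument lies: I would extend the spanning tree $T$ of $\Gamma_0$ (the path $v_0 \cdots v_{n-1}$) to a spanning tree $T'$ of $\Gamma'$; this is possible precisely because $T$ is already a tree and $\Gamma'$ is connected. Then by Proposition-Definition~\ref{d:dual} the dual basis $S_{T'} = \{ s_f : f \in E_+\Gamma' - T'\}$ is a free basis of $\pi_1(\Gamma', v_0)$, and since $\mu_\#: \pi_1(\Gamma', v_0) \to H$ is an isomorphism, $\mu(S_{T'})$ is a free basis of $H$. Now the edge $e$ from $v_{n-1}$ to $v_0$ (the unique edge of $\Gamma_0$ outside $T$) is not in $T'$, because $T' \cap \Gamma_0 = T$; hence $s_e \in S_{T'}$, and $\mu(s_e) = \mu([v_0, v_{n-1}]_{T'}\, e\, [v_0, v_0]_{T'}) = x_1 \cdots x_{n-1} x_n = w$ since $[v_0, v_{n-1}]_{T'}$ is the path in $T$ spelling $x_1\cdots x_{n-1}$ and $[v_0,v_0]_{T'}$ is trivial. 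Therefore $w$ belongs to the free basis $\mu(S_{T'})$ of $H$, i.e. $w$ is primitive in $H$, completing the proof.

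The main obstacle — really the only subtle point — is bookkeeping: ensuring that the spanning tree of $\Gamma_0$ can be chosen so that it spells an initial segment of $w$ and extends to a spanning tree of $\Gamma'$ whose intersection with $\Gamma_0$ is exactly that tree, so that the ``missing edge'' $e$ of $\Gamma_0$ survives as a missing edge of $\Gamma'$ and contributes $w$ itself (not merely a conjugate of $w$) to the dual basis. Since $w$ primitive in $H$ only needs membership in \emph{some} free basis, even a conjugate would suffice after noting primitivity is conjugation-invariant, but the direct choice above avoids that step entirely.
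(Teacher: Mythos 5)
Your proposal is correct and follows essentially the same route as the paper's proof: build the labeled cycle $\Gamma_0$ for $w$, use Lemma~\ref{l:co} to complete it to an $A$-regular graph $\Gamma'$ on the same $n$ vertices (hence index $n$), and read off $w$ as a dual-basis element from the spanning tree given by all but one edge of the cycle. One small simplification you could make: since $V\Gamma' = V\Gamma_0$, your tree $T$ already spans $\Gamma'$, so $T' = T$ and no "extension" step or argument that $T' \cap \Gamma_0 = T$ is actually needed.
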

\begin{proof}
Take the word $w$ of length $n$ and write it on a circle of simplicial length $n$. Pick a vertex $x$ as the base vertex. Call this graph $(\Gamma_w, x)$. By Lemma \ref{l:co} we can complete this graph to a finite cover $(\Gamma_w',x)$ of the $N$-rose without adding any extra vertices. Thus $(\Gamma_w',x)$ has $n$ vertices and represents a subgroup $H$ of $F_N$ of index precisely $n$. The fact that $w$ is realized as the label of a simple closed curve in $(\Gamma_w',x)$ implies that $w$ is a primitive element in $H$. It is clear that $w \in H$ by definition of $H$. Note that since $(\Gamma',x)$ has no extra vertices, a maximal tree $T$ of $(\Gamma, x)$ consists of all but one edge of the simple closed curve representing $w$. Let $e \in E_+\Gamma' - T$. Then $\mu(s_e)=w$ and hence $w$ is primitive. See Figure \ref{Comp} for a pictorial proof.

\begin{figure}
\centering
\includegraphics[trim=0cm 4cm 0cm 8cm, clip=true, totalheight=0.4\textheight, angle=0]{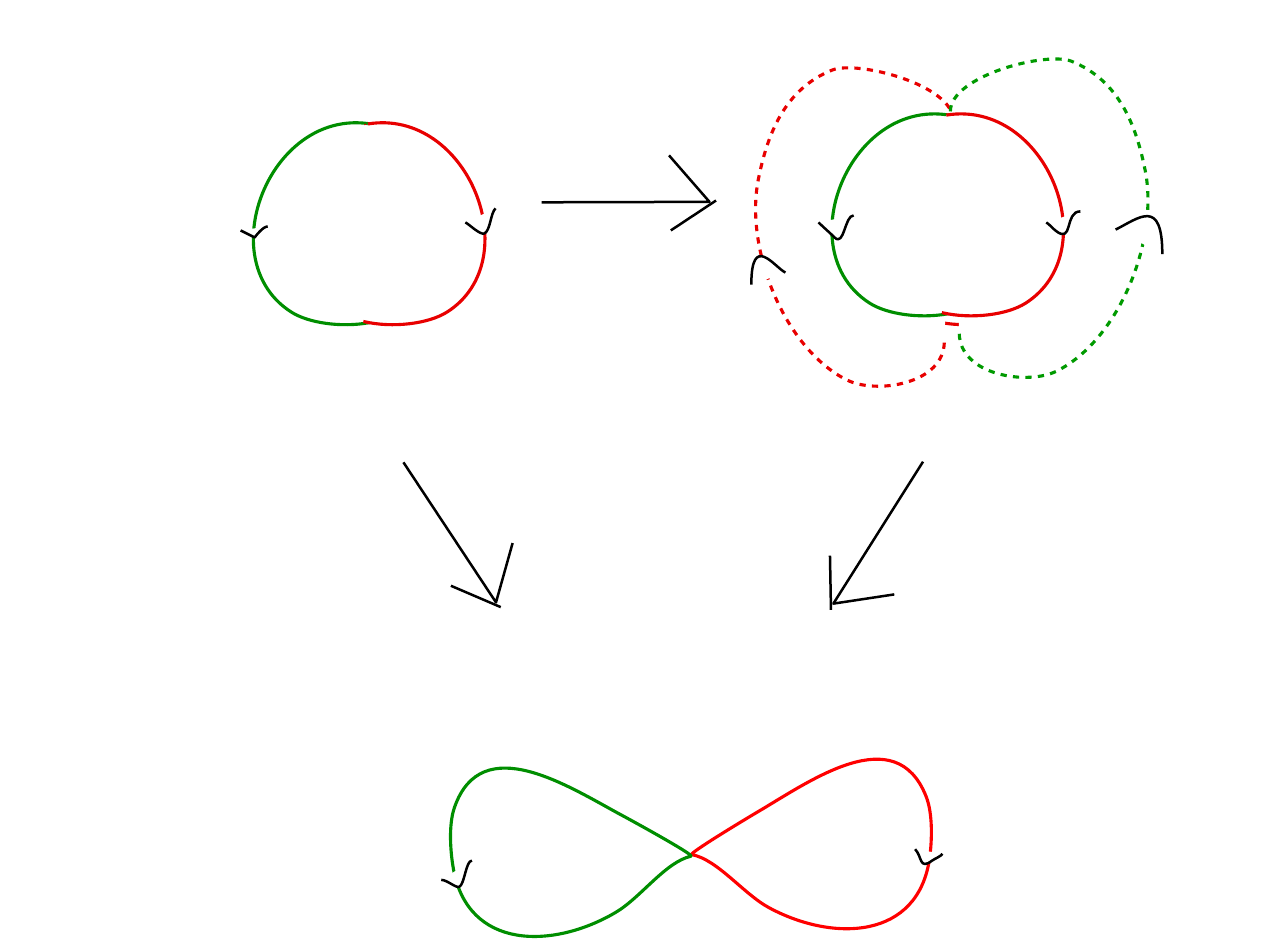}
\caption{Proof by Picture for Proposition \ref{p:comp}\label{Comp}}
\end{figure}

\end{proof}

Proposition \ref{p:comp}, together with the definitions, directly implies:
\begin{lem}\label{l:aut}
Let $N\ge 2$ and let $F_N$ be free of rank $N$. Then the following
hold:

\begin{enumerate}
\item If $1\ne g\in F_N=F(A)$ then 
\[
\dfi(g)\le \dsi(g)\le \dpr(g) \leq ||g||_A \leq |g|_A = n.\]
\item For every $n\ge 1$ we have  
\[
\ff(n)\le \fs(n)\le \fp(n) \leq n.
\] 
\item Let $1\ne g \in F_N$ and let $\alpha \in$ Aut$(F_N)$. Then
  $\dpr(g)=\dpr(\alpha(g))$, $\dsi(g)= \dsi(\alpha(g))$ and $\dfi(g)=
  \dfi(\alpha(g))$.
\item If $1\ne g\in F_N$ and $k\ge 1$ is an integer, then
  $\dsi(g^k)\le \dsi(g)$ and $\dfi(g^k)\le \dfi(g)$.
\end{enumerate}

\end{lem}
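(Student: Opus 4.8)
The plan is to dispatch the four clauses one at a time, each resting on a short observation about how primitivity, simplicity and non-filling interact with the operation in question. For clause~(1), the chain $\dfi(g)\le \dsi(g)\le \dpr(g)$ is immediate from Remark~\ref{r:ps}: a finite-index subgroup $L\le F_N$ in which $g$ is primitive is in particular one in which $g$ is simple, and one in which $g$ is simple is in particular one in which $g$ is non-filling, so the minimum index taken over the progressively larger families of admissible $L$ can only decrease. For the bound $\dpr(g)\le ||g||_A$ I would write $g=uwu^{-1}$ with $w$ the cyclically reduced form of $g$, so that $||g||_A=|w|=:m$; Proposition~\ref{p:comp} supplies $H\le F_N$ of index $m$ in which $w$ is primitive, and conjugation by $u$ is then an automorphism of $F_N$ carrying $H$ to a subgroup $uHu^{-1}$ of the same index $m$ and carrying a free basis of $H$ containing $w$ to a free basis of $uHu^{-1}$ containing $g$; hence $g$ is primitive in $uHu^{-1}$ and $\dpr(g)\le m=||g||_A$. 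The final inequality $||g||_A\le |g|_A$ is the standard fact that cyclic reduction never increases length.

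Clause~(2) then follows by taking maxima in clause~(1) over the common indexing family of nontrivial root-free words of length $\le n$: for each such $g$ one has $\dfi(g)\le \dsi(g)\le \dpr(g)\le |g|_A\le n$, and the three index functions inherit these inequalities. For clause~(3), the key point, strengthening the automorphism-invariance recorded in Remark~\ref{r:ps}, is that whether $g$ is primitive, simple, or non-filling in a group $L$ depends only on the isomorphism type of the pair $(L,g)$, since ``belonging to a free basis of $L$'', ``belonging to a proper free factor of $L$'', and ``lying in a vertex group of a splitting of $L$ of the kind allowed in Definition~\ref{df:nonfill}'' are all notions intrinsic to the abstract group $L$. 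Thus if $L$ has index $d$ in $F_N$ and $g$ is primitive (resp.\ simple, non-filling) in $L$, then for $\alpha\in \Aut(F_N)$ the subgroup $\alpha(L)$ again has index $d$, and transporting along the isomorphism $\alpha|_L\colon L\to \alpha(L)$ shows that $\alpha(g)$ is primitive (resp.\ simple, non-filling) in $\alpha(L)$; this yields $\dpr(\alpha(g))\le \dpr(g)$ and similarly for $\dsi$ and $\dfi$, and applying the same reasoning to $\alpha^{-1}$ gives equality.

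Finally, for clause~(4), let $L\le F_N$ be of finite index with $g\in L$ and $g$ simple in $L$, say $g$ lies in a proper free factor $M$ of $L$; since $M$ is a subgroup, $g^k\in M$ as well, so $g^k$ is simple in $L$, and minimizing $[F_N:L]$ over such $L$ gives $\dsi(g^k)\le \dsi(g)$. The non-filling case is identical: if $g$ lies in a vertex group of a splitting of $L$ as in Definition~\ref{df:nonfill}, then so does $g^k$, whence $\dfi(g^k)\le \dfi(g)$. This argument visibly fails for $\dpr$, since a proper power never belongs to a free basis, which is exactly why clause~(4) omits the primitivity index. None of the steps is difficult; the one point that repays a moment's care is the conjugation trick in clause~(1), where rather than trying to build a witnessing subgroup for $g$ directly one transports the Proposition~\ref{p:comp} witness for the cyclically reduced form of $g$ by an inner automorphism.
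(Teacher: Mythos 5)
Your proof is correct and follows the same direct route the paper intends (the paper records no explicit argument, stating only that the lemma "directly" follows from Proposition~\ref{p:comp} and the definitions). The one spot where the details genuinely need spelling out is the conjugation trick reducing $\dpr(g)\le\|g\|_A$ to the cyclically reduced case, and you handle it cleanly and without circular reliance on clause~(3).
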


In particular, part (3) of the above lemma shows that for $g_1, g_2$
conjugate non-trivial elements of $F_N$, we have $\dpr(g_1)=
\dpr(g_2)$, $\dsi(g_1)= \dsi(g_2)$ and $\dfi(g_1)= \dfi(g_2)$.

As noted above, if $1\ne g\in F_N$ and $k\ge 1$ is an integer, then
$\dsi(g^k)\le \dsi(g)$ and $\dfi(g^k)\le \dfi(g)$. However, the function $\dpr(g)$ does not behave well under taking powers, as demonstrated by the following lemma:

\begin{lem}\label{lem:powers}
For any $a_i \in \{a_1, \ldots, a_N \}$, and any positive integer $n$, $\dpr(a_i^n)=n$.
\end{lem}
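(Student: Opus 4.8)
\textbf{Proof plan for Lemma~\ref{lem:powers}.}
The upper bound $\dpr(a_i^n)\le n$ is immediate from Lemma~\ref{l:aut}(1), since $||a_i^n||_A=n$. So the real content is the lower bound $\dpr(a_i^n)\ge n$, i.e.\ that every finite index subgroup $H\le F_N$ in which $a_i^n$ is primitive must have index at least $n$. The plan is to fix such an $H$ and analyze how the cyclic subgroup $\langle a_i\rangle$ sits inside $H$. Let $d=[F_N:H]$. Consider the coset action of $F_N$ on $F_N/H$; the element $a_i$ acts as a permutation of the $d$ cosets, and the cycle of this permutation containing the coset $H$ has some length $k$. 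Standard Stallings-graph / Schreier-coset reasoning shows that $k$ is exactly the smallest positive integer with $a_i^k\in H$, and that $a_i^k$, read in the Stallings graph of $H$ with respect to $A$, is the label of an embedded cycle of combinatorial length $k$ (the $a_i$-labelled cycle through the basepoint). Since $a_i^n\in H$ we have $k\mid n$, say $n=km$; the element $a_i^n$ is then the $m$-th power of this embedded loop $\ell=a_i^k$ in the Stallings graph $\Gamma_H$ of $H$.

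The key step is then: if $a_i^n=\ell^m$ with $\ell$ a primitive-looking embedded loop but $m\ge 2$, then $a_i^n$ is a proper power in $H$, hence not primitive in $H$ (a proper power is never part of a free basis). Therefore primitivity of $a_i^n$ in $H$ forces $m=1$, i.e.\ $k=n$, i.e.\ $a_i$ acts with a cycle of length $n$ on $F_N/H$. A permutation of a $d$-element set having a cycle of length $n$ requires $d\ge n$, giving $[F_N:H]=d\ge n$ as desired. Combining with the upper bound yields $\dpr(a_i^n)=n$.

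The main obstacle — and the only place that needs genuine care — is the claim that the smallest $k>0$ with $a_i^k\in H$ equals the length of the $a_i$-cycle through the basepoint in $\Gamma_H$, and that $a_i^k$ is then \emph{root-free as an element of $H$} (so that $a_i^{km}$ with $m\ge 2$ is a genuine proper power in $H$, not merely in $F_N$). Both facts follow from the geometry of Stallings subgroup graphs: reading $a_i^k$ from the basepoint traces the unique $a_i$-labelled edge-path, which closes up exactly when it returns to the basepoint, i.e.\ after $k$ steps where $k$ is the cycle length; this closed path is an embedded loop, hence its label $a_i^k$ is a root-free element of $\pi_1(\Gamma_H,x_0)\cong H$ because any root of it would have to be conjugate into $\langle a_i\rangle$ and strictly shorter, contradicting minimality of $k$. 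One can alternatively phrase the whole argument purely in terms of the coset permutation: $a_i^n\in H$ is primitive in $H$ only if it is not a proper power in $H$, and $a_i^n=(a_i^k)^{n/k}$ exhibits it as the $(n/k)$-th power of $a_i^k\in H$, so $n/k=1$. I would present the coset-permutation version as the main line and invoke the Stallings graph only to justify root-freeness of $a_i^k$ in $H$; once that is in hand, the index bound $d\ge n$ is just the pigeonhole statement about cycle lengths of permutations.
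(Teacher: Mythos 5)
Your proof is correct and matches the paper's argument: both show that primitivity of $a_i^n$ in $H$ forces the $a_i$-labelled cycle through the basepoint of the Stallings (equivalently, Schreier coset) graph to have length exactly $n$, since otherwise $a_i^n$ would be a proper power in $H$, and a length-$n$ embedded $a_i$-cycle forces $[F_N:H]\ge n$. The root-freeness of $a_i^k$ in $H$ that you flag as the main obstacle is actually unnecessary: once $a_i^k\in H$ and $n=km$ with $m\ge 2$, the identity $a_i^n=(a_i^k)^m$ already exhibits $a_i^n$ as a proper power of an element of $H$, which rules out primitivity in $H$ regardless of whether $a_i^k$ itself admits a root.
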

\begin{proof}
As noted above, for every nontrivial $g\in F_N$ we have $\dsi(g)\le \dpr(g)\le ||g||_A$. Thus $\dpr(a_i^n)\le ||a_i^n||_A=n$.
We need to show that $\dpr(a_i^n)\ge n$.

Let $d=\dpr(a_i^n)$ and let  $H\le F_N$ be a subgroup of index $d$ such that $a_i^n\in H$ and that $a_i^n$ is a primitive element of $H$. Let $(\Gamma,\ast)$ be the $d$-fold cover of $R_N$ corresponding to $H$, so that for the covering map $p:\Gamma\to R_N$ have $\pi_1(\Gamma,\ast)\cong H$ and $p_\#=\mu:\pi_1(\Gamma,\ast)\to H\le F_N=\pi_1(R_N,x_0)$ is an isomorphism. 

The fact that $a_i^n\in H$ implies that there exists a reduced closed path $\gamma$ from $\ast$ to $\ast$ in $\Gamma$ with $\mu(\gamma)=a_i^n$.
Since $a_i^n$ is primitive in $H$, the element $\gamma$ is primitive in $\pi_1(\Gamma,\ast)$.

Since $a_i^n$ is cyclically reduced, the closed path $\gamma$ is also cyclically reduced. We claim that $\gamma$ is a simple closed path in $\Gamma$.
Indeed, suppose not. Then $\gamma=\gamma_1^k$ where $k\ge 2$ and where $\gamma_1$ is a simple closed path at $\ast$ in $\Gamma$ with label $a_i^{n/k}$. 
Therefore $\gamma$ is a proper power in $\pi_1(\Gamma,\ast)$, which contradicts the fact that $\gamma$ is primitive in $\pi_1(\Gamma,\ast)$.
Thus indeed $\gamma$ is a simple closed path in $\Gamma$ with label $a_i^n$. This means that the full $p$-preimage of the $i$-th petal of $R_N$, labeled $a_i$, in $\Gamma$ consists of $\ge n$ distinct topological edges.  Therefore the degree $d$ of the cover $p:\Gamma\to R_N$ satisfies $d\ge n$.

Thus $d=\dpr(a_i^n)\ge n$. Since we already know that $\dpr(a_i^n)\le n$, it follows that $\dpr(a_i^n)=n$, as required.
\end{proof}

Avoiding the bad behavior of $\dpr(g)$ under taking powers of $g$,
demonstrated by Lemma~\ref{lem:powers}, is the main reason why in
Definition~\ref{d:pif} we take the maximum over all root-free
nontrivial elements $g\in F_N$ with $|g|_A\le n$ rather than over all
nontrivial $g\in F_N$ with $|g|_A\le n$.

\section{Algorithmic computability of $\dpr(g)$, $\dsi(g)$, and $\dfi(g)$}\label{sect:alg}

In this section we will establish algorithmic computability of $\dpr(g)$, $\dsi(g)$, and $\dfi(g)$. Consequently, we will also establish the algorithmic computability of $\fp(n)$, $\fs(n)$, and $\ff(n)$.
\color{black}

We first need to recall some basic definitions and facts related to Whitehead automorphisms and Whitehead's algorithm. We only briefly cover this topic here and refer the reader for further details to~\cite[pp. 30-35]{LySch} and to \cite{MS,KSS,Kapovich,RVW} for some of the recent developments.  As before, $F_N=F(A)= F(a_1, \ldots, a_N)$ is the free group of rank $N\ge 2$ with a free basis $A=\{a_1,\dots, a_N\}$.

\begin{df}[Whitehead automorphisms]
A \emph{Whitehead automorphism} $\tau$ of $F_N=F(A)$ with respect to $A$ is an automorphism $\tau$ of $F(A)$ of one of the following types:
\begin{enumerate}
\item There exists a permutation $t$ of $\Upsilon= A\sqcup A^{-1}$ such that $\tau|_{\Upsilon}=t$. In this case $\tau$ is called a \emph{relabeling automorphism} or a \emph{Whitehead automorphism of the first kind}.
\item There exists an element $a\in \Upsilon$ which we call the \emph{multiplier} such that for any $x\in \Upsilon$, $\tau(x) \in \{x, xa, a^{-1}x, a^{-1}xa\}$. In this case $\tau$ is called a \emph{Whitehead automorphism of the second kind}. 
\end{enumerate}
\end{df}

Note that since $\tau \in Aut(F(A))$, if $\tau$ is a Whitehead automorphism of the second kind with multiplier $a$, then $\tau(a)=a$. Also for any $a \in \Upsilon$, the inner automorphism corresponding to conjugation by $a$ is a Whitehead automorphism of the second kind.  

\begin{df}[Automorphically minimal and Whitehead minimal elements]
An element $g\in F(A)=F_N$ is \emph{automorphically minimal} in $F(A)$ with respect to a basis $A$ of $F_N$ if, for every $\phi\in Aut(F(A))$ we have $||g||_A\le ||\phi(g)||_A$. 

An element $g\in F(A)$ is \emph{Whitehead minimal} in $F(A)$ with respect to a free basis $A$ if, for every Whitehead automorphism $\tau$ of $F(A)$ we have $||g||_A\le ||\tau(g)||_A$.
For an element $g\in F(A)$ we say that $\tilde g\in F(A)$ is a \emph{Whitehead minimal form} of $g$ with respect to $A$ if $\tilde g$ is Whitehead minimal with respect to $A$ and there exists an automorphism $\phi\in \Aut(F(A))$ such that $\phi(g)=\tilde g$. 

\end{df}

Note that neither Whitehead automorphisms of the first kind nor inner automorphisms change the cyclically reduced length of an element. 
  
  The following proposition summarizes the key known facts regarding Whitehead's algorithm (see~\cite{Wh} for the original proof by Whitehead and see \cite[Proposition 4.17]{LySch} for a modern exposition): 
  
\begin{prop}[Whitehead's Theorem] \label{p:wa} 
Let $N\ge 2$ and let $F_N=F(A)$ be free of rank $N$ with a free basis $A$. Then:
\begin{enumerate}
\item An element $g\in F(A)$ is automorphically minimal in $F(A)$ with respect to a basis $A$  if and only if $g$ is Whitehead minimal in $F(A)$ with respect to $A$. (Hence $g\in F(A)$ is not automorphically minimal with respect to $A$ if and only if there exists a Whitehead automorphism $\tau$ such that $||\tau(g)||_A<||g||_A$). 
\item Whenever $u,v\in F(A)$ are Whitehead minimal with respect to $A$ such that the orbits $Aut(F(A))u=Aut(F(A))v$ (so that, in particular, $||u||_A=||v||_A$), then there exists a sequence of Whitehead automorphisms $\tau_1,\dots, \tau_m$ of $F(A)$ with respect to $A$ such that $\tau_m...\tau_1(u)=v$ and that $||\tau_i...\tau_1(u)||_A=||u||_A$ for $i=1,...,m$.
\end{enumerate}
\end{prop}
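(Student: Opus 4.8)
The plan is to establish the two classical parts of Whitehead's Theorem by the standard \emph{peak-reduction} method, organized around the Whitehead graph $Wh_A(\cdot)$ (and, where the full link of two-letter subwords is needed, its multi-edge refinement). In part (1) one direction is immediate: a Whitehead automorphism is an automorphism, so an automorphically minimal $g$ is a fortiori Whitehead minimal. The content is the converse --- if some $\phi \in \Aut(F(A))$ has $||\phi(g)||_A < ||g||_A$, then already some single Whitehead automorphism $\tau$ has $||\tau(g)||_A < ||g||_A$. I would deduce this, together with part (2), from one technical lemma (the peak-reduction lemma), so that the real work is concentrated there.

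The computational engine is the exact length-change formula. For a cyclically reduced $w$ and a Whitehead automorphism $\tau$ of the second kind determined by a multiplier $a \in \Upsilon$ and an associated set $Z \subseteq \Upsilon$ (with $a \in Z$, $a^{-1} \notin Z$), one has an identity of the shape
\[
||\tau(w)||_A - ||w||_A = e_w\bigl(Z, \Upsilon \setminus Z\bigr) - \deg_{Wh_A(w)}(a),
\]
where $e_w(\cdot,\cdot)$ counts, with multiplicity, the edges of the Whitehead graph of $w$ crossing the partition $(Z, \Upsilon\setminus Z)$. Two consequences are drawn from this. First, minimizing $e_w(Z,\Upsilon\setminus Z)$ over all admissible $Z$ containing a fixed $a$ is a min-cut problem in a finite graph, and comparing that minimum to the vertex degrees either produces a length-reducing Whitehead automorphism or certifies that $w$ is Whitehead minimal; combined with peak reduction this yields part (1). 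Second, the same formula is what drives the local surgery inside the peak-reduction lemma.

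For part (2), take $u, v$ Whitehead minimal with $v = \phi(u)$. Since Whitehead automorphisms generate $\Aut(F(A))$ (Nielsen), write $\phi = \tau_m \cdots \tau_1$ and set $w_i = \tau_i\cdots\tau_1(u)$ with $w_0 = u$, $w_m = v$. After absorbing relabelings and inner automorphisms (which never change $||\cdot||_A$), examine the length profile $i \mapsto ||w_i||_A$; a \emph{peak} is an interior index $i$ with $||w_i||_A$ strictly larger than both neighbours. The peak-reduction lemma asserts that near any peak the pair $\tau_{i+1}\tau_i$ can be replaced by a product of Whitehead automorphisms taking $w_{i-1}$ to $w_{i+1}$ through words of length $\le \max(||w_{i-1}||_A, ||w_{i+1}||_A) < ||w_i||_A$. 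Iterating eliminates all peaks, leaving a ``valley-shaped'' profile (non-increasing, then non-decreasing); since both endpoints already realise the minimal length in the $\Aut$-orbit, the profile is forced to be constant $= ||u||_A = ||v||_A$, which is exactly (2).

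The main obstacle is the peak-reduction lemma itself: for a peak formed by $\tau_i = (Z,a)$ and $\tau_{i+1} = (Z', a')$ one must split into cases according to the mutual positions of $a, a^{-1}, a', a'^{-1}$ and of $Z, Z'$ (the subcases $a' = a$, $a' = a^{-1}$, and $a' \notin \{a^{\pm1}\}$ with $Z, Z'$ nested, crossing, or disjoint), and in each case exhibit the replacement path and verify the length bound using the formula above applied to the Whitehead graph of $w_i$. This is precisely where Whitehead's original proof invokes the topology of embedded $2$-spheres in $\#_N(S^1\times S^2)$, and where Higgins--Lyndon later gave a purely combinatorial substitute. An equally viable alternative I would be happy to present in place of the above is the fully topological proof: realize conjugacy classes in $F_N$ as free homotopy classes of loops in $\#_N(S^1\times S^2)$, realize Whitehead automorphisms as surgeries along embedded spheres, and obtain both (1) and (2) from an innermost-disc normal-form argument for sphere systems (Stallings; Hatcher; Hatcher--Vogtmann). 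Either way, the moral is the same: any local increase of cyclically reduced length is detected by the Whitehead graph and can be routed around.
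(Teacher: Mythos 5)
The paper does not prove Proposition~\ref{p:wa}; it is stated as a classical result with citations to Whitehead's original paper and to \cite[Proposition 4.17]{LySch}. Your sketch follows precisely the Higgins--Lyndon peak-reduction argument that underlies the Lyndon--Schupp exposition (the cited source), so methodologically you are aligned with the reference the paper points to, and the overall architecture (trivial direction of (1); Nielsen/Whitehead generation of $\Aut(F_N)$; the length-change formula for a second-kind Whitehead automorphism; peak reduction to flatten the length profile; the valley argument for (2)) is sound.

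Two remarks on precision. First, your definition of a \emph{peak} --- ``an interior index $i$ with $||w_i||_A$ strictly larger than both neighbours'' --- is too restrictive to make part~(1) go through. If $g$ is Whitehead minimal but some $\phi$ shortens it, the reduced path may begin with a plateau ($||w_1||_A = ||w_0||_A$) before the first strict drop, and pure strict-peak elimination will never touch it. The usable form of the lemma must treat the weak peak $||w_{i-1}||_A \le ||w_i||_A$, $||w_{i+1}||_A < ||w_i||_A$ (and its mirror), and conclude that the replacement path passes \emph{strictly} below $||w_i||_A$; with that version one argues by lexicographic induction on $(\max_j ||w_j||_A,\ \#\{j : ||w_j||_A = \max\})$. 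Second, and more substantially, the peak-reduction lemma itself --- the case analysis over $a' \in \{a, a^{-1}\}$ or not, and over the positions of $Z$ and $Z'$, with the length estimate in each case extracted from the formula $||\tau(w)||_A - ||w||_A = e_w(Z,\Upsilon\setminus Z) - \deg_{Wh_A(w)}(a)$ --- is named but not carried out. That case analysis \emph{is} the proof of Whitehead's theorem; everything else in your write-up is bookkeeping around it. As a roadmap your proposal is accurate and well organized, but as it stands the load-bearing lemma is asserted rather than proved, and its statement needs the correction above before the induction in (1) closes.
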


Note that part (2) of Proposition~\ref{p:wa}  holds even if $u,v$ are conjugate in $F(A)$ since conjugation by an element of $A^{\pm 1}$ is a Whitehead automorphism. 

\subsection{Algorithmic computability of $\dpr(g)$ and $\dsi(g)$}
The following useful lemma explicitly states the relationship between primitivity, simplicity and Whitehead minimality:

\begin{lem}\label{l:mps} Let $1\ne w\in F(A)=F_N$. 
\begin{enumerate}
\item $w$ primitive in $F(A)$ if and only if every (equivalently, some) Whitehead minimal form $\widetilde{w}$ of $w$ has $||\widetilde{w}||_A= 1$. 
\item $w$ is simple in $F(A)$ if and only if some Whitehead minimal form $\widetilde{w}$ of $w$ misses an $a_{i}^{\pm 1}$.
\item $w$ is simple in $F(A)$ if and only if every Whitehead minimal cyclically reduced form $\widetilde{w}$ of $w$ misses an  $a_{i}^{\pm 1}$. 
\end{enumerate}

\end{lem}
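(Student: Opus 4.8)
\textbf{Proof proposal for Lemma~\ref{l:mps}.}

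The plan is to deduce all three statements from Whitehead's Theorem (Proposition~\ref{p:wa}) together with the elementary observation that primitivity and simplicity are invariant under automorphisms (Remark~\ref{r:ps}), and that an element is primitive (resp.\ simple) in $F(A)$ precisely when some automorphic image of it has cyclically reduced length $1$ (resp.\ misses a letter $a_i^{\pm1}$). First I would establish the ``baseline'' facts: if $\|g\|_A=1$ then the cyclically reduced form of $g$ is a single letter $a_i^{\pm1}$, which is obviously primitive, hence $g$ is primitive; and if the cyclically reduced form of $g$ omits some $a_i^{\pm1}$ then $g$ lies (up to conjugacy, hence after an inner automorphism) in the proper free factor generated by $A\smsm\{a_i\}$, so $g$ is simple. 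Conversely, $g$ primitive means $\phi(g)=a_1$ for some $\phi\in\Aut(F(A))$, so $\|\phi(g)\|_A=1$; and $g$ simple means $\phi(g)$ lies in a proper free factor of $F(A)$, which after composing with a further automorphism can be taken to be $\langle a_1,\dots,a_{N-1}\rangle$, so $\phi(g)$ misses $a_N^{\pm1}$. Thus: $g$ is primitive iff $g$ is $\Aut$-equivalent to a word of cyclically reduced length $1$, and $g$ is simple iff $g$ is $\Aut$-equivalent to a word missing some $a_i^{\pm1}$.

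For part (1): by Proposition~\ref{p:wa}(1), a Whitehead minimal form $\widetilde w$ of $w$ is automorphically minimal, so $\|\widetilde w\|_A$ is the minimum of $\|\phi(w)\|_A$ over all $\phi\in\Aut(F(A))$. If $w$ is primitive this minimum is $\le 1$, hence $=1$ (it cannot be $0$ since $w\ne1$), so \emph{every} Whitehead minimal form has length $1$. Conversely, if some Whitehead minimal form has length $1$, then $w$ is $\Aut$-equivalent to a single letter, hence primitive; and since all Whitehead minimal forms of $w$ have the same length $\|\widetilde w\|_A$ (again by Proposition~\ref{p:wa}(1), this is the common minimum), ``some'' and ``every'' coincide.

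For parts (2) and (3): if $w$ is simple, pick $\phi$ with $\phi(w)$ missing some $a_i^{\pm1}$; running Whitehead's algorithm on $\phi(w)$ only decreases or preserves cyclically reduced length and the end result $\widetilde w$ is a Whitehead minimal form of $w$ with $\|\widetilde w\|_A\le\|\phi(w)\|_A$. The content here is that one can arrange $\widetilde w$ to still miss a letter: since $\widetilde w$ is automorphically minimal, $\widetilde w$ is obtained from $\phi(w)$ by length-nonincreasing Whitehead moves, and a short direct argument (a length-nonincreasing Whitehead automorphism applied to a cyclically reduced word that misses $a_i^{\pm1}$ can be modified, after precomposing with a relabeling, to still yield a word missing some letter — alternatively, simplicity of $\widetilde w$ is automatic since simplicity is $\Aut$-invariant, and then apply part (3)'s forward direction). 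Conversely if some Whitehead minimal form misses a letter then $w$ is $\Aut$-equivalent to it and hence simple; this gives (2). For (3): the forward direction needs that \emph{every} Whitehead minimal cyclically reduced form misses a letter, which follows from Proposition~\ref{p:wa}(2): any two Whitehead minimal forms of $w$ are joined by a chain of length-preserving Whitehead automorphisms, and one checks that a length-preserving Whitehead automorphism of the second kind with multiplier $a$, applied to a cyclically reduced word $v$, produces a word still missing some letter (if $v$ misses $a_j^{\pm1}$ with $a_j\ne a^{\pm1}$ then the image still misses $a_j^{\pm1}$ unless $\tau$ multiplies by $a_j$, but $\tau$ has a single multiplier $a\ne a_j$; and if $v$ misses $a^{\pm1}$ itself then $\tau$ acts trivially since $a$ does not occur). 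The reverse direction of (3) is immediate from (2).

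The main obstacle I anticipate is the bookkeeping in part (3) — precisely verifying that the property ``misses some letter $a_i^{\pm1}$'' is preserved under a single length-preserving Whitehead automorphism of the second kind. This is a finite case analysis on how $\tau(x)\in\{x,xa,a^{-1}x,a^{-1}xa\}$ interacts with the subword occurrences in $v$, and it is the only place where one must do genuine combinatorial work rather than invoke Proposition~\ref{p:wa} as a black box; everything else is formal.
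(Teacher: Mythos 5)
Your high-level plan is the same as the paper's: reduce everything to Whitehead's Theorem (Proposition~\ref{p:wa}), use $\Aut$-invariance of primitivity/simplicity, and for part (3) walk along a chain of length-preserving Whitehead moves supplied by Proposition~\ref{p:wa}(2). Part (1) is fine as written.

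There is, however, a genuine gap in the forward direction of part (2). You acknowledge that the ``content'' is to produce \emph{some} Whitehead minimal form of $w$ that misses a letter, but neither of your two suggested routes actually closes this. Your second alternative (``simplicity of $\widetilde w$ is automatic since simplicity is $\Aut$-invariant, then apply part (3)'s forward direction'') is circular: in the paper, the forward direction of (3) is deduced \emph{from} (2), so you cannot invoke (3) to establish (2). Your first alternative (``a length-nonincreasing Whitehead automorphism applied to a cyclically reduced word that misses $a_i^{\pm 1}$ can be modified, after precomposing with a relabeling, to still yield a word missing some letter'') points in the wrong direction — no relabeling repair is available or needed. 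The missing idea is a descent argument: after a preliminary automorphism you may assume $\phi(w)$ misses $a_N^{\pm 1}$; if it is not yet Whitehead minimal, pick a Whitehead automorphism $\tau$ that strictly decreases $\|\cdot\|_A$, and observe that the multiplier of $\tau$ \emph{cannot} be $a_N^{\pm 1}$ (if it were, $\tau$ would introduce no cancellations among the letters $a_1^{\pm1},\dots,a_{N-1}^{\pm1}$ appearing in $\phi(w)$, so the cyclically reduced length could not drop). Hence $\tau$ restricts to a Whitehead automorphism of $F(a_1,\dots,a_{N-1})$, $\tau(\phi(w))$ still misses $a_N^{\pm1}$ and is strictly shorter, and one inducts on cyclically reduced length. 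This is precisely the paper's Claim 1, and it is the lemma's only nontrivial combinatorial ingredient; your proposal treats it as an afterthought.

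For part (3) your argument matches the paper's Claim 2 in structure, but the sentence ``if $v$ misses $a^{\pm1}$ itself then $\tau$ acts trivially since $a$ does not occur'' is false as a blanket statement about Whitehead automorphisms: $\tau$ may well send a letter $x$ of $v$ to $xa$. What is true (and what the paper uses) is conditional: \emph{because} $\tau$ preserves cyclically reduced length, either $\tau(v)$ equals $v$ after cyclic reduction, or the multiplier $a$ must occur in $v$ (hence $a\ne a_i^{\pm1}$); in either branch the image still misses $a_i^{\pm1}$. You should make the dependence on the length-preservation hypothesis explicit, and replace ``acts trivially'' with ``agrees with $v$ up to cyclic reduction.''

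So: right architecture, but the forward implication of (2) — the heart of the lemma — is not actually proved in your proposal, and the (3) case analysis needs to be stated conditionally on length preservation.
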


\begin{proof}

Part (1) of the lemma is well-known and follows directly from Proposition~\ref{p:wa}.

If some Whitehead minimal form $\widetilde{w}$ of $w$ misses an $a_{i}^{\pm 1}$, then $w$ is simple in $F(A)$ as $w \in F(B)$ where $B = A-\{a_i\}$ and $F(B)$ is a proper free factor of $F(A)$. 

Conversely, suppose that $w$ is simple in $F(A)$. Then there exists an automorphism $\phi$ of $F(A)$ such that the cyclically reduced form $\widehat{w}$ of $\phi(w)$ misses $a_N^{\pm 1}$.

{\bf Claim 1.} We claim that some Whitehead minimal form of $\widehat{w}$ also misses $a_N^{\pm 1}$.

We prove this claim by induction on $||\widehat{w}||_A$.
If $||\widehat{w}||_A=1$, then the claim clearly holds.  Suppose now that $||\widehat{w}||_A=m>1$ and that the claim has been established for all nontrivial cyclically reduced words in $F(a_1,\dots, a_{N-1})$ of length $\le m-1$. 

 If $\widehat{w}$ is already Whitehead minimal in $F(A)$ then we are done as the claim holds in this case. 
 
 If $\widehat{w}$ is not Whitehead minimal in $F(A)$ then there exists a Whitehead automorphism $\tau$ of $F(A)$ such that $||\tau(\widehat{w})||_A < ||\widehat{w}||_A$.   Note first that since the cyclically reduced length of $\widehat{w}$ changes under $\tau$, we must have that $\tau$ is a Whitehead automorphisms of the second kind that is not an inner automorphism.

 Let $a \in \Upsilon= A \sqcup A^{-1}$ be the multiplier of $\tau$. If $a=a_N^{\pm 1}$,  since $\widehat{w}$ is a cyclically reduced word in $F(A)$ that misses the letter $a_N^{\pm 1}$, the definition of a Whitehead automorphism implies that there can be no cancellation in $\tau(\widehat{w})$ between the letters $\{a_1, \ldots, a_{N-1}\}$ when a cyclically reduced form of $\tau(\widehat{w})$ is computed. Hence $||\tau(\widehat{w})||_A\geq ||\widehat{w}||_A$, contrary to the fact that $||\tau(\widehat{w})||_A < ||\widehat{w}||_A$. 
 Therefore $a \in \{a_1, \ldots, a_{N-1}\}^{\pm 1}$.  We then define a Whitehead automorphism $\tau'$ of $F(a_1,\dots, a_{N-1})$ with respect to $\{a_1,\dots, a_{N-1}\}$ as  $\tau' = \tau|_{\{a_1, \ldots, a_{N-1}\}}$.   Hence $\tau(\widehat{w})= \tau'(\widehat{w})$. Thus $\tau(\widehat{w})$ still misses $a_N^{\pm 1}$ and $||\tau(\widehat{w})||_A < ||\widehat{w}||_A=m$.  
 Applying the inductive hypothesis to $\tau(\widehat{w})$, we conclude that some Whitehead minimal form $\widetilde{w}$ of  $\tau(\widehat{w})$ in $F(A)$ misses $a_N^{\pm 1}$. Then $\widetilde{w}$  is also a Whitehead minimal form of $\hat w$, and Claim~1 is verified. 
 
Thus we have established part (2) of the lemma. 
 
To see that part (3) holds, note that if every Whitehead minimal cyclically reduced form $\widetilde{w}$ of $w$ misses an  $a_{i}^{\pm 1}$ then $w$ is simple in $F(A)$. 

Now suppose $w$ is simple in $F(A)$. From (2) we know that there is a $\widetilde{w}$ Whitehead minimal cyclically reduced form of $w$ that misses $a_N^{\pm 1}$. Let $w'$ be another Whitehead minimal cyclically reduced form of $w$ in $F(A)$. Then $Aut(F(A))w'=Aut(F(A))\widetilde{w}$, and so by part (2) of Proposition~\ref{p:wa}, there exists a sequence of Whitehead automorphisms $\tau_1,\dots, \tau_m$ of $F(A)$ with respect to $A$ such that $\tau_m...\tau_1(\widetilde{w})=w'$ and that $||\tau_i...\tau_1(\widetilde{w})||_A=||w'||_A$ for $i=1,...,m$. 

For $j=0,1,\dots, m$ denote $w_j=\tau_j...\tau_1(\widetilde{w})$, where $w_0=\widetilde{w}$.

{\bf Claim 2.} We claim that for each $j=0,\dots, m$ the cyclically reduced form of $w_j$ misses some $a_i^{\pm 1}$. 

We will establish Claim 2 by induction on $j$.

If $j=0$ then $w_0=w$ and there is nothing to prove. Suppose now that $j\ge 1$ and that the claim has been verified for $w_{j-1}$. 

Thus the cyclically reduced form of $w_{j-1}$ misses some $a_i^{\pm 1}$.  If $\tau_j$  is a Whitehead automorphism of the first kind, it is clear that the cyclically reduced form of $\tau_j(w_{j-1})=w_j$ still misses some $a_k^{\pm 1}$ (this $a_k^{\pm 1}$ is not necessarily $a_i^{\pm 1}$).  Suppose now that $\tau_j$  is a Whitehead automorphism of the second kind. 
The restriction that $||\tau_j(w_{j-1})||_A=||w_{j-1}||$ forces the condition that either $\tau_j(w_{j-1})$ is equal to $w_{j-1}$ after cyclic reduction, or else $\tau_j$ is a Whitehead automorphism of the second kind with multiplier $a \in B\sqcup B^{-1}$ where $B= \{x \in A \sqcup A^{-1}|x  \text{ occurs in the cyclically reduced form of} \, w_{j-1}\}$ (in particular $a \neq a_i^{\pm 1}$). 
In both cases we see that the cyclically reduced form of $w_j$ still misses $a_i^{\pm 1}$, as required. This completes the inductive step and the proof of Claim~2.

Applying Claim 2 with $j=m$ shows that the cyclically reduced form of $w'=w_m$ misses some $a_i^{\pm 1}$, and part (3) of the lemma is proved.
\end{proof}

\begin{prop}\label{p:H}  Let $1\ne g\in H\le F(A)$, where $H$ is a proper free factor of $F(A)$. Then the following hold:
\begin{enumerate}
\item The element $g$ is primitive in $H$ if and only if $g$ is primitive in $F_N$.
\item There is an algorithm which decides, given $g\in F(A)$,  whether or not $g\in F(A)$ is primitive.
\item There is an algorithm which given $g\in F(A)$,  whether or not $g\in F(A)$ is simple.
\end{enumerate}
\end{prop}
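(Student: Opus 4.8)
The plan is to establish the three parts in the order stated, since each relies on the previous. For part (1), write $F(A) = H \ast K$ with $H$ a proper free factor. If $g$ is primitive in $F_N$, then applying Lemma~\ref{l:mps}(2): $g$ being primitive in $F_N$ gives some Whitehead minimal form $\widetilde g$ with $\|\widetilde g\|_A = 1$, but I need to connect this to primitivity in $H$ rather than just missing a letter. The cleanest route is: a free basis $B$ of $H$ extends to a free basis $B \cup C$ of $F_N$ (with $C$ a basis of $K$); then $g \in H$ is primitive in $F_N$ iff $g$ belongs to a basis of $F_N$; one direction (primitive in $H$ $\Rightarrow$ primitive in $F_N$) is immediate since a basis of $H$ together with a basis of $K$ is a basis of $F_N$. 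For the converse, one uses the standard fact (a consequence of Whitehead's theorem, or of the fact that $H$ is a free factor) that if $g \in H$ and $g$ is primitive in $F_N$, then $g$ is already primitive in $H$: concretely, express $g$ in the basis $B \cup C$ — it involves only letters of $B$ — and run Whitehead's algorithm relative to $B \cup C$; by an induction argument entirely analogous to Claim~1 in the proof of Lemma~\ref{l:mps}, the Whitehead automorphisms reducing $\|g\|$ can be taken to be supported on $B$, so $g$ is Whitehead minimal-reducible within $F(B) = H$ down to length $1$, hence primitive in $H$ by Lemma~\ref{l:mps}(1).

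For part (2), decidability of primitivity in $F(A)$: run Whitehead's algorithm. Given $g$, cyclically reduce it, then repeatedly search (finitely many Whitehead automorphisms to try at each stage) for a Whitehead automorphism strictly decreasing $\|\cdot\|_A$; this terminates and produces a Whitehead minimal form $\widetilde g$. By Lemma~\ref{l:mps}(1), $g$ is primitive iff $\|\widetilde g\|_A = 1$, which is checkable. This is essentially just citing Proposition~\ref{p:wa} together with Lemma~\ref{l:mps}(1).

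For part (3), decidability of simplicity: again compute a Whitehead minimal cyclically reduced form $\widetilde g$ of $g$ by Whitehead's algorithm. By Lemma~\ref{l:mps}(3), $g$ is simple iff $\widetilde g$ misses some $a_i^{\pm 1}$ — equivalently, iff some letter of $\Upsilon$ does not occur in $\widetilde g$ — and this is a finite check on the word $\widetilde g$. (One subtlety worth a sentence: Lemma~\ref{l:mps}(3) says \emph{every} Whitehead minimal cyclically reduced form misses a letter when $g$ is simple, so it does not matter which one the algorithm finds.)

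The main obstacle is part (1): parts (2) and (3) are essentially immediate from Proposition~\ref{p:wa} and Lemma~\ref{l:mps}. The content of part (1) is the non-trivial direction, that primitivity in the ambient group descends to primitivity in a free factor containing the element. I expect to handle this by the same ``relative Whitehead'' induction used for Claim~1 in Lemma~\ref{l:mps}: choosing a basis adapted to the splitting $F_N = H \ast K$, showing a length-reducing Whitehead automorphism with a multiplier outside $B^{\pm 1}$ cannot create cancellation among $B$-letters, hence can be replaced by one supported on $B$, and iterating down to cyclically reduced length $1$.
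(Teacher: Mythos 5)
Your proposal is correct and follows essentially the same route as the paper: the non-trivial direction of part (1) is handled by extending a basis of $H$ to a basis of $F_N$, then running the induction from Claim~1 of Lemma~\ref{l:mps} to replace any length-reducing Whitehead automorphism of $F_N$ by one supported on the basis of $H$, and parts (2) and (3) are read off from Lemma~\ref{l:mps} after computing a Whitehead minimal form via Proposition~\ref{p:wa}. Your remark that part (3) needs the ``every'' direction of Lemma~\ref{l:mps}(3) (rather than just ``some'' from part (2)) is exactly the point the paper relies on, since the algorithm does not control which minimal representative it outputs.
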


\begin{proof}

We first prove part (1).  The ``only if" direction is obvious. Thus we assume that $g\in H$ is primitive in $F_N$.

Let $K\le F_N$ be such that $F_N= H \ast K$. Let $\mathcal{B}_H= \{ h_1, \ldots, h_l \}$ be a free basis for $H$, and $\mathcal{B}_K= \{k_1, \ldots, k_m\}$ be a free basis for $K$. Then $\mathcal{B}_F= \{h_1, \ldots, h_l, k_1, \ldots, k_m \}$ is a free basis for $F_N$ (here $l+m=n$). 

Since $g\in H$, then $g$ is a freely reduced word over $\mathcal B_H$, with cyclically reduced form $w$. 
We prove that $g$ is primitive in $H$ by induction on the length $m$ of $w$.

If $w$ has length $1$, then $g$ is primitive in $H$, as required. If $w$ has length $m>1$, then the fact that $w$ is primitive in $F_N$ implies that $w$ is not Whitehead minimal in $F_N$ with respect to the free basis $\mathcal{B}_F$ of $F_N$. Hence there exists a Whitehead automorphism $\tau$ of $F_N$ with respect to $\mathcal{B}_F$ such that $||\tau(w)||_{\mathcal{B}_F}<m$.  (Note that at this point we do not yet know that $\tau(w)\in H$ since $\tau$ is a Whitehead automorphism of $F_N$, and not of $H$).

By the same argument as in the proof of Lemma \ref{l:mps}, we see that there exists a Whitehead automorphism $\tau'$ of $H=F(\mathcal{B}_H)$ such that $\tau'(w)=\tau(w)$. Then $\tau(w)=\tau'(w)\in H$ is primitive in $F_N$ with $||\tau(w)||_{\mathcal{B}_F}<m$. Therefore by the inductive hypothesis the element $\tau(w)=\tau'(w)$ is primitive in $H$. Since $\tau'\in Aut(H)$, it follows that $w$ is also primitive in $H$, as required. Thus part (1) of the proposition holds.

To prove parts (2)  and (3) for $g \in F(A)= F(a_1, \ldots, F_N)$, let $\widetilde{g}$ be a Whitehead minimal form of $g$ in $F(A)$ (such $\widetilde{g}$ exists by Proposition~\ref{p:wa}. By part (1) of Lemma \ref{l:mps}, $||\widetilde{g}||_A= 1$ if and only if $g$ is primitive in $F(A)$.  By part (3) of Lemma \ref{l:mps}, $\widetilde{g}$ misses some $a_i^{\pm 1}$ if and only if $w$ is simple in $F(A)$. 
\end{proof}

\begin{rk}
The algorithm described in part (2) of Proposition~\ref{p:H} is due to Whitehead~\cite{Wh}.
The first algorithms for deciding whether an element of $F_N$ is simple in $F_N$ were provided by Stallings~\cite{St} and Stong~\cite{Stong}  in 1990s. Their algorithms are somewhat different from the algorithm given in part (3) of Proposition~\ref{p:H} above, but they are also based on using Whitehead's algorithm.
\end{rk}

\begin{df}[Principal quotient]\label{d:pq}
Following the terminology of \cite{KM}, for a finite connected $A$-graph $\Gamma_1$ and a folded $A$-graph $\Gamma_2$, we say that $\Gamma_2$ is a \emph{principal quotient} of $\Gamma_1$ if there exists a surjective $A$-morphism $\Gamma_1\to\Gamma_2$. 
\end{df}
\begin{df}\label{d:cw}
Let $w\in F_N=F(A)$ be a nontrivial cyclically reduced word. We denote by $C_w$ the $A$-graph which is a simplicial circle subdivided into $n=||w||_A$ topological edges, such that the label of the closed path of length $n$ corresponding to going around this circle once from some vertex $\ast$ to  $\ast$ is the word $w$.

By definition, the graph $C_w$ has a distinguished base-vertex $\ast$.  Thus a principal quotient of $C_w$ also come equipped with a distinguished base-vertex. We say that $(\Gamma,x)$ is a \emph{principal quotient} of $C_w$ if $\Gamma$ is a finite connected folded $A$-graph, if $x\in V\Gamma$ and if there exists a surjective $A$-morphism $f:C_w\to\Gamma$ such that $f(\ast)=x$.
\end{df}

Note that if $(\Gamma,x)$ is a principal quotient of $C_w$, then there exists a unique path $\gamma_{w,x}$ in $\Gamma$ starting with $x$ and with label $w$, and, moreover, this path is closed and passes through every topological edge of $\Gamma$.

The following lemma is an immediate corollary of the definitions:
\begin{lem}  The following hold:
\begin{enumerate}
\item  Let $\Gamma_1$ be a finite connected $A$-graph and $\Gamma_2$ be a finite folded $A$-graph. Then $\Gamma_2$ is a principal quotient of $\Gamma_1$ if and only if $\Gamma_2$ can be obtained from $\Gamma_1$ by the following procedure: choose some partition $V\Gamma_1=V_1\sqcup \dots \sqcup V_m$ (with all $V_i\ne \emptyset$), then for each $i=1,\dots, m$ collapse $V_i$ to a single vertex to get an $A$-graph $\Gamma_1'$, and then fold the graph $\Gamma_1'$ to obtain $\Gamma_2$.
 
\item  If $w\in  F_N=F(A)$ is a nontrivial cyclically reduced word and $\Gamma$ is a finite connected folded $A$-graph, then $\Gamma$ is a principal quotient of $C_w$ if and only if $\Gamma$ is a core graph and there exists a closed path $\gamma_w$ in $\Gamma$ with label $w$ such that $\gamma_w$ passes through every topological edge of $\Gamma$.
\end{enumerate}
\end{lem}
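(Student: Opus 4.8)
The plan is to verify both equivalences directly from the definitions — of principal quotient, of Stallings folding, and of the circle $C_w$; there is essentially nothing beyond bookkeeping, so I will just lay out the steps and flag the one spot that needs a moment's thought.

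For part (1), I would dispose of the ``if'' direction first, as it holds by construction: collapsing a vertex partition $V\Gamma_1=V_1\sqcup\dots\sqcup V_m$ defines a surjective $A$-morphism $\Gamma_1\to\Gamma_1'$ (a bijection on topological edges, carrying vertices to vertices and preserving labels), and the folding map $\Gamma_1'\to\Gamma_2=\mathrm{fold}(\Gamma_1')$ is again a surjective $A$-morphism onto a folded graph, so their composite witnesses $\Gamma_2$ as a principal quotient of $\Gamma_1$. For the ``only if'' direction I would start from a surjective $A$-morphism $f\colon\Gamma_1\to\Gamma_2$ and take for the partition the family of vertex fibers $\{f^{-1}(v): v\in V\Gamma_2\}$ (each is a nonempty set of vertices, since $f$ is surjective and the preimage of a vertex under a graph morphism lies in the vertex set); collapsing this partition produces $\Gamma_1'$ together with a factorization $\Gamma_1\to\Gamma_1'\to\Gamma_2$ whose second arrow $g$ is a bijection on vertices and surjective on edges. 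The remaining point — the one place that needs a little care — is that $\mathrm{fold}(\Gamma_1')=\Gamma_2$: by the universal property of folding \cite{KM}, $g$ factors as $\Gamma_1'\to\mathrm{fold}(\Gamma_1')\to\Gamma_2$ with the second arrow $\psi$ surjective, and since $g$ is injective on vertices so is $\psi$; an oriented edge of a folded graph being determined by its initial vertex and its label, $\psi$ is then injective on edges as well, hence an isomorphism. (Concretely: because $g$ is injective on vertices, any fold applicable to $\Gamma_1'$ merely deletes one of two parallel edges sharing the same endpoints and label and never identifies two vertices, so folding $\Gamma_1'$ just removes such redundancies and recovers exactly $\Gamma_2$.)

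For part (2), in the forward direction I would take a surjective $A$-morphism $f\colon C_w\to\Gamma$ with $f(\ast)=x$ and invoke the observation recorded right after Definition~\ref{d:cw}: the image under $f$ of the closed path running once around $C_w$ is a closed path $\gamma_w$ based at $x$ with label $w$, and since $f$ is onto on topological edges while that loop runs over every topological edge of $C_w$, the path $\gamma_w$ runs over every topological edge of $\Gamma$. As the word $w$ is freely reduced, $\gamma_w$ is automatically a reduced path, so it displays every topological edge of $\Gamma$ as lying on a reduced closed path based at $x$; hence $\Gamma=Core(\Gamma,x)$, i.e. $\Gamma$ is a core graph. For the converse, given a closed path $\gamma_w=d_1,\dots,d_n$ in $\Gamma$ with label $w$ — so $n=||w||_A$ and $\mu(d_i)$ is the $i$-th letter of $w$ — that passes through every topological edge, I would define $f\colon C_w\to\Gamma$ by sending the $i$-th edge of the circle $C_w$ to $d_i$ and the terminal vertex of that edge to $t(d_i)$ (consistent around the circle because $\gamma_w$ is closed, and sending $\ast$ to $x:=o(\gamma_w)=t(d_n)$); the endpoint matchings are forced by $\gamma_w$ being a path and the labels agree by construction of $C_w$, so $f$ is an $A$-morphism, and it is surjective because its edge-image $\{d_1,\dots,d_n\}$ exhausts the topological edges of $\Gamma$ and therefore, $\Gamma$ being connected with at least one edge, its vertex-image is all of $V\Gamma$. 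Thus $(\Gamma,x)$ is a principal quotient of $C_w$. I would also note that the ``core graph'' hypothesis in part (2) is in fact redundant, since it follows from the existence of $\gamma_w$ by the argument just given for the forward direction.
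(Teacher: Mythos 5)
The paper gives no proof of this lemma; it is asserted to be an immediate corollary of the definitions, so there is no argument of the authors' to compare yours against. Your write-up correctly fills in the details. In part (1) you isolate the one step that genuinely needs a check — that folding $\Gamma_1'$ lands exactly on $\Gamma_2$ and not on some further quotient — and your argument is right: the induced map $g\colon\Gamma_1'\to\Gamma_2$ is a bijection on vertices, so no fold of $\Gamma_1'$ ever identifies two vertices, and the comparison map $\psi\colon\mathrm{fold}(\Gamma_1')\to\Gamma_2$ furnished by the universal property of folding is then vertex-bijective, surjective, and edge-injective (since its domain is folded and an oriented edge of a folded graph is determined by its origin and label), hence an isomorphism. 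Part (2) is also correct, and your remark that the core-graph clause in the statement is redundant is consistent with the paper's own sentence immediately following Definition~\ref{d:cw}: the image under any surjective $A$-morphism $f\colon C_w\to\Gamma$ of the circuit once around $C_w$ is a reduced (because $w$ is cyclically reduced and $\Gamma$ is folded) closed path based at $f(\ast)$ passing through every topological edge of $\Gamma$, which is exactly the core-graph condition at that base point.
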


A priori it is unclear that the functions $\fp(n)$ and $\fs(n)$ are even computable for a given $F_N$. We now give an algorithm that  calculates $\dpr(g)$ and $\dsi(g)$ for any non-trivial $g$. This would then show that the functions $\fp(n)$ and $\fs(n)$ are indeed algorithmically computable.

\begin{df}\label{df:G}
Let $1\ne g\in F_N=F(A)$ and let $w\in F(A)$ be the cyclically reduced form of $g$. 
We denote by $\mathcal G_0(w)$ the set of all finite connected folded basepointed $A$-graphs $(\Gamma,x)$ such that there exists a closed path $\gamma$ from $x$ to $x$ labeled $w$ with the property that $\gamma$ passes through every topological edge of $\Gamma$ at least once and such that either the labeling map $\Gamma\to R_N$ is not a covering (that is, there exists a vertex of $\Gamma$ of degree $<2N$), or the labeling map $\Gamma\to R_N$ is a covering and the element $\gamma\in \pi_1(\Gamma,x)$ is simple in $\pi_1(\Gamma,x)$. 

We denote by $\mathcal G(w)$ the set of all finite connected folded basepointed $A$-graphs $(\Gamma,x)$ such that there exists a closed path $\gamma$ from $x$ to $x$ labeled $w$ with the property that $\gamma$ passes through every topological edge of $\Gamma$ at least once and such that the element $\gamma\in \pi_1(\Gamma,x)$ is primitive in $\pi_1(\Gamma,x)$. 
\end{df}
Let $(\Gamma,x)\in \mathcal G(w)$ or $(\Gamma,x)\in \mathcal G_0(w)$. Since $w$ is cyclically reduced and $\gamma$ passes through every topological edge of $\Gamma$ at least once, every vertex of  $\Gamma$ has degree $\ge 2$, so that $\Gamma$ is a core graph.

Note further that the condition that $\gamma$ is simple in $\pi_1(\Gamma,x)$ is equivalent to the condition that $w$ is simple in the subgroup $H \leq F_N$ represented by $(\Gamma,x)$. This follows from the fact that the labeling map gives an isomorphism $\mu: \pi_1(\Gamma,x)\to H$, with $\mu(\gamma)=w$.

We recall the following basic fact:
\begin{lem}[\cite{KM}, p.13] \label{l:ff} Let $\Gamma$ be a folded connected $A$-graph and let $\Gamma'$ be a connected subgraph of $\Gamma$. Let $\ast$ be a vertex of $\Gamma'$. If $H' \leq F(A)$ is the subgroup represented by $(\Gamma',\ast)$ and $H$ is the subgroup represented by $(\Gamma, \ast)$, then $H'$ is a free factor of $H$.
\end{lem}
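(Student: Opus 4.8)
The plan is to construct compatible dual free bases for $\pi_1(\Gamma',\ast)$ and $\pi_1(\Gamma,\ast)$ and then transport them to $H'$ and $H$ via the labeling map, using Proposition-Definition~\ref{d:dual}. First I would choose a maximal subtree $T'$ of the connected graph $\Gamma'$ and extend it to a maximal subtree $T$ of the connected graph $\Gamma$; this extension exists because $\Gamma$ is connected. One checks that $T\cap\Gamma'=T'$: indeed $T\cap\Gamma'$ is a subforest of the tree $T$ that contains the spanning tree $T'$ of $\Gamma'$, so any edge of $(T\cap\Gamma')-T'$ would have both endpoints in $T'$ and hence close up a cycle inside $T$, which is impossible. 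I would also fix an orientation of $\Gamma'$ and extend it to an orientation of $\Gamma$, so that $E_+\Gamma'\subseteq E_+\Gamma$. Let $S_{T'}=\{s'_e\mid e\in E_+\Gamma'-T'\}$ be the free basis of $\pi_1(\Gamma',\ast)$ dual to $T'$, and let $S_T=\{s_e\mid e\in E_+\Gamma-T\}$ be the free basis of $\pi_1(\Gamma,\ast)$ dual to $T$.

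The key point is that for every $e\in E_+\Gamma'-T'$ we have $e\in E_+\Gamma-T$ (using $E_+\Gamma'\subseteq E_+\Gamma$ and $T\cap\Gamma'=T'$), and the two edge-paths $s'_e=[\ast,o(e)]_{T'}\, e\,[t(e),\ast]_{T'}$ and $s_e=[\ast,o(e)]_T\, e\,[t(e),\ast]_T$ are literally the same: the vertices $\ast$, $o(e)$ and $t(e)$ all lie in $T'$, and the unique reduced path in the tree $T$ between two vertices of its subtree $T'$ coincides with the unique reduced path between them in $T'$. Since a reduced closed path in the subgraph $\Gamma'$ is still a reduced closed path in $\Gamma$, the homomorphism $\pi_1(\Gamma',\ast)\to\pi_1(\Gamma,\ast)$ induced by inclusion is injective, and under it $S_{T'}$ maps bijectively onto the subset $\{s_e\mid e\in E_+\Gamma'-T'\}$ of the free basis $S_T$. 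Hence $\pi_1(\Gamma',\ast)$ is a free factor of $\pi_1(\Gamma,\ast)$.

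Finally I would push this down to $F(A)$. Because $\Gamma$ is folded, the label of every reduced path in $\Gamma$ is freely reduced, so every nontrivial element of $\pi_1(\Gamma,\ast)$ — represented by a nonempty reduced closed path — has nontrivial $\mu_\#$-image; thus $\mu_\#$ is injective on $\pi_1(\Gamma,\ast)$ and restricts to an isomorphism $\pi_1(\Gamma,\ast)\to H$, and similarly $\mu_\#$ restricts to an isomorphism $\pi_1(\Gamma',\ast)\to H'$. Then $\mu_\#(S_T)$ is a free basis of $H$ and its subset $\mu_\#(S_{T'})$ is a free basis of $H'$, so $H'$ is a free factor of $H$. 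The only step requiring genuine care is the combinatorial claim that $T\cap\Gamma'=T'$ together with the agreement of the tree geodesics in $T$ and in $T'$; everything else is routine bookkeeping with Stallings subgroup graphs, so I do not anticipate a serious obstacle.
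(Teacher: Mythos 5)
Your argument is correct and is essentially the standard proof of this fact. The paper itself does not prove Lemma~\ref{l:ff}, but simply cites \cite{KM}; the cited argument is precisely the one you give, namely extending a maximal tree $T'$ of $\Gamma'$ to a maximal tree $T$ of $\Gamma$, observing $T\cap\Gamma'=T'$ and that tree geodesics in $T'$ agree with those in $T$, so that the dual basis $S_{T'}$ of $\pi_1(\Gamma',\ast)$ is carried by inclusion onto a subset of the dual basis $S_T$ of $\pi_1(\Gamma,\ast)$, and then transporting via the (injective, since $\Gamma$ and hence its subgraph $\Gamma'$ are folded) labeling map $\mu_\#$.
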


\begin{rk} \label{r:ff}
In the setting of Lemma~\ref{l:ff}, $\pi_1(\Gamma',\ast)$ is a free factor of $\pi_1(\Gamma,\ast)$.
\end{rk}

\begin{prop}\label{p:criterion}
Let $1\ne g\in F_N=F(A)$ and let $w\in F(A)$ be the cyclically reduced form of $g$. Then the following hold:
\begin{enumerate}
\item The number $\dpr(g)$ equals to the minimum of $\#V\Gamma$, taken over all $(\Gamma,x)\in \mathcal G(w)$.

\item  The number $\dsi(g)$ equals to the minimum of $\#V\Gamma$, taken over all $(\Gamma,x)\in \mathcal G_0(w)$.
\end{enumerate}
\end{prop}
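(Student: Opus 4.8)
The plan is to prove both parts by establishing the two inequalities ($\le$ and $\ge$) separately, in each case translating between finite-index subgroups containing $g$ as a primitive (resp.\ simple) element and elements of $\mathcal G(w)$ (resp.\ $\mathcal G_0(w)$). I will give the argument for part (1); part (2) is entirely analogous, with ``primitive'' replaced by ``simple'' and with the extra bookkeeping that a member of $\mathcal G_0(w)$ need not be a covering of $R_N$.

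First, the inequality $\dpr(g)\ge \min\{\#V\Gamma : (\Gamma,x)\in\mathcal G(w)\}$. Let $L\le F_N$ be a finite-index subgroup of index $d=\dpr(g)$ containing $g$ as a primitive element, and let $(\widehat\Gamma,x)$ be the $d$-fold cover of $R_N$ corresponding to $L$ (an $A$-regular graph on $d$ vertices). Since $g\in L$ and $w$ is the cyclically reduced form of $g$, there is a closed path $\gamma$ at some vertex labeled $w$; after conjugating (moving the basepoint) we may take $\gamma$ to be closed at $x$, which does not change primitivity in $\pi_1$ by Lemma~\ref{l:aut}(3). Let $\Gamma:=Core(\widehat\Gamma,x)$ restricted further to the union of the edges traversed by $\gamma$; more precisely, let $\Gamma$ be the subgraph of $\widehat\Gamma$ consisting of all topological edges crossed by $\gamma$. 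Then $\gamma$ passes through every topological edge of $\Gamma$, and by Lemma~\ref{l:ff}/Remark~\ref{r:ff}, $\pi_1(\Gamma,x)$ is a free factor of $\pi_1(\widehat\Gamma,x)\cong L$; since $g$ is primitive in $L$ and $g\in\pi_1(\Gamma,x)$, primitivity in a free factor containing the element (Proposition~\ref{p:H}(1), applied with $H=\pi_1(\Gamma,x)$) shows $\gamma$ is primitive in $\pi_1(\Gamma,x)$. Hence $(\Gamma,x)\in\mathcal G(w)$ and $\#V\Gamma\le\#V\widehat\Gamma=d$.

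Conversely, $\dpr(g)\le \min\{\#V\Gamma : (\Gamma,x)\in\mathcal G(w)\}$. Take $(\Gamma,x)\in\mathcal G(w)$ with $k:=\#V\Gamma$ vertices; $\Gamma$ is a finite connected folded core $A$-graph, $g$ (as $\gamma$) is primitive in the subgroup $H\le F_N$ represented by $(\Gamma,x)$, and $g\in H$. By Lemma~\ref{l:co}, complete $\Gamma$ to a finite folded $A$-regular graph $\Gamma'$ with $V\Gamma=V\Gamma'$; then $(\Gamma',x)$ is a $k$-fold cover of $R_N$, representing a finite-index subgroup $L$ with $[F_N:L]=k$ and $g\in H\le L$. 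By Lemma~\ref{l:ff}, $H$ is a free factor of $L$, and $g$ is primitive in $H$, so by Proposition~\ref{p:H}(1) (the ``only if'' direction: primitive in a free factor implies primitive in the overgroup) $g$ is primitive in $L$. Hence $\dpr(g)\le k$. Combining the two inequalities gives part (1).

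For part (2) the same two-sided argument works verbatim, using that ``$\gamma$ simple in $\pi_1(\Gamma,x)$'' is equivalent to ``$w$ simple in the represented subgroup $H$'' (noted after Definition~\ref{df:G}), that simplicity passes to and from free factors containing the element (one direction is Remark~\ref{r:ps}-style and the free-factor transitivity of Lemma~\ref{l:ff}; the other direction is immediate since a proper free factor of $H$ witnessing simplicity of $g$ in $H$ is also a proper free factor of the larger group, as $H$ itself is a proper free factor of the index-$k$ subgroup by Lemma~\ref{l:ff} once $k\ge 2$, and when $k=1$ the graph is already $A$-regular). The one genuine subtlety — and the place I expect to spend the most care — is the definition of $\mathcal G_0(w)$, which deliberately allows graphs that are \emph{not} coverings of $R_N$: for such a $(\Gamma,x)$ one uses Lemma~\ref{l:co} to embed it in an $A$-regular $\Gamma'$ on the same vertex set, getting index exactly $\#V\Gamma$, while for a $(\Gamma,x)$ that already is a covering, simplicity of $\gamma$ in $\pi_1(\Gamma,x)$ is imposed directly. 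One must check that in the $\ge$ direction the subgraph-of-a-cover construction lands in $\mathcal G_0(w)$ in both cases — when the ambient minimal cover has a proper free factor subgraph carrying $g$, the subgraph may fail to be $A$-regular, which is exactly why the ``not a covering'' alternative is built into the definition. Verifying that this case analysis is exhaustive and that $\#V$ is minimized correctly is the main obstacle; everything else is a routine application of Stallings-graph bookkeeping together with Proposition~\ref{p:H} and Lemma~\ref{l:ff}.
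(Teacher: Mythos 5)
Your argument is correct and follows essentially the same mechanism as the paper's proof: pass between a finite-index subgroup and its Stallings graph, restrict in the $\ge$ direction to the subgraph spanned by the $w$-loop, complete to an $A$-regular cover via Lemma~\ref{l:co} in the $\le$ direction, and use Lemma~\ref{l:ff}/Remark~\ref{r:ff} together with Proposition~\ref{p:H}(1) to transport the primitivity (resp.\ simplicity) hypothesis across a free factor. The paper writes out part~(2) in full and remarks that part~(1) additionally needs Proposition~\ref{p:H}(1); you do the mirror image, writing out part~(1) in full, which is fine. One small caution on your sketch of part~(2): the phrase ``simplicity passes to and from free factors containing the element'' is not quite right as stated --- if $H$ is a proper free factor of $L$ and $g\in H$, then $g$ is \emph{automatically} simple in $L$, so ``$g$ simple in $L$'' carries no information about $g$ in $H$, and a naive simplicity analogue of the nontrivial direction of Proposition~\ref{p:H}(1) is false. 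However, as your ``genuine subtlety'' paragraph already recognizes, that direction is never needed: the definition of $\mathcal G_0(w)$ deliberately drops the simplicity requirement on $\gamma$ whenever $\Gamma\to R_N$ is not a covering, precisely so that in the $\ge$ direction the proper subgraph spanned by $\gamma$ qualifies on the ``not a covering'' ground alone (a connected subgraph of a connected cover that is itself a cover must be the whole graph), and in the $\le$ direction the completion via Lemma~\ref{l:co} makes $\pi_1(\Gamma,x)$ a proper free factor, giving simplicity for free. If you write part~(2) out with that two-case split instead of invoking a two-directional transfer principle, the proof matches the paper's.
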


\begin{proof}
We give a proof of part (2). The proof of part (1) is very similar in nature. However, it additionally involves using part (1) of Proposition \ref{p:H} to prove one of the inequalities. For $1\ne g\in F_N=F(A)$ and $w\in F(A)$ the cyclically reduced form of $g$, let $\displaystyle \overline{\dsi}(g)= \min_{(\Gamma,x) \in \mathcal G_0(w)} \#V\Gamma$. First suppose that $H \leq F_N$ such that $[F_N:H]= \dsi(g)= \dsi(w)$, and that $w \in H$ is simple in $H$. Let $(\Gamma,x)$ be the graph representing $H$ as in Proposition-Definition \ref{pd:grep}. We have that $\#V\Gamma=\dsi(w)$. Since $w \in H$, there exists a path $\gamma$ from $x$ to $x$ in $\Gamma$ with label $w$. Also since $w \in H$ is simple in $H$, $\gamma \in \pi_1(\Gamma,x)$ is simple in $\pi_1(\Gamma,x)$. Let $\Gamma' \subseteq \Gamma$ be the subgraph spanned by $\gamma$. Then $\gamma$ is a path from $x$ to $x$ in $\Gamma'$ that passes through every topological edge in $\Gamma'$ at least once. If $\Gamma'=\Gamma$, then the labeling map $\Gamma'\to R_N$ is a covering. Since $\gamma$ is simple in $\Gamma=\Gamma'$, we have $(\Gamma',x) \in \mathcal G_0(w)$. Since $\#V\Gamma'= \#V\Gamma=\dsi(g)$, we have that $\overline {\dsi}(g) \leq \dsi(g)$. If $\Gamma' \neq \Gamma$, then $\#V\Gamma' \leq \#V\Gamma$ and $\#E\Gamma - \#E\Gamma' \geq 1$. From Remark \ref{r:ff}, $(\Gamma',x)$ is a proper free factor of $(\Gamma,x)$. In this case the labeling map $\Gamma'\to R_N$ is not a covering and $(\Gamma',x) \in \mathcal G_0(w)$. Thus $\overline {\dsi}(g) \leq \dsi(g)$. 

Conversely suppose that $(\Gamma,x) \in \mathcal G_0(w)$ with $\#V\Gamma=\overline{\dsi}(g)$. Let $\gamma$ be the closed path from $x$ to $x$ labeled by $w$ such that $\gamma$ passes through every topological edge of $\Gamma$ at least once. If the labeling map $\Gamma\to R_N$ is a covering then $\gamma \in \pi_1(\Gamma,x) $ is simple in $\pi_1(\Gamma,x)$ by definition of $\mathcal G_0(w) $. Let $H$ be the subgroup represented by $(\Gamma,x)$. $H$ is then a subgroup of $F_N$ of index $\overline{\dsi}(g)$ with $w\in H$ and $w$ simple in $H$. Hence $\dsi(g)=\dsi(w) \leq \overline{\dsi}(g)$. If the labeling map $\Gamma\to R_N$ is not a covering, we use Lemma \ref{l:co} to complete $(\Gamma,x)$ to a finite cover $(\widehat{\Gamma},x)$ of $R_N$ without adding any extra vertices and by adding at least one edge. Again from Remark \ref{r:ff}, $(\Gamma,x)$ is a proper free factor of $(\widehat{\Gamma},x)$. Hence $\gamma \in \pi_1(\widehat{\Gamma}, x)$ is simple in $\pi_1(\widehat{\Gamma}, x)$. Let $H$ be the subgroup represented by $(\widehat{\Gamma},x)$. We have shown that $w \in H$ is simple in $H$. Since $\#V\widehat{\Gamma}= \#V\Gamma= \overline{\dsi}(g)$, we see that $\dsi(g) \leq \overline {\dsi}(g)$.     
\end{proof}

We can now prove:

\begin{thm}\label{t:algorithm} 
Let $F_N=F(A)$, where $N\ge 2$ and where $A=\{a_1,\dots, a_N\}$ is a free basis of $F_N$. Then:
\begin{enumerate}
\item There exists an algorithm that, given $1\ne g\in F_N$, computes $\dpr(g)$ and $\dsi(g)$.
\item There exists an algorithm that, for every $n\ge 1$ computes $\fp(n)$ and $\fs(n)$
\end{enumerate}
\end{thm}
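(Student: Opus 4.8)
The plan is to reduce both statements to the combinatorial description of the indices given in Proposition~\ref{p:criterion}, using the a priori bound $\dsi(g)\le\dpr(g)\le ||g||_A$ from Lemma~\ref{l:aut}(1) to make the relevant search spaces finite, and to dispatch the one substantive ingredient — deciding primitivity and simplicity inside a free group — by invoking Proposition~\ref{p:H}(2) and (3).

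\textbf{Part (1).} Given $1\ne g\in F_N$, I would first compute the cyclically reduced form $w$ of $g$ and set $n=||w||_A$. By Proposition~\ref{p:criterion}, $\dpr(g)$ equals the minimum of $\#V\Gamma$ over $(\Gamma,x)\in\mathcal G(w)$, and $\dsi(g)$ equals the minimum of $\#V\Gamma$ over $(\Gamma,x)\in\mathcal G_0(w)$; by Proposition~\ref{p:comp} and Lemma~\ref{l:aut}(1) both minima are $\le n$. Since there are only finitely many finite connected folded based $A$-graphs $(\Gamma,x)$ with $\#V\Gamma\le n$, it suffices to enumerate them and, for each, decide membership in $\mathcal G(w)$ (respectively $\mathcal G_0(w)$), then take the minimum of $\#V\Gamma$ over those that qualify. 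Membership is decidable: because $\Gamma$ is folded there is at most one edge-path from $x$ with label $w$; I trace it and check that it is a closed path at $x$ traversing every topological edge at least once, discarding $(\Gamma,x)$ otherwise. Call the resulting path $\gamma$. To test $(\Gamma,x)\in\mathcal G_0(w)$, I check whether every vertex of $\Gamma$ has degree $2N$ with all labels present: if not, the labeling map $\Gamma\to R_N$ is not a covering and $(\Gamma,x)\in\mathcal G_0(w)$ automatically; if so, I must further decide whether $\gamma$ is simple in $\pi_1(\Gamma,x)$. To test $(\Gamma,x)\in\mathcal G(w)$, I must decide whether $\gamma$ is primitive in $\pi_1(\Gamma,x)$. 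In each of these last two cases I choose a maximal tree $T\subseteq\Gamma$, form the dual basis $S_T$ of $\pi_1(\Gamma,x)$ as in Proposition-Definition~\ref{d:dual}, rewrite $\gamma$ as a freely reduced word over $S_T$ via Proposition~\ref{p:rw-basis}, view it as an element of the free group $F(S_T)$, and apply the primitivity algorithm of Proposition~\ref{p:H}(2) (respectively the simplicity algorithm of Proposition~\ref{p:H}(3)). The minimum of $\#V\Gamma$ over the accepted graphs is then $\dpr(g)$ (respectively $\dsi(g)$).

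\textbf{Part (2).} For a fixed $n\ge 1$ there are only finitely many freely reduced words $g$ in $F(A)$ with $1\le|g|_A\le n$. I would enumerate them, discard those that are proper powers in $F_N$ (decidable by passing to the cyclic reduction and testing periodicity of the cyclic word), run the algorithm of part (1) on each of the survivors to compute $\dpr(g)$ (respectively $\dsi(g)$), and output the maximum; by definition this is $\fp(n)$ (respectively $\fs(n)$).

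I do not expect a genuine obstacle here: once Proposition~\ref{p:criterion} is in place, the content is the finiteness of the search spaces (which rests on the bound $\dpr(g)\le||g||_A$) together with the effectivity of the defining conditions of $\mathcal G(w)$ and $\mathcal G_0(w)$. The step needing the most care is checking that every clause of Definition~\ref{df:G} is effectively testable for an explicitly presented finite graph — in particular the covering condition and the primitivity/simplicity conditions, the latter being exactly where Whitehead's algorithm, packaged in Proposition~\ref{p:H}, enters.
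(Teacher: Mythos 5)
Your proposal is correct and takes essentially the same route as the paper: both rest on Proposition~\ref{p:criterion} to reduce the computation to a finite search over graphs, and both invoke Proposition~\ref{p:H} (Whitehead's algorithm, packaged as an effective primitivity/simplicity test) to decide membership in $\mathcal G(w)$ and $\mathcal G_0(w)$. The only difference is cosmetic: you enumerate all connected folded based $A$-graphs with at most $n$ vertices and then test the ``closed path labeled $w$ covering every edge'' condition, while the paper enumerates partitions of $VC_w$ and builds the principal quotients directly by collapsing and folding, which produces exactly the relevant candidate graphs without needing the a priori bound $\dpr(g)\le n$ to cut off the search; both enumerations are finite and yield the same set of graphs to inspect, so the two presentations are interchangeable.
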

\begin{proof}

Let $1\ne g\in F_N$ and let $w$ be the cyclically reduced form of $g$.  
Note that a finite connected folded base-pointed $A$-graph $(\Gamma,x)$ admits a closed path $\gamma$ from $x$ to $x$ labeled $w$ and passing through every topological edge of $\Gamma$ at least once if and only if $(\Gamma,x)$ is a principal quotient of $C_w$ with $x$ being the image of the base-vertex $\ast$ of $C_w$.

Therefore we can algorithmically find all the graphs in $\mathcal G_0(w)$ as follows: 
List all partitions on $VC_w$. For each partition of $VC_w$ as a disjoint union of nonempty subsets $V_1,\dots V_m$, collapse $V_i$ to a single vertex for $i=1,\dots, m$, and fold the resulting graph to obtain a principal quotient $(\Gamma,x)$ of $C_w$, with $x$ being the image of the base-vertex $\ast$ of $C_w$. Let $\gamma$ be the path from $x$ to $x$ in $\Gamma$ labeled $w$ (so that, by construction, $\gamma$ passes through every topological edge of $\Gamma$ at least once). Then check whether the labeling map $\Gamma\to R_N$ is a covering, that is, whether it is true that every vertex of $\Gamma$ has degree $2N$. If $\Gamma\to R_N$ is not a covering, the graph $(\Gamma,x)$ belongs to $\mathcal G_0(w)$. If $\Gamma\to R_N$ is a covering, check, using the algorithm from part (3) of Proposition~\ref{p:H},  whether or not $\gamma\in \pi_1(\Gamma,x)$ is simple in the finite rank free group $\pi_1(\Gamma,x)$. If $\gamma\in \pi_1(\Gamma,x)$ is simple in $\pi_1(\Gamma,x)$, we conclude that the graph $(\Gamma,x)$ belongs to $\mathcal G_0(w)$, and $\gamma\in \pi_1(\Gamma,x)$ is not simple in $\pi_1(\Gamma,x)$, we conclude that he graph $(\Gamma,x)$ does not belong to $\mathcal G_0(w)$.
Performing this procedure for each partition of $VC_w$ as a disjoint union of nonempty subsets produces the finite set $\mathcal G_0(w)$.
Proposition~\ref{p:criterion} then implies that $\dsi(g)=\dsi(w)=\min\{\#V\Gamma: (\Gamma,x) \in \mathcal G_0(w)\}$.

The algorithm for computing $\dpr(g)=\dpr(w)$ is similar. We first find all the graphs in $\mathcal G(w)$ as follows.
Enumerate all partitions of $VC_w$ as a disjoint union of nonempty subsets. For each such partition $V_1,\dots V_m$ collapse each $V_i$, $i=1,\dots, m$, to a vertex and then fold the result to get a principal quotient $(\Gamma,x)$ of $C_w$. There is a path $\gamma$ from $x$ to $x$ in $\Gamma$ labeled $w$. Then check, using the algorithm from part (2) of Proposition~\ref{p:H},, whether or not $\gamma\in \pi_1(\Gamma,x)$ is primitive in the free group $\pi_1(\Gamma,x)$. If yes, we conclude that $(\Gamma,x)\in \mathcal G(w)$ and if not, we conclude that $(\Gamma,x)\not\in \mathcal G(w)$. This procedure algorithmically computes the set  $\mathcal G(w)$.

Proposition~\ref{p:criterion} then implies that $\dpr(g)=\dpr(w)=\min\{\#V\Gamma: (\Gamma,x) \in \mathcal G(w)\}$.
Thus part (1) of the theorem is verified.

Part (2) now follows directly from part (1) using the definitions of $\fp(n)$ and $\fs(n)$.

\end{proof}

\begin{rk}
The complexity of the algorithms for computing $\dsi(g)$ and $\dpr(g)$ given in part (1) of Theorem~\ref{t:algorithm} is super-exponential in $n=||g||_A$. 
The reason is that enumerating all principal quotients of the graph $C_w$ requires listing all partitions of the $n$-element set $VC_w$. The \emph{Bell number} $B_n$, which is the number of all partitions  of an $n$-element set, grows roughly as $n^n$. 
\end{rk}

\subsection{Algorithmic computability of $\dfi(g)$}

We now want to give an algorithm for computing $\dfi(g)$. Computationally this algorithm is not nearly as nice as the algorithms for computing $\dsi(g)$ and $\dpr(g)$ described above.

We briefly recall here some definitions and notations related to the Outer space. We refer the reader to \cite{Gui,KL,Vog} for more details. Let $N\ge 2$ be an integer. The \emph{unprojectivized Outer space} $\cvn$ is the set of all  of $F_N$-equivariant isometry classes of $\mathbb R$-trees $T$ such that $T$ is equipped with a free discrete minimal isometric action of $F_N$. The \emph{projectivized Outer space} $\CVN$ consists of the projective classes $[T]$ where $T\in \cvn$. Here for $T\in \cvn$ the projective class $[T]$ of $T$ is the set of all $cT\in \cvn$ where $c\in \mathbb R_{\ge 0}$. Here $cT$ is the same set as $T$, with the same action of $F_N$, but where the metric on $cT$ is the multiple by $c$ of the metric on $T$.

The space $\cvnbar$ is the closure of $\cvn$ in the equivariant Gromov-Hausdorff convergence topology. It is known that $\cvnbar$ consists precisely of all of $F_N$-equivariant isometry classes of $\mathbb R$-trees $T$ such that $T$ is equipped with a free minimal very small isometric action of $F_N$.
The space $\CVNbar$ is the set of all projective classes $[T]$ where $T\in \cvn$ (the projective class $[T]$ for $T\in \cvnbar$ is defined similarly as above, as the set of f all $cT\in \cvnbar$ where $c\in \mathbb R_{\ge 0}$).

The following result provides a useful characterization of filling elements:

\begin{prop}\label{p:fill}
Let $1\ne g\in F_N$. Then $g\in F_N$ is filling if and only if $Stab_{Out(F_N)}([g])$ is finite.
\end{prop}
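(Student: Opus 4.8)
The plan is to prove Proposition~\ref{p:fill} by relating the two conditions through the action of $\Out(F_N)$ on the compactified Outer space $\CVNbar$, and to the length-function characterization of filling elements from Proposition~\ref{prop:filling}. First I would establish the easy direction: if $g$ is non-filling, then by Definition~\ref{df:nonfill} there is a nontrivial splitting of $F_N$ (amalgam or HNN, over trivial or maximal cyclic edge group) with $g$ in a vertex group $K$. Passing to the Bass--Serre tree $T$ of this splitting, the fixed subtree of $\langle g\rangle$ is nonempty, and more importantly the whole coset structure gives infinitely many distinct simplicial $F_N$-trees (by adjusting markings, applying Dehn twists along the edge, etc.) that all have $\|g\|=0$. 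The cleanest way, though, is to exhibit an infinite subgroup of $\Stab_{\Out(F_N)}([g])$ directly: if $g$ lies in a proper free factor that is small enough, or if $g$ lies in a vertex group of a one-edge splitting, then twist automorphisms and partial conjugations supported on the splitting fix the conjugacy class $[g]$ and generate an infinite group. So "non-filling $\Rightarrow$ $\Stab_{\Out(F_N)}([g])$ infinite" follows by producing such automorphisms explicitly from the splitting data.

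For the converse — the substantive direction — I would argue the contrapositive: assume $g$ is filling and show $\Stab_{\Out(F_N)}([g])$ is finite. Here I would use the action of $\Out(F_N)$ on $\CVNbar$. The function $T\mapsto \|g\|_T$ on $\CVNbar$ is continuous, and by Proposition~\ref{prop:filling}, since $g$ is filling, $\|g\|_T>0$ for every $T$ in $\CVNbar$ (every point of the boundary is a very small action). Hence the normalized translation length $[T]\mapsto \|g\|_T/\mathrm{vol}(T)$ (or an appropriate projectivized version) attains a positive minimum on the compact space $\CVNbar$, and the sublevel set $\{[T]\in \CVNbar : \|g\|_{[T]}\le C\}$ for $C$ slightly above this minimum is a compact subset of the open Outer space $\CVN$ (it cannot touch the boundary since there $\|g\|$ is bounded below by a positive constant too, provided we choose $C$ correctly — actually one argues the minimum over the boundary and the minimum over the interior and uses that the minimizing set is closed, nonempty, and $\Stab([g])$-invariant). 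This $\Stab_{\Out(F_N)}([g])$-invariant compact subset of $\CVN$ contains only finitely many points of any orbit because $\Out(F_N)$ acts properly discontinuously on $\CVN$; combined with invariance this forces $\Stab_{\Out(F_N)}([g])$ to be finite.

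The key steps in order: (1) recall/invoke Proposition~\ref{prop:filling} to get $\|g\|_T>0$ for all $T\in\CVNbar$ when $g$ is filling; (2) observe $\Stab_{\Out(F_N)}([g])$ preserves the function $[T]\mapsto \|g\|_{[T]}$ on projectivized $\CVNbar$ (translation length is a conjugacy invariant and transforms correctly under the $\Out$-action); (3) by continuity and compactness of $\CVNbar$, the set where this function is minimal is nonempty, compact, $\Stab$-invariant, and — because the minimum value is positive and the boundary gives no obstruction — actually lies in $\CVN$; (4) apply proper discontinuity of the $\Out(F_N)$-action on $\CVN$ to conclude finiteness of the stabilizer; (5) separately, for non-filling $g$, build an infinite subgroup of the stabilizer from twists/partial conjugations along a splitting witnessing non-fillingness. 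I expect the main obstacle to be step (3): one must be careful that the minimal set of $\|g\|$ on $\CVNbar$ does not escape to the boundary. The resolution is that $\|g\|$ is bounded below by a positive constant on the boundary as well (all boundary trees are very small, $g$ is filling, so $\|g\|_T>0$ everywhere, and by compactness this infimum over the boundary is positive), while in the interior $\|g\|$ takes values arbitrarily close to $0$ only as $\mathrm{vol}\to 0$ — so after projectivizing appropriately the minimum is realized in $\CVN$; alternatively one cites the known fact (Levitt, or Guirardel--Levitt) that the stabilizer in $\Out(F_N)$ of a projective class of a filling current/conjugacy class is finite, but I would prefer to give the self-contained Outer space argument sketched above.
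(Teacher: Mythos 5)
The two directions split the same way in both your proposal and the paper, but the hard direction is handled quite differently, and your version has a real gap.

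For the easy direction (non-filling $\Rightarrow$ $\mathrm{Stab}_{\Out(F_N)}([g])$ infinite), you propose to build an explicit infinite subgroup from Dehn twists and partial conjugations along the splitting witnessing non-fillingness; the paper simply cites Solie's thesis for this implication. Both routes are fine and essentially the same idea; yours would just require a paragraph or two of bookkeeping that the paper outsources to a reference.

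For the substantive direction you argue \emph{directly}: assume $g$ filling, use Proposition~\ref{prop:filling} to get $\|g\|_T>0$ on all of $\CVNbar$, and then try to extract a $\mathrm{Stab}$-invariant compact subset of the open Outer space $\CVN$ from a sublevel set of the normalized length function, so that proper discontinuity finishes. The paper instead argues the \emph{contrapositive}: if $\mathrm{Stab}_{\Out(F_N)}([g])$ is infinite, pick $[T_0]\in\CVN$; by proper discontinuity the infinite orbit $[T_0]\phi_n$ has no accumulation point in $\CVN$, and by compactness of $\CVNbar$ it accumulates at some $[T]\in\CVNbar\setminus\CVN$. Lifting, $c_nT_0\phi_n\to T$ with $c_n\to 0$, and then $\|g\|_T=\lim c_n\|\phi_n(g)\|_{T_0}=\lim c_n\|g\|_{T_0}=0$ because $\phi_n$ fixes $[g]$; Proposition~\ref{prop:filling} then says $g$ is non-filling. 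This sidesteps any discussion of where minsets lie.

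The gap in your version is precisely the one you flag as "the main obstacle": you assert that "after projectivizing appropriately the minimum is realized in $\CVN$," but you do not prove it, and it is not obvious. The sublevel set $\{[T]\in\CVNbar:\|g\|_{[T]}\le C\}$ is compact, closed, and $\mathrm{Stab}$-invariant, but nothing you say rules out its touching the boundary $\CVNbar\setminus\CVN$: filling only guarantees $\|g\|_T>0$ on the boundary, not that the normalized length is \emph{larger} there than in the interior, so the infimum over $\CVNbar$ could be attained at a boundary point. If the invariant compact set meets the boundary, proper discontinuity of the $\Out(F_N)$-action (which holds only on $\CVN$) no longer applies and the argument stalls. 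To close this gap you would need an additional nontrivial input (properness of $T\mapsto\|g\|_T$ on volume-normalized $\CVN$ for filling $g$), which you neither prove nor cite in a usable form. By contrast, the paper's contrapositive argument works with a single orbit accumulating at the boundary, where $\|g\|$ is forced to vanish because the translation length along the orbit is constant while the normalizing constants tend to zero, and never needs to locate a minimizing set in the interior.
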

\begin{proof}
Solie \cite[Lemma 2.42, Lemma 2.44]{Solie2} proves that if $g\in F_N$ is non-filling then $Stab_{Out(F_N)}([g])$ is infinite.  Thus the the ``if" direction of Proposition~\ref{p:fill} holds.

Let us now prove the ``only if" direction. Suppose $Stab_{Out(F_N)}([g])$ is infinite. Choose a basepoint $[T_0]\in \CVN$. 
Since the action of $Out(F_N)$ on $\CVN$ is properly discontinuous and since $\CVNbar$ is compact, it follows that there exist an infinite sequence of distinct elements $\phi_n\in Stab_{Out(F_N)}([g])$ and a point $[T]\in \CVNbar -\CVN$ such that $\lim_{n\to\infty} [T_0]\phi^{n}=[T]$. Then for some sequence of scalars $c_n\geq 0$  with $c_n\to 0$ as $n\to\infty$ we have $\lim_{n\to\infty} c_n T_0\phi_n =T$ in $\cvnbar$.  Since $\phi_n([g])=[g]$, it follows that $||g||_T=\lim_{n\to\infty} c_n||\phi_n(g)||_{T_0}=0$. Then by  Proposition~\ref{prop:filling} the element $g$ is not filling in $F_N$, as required. 
\end{proof}

\begin{prop}\label{p:alg-fill}
Let $F_N=F(A)$, where $N\ge 2$ and where $A=\{a_1,\dots, a_N\}$ is a free basis of $F_N$. Then there exists an algorithm that, given a nontrivial element $g\in F_N$, decides whether or not $g$ is filling in $F_N$.
\end{prop}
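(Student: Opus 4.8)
The plan is to reduce the decidability of being filling to the decidability of finiteness of $Stab_{Out(F_N)}([g])$, using Proposition~\ref{p:fill}, and then to produce an effective procedure for the latter. The first and conceptually easiest ingredient is already in hand: by Proposition~\ref{p:fill}, $g$ is filling in $F_N$ if and only if the stabilizer of the conjugacy class $[g]$ in $Out(F_N)$ is finite, so it suffices to decide this finiteness property algorithmically. I would phrase the final algorithm as: compute enough of $Stab_{Out(F_N)}([g])$ to either exhibit an element of infinite order (or otherwise certify infiniteness) or to certify that the stabilizer is finite, and output ``non-filling'' in the first case and ``filling'' in the second.

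The main work is in making the ``compute the stabilizer'' step effective. I would use Whitehead's algorithm and the automorphism-orbit machinery recalled in Proposition~\ref{p:wa}. First, replace $g$ by an automorphically minimal (equivalently Whitehead minimal) representative $\widetilde g$ of its $Aut(F_N)$-orbit, which is computable; this does not change whether $g$ is filling, by Remark~\ref{r:ps}. Whitehead's theorem provides a finite, explicitly computable set of Whitehead automorphisms, and the standard ``Whitehead graph of the orbit'' construction lets one enumerate all cyclically reduced words of length $||\widetilde g||_A$ that are $Aut(F_N)$-equivalent to $\widetilde g$, together with, for each such word $u$, the (finite, computable) set of automorphisms $\phi$ with $\phi(\widetilde g)=u$ up to conjugacy. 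Composing these and tracking which return $\widetilde g$ to itself up to conjugacy yields a finite generating set for $Stab_{Out(F_N)}([\widetilde g])$ as a computable subgroup of $Out(F_N)$: concretely, one builds the finite graph whose vertices are the words in the orbit and whose edges are length-preserving Whitehead moves, picks a spanning tree, and reads off generators of the fundamental group of this ``orbit graph'' based at $\widetilde g$, exactly as in the proof of Proposition~\ref{l:mps}'s Claim~2 and the analysis underlying Proposition~\ref{p:wa}(2).

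Once a finite generating set $\phi_1,\dots,\phi_r$ of $Stab_{Out(F_N)}([g])$ is available, deciding whether the subgroup it generates is finite is itself algorithmic: $Out(F_N)$ has solvable membership and order problems for finitely generated subgroups (for instance via its action on Culler--Vogtmann Outer space and train-track representatives, or via the linear representation of $Out(F_N)$ on $H_1(F_N;\mathbb Z/3)$ combined with the known structure of the kernel, or simply because one can detect an infinite-order element: an element of $Out(F_N)$ has infinite order if and only if it is not conjugate into the finite group of ``geometric'' torsion, which is checkable using train tracks). So the algorithm is: compute $\widetilde g$; build the orbit graph and extract generators $\phi_1,\dots,\phi_r$ of $Stab_{Out(F_N)}([g])$; test whether $\langle \phi_1,\dots,\phi_r\rangle$ is finite; declare $g$ filling iff it is.

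The hard part is not conceptual but bookkeeping: verifying carefully that the orbit graph (the set of Whitehead-minimal words $Aut(F_N)$-equivalent to $\widetilde g$, with length-preserving Whitehead-move edges) is finite and computable, and that its fundamental group surjects onto $Stab_{Out(F_N)}([\widetilde g])$ under $\phi\mapsto [\phi]$ — this is where Proposition~\ref{p:wa}(2), applied with $u=v=\widetilde g$, is essential, since it guarantees that every stabilizing automorphism is a product of length-preserving Whitehead moves and hence is represented by a loop in the orbit graph. The one remaining subtlety is the passage from ``stabilizes the word $\widetilde g$ on the nose'' to ``stabilizes the conjugacy class $[\widetilde g]$'', which is handled by noting (as remarked after Proposition~\ref{p:wa}) that conjugation by a generator is a length-preserving Whitehead automorphism, so conjugates of $\widetilde g$ also appear as vertices of the orbit graph and loops through them are accounted for. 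I expect the deciding-finiteness step to be the other place requiring care, but it can be black-boxed using any of the several known solutions to the order and finiteness problems in $Out(F_N)$.
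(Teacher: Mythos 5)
Your overall strategy coincides with the paper's: reduce to deciding finiteness of $Stab_{Out(F_N)}([g])$ via Proposition~\ref{p:fill}, produce a finite generating set for that stabilizer, and then test finiteness. Your ``orbit graph'' construction for computing generators of the stabilizer is, in essence, a re-derivation of McCool's theorem \cite{Mc}, which the paper simply cites; either route is fine.

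The genuine gap is in the last step. You declare that finiteness of $\langle\phi_1,\dots,\phi_r\rangle\le Out(F_N)$ ``can be black-boxed using any of the several known solutions to the order and finiteness problems in $Out(F_N)$,'' but the ideas you sketch do not actually decide this. Detecting an infinite-order generator is insufficient: a finitely generated subgroup can be infinite while every generator has finite order (the infinite dihedral group is generated by two involutions, and such configurations do occur inside $Out(F_N)$). Solvability of the word problem alone also does not give a finiteness test for finitely generated subgroups in a recursively presented group; and the general membership problem for finitely generated subgroups of $Out(F_N)$ is not a safe black box. What makes the finiteness test effective is a uniform \emph{a priori} bound on orders of finite subgroups: the paper invokes Wang--Zimmermann, that every finite subgroup of $Out(F_N)$ has order at most $2^N N!$, and then compares the balls $B(2^N N!)$ and $B(1+2^N N!)$ in the Cayley graph of $\langle Y\rangle$, which is computable from the (polynomial-time, Schleimer) word problem in $Out(F_N)$. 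This bound is the missing ingredient in your proposal; without it, ``test whether the subgroup is finite'' is not an algorithm.
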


\begin{proof}

Let $g\in F_N=F(A)$ be a nontrivial freely reduced word. By a result of McCool \cite{Mc} the group $Stab_{Out(F_N)}([g])$ is finitely generated and, moreover, we can algorithmically compute a finite generating set $Y=\{\psi_1,\ldots, \psi_k\}$ of  $Stab_{Out(F_N)}([g])$.

In view of Proposition~\ref{p:fill} we next need to determine if $H:=\langle Y\rangle \leq Out(F_N)$ is finite. Wang and Zimmermann \cite{WZ} prove that for $N>2$, the maximum order of a finite subgroup of $Out(F_N)$ is $2^N N!$. Also, the word problem for $Out(F_N)$ is solvable (even solvable in polynomial time~\cite{Schl}). Thus we then start building the Cayley graph $Cay(H;Y)$ of $H$ with respect to $Y$. Using solvability of the word problem in $Out(F_N)$, for any finite $k$ we can algorithmically construct the ball $B(k)$ of radius $k$ cantered at identity in $Cay(H;Y)$.  We construct the balls $B(2^N N!)$ and $B(1+2^N N!)$. By the result of Wang and Zimmermann mentioned above, the group $H$ is finite if and only if $B(2^N N!)=B(1+2^N N!)$.

Thus we can algorithmically decide whether or not $Stab_{Out(F_N)}([g])$ is finite, and hence, by Proposition~\ref{p:fill}, whether or not $g$ is filling in $F_N$.

\end{proof}

\begin{thm}\label{t:algorithm1} 
Let $F_N=F(A)$, where $N\ge 2$ and where $A=\{a_1,\dots, a_N\}$ is a free basis of $F_N$. Then:
\begin{enumerate}
\item There exists an algorithm that, given $1\ne g\in F_N$, computes $\dfi(g)$.
\item There exists an algorithm that, for every $n\ge 1$ computes $\ff(n)$
\end{enumerate}
\end{thm}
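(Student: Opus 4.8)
The plan is to follow the template of the proof of Theorem~\ref{t:algorithm} almost verbatim, systematically replacing ``simple'' by ``non-filling'' and replacing the simplicity-decision algorithm of part~(3) of Proposition~\ref{p:H} by the filling-decision algorithm of Proposition~\ref{p:alg-fill}. Concretely, for a nontrivial cyclically reduced word $w\in F(A)$ I would introduce, in parallel with Definition~\ref{df:G}, the set $\mathcal G^{fill}_0(w)$ of all finite connected folded basepointed $A$-graphs $(\Gamma,x)$ for which there is a closed path $\gamma$ from $x$ to $x$ labeled $w$ passing through every topological edge of $\Gamma$ at least once, and such that either the labeling map $\Gamma\to R_N$ is not a covering, or it is a covering and $\gamma\in\pi_1(\Gamma,x)$ is non-filling in $\pi_1(\Gamma,x)$. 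As observed after Definition~\ref{df:G}, every such $\Gamma$ is automatically a core graph, and, via the isomorphism $\mu\colon\pi_1(\Gamma,x)\to H$ onto the subgroup $H$ represented by $(\Gamma,x)$ with $\mu(\gamma)=w$, the condition ``$\gamma$ non-filling in $\pi_1(\Gamma,x)$'' is equivalent to ``$w$ non-filling in $H$''.

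The first substantive step is the analogue of Proposition~\ref{p:criterion}: if $w$ is the cyclically reduced form of $g$, then $\dfi(g)$ equals the minimum of $\#V\Gamma$ over all $(\Gamma,x)\in\mathcal G^{fill}_0(w)$. The argument copies the proof of Proposition~\ref{p:criterion}(2). For the inequality ``$\dfi(g)\le\min$'': take $(\Gamma,x)\in\mathcal G^{fill}_0(w)$ of minimal size; if $\Gamma\to R_N$ is a covering, the subgroup $H$ it represents has index $\#V\Gamma$, contains $w$, and has $w$ non-filling in $H$; if $\Gamma\to R_N$ is not a covering, complete $(\Gamma,x)$ by Lemma~\ref{l:co} to a finite cover $(\widehat\Gamma,x)$ of $R_N$ with $V\widehat\Gamma=V\Gamma$ and at least one new edge, so by Remark~\ref{r:ff} $\pi_1(\Gamma,x)$ is a \emph{proper} free factor of $\pi_1(\widehat\Gamma,x)$ (the rank strictly increases), hence $\gamma$ lies in a proper free factor and is therefore simple, and so non-filling (Remark~\ref{r:ps}), in $\pi_1(\widehat\Gamma,x)$, which makes the subgroup represented by $(\widehat\Gamma,x)$ a witness of index $\#V\widehat\Gamma=\#V\Gamma$. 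For the reverse inequality: pick $H\le F_N$ of index $\dfi(g)$ with $w$ non-filling in $H$, let $(\Gamma,x)$ be its Stallings graph (so $\Gamma\to R_N$ is a covering and $\#V\Gamma=\dfi(g)$), let $\gamma$ be the closed path labeled $w$ (non-filling in $\pi_1(\Gamma,x)$), and let $\Gamma'\subseteq\Gamma$ be the subgraph spanned by $\gamma$; if $\Gamma'=\Gamma$ then $(\Gamma,x)\in\mathcal G^{fill}_0(w)$, and if $\Gamma'\subsetneq\Gamma$ then $\Gamma'$ cannot be $A$-regular (a connected $A$-regular subgraph of a connected $A$-regular graph is the whole graph), so $\Gamma'\to R_N$ is not a covering and $(\Gamma',x)\in\mathcal G^{fill}_0(w)$ with $\#V\Gamma'\le\dfi(g)$.

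It then remains to spell out the algorithm. Given $1\ne g\in F_N$, compute the cyclically reduced form $w$ and form the circle graph $C_w$ of Definition~\ref{d:cw}. As in the proof of Theorem~\ref{t:algorithm}, a finite connected folded basepointed $A$-graph admits a closed $w$-labeled path traversing every edge precisely when it is a principal quotient of $C_w$ with basepoint the image of $\ast$, and all such principal quotients are produced by enumerating the finitely many partitions of $VC_w$, collapsing each block, and folding. For each resulting $(\Gamma,x)$, check whether every vertex has degree $2N$. If not, then $(\Gamma,x)\in\mathcal G^{fill}_0(w)$. If so, choose a maximal tree $T\subseteq\Gamma$, pass to the dual free basis $S_T$ of $\pi_1(\Gamma,x)$ (Proposition-Definition~\ref{d:dual}) and rewrite $\gamma$ as a word over $S_T$ (Proposition~\ref{p:rw-basis}); since $\Gamma\to R_N$ is a covering with $N\ge 2$, the group $\pi_1(\Gamma,x)$ is free of rank $\ge 2$, so Proposition~\ref{p:alg-fill} decides whether $\gamma$ is filling in $\pi_1(\Gamma,x)$, and hence whether $(\Gamma,x)\in\mathcal G^{fill}_0(w)$. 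This yields the finite set $\mathcal G^{fill}_0(w)$ explicitly, and by the criterion above $\dfi(g)=\min\{\#V\Gamma:(\Gamma,x)\in\mathcal G^{fill}_0(w)\}$, which proves part~(1). Part~(2) is then immediate: $\ff(n)$ is the maximum of $\dfi(g)$ over the finitely many nontrivial freely reduced words $g$ with $|g|_A\le n$ that are not proper powers, and being a proper power is decidable.

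The genuinely hard ingredient, the decidability of being filling in a finite-rank free group (Proposition~\ref{p:alg-fill}, resting on McCool's theorem and the Wang--Zimmermann bound), is already in place, so I expect the only real care needed to be in the criterion: one must check that membership in a proper free factor forces non-filling (immediate from Remark~\ref{r:ps}), and that the covering/non-covering dichotomy is handled exactly as in Proposition~\ref{p:criterion}. Beyond this bookkeeping I anticipate no essential difficulty. As with $\dpr$ and $\dsi$, the resulting algorithm is at least super-exponential in $||g||_A$ because of the enumeration of partitions of $VC_w$, and in fact considerably worse owing to the cost of the filling test; but the theorem asserts only computability.
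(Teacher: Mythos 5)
Your proposal matches the paper's proof of Theorem~\ref{t:algorithm1} essentially verbatim: both enumerate the principal quotients of $C_w$, classify each as a ``success'' according to the covering/non-covering dichotomy combined with the filling test from Proposition~\ref{p:alg-fill}, and read off $\dfi(g)$ as the minimum number of vertices over the successes, with the criterion justified exactly as in Proposition~\ref{p:criterion}. Your write-up is somewhat more careful in spelling out the two inequalities of the criterion (and in noting that the free factor obtained by completing a non-covering is proper), but it is the same argument.
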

\begin{proof}
Part (2) follows directly from part (1) and from the definition of $\ff(n)$.
Thus we only need to establish part (1).

Given $g\in F_N$, let $w$ be the cyclically reduced form of $g$. Let $C_w$ and its principle quotients be as in Definitions \ref{d:cw}, \ref{d:pq}. Enumerate all principle quotients of $C_w$ as $\{\Gamma_1, \ldots, \Gamma_k\}$. For each $\Gamma_i$ with $1 \leq i \leq k$, two possibilities arise: \\
{\bf{Case (i)}} (\textit{$\Gamma_i$ is not a finite cover of $R_N$}): In this case, we call $\Gamma_i$ a ``success". In this case we can complete $\Gamma_i$ to a finite cover $\Gamma_i'$ of $R_N$ and now $\pi_1(\Gamma_i)$ is a free factor of $\pi_1(\Gamma_i')$. Hence $w$ is simple in in the subgroup represented by $\Gamma_i'$ i.e. $w$ is not filling in the subgroup represented by $\Gamma_i'$. \\
{\bf{Case (ii)}} (\textit{$\Gamma_i$ is a finite cover of $R_N$}): In this case there is a closed loop $\gamma_i$ in $\Gamma_i$ with label $w$. We then use the algorithm from Proposition~\ref{p:alg-fill} to check whether $\gamma_i$ is filling in $\pi_1(\Gamma_i)$. If $\gamma_i$ is not filling in $\pi_1(\Gamma_i)$, and we call $\Gamma_i$ a success. \\
Finally, observe that $\dfi(g)= \min \{V\Gamma_i\,  | \, \Gamma_i \hbox{ is a ``success"}\}$ where this equality is established in a manner similar to that in Proposition \ref{p:criterion}. Thus part (1) of the theorem is proved.
\end{proof}

\section{Special words and finite covers}

The main goal of this section is to find a suitable sufficient condition implying that a given freely reduced word is filling in a given finite index subgroup of $F_N$ represented by a finite cover of the rose $R_N$. Similarly we find a suitable sufficient condition implying that a given freely reduced word is not simple in a subgroup of $F_N$ represented by a given finite cover of the rose $R_N$. 

These goals are accomplished by constructing  ``simplicity blocking" and  ``filling forcing" words in $F_N$ of controlled length, provided by Proposition~\ref{p:five} and Proposition~\ref{p:v} below.
Since the proofs of these Propositions are somewhat technical, we first illustrate the idea of their proof by obtaining a related simpler statement, given in Lemma~\ref{lem:Barysh} below. The proof of Lemma~\ref{lem:Barysh} is due to 
Yuliy Baryshnikov. We then adapt the idea of this proof to obtain  Proposition~\ref{p:v} and Proposition~\ref{p:five}.

\begin{lem}\label{lem:Barysh}
Let $N\ge 2$. Then there exists a constant $c_0=c_0(N)>0$ with the following property.
Let $(\Gamma,\ast)$ be a connected $d$-fold cover of the $N$-rose $R_N$, where $d\ge 1$. Then there exists a freely reduced word $v=v(\Gamma)$ with $|v|\le c_0d^2$ such that for every vertex $x\in V\Gamma$ the path $p(x,v)$ from $x$ labeled by $v$ in $\Gamma$ passes through every topological edge of $\Gamma$ at least once.
\end{lem}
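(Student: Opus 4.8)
The plan is to build $v$ greedily, processing the $d$ vertices of $\Gamma$ one at a time, using Eulerian circuits as the basic building block. The key structural fact is that $\Gamma$, being a connected $d$-fold cover of $R_N$, is a connected graph with $d$ vertices and $Nd$ topological edges in which every vertex has the \emph{even} degree $2N$; hence by Euler's theorem $\Gamma$ has an Eulerian circuit, i.e.\ a closed edge-path traversing every topological edge exactly once, of length exactly $Nd$. Reading one such circuit from its own basepoint traverses every edge of $\Gamma$; the difficulty is only that reading its label from a different vertex produces a different path. So I will concatenate $d$ Eulerian circuits, the $j$-th one based at the vertex where the word built so far lands when read from the $j$-th vertex, and arrange the concatenation to stay freely reduced. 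This will give $|v|\le d\cdot Nd=Nd^2$, so $c_0=c_0(N)=N$ works.

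First I would record two elementary facts about Eulerian circuits of $\Gamma$: (a) as an edge-path an Eulerian circuit is automatically \emph{reduced}, since it never traverses a topological edge twice and so never contains a subpath $e,\bar e$; (b) for any vertex $y\in V\Gamma$ and any oriented edge $g$ with $o(g)=y$ there is an Eulerian circuit based at $y$ whose first edge is $g$ --- take any Eulerian circuit, replace it by its reverse if necessary so that it traverses the underlying topological edge of $g$ in the direction $g$, then cyclically rotate it to begin at that traversal. Next, enumerating $V\Gamma=\{x_1,\dots,x_d\}$, I would construct by induction freely reduced words $v_0=1$ and $v_{j-1}\subseteq v_j$ with $|v_j|\le jNd$, maintaining the invariant that for every $i\le j$ the path in $\Gamma$ from $x_i$ labeled $v_j$ passes through every topological edge. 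To pass from $v_{j-1}$ to $v_j$: since $\Gamma$ is folded, the path from $x_j$ labeled $v_{j-1}$ is reduced; let $y_j$ be its endpoint and $\ell$ the last letter of $v_{j-1}$; among the $2N$ oriented edges out of $y_j$ (which carry all the distinct labels in $\Upsilon$) choose one, $g$, with $\mu(g)\ne\ell^{-1}$; let $\mathcal E_j$ be an Eulerian circuit based at $y_j$ with first edge $g$, and set $v_j:=v_{j-1}\,\mu(\mathcal E_j)$. Then $v_j$ is freely reduced (the only possible cancellation, at the junction, is excluded by $\mu(g)\ne\ell^{-1}$), $v_{j-1}$ is a prefix, and $|v_j|=|v_{j-1}|+Nd$; for $i<j$ the old path already covered every edge and a fortiori so does the $v_j$-path, while for $i=j$ the $v_j$-path from $x_j$ first runs $v_{j-1}$ to $y_j$ and then traces $\mathcal E_j$, which covers every edge. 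Setting $v=v_d$ finishes the argument, with the length bound above.

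The one genuinely delicate point --- and the reason the statement is not completely trivial --- is the freely reduced requirement: one cannot simply concatenate reduced words, nor freely reduce a non-reduced word at the end, because free reduction of a non-reduced label destroys exactly the topological edges that were traversed only by backtracking excursions. This is why I use Eulerian circuits (reduced by fact (a)) as the building blocks rather than, say, a spanning-tree traversal, and why I impose the constraint $\mu(g)\ne\ell^{-1}$ so that no cancellation occurs at the junctions. I expect the only place needing a small case-check to be the construction in fact (b), namely the possible need to reverse an Eulerian circuit before rotating it so that it begins with a prescribed oriented edge; everything else (the Euler circuit existence and edge count, the length bookkeeping, the monotonicity of edge-coverage under taking prefixes) is routine.
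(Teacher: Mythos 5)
Your proof is correct and follows the same overall strategy as the paper's: concatenate $d$ Eulerian circuits, one arranged so that reading the accumulated word from the $j$-th vertex ends inside the $j$-th circuit, giving $|v|\le Nd^2$. The difference is in how free reduction at the junctions is guaranteed. The paper observes that $\Gamma$, viewed as a \emph{directed} graph with only the $A$-labeled (positively oriented) edges, has in-degree $=$ out-degree $= N$ at every vertex, so it admits a \emph{directed} Euler circuit whose label is a positive word over $A$; since concatenations of positive words never contain an $aa^{-1}$, all the bookkeeping about cancellation disappears. You instead work with undirected Eulerian circuits (each vertex has even degree $2N$), whose labels may involve both $A$ and $A^{-1}$, and you compensate by reversing and cyclically rotating each circuit so that its first edge's label does not cancel the preceding letter. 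Both routes are valid and give the same constant $c_0=N$; the paper's positivity trick is a bit slicker because it removes the cancellation concern and the need for your fact (b), whereas your version shows the argument does not actually depend on being able to orient all edges positively, which is a marginally more robust formulation. One small polish: when $j=1$ there is no ``last letter $\ell$'' of $v_0=1$, so the constraint $\mu(g)\ne\ell^{-1}$ is vacuous there; you should say so explicitly.
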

\begin{proof}
The graph $\Gamma$ is a connected $2N$-regular graph with $d$ vertices and $Nd$ topological edges. We can view $\Gamma$ as a directed graph where the directed edges are labeled by elements of $A$ (and without using $A^{-1}$). Then $\Gamma$ is a connected  directed graph where the in-degree of every vertex is equal to $N$, which is also equal to the out-degree of every vertex.
Hence there exists an Euler circuit in $\Gamma$ beginning and ending at $\ast$ consisting of edges labeled by elements of $A$ that transverses each topological edge exactly once. Let $v_1$ be the label of this Euler circuit. Then $v_1$ is freely reduced and no $a_i^{-1}$ occurs in $v_1$ for $i=1, \ldots, N$. Enumerate the vertices as $V\Gamma= \{x_1, x_2, \ldots, x_d\}$ with $\ast=x_1$. Starting at the vertex $x_2$ follow a path $p_1$ with label $v_1$. Denote the terminal vertex of $p_1$ by $z_1$. Let $p_1'$ be an Euler circuit in $\Gamma$ starting and ending at $z_1$ and consisting only of edges labeled by elements of $A$. Let $v_2$ be the label of this path $p_1'$. Note that since we only consider positively labeled edges, the path $p_2=p_1p_1'$ is reduced and its label $v_1v_2$ is a positive (and hence freely reduced) word over $A$. We now inductively define a positive word $v_{i+1}$ over $A$ given that the positive words $v_1, \ldots, v_i$ where $i \in \{1, \ldots, d-1\}$ have already been defined. Starting at vertex $x_{i+1}$ we follow a path $p_{i}$ with label $v_1\ldots v_i$. Denote the terminal vertex of the path $p_i$ by $z_i$. Let $p_i'$ be an Euler circuit at $z_i$ that transverses every positively labeled edge exactly once. Let $v_{i+1}$ be the label of this path $p_i'$. We define our word $v:=v_1v_2\ldots v_d$. Since following a path with label $v_1\ldots v_i$ at any vertex $v_i$ already passes through every topological edge of $\Gamma$ at least once, so does following a path with label $v$. Since each $|v_i|=Nd$ for $1=1, \ldots, d$, we have that $|v|=Nd^2$. 
\end{proof}

\subsection{Simplicity blocking words and finite covers} 
In the above proof the concatenation argument always produces reduced
edge-paths because we only deal with edges and paths labeled by
positive words over $A$.
By contrast, in proving Proposition~\ref{p:v} simple concatenation
does not always work as it may result in paths that are not reduced.
Also, instead of paths labeled by $v$ passing through every edge of
$\Gamma$, we need to ensure a more complicated condition which implies
that all paths labeled by $v$ in $\Gamma$ pass through a certain
``simplicity-blocking'' path $\alpha(\Gamma,T)$, which is defined
below.

\begin{df}\label{d:delta}
Let $\Gamma$ be a finite connected folded $A$-graph, let $T\subseteq \Gamma$ be a maximal tree in $\Gamma$ and let $S_T$ be the corresponding basis of 
$\pi_1(\Gamma,\ast)$. Let $u=y_1\dots y_n$ be a nontrivial freely reduced word over $S_T^{\pm 1}$.  Thus each $y_i$ corresponds to an edge $e_i\in E(\Gamma-T)$. 
We define a reduced path $\delta(u)$ in $\Gamma$ as 
\[
\delta(u):=[\ast, o(e_1)]_T e_1 [t(e_1),o(e_2)]_T e_2\dots \dots e_n [t(e_n),\ast]_T.
\]
\end{df}
Note that if $d=\#V\Gamma$ then $T$ has $\le d-1$ topological edges and hence $|\delta(u)|\le n+(n+1)(d-1)=nd+d-1=d(n+1) -1$. 

\begin{df} \label{d:alpha}
Let $(\Gamma,\ast)$ be a finite folded core graph with a base-vertex $\ast$.  Let $T\subseteq \Gamma$ be a maximal subtree in $\Gamma$. Let $S_T=\{b_1,\dots, b_r\}$ be the basis of $\pi_1(\Gamma,\ast)$ dual to $T$.

Define a reduced edge-path $\alpha(\Gamma,T)$ from $\ast$ to $\ast$ in $\Gamma$ as

\[
\alpha(\Gamma,T):=\delta(b_r^2b_1^2\ldots b_r^2).
\]

\end{df}

\begin{rk}\label{r:alpha}
Note that the path $\alpha(\Gamma,T)$ is reduced and represents the element  $b_r^2b_1^2\ldots b_r^2$ in $\pi_1(\Gamma,\ast)$. The following proposition demonstrates the ``simplicity-blocking''
property of $\alpha(\Gamma,T)$.  The word  $b_r^2b_1^2\ldots b_r^2$ has length $2r+2$ and hence $|\alpha(\Gamma,T)|\le d(2r+3)-1$ where $d=\#V\Gamma$. 
In particular, if $\Gamma$ is a $d$-fold cover of the rose $R_N$, then $r=d(N-1)+1$ and 
\[
|\alpha(\Gamma,T)|\le d(2d(N-1)+3)-1\le 2d^2(N-1)+4d.
\]
\end{rk}

\begin{prop}
Let $\Gamma$ be as in Definition \ref{d:alpha} with $T$ a maximal tree in $\Gamma$. Let $S_T$ and $\alpha(\Gamma,T)$ be as before. Let $\gamma \in \pi_1(\Gamma,\ast)$ be such that $\gamma$ is represented by a cyclically reduced circuit in $\Gamma$ containing $\alpha(\Gamma,T)$ as a subpath. Then $\gamma$ is not simple in $\pi_1(\Gamma,\ast)$.
\end{prop}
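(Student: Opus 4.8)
The plan is to deduce the non-simplicity of $\gamma$ from Corollary~\ref{c:np} by rewriting $\gamma$ over the dual basis $S_T=\{b_1,\dots,b_r\}$ of $\pi_1(\Gamma,\ast)$ and showing that its cyclically reduced form over $S_T$ literally contains the forbidden subword $b_r^2b_1^2b_2^2\cdots b_r^2$.

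First I would record what $\alpha(\Gamma,T)$ does under the ``delete the tree, relabel the non-tree edges'' operation. By Definition~\ref{d:alpha}, $\alpha(\Gamma,T)=\delta(b_r^2b_1^2b_2^2\cdots b_r^2)$, and by Definition~\ref{d:delta} the path $\delta(u)$ for a freely reduced word $u=y_1\cdots y_n$ over $S_T^{\pm1}$ is a concatenation of $T$-geodesics interspersed with the non-tree edges $e_1,\dots,e_n$, so that deleting from $\delta(u)$ all edges of $T$ and replacing each remaining $e_i^{\pm1}$ by the corresponding generator $b_j^{\pm1}$ returns exactly $u$; in particular, by Remark~\ref{r:alpha}, $\alpha(\Gamma,T)$ is a reduced path representing the element $b_r^2b_1^2b_2^2\cdots b_r^2$ of $\pi_1(\Gamma,\ast)$.

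Next I would apply the rewriting procedure of Proposition~\ref{p:rw-basis}(2) to $\gamma$, taking as representing edge-path the cyclically reduced circuit $c$ furnished by the hypothesis. Since $c$ is already a closed cyclically reduced path, the cyclic-reduction step of that procedure does nothing, and the cyclically reduced form $w$ of $\gamma$ over $S_T$ is obtained from $c$ simply by deleting all $T$-edges and relabelling the remaining edges by $S_T^{\pm1}$. Because $\alpha(\Gamma,T)$ occurs in $c$ as a subpath, the computation of the previous paragraph shows that its image under this operation, namely $b_r^2b_1^2b_2^2\cdots b_r^2$, occurs in $w$ as a contiguous block. The one point that needs care is that $w$ is \emph{genuinely cyclically reduced} over $S_T$ — this is precisely the content of Proposition~\ref{p:rw-basis}(2) — so there is no free cancellation between this block and the letters of $w$ coming from the rest of $c$, and hence $w$ is a cyclically reduced word over $S_T$ that literally contains $b_r^2b_1^2b_2^2\cdots b_r^2$ as a subword. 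This no-cancellation observation is the only genuinely delicate step; the rest is bookkeeping.

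Finally, since $\pi_1(\Gamma,\ast)=F(S_T)$ is free with free basis $S_T=\{b_1,\dots,b_r\}$ by Proposition-Definition~\ref{d:dual}, I would invoke Corollary~\ref{c:np} with $S_T$ playing the role of $A$ and $r$ the role of $N$: a cyclically reduced word over $S_T$ containing $b_r^2b_1^2b_2^2\cdots b_r^2$ is not simple in $F(S_T)$. Therefore $\gamma$ is not simple in $\pi_1(\Gamma,\ast)$, as claimed. (If $r=1$ the hypothesis of Corollary~\ref{c:np} is not literally satisfied, but then $\pi_1(\Gamma,\ast)\cong\Z$ has no nontrivial simple elements and the conclusion is immediate, since $\gamma\ne 1$ as it contains the nontrivial subpath $\alpha(\Gamma,T)$.)
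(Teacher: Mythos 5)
Your proposal is correct and follows essentially the same route as the paper's proof: rewrite $\gamma$ over the dual basis $S_T$ via Proposition~\ref{p:rw-basis}(2), observe that the subpath $\alpha(\Gamma,T)$ contributes the block $b_r^2b_1^2\cdots b_r^2$ to the resulting cyclically reduced word, and conclude by Corollary~\ref{c:np}. You spell out the no-cancellation point and the degenerate $r=1$ case, which the paper leaves implicit, but the argument is the same.
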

\begin{proof}
We first use Proposition \ref{p:rw-basis} to rewrite $\gamma$ as a
cyclically reduced word $w$ in $S_T=\{b_1,\dots,b_r\}$. Then  the
occurrence of $\alpha(\Gamma,T)$ in  $\gamma$ produces an occurrence
of the reduced word $b_r^2b_1^2 \ldots b_r^2$ in $w$. Hence, by
Corollary~\ref{c:np}, in this case $\gamma$ is not simple in $F(b_1,\dots,b_r)=\pi_1(\Gamma,\ast)$.

\end{proof}

Note that Definition~\ref{d:Core-Graph} of a core graph implies that if $\Gamma$ is a finite connected core graph, then $\Gamma$ does not have any degree-1 vertices.

\begin{lem} \label{l:rp}
Let $\Gamma$ be a finite connected core graph with $d$ vertices. Suppose that $\pi_1(\Gamma)$ has rank $\geq 2$. Then for any any two edges $e_1, e_2 \in E(\Gamma)$, there exists a reduced path $p(e_1, e_2)$ starting at $e_1$, ending at $e_2$, and with $|p(e_1,e_2)| \leq 3d$. 
\end{lem}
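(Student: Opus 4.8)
The plan is to first reduce to the case where $e_1$ and $e_2$ are oriented so that we want a reduced path from $t(e_1)$ to $o(e_2)$; prepending $e_1$ and appending $e_2$ costs only two extra edges, and the resulting path is reduced provided we do a little care at the two seams (this will be absorbed into the constant). So the real task is: for vertices $u = t(e_1)$ and $v = o(e_2)$, produce a reduced path from $u$ to $v$ of length $\le 3d - 2$ whose first edge is not $\bar e_1$ and whose last edge is not $\bar e_2$. Since $\Gamma$ is connected, there is some reduced path $q$ from $u$ to $v$; the issue is only to control its length and its first/last edges.

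The key step is to bound the length. Because $\Gamma$ is a finite core graph (no valence-one vertices), a reduced path of length $\ge d+1$ must revisit a vertex, and one can then either excise an embedded loop or shortcut, but the cleaner route is: a \emph{geodesic} edge-path in $\Gamma$ (shortest edge-path between its endpoints) is automatically reduced and visits each vertex at most once, hence has length $\le d-1$. So there is a reduced path $q_0$ from $u$ to $v$ with $|q_0| \le d-1$. Now I must fix up the endpoints: if the first edge of $q_0$ equals $\bar e_1$, I reroute using the hypothesis that $\pi_1(\Gamma)$ has rank $\ge 2$. Concretely, since $\Gamma$ is a core graph of rank $\ge 2$, every edge lies on an embedded circuit and $\Gamma$ is not a single circle; in particular from the vertex $u$ there are at least two distinct reduced ``escape'' routes, so I can choose a short reduced path out of $u$ avoiding $\bar e_1$ — going around one embedded loop through $u$ (or to the nearest branch vertex and around a loop there), of length at most $d$ — and then append a geodesic to $v$; concatenation is reduced after at most one cancellation. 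The same fix applies at $v$ for the last edge. Each fix costs at most an extra embedded circuit, i.e. at most $d$ edges, but with care one fix of size $\le d$ suffices to handle both seams simultaneously when $u$ and $v$ are close; tallying the worst case gives $|p(e_1,e_2)| \le 2 + (d-1) + d \le 3d$, as claimed.

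The step I expect to be the main obstacle is the endpoint/seam bookkeeping: ensuring that after prepending $e_1$, detouring around a loop to avoid $\bar e_1$, running a geodesic, detouring near $v$, and appending $e_2$, the whole concatenation is genuinely reduced, and that the accumulated length stays under $3d$ rather than, say, $4d$ or $5d$. This is where the rank~$\ge 2$ hypothesis is essential: in a rank-one core graph (a single circle) the path between two edges is forced and can be nearly $d$, but one cannot simultaneously control the first and last edges without the freedom provided by a second independent loop. The clean way to organize the argument is: (1) a shortest edge-path is reduced and simple, length $\le d-1$; (2) in a core graph of rank $\ge 2$, from any vertex there is a reduced closed path of length $\le d$ not beginning with a prescribed edge; (3) combine, absorbing all cancellations and the two collar edges into the bound $3d$. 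I would write it in that order.
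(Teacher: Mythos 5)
Your geodesic-plus-detour strategy is genuinely different from the paper's argument, which instead selects a connected rank-two core subgraph $\Gamma'\subseteq\Gamma$ containing both $e_1$ and $e_2$ (necessarily a wedge of two circles, a theta graph, or a barbell) and checks the bound directly in each of those three shapes, so that the seam bookkeeping becomes finite and concrete. Your version, however, rests on a false auxiliary claim: your step (2), that in a core graph of rank $\geq 2$ any vertex admits a reduced closed path of length $\leq d$ not beginning with a prescribed edge. Consider the barbell with a single one-edge loop at each end of an arc of $s$ edges, so that $d=s+1$; take $u$ to be the degree-two vertex two steps along the arc from the near loop, and forbid the edge at $u$ pointing back toward that near loop. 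Every reduced closed path based at $u$ whose first edge is the other one is forced along the arc to the far loop, around it, and back, so the shortest such closed path has length $2(s-2)+1=2d-5$, which exceeds $d$ once $d>5$.

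This matters for the count. The true bound for such a closed detour is $\leq 2d$ rather than $\leq d$, and with that your estimate becomes $2+(d-1)+2d=3d+1$, which is too big. Moreover the case in which the geodesic conflicts with $\bar e_1$ at one end and with $\bar e_2$ at the other is never actually handled: you remark that one fix suffices for both seams when $u$ and $v$ are close, but give no argument when they are far, and naively two detours plus the geodesic plus the two collar edges overshoot $3d$. The bound does in fact survive in the barbell example, because a long detour at $u$ forces the geodesic, which must leave $u$ in the opposite direction, to be short; but your write-up does not exploit this trade-off, and without it the tally is not justified. So as written there is a genuine gap: a false lemma and unfinished seam bookkeeping, both of which the paper's reduction to the three rank-two shapes sidesteps by direct inspection.
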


\begin{proof}
Pick a graph $\Gamma' \subseteq \Gamma$ such that $\Gamma'$ is a finite, connected, core graph with $\pi_1(\Gamma')$ of rank 2 and $e_1,e_2 \in E\Gamma'$. Then there are precisely three possibilities for $\Gamma'$. It can be the wedge of two circles, or a theta-graph (a circle with a line segment joining two points on the circle), or a barbell graph (two circles attached to two ends of a line segment). We will show that the result holds for the graph $\Gamma'$, and hence holds for our graph $\Gamma$. Our proof is essentially going to be a proof by picture for each of these three cases. In Figure \ref{rp}, green edges (or arrows) indicate $e_1$ and blue edges (or arrows) indicate $e_2$. We indicate the path $p(e_1,e_2)$ in red with the $\bullet$ representing the starting point of $p(e_1,e_2)$ and the $\rightarrow$ representing the direction. The path $p(e_1,e_2)$ starts at $o(e_1)$ and ends at $t(v_1)$. We call a ``cusp" any vertex of $\Gamma'$ of degree $\ge 3$ in $\Gamma'$. The idea behind finding this path $p(e_1,e_2)$ is always to travel along $e_1$ to the nearest cusp. Then if one is required to go back on the same path one has already been on to get to $e_2$, one instead travels along a disjoint loop at the cusp. Now one can go back to $e_2$ and the path $p(e_1,e_2)$ will be reduced. If after traveling from $e_1$ to the cusp one can get to $e_2$ without compromising the fact that the path $p(e_1,e_2)$ is reduced, then one simply goes to $e_2$ and the path $p(e_1,e_2)$ so obtained is reduced. From Figure \ref{rp} we see that the result holds. 

\begin{figure}

\includegraphics[trim=5cm 10cm 5cm 7cm, clip=true, totalheight=0.3\textheight, angle=0]{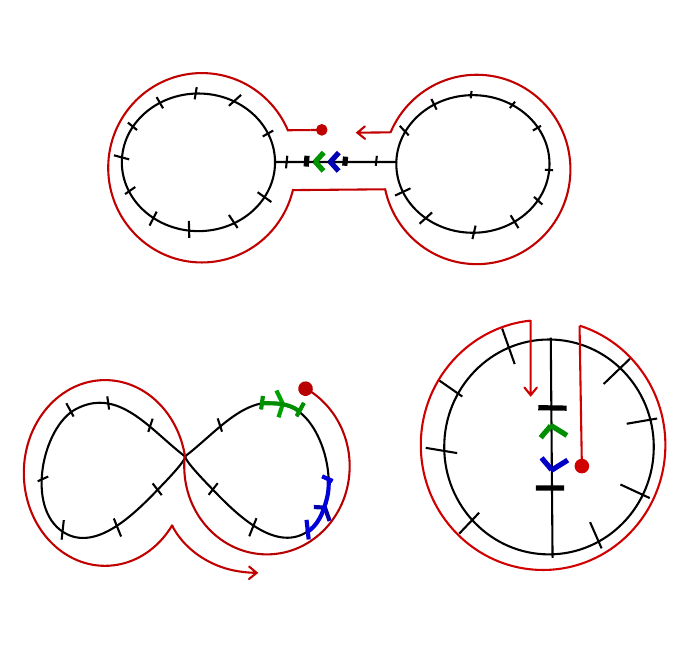}
\caption{Proof by picture for Lemma \ref{l:rp} \label{rp}}

\end{figure}

\end{proof}

The following fact plays a key role in the proof of Theorem~\ref{thm:B}:

\begin{prop}\label{p:v}
Let $N\ge 2$. Then there exists a constant $c_0=c_0(N)>0$ with the following property.
Let $(\Gamma,\ast)$ be a connected $d$-fold cover of the $N$-rose $R_N$, where $d\ge 1$ and let $T\subseteq \Gamma$ be a maximal subtree of $\Gamma$. Then there exists a freely reduced word $v=v(\Gamma,T)$ with $|v|\le c_0d^3$ such that for every vertex $x\in V\Gamma$ the path $p(x,v)$ from $x$ labeled by $v$ in $\Gamma$ contains $\alpha(\Gamma,T)$ as a subpath.
\end{prop}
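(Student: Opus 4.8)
The plan is to mimic, with two new twists, the proof of Lemma~\ref{lem:Barysh}. As there, I would enumerate the vertices $V\Gamma=\{x_1,\dots,x_d\}$ with $x_1=\ast$ and build $v$ as a concatenation $v=v_1v_2\cdots v_d$ of $d$ freely reduced ``blocks'', the block $v_j$ being responsible for the starting vertex $x_j$. The first twist is that, instead of inserting an Euler circuit to force the reading path through every edge of $\Gamma$, each block will steer the reading path through the fixed simplicity-blocking path $\alpha(\Gamma,T)$ of Definition~\ref{d:alpha}. The second twist is that, since we can no longer restrict to positively labelled edges, naive concatenation of blocks may create backtracking; this is exactly the difficulty flagged just before Definition~\ref{d:delta}, and it will be overcome by Lemma~\ref{l:rp}, which connects any ordered pair of edges of $\Gamma$ by a reduced path of length at most $3d$ --- short enough to keep the final length bounded by $c_0(N)d^3$. (Here $\Gamma$, being a $d$-fold cover of $R_N$ with $N\ge2$, is $2N$-regular, so every vertex has degree $2N\ge4$ and $\pi_1(\Gamma)$ has rank $d(N-1)+1\ge2$, so Lemma~\ref{l:rp} applies.)

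In detail, I would define $v_1,\dots,v_d$ by induction on $j$. Write $f_1$ for the first edge of $\alpha(\Gamma,T)$; recall $o(\alpha(\Gamma,T))=t(\alpha(\Gamma,T))=\ast$. For $j=1$ set $v_1:=\mu(\alpha(\Gamma,T))$ (so $\sigma_1$ is the trivial path at $\ast$). For $j\ge2$, assume $v_1,\dots,v_{j-1}$ are defined and let $z_{j-1}$ be the endpoint of the reduced path from $x_j$ labelled $v_1\cdots v_{j-1}$, with last edge $e_P$ entering $z_{j-1}$. Pick an edge $g$ with $o(g)=z_{j-1}$ and $g\ne\overline{e_P}$ (possible since $z_{j-1}$ has degree $\ge4$), apply Lemma~\ref{l:rp} to the pair $(g,f_1)$ to obtain a reduced path $p(g,f_1)=g,\dots,f_1$ of length $\le3d$, and let $\sigma_j$ be $p(g,f_1)$ with its final edge $f_1$ deleted. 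Then $\sigma_j$ is a reduced path from $z_{j-1}$ to $\ast$ of length $<3d$ whose first edge is $g\ne\overline{e_P}$ and for which $\sigma_j\cdot\alpha(\Gamma,T)$ is a reduced path. Set $v_j:=\mu(\sigma_j\cdot\alpha(\Gamma,T))$; this is freely reduced because $\Gamma$ is folded and $\sigma_j\cdot\alpha(\Gamma,T)$ is reduced. Finally $v:=v_1v_2\cdots v_d$.

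Now three verifications. First, $v$ is freely reduced: each $v_j$ is freely reduced by construction, and at the junction of $v_{j-1}$ and $v_j$ there is no cancellation because $v_j$ begins with the letter $\mu(g)\ne\mu(e_P)^{-1}$, where $\mu(e_P)$ is the last letter of $v_{j-1}$; hence $p(x,v)$ is a reduced path for every $x$ (though only ``contains $\alpha(\Gamma,T)$ as a subpath'' is needed). Second, the length estimate: by Remark~\ref{r:alpha} one has $|\alpha(\Gamma,T)|\le2(N-1)d^2+4d$, so $|v_j|<3d+2(N-1)d^2+4d$, whence $|v|=\sum_{j=1}^d|v_j|\le c_0(N)d^3$ for a suitable $c_0(N)$ (enlarged if needed to absorb small values of $d$). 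Third, the key property: fix $x=x_j$; reading $v$ from $x_j$, the prefix $v_1\cdots v_{j-1}$ leads from $x_j$ to $z_{j-1}$ by the definition of $z_{j-1}$, and then the block $v_j$ is read starting at $z_{j-1}$, tracing the path $\sigma_j$ followed by $\alpha(\Gamma,T)$; thus $\alpha(\Gamma,T)$ occurs as a contiguous subpath of $p(x_j,v)$, as required.

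The main obstacle is the gluing, i.e.\ producing the connecting paths $\sigma_j$ simultaneously short (length $O(d)$), reduced, non-backtracking on the incoming edge $e_P$, and compatible with prepending to $\alpha(\Gamma,T)$; this is precisely what Lemma~\ref{l:rp} together with the degree~$\ge4$ of the vertices of $\Gamma$ delivers, although a careful writeup must still dispose of the degenerate cases (e.g.\ $z_{j-1}=\ast$, where $\sigma_j$ may be trivial or $g=f_1$). Everything else --- the inductive bookkeeping copied from Lemma~\ref{lem:Barysh} and the arithmetic --- is routine; the exponent rises from $d^2$ in Lemma~\ref{lem:Barysh} to $d^3$ here only because each of the $d$ blocks now carries a copy of $\alpha(\Gamma,T)$, which already has length $\Theta(d^2)$.
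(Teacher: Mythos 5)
Your proof is correct and follows essentially the same approach as the paper's: both enumerate the vertices, build $v$ as a concatenation of $d$ blocks each steering the reading path through a copy of $\alpha(\Gamma,T)$, and invoke Lemma~\ref{l:rp} to produce short reduced connecting paths whose first edge avoids backtracking onto the last edge of the incoming path. The paper applies Lemma~\ref{l:rp} to the pair $(e_j,e)$ where $e_j$ is itself the terminal edge of the incoming path (so the connecting path $p_{j-1}''$ is $p(e_j,e)$ with both endpoints stripped), whereas you choose a fresh outgoing edge $g\ne\overline{e_P}$ and strip only the final $f_1$; this is a cosmetic reorganization that costs one extra edge per block and does not change the $O(d^3)$ estimate.
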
 
\begin{proof}
Let us begin by enumerating the vertices of $V\Gamma= x_1, x_2, \ldots, x_d$. Let $H \leq F_N$ be the subgroup of index $d$ that is represented by $(\Gamma,\ast)$. Let $T$ be a maximal tree in $(\Gamma,\ast)$ and let $S_T=\{b_1, b_2, \ldots, b_r\}$ be the corresponding basis of $\pi_1(\Gamma,\ast)$. 
By Remark~\ref{r:alpha}, we have $|\alpha(\Gamma, T)| \leq  2d^2(N-1) + 4d$.

Let $e$ be the first edge of the path $\alpha(\Gamma,T)$. Starting at the vertex $x_1 \in V\Gamma$, there exists a unique path $[x_1, \ast]_T$ of length $\leq d-1$ with terminal edge $e_1$ (say). Lemma \ref{l:rp} then gives us a reduced path $p(e_1,e)=e_1p'e$ of length $\leq 3d$. Let the word $v_1$ be the label of the path $p_1=[x_1, \ast]_Tp'\alpha(\Gamma, T)$. Note that $|v_1|=|p_1| \leq 2d^2(N-1)+8d-3$. 

 Starting at the vertex $x_2$ we follow a path $p_1'$ that has label $v_1$. Let $e_2$ be the terminal edge of the path $p_1'$. Then from Lemma \ref{l:rp}, the path $p(e_2,e)=e_2p_1''e$ is reduced with $|p(e_2,e)| \leq 3d$, and hence $|p_1''|\leq 3d-2$. Let the word $v_2$ be the label of the path $p_2=p_1''\alpha(\Gamma,T)$. Now the path $p_1'p_2=p_1'p_1''\alpha(\Gamma,T)$ is reduced. Notice that $|v_2|= |p_2| \leq 2d^2(N-1)+7d-1$. We now define inductively a sequence of words and paths as follows: Suppose we have already defined our words $v_1, v_2, \ldots, v_{i-1}$ which are respectively the labels of reduced paths $p_1, \ldots, p_{i-1}$. Starting at vertex $x_i$ we follow the path $p_{i-1}'$ labeled by the word $v_1v_2 \ldots v_{i-1}$. Let $e_{i}$ be the terminal edge of the path $p_{i-1}'$. Then the path $p(e_{i}, e)=e_{i}p_{i-1}''e$ is reduced with $|p_{i-1}''| \leq 3d-2$. Let the word $v_i$ be the label of the reduced path $p_i= p_{i-1}''\alpha(\Gamma,T)$. Now the path $p_{i-1}'p_i=p_{i-1}'p_{i-1}''\alpha(\Gamma,T)$ is reduced. Let the word $v:= v_1v_2 \ldots v_d$. Then notice that at any vertex $x_i$ with $1 \leq i \leq d$, the path $p_{i-1}'p_i$ is a reduced path labeled by $v_1 \ldots v_i$ that already contains the subpath $\alpha(\Gamma,T)$. Thus for $i=1,\dots, d$ the path starting at $x_i$ labeled by the word $v_1\ldots v_d$ also contains the subpath $\alpha(\Gamma,T)$. Since for all $2 \leq i \leq d$, $|v_i| \leq 2 d^2(N-1) + 7d-1$, we have that $|v| \leq 2 d^3(N-1) + 7d^2-2 \leq (2N+5)d^3$. Thus with $c_0=2N+5$, we are done.

\end{proof} 

If $\Gamma$ is a finite connected graph and $T\subseteq \Gamma$ is a maximal subtree, then, following the conventions of Bass-Serre theory, we denote 
\[
\pi_1(\Gamma,T):=\langle E\Gamma | e\bar e=1 \text{ for all } e\in E\Gamma,   e=1 \text{ for all } e\in ET\rangle. 
\]
If $\Gamma$ is equipped with an orientation, then $\pi_1(\Gamma,T)$ is canonically isomorphic to the free group $F(E_+\Gamma-E_+T)$.
Note also that $\pi_1(\Gamma,T)$ is isomorphic to the fundamental group of the quotient space $\Gamma/T$ (where $T$ is collapsed to a point).

The freely reduced word  $v=v(\Gamma,T)$ in $F(A)$  can be viewed as a ``simplicity blocking" word for the elements of the fundamental group of a $d$-fold cover $\Gamma$ of $R_N$. 

\begin{cor}\label{cor:v}
Let $N\ge 2$ and let $c_0=c_0(N)>0$ be the constant provided by Proposition~\ref{p:v}. 

Let $d\ge 1$, let $\Gamma$ be a connected $d$-fold cover of the $N$-rose $R_N$ and let $T\subseteq \Gamma$ be a maximal tree in $\Gamma$.  Let $\ast\in V\Gamma$, let $\gamma$ be a reduced edge-path from $\ast$ to $\ast$  in $\Gamma$ and let $\gamma'$ be the cyclically reduced form of the path $\gamma$ (so that the label of $\gamma'$ is a cyclically reduced word in $F(A)$). Suppose that the label of $\gamma'$ contains as a subword the word $v=v(\Gamma,T)$ with $|v|\le c_0d^3$  provided by Proposition~\ref{p:v}. 

Then $\gamma\in \pi_1(\Gamma,\ast)$ does not belong to a proper free factor of $\pi_1(\Gamma,\ast)$.

\end{cor}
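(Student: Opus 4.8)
The plan is to feed the hypothesis straight into Proposition~\ref{p:v} and then invoke the ``simplicity-blocking'' Proposition that immediately follows Remark~\ref{r:alpha}. First I would record that since $\Gamma$ is a $d$-fold cover of $R_N$, it is a finite connected folded core graph which, being a cover, is $A$-regular; consequently, for every vertex $x\in V\Gamma$ and every freely reduced word $u$ over $A$ there is a unique reduced edge-path in $\Gamma$ starting at $x$ and carrying the label $u$, namely the path $p(x,u)$.

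Next I would unpack the hypothesis: write the (cyclically reduced) label of $\gamma'$ as $w=w_1\,v\,w_2$. Reading $\gamma'$ as an edge-path, the block of consecutive edges whose label spells out $v$ is a reduced subpath of $\gamma'$, and by the uniqueness just noted this block is precisely $p(x,v)$, where $x$ is the vertex of $\gamma'$ at which the block begins. Proposition~\ref{p:v} then tells us that $p(x,v)$ contains $\alpha(\Gamma,T)$ as a subpath; since $p(x,v)$ is a subpath of $\gamma'$, the cyclically reduced circuit $\gamma'$ contains $\alpha(\Gamma,T)$ as a subpath.

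To finish, I would apply the Proposition immediately following Remark~\ref{r:alpha} to $\gamma$ together with the cyclically reduced circuit $\gamma'$ (which represents the conjugacy class of $\gamma$ in $\pi_1(\Gamma,\ast)$ and, by the previous step, contains $\alpha(\Gamma,T)$ as a subpath). That Proposition yields that $\gamma$ is not simple in $\pi_1(\Gamma,\ast)$, i.e.\ $\gamma$ does not lie in any proper free factor of $\pi_1(\Gamma,\ast)$, which is exactly the claim.

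There is no genuine obstacle here: the corollary is a direct assembly of Proposition~\ref{p:v} and the blocking Proposition. The only two points requiring a moment of care are (i) that an occurrence of $v$ in the label of $\gamma'$ really does produce an honest subpath $p(x,v)$ of $\gamma'$ to which Proposition~\ref{p:v} applies --- this is where the hypothesis that $\Gamma$ is a cover, hence $A$-regular so that a reduced path is determined by its start vertex and label, is used --- and (ii) that replacing $\gamma$ by its cyclically reduced form $\gamma'$ does no harm, since membership in a proper free factor (and hence simplicity) is a conjugacy invariant, a conjugate of a proper free factor being again a proper free factor.
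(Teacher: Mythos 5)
Your proof is correct and follows essentially the same route as the paper: both arguments amount to observing that an occurrence of $v$ in the cyclically reduced label of $\gamma'$ forces (via Proposition~\ref{p:v} and foldedness/$A$-regularity of the cover) the edge-path $\gamma'$ to pass through $\alpha(\Gamma,T)$, and then translating that via $S_T$ and Corollary~\ref{c:np} into non-simplicity. The one minor difference is that you invoke the unnamed "simplicity-blocking" proposition following Remark~\ref{r:alpha} as a black box, whereas the paper's proof of the corollary re-runs that proposition's argument inline (Proposition~\ref{p:rw-basis} to rewrite $\gamma$ over $S_T$, then Corollary~\ref{c:np}, then Lemma~\ref{l:aut} to pass from $\gamma'$ back to $\gamma$); your version is the cleaner packaging of the same content, and your two "points requiring care" are exactly the spots the paper handles via those cited results.
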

\begin{proof}
From definitions $\gamma \in \pi_1(\Gamma,\ast)$. Using the tree $T$ we can obtain a free basis $S_T=\{b_1,\ldots, b_r\}$ of $\pi_1(\Gamma,T)$. Then Proposition \ref{p:rw-basis} tells us how to rewrite $\gamma$ in terms of the basis $S_T$, both as freely reduced word and as a cyclically reduced word. Let $\alpha(\Gamma,T)$ be as before. Then for the label of $\gamma'$ to contain the word $v$, we must have that the cyclically reduced form of $\gamma'$ in terms of $S_T$ contains $b_r^2b_1^2\ldots b_r^2$ as a subword. Now from Corollary \ref{c:np} we know that $\gamma'$ is not simple in $\pi_1(\Gamma,T)$. Finally from Lemma \ref{l:aut} $\gamma$ is not simple in $\pi_i(\Gamma,T)$, that is, $\gamma\in \pi_1(\Gamma,\ast)$ does not belong to a proper free factor of $\pi_1(\Gamma,\ast)$.
 
\end{proof}

\subsection{Filling forcing words and finite covers}
To proceed further we will once again adapt the idea of proof of Lemma \ref{lem:Barysh} to produce  a ``filling-forcing" path $\beta(\Gamma, T)$ of controlled length.

\begin{conv}
If $F(B)$ is a free group with $|B|=r\ge 2$, then the total number of freely reduced words of length 3 in $F(B)$ are $L=2r(2r-1)^2$. Let $\{u_1, \ldots, u_L\}$ be the set of all freely reduced words of length 3 in $B^{\pm1}$. Define a freely reduced word $u_B:= u_1y_1u_2y_2u_3y_3\ldots y_{L-1}u_L$ where each $y_i$ is either the empty word, or $y_i \in B^{\pm 1}$. Namely, whenever the concatenation $u_ju_{j+1}$ is reduced to begin with, we define $y_j$ to be the empty word. If this concatenation is not reduced, then we can always choose $y_j\in B^{\pm 1}$ so that $u_jy_ju_{j+1}$ is reduced in $F(B)$.  Note that  $|u|_{B}\leq 3L+L-1 = 4L-1$. 
\end{conv}

We now define the path $\beta(\Gamma, T)$ as follows:
\begin{df} \label{d:beta}
Let $(\Gamma,\ast)$ be a finite connected folded core graph with a base-vertex $\ast$.  Let $T\subseteq \Gamma$ be a maximal subtree in $\Gamma$ with $E_+(\Gamma-T)=\{e_1,\dots,e_r\}$, and let $S_T=\{b_1,\dots, b_r\}$ be the basis of $\pi_1(\Gamma,\ast)$ dual to $T$.
We put 
\[
\beta(\Gamma,T):=\delta(u_{S_T}).
 \]
Thus $\beta(\Gamma,T)$ is a reduced edge-path from $\ast$ to $\ast$ in $\Gamma$ representing the element $u_{S_T}$ in $\pi_1(\Gamma,\ast)$.
Recall that $u_{S_T}$ has length $4L-1=8r(2r-1)^2-1$. Therefore
\[
|\beta(\Gamma,T)|\le 8rd(2r-1)^2 -1
\]
where $d=\#V\Gamma$.
In particular, if $\Gamma$ is a $d$-fold cover of $R_N$ then $r=d(N-1)+1$ and
\[
|\beta(\Gamma,T)|\le 8d (d(N-1)+1)( 2d(N-1)+1)^2-1\le 500d^4N^3.
\]
\end{df}

The following proposition demonstrates the a ``filling-forcing" property of the path $\beta(\Gamma, T)$

\begin{prop}\label{p:T}
Let $\Gamma$ be as in Definition \ref{d:beta} with $T$ a maximal tree. Let $S_T$ and $\beta(\Gamma,T)$ be as before. Let $\gamma \in \pi_1(\Gamma,\ast)$ be such that $\gamma$ is represented by a cyclically reduced circuit in $\Gamma$ containing $\beta(\Gamma,T)$ as a subpath. Then $\gamma$ is filling in $\pi_1(\Gamma,\ast)$.
\end{prop}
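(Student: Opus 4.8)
The plan is to argue exactly as in the proof of the preceding proposition about the simplicity-blocking path $\alpha(\Gamma,T)$, but with Corollary~\ref{c:np} replaced by the filling criterion of Proposition~\ref{p:ChM}. First I would apply part~(2) of Proposition~\ref{p:rw-basis} to the given cyclically reduced circuit representing $\gamma$, rewriting it as a cyclically reduced word $w$ over the dual basis $S_T=\{b_1,\dots,b_r\}$ of $\pi_1(\Gamma,\ast)$: one deletes every edge lying in $T$ and replaces each occurrence of $e_i^{\pm 1}$, for $e_i\in E_+(\Gamma-T)$, by $b_i^{\pm 1}$. By Definition~\ref{d:delta} of the path $\delta(\cdot)$ and Definition~\ref{d:beta} of $\beta(\Gamma,T)=\delta(u_{S_T})$, this rewriting transcribes the subpath $\beta(\Gamma,T)$ precisely into the word $u_{S_T}$. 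Hence the hypothesis that the cyclically reduced circuit contains $\beta(\Gamma,T)$ as a subpath yields that $u_{S_T}$ occurs as a subword of some cyclic permutation of $w$.

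Next I would use the defining property of $u_{S_T}$ from the Convention preceding Definition~\ref{d:beta}: it is the concatenation $u_1y_1u_2y_2\cdots y_{L-1}u_L$, where $\{u_1,\dots,u_L\}$ is the complete list of all freely reduced words of length $3$ over $S_T^{\pm 1}$. In particular every freely reduced word of length $3$ over $S_T^{\pm 1}$ occurs as a subword of $u_{S_T}$, and therefore, by the previous step, occurs as a subword of a cyclic permutation of $w$. Since $r=|S_T|\ge 2$ (which is part of the setup, being needed already to define $u_{S_T}$), Proposition~\ref{p:ChM}, applied to the free group $F(S_T)=\pi_1(\Gamma,\ast)$ with the free basis $S_T$, now gives that $w$ is filling in $\pi_1(\Gamma,\ast)$. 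As $w$ is a cyclically reduced form over $S_T$ of an element in the conjugacy class of $\gamma$, and being filling is invariant under automorphisms of $\pi_1(\Gamma,\ast)$ by Remark~\ref{r:ps} (in particular under inner automorphisms, hence a conjugacy invariant), we conclude that $\gamma$ is filling in $\pi_1(\Gamma,\ast)$, as claimed.

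The only point requiring care — and it is the exact analogue of the corresponding point in the $\alpha(\Gamma,T)$ argument — is the passage from ``$\beta(\Gamma,T)$ is a subpath of the circuit'' to ``$u_{S_T}$ is a subword of a cyclic permutation of $w$''. This is legitimate because $\beta(\Gamma,T)$ is a reduced edge-path whose $S_T$-transcription is the freely reduced word $u_{S_T}$, and because the occurrence sits inside an already cyclically reduced circuit, so that no further free or cyclic cancellation can truncate or destroy it when the circuit is rewritten over $S_T$. I expect this bookkeeping, rather than any substantial new argument, to be the only thing one really has to get right.
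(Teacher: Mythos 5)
Your proposal is correct and follows essentially the same approach as the paper's own proof: rewrite $\gamma$ as a cyclically reduced word $w$ over $S_T$ via Proposition~\ref{p:rw-basis}, observe that the occurrence of $\beta(\Gamma,T)$ as a subpath transcribes to an occurrence of $u_{S_T}$ in (a cyclic permutation of) $w$, and conclude by Proposition~\ref{p:ChM}. The extra bookkeeping you supply — cyclic-permutation care, conjugacy invariance of being filling, and the observation that $r\ge 2$ — is sound and merely makes explicit what the paper leaves implicit.
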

\begin{proof}
We first use Proposition \ref{p:rw-basis} to rewrite $\gamma$ as a
cyclically reduced word $w$ in $S_T=\{b_1,\dots,b_r\}$. Then  the
occurrence of $\beta(\Gamma,T)$ in  $\gamma$ produces an occurrence
of the reduced word $u_1y_1u_2y_2u_3y_3\ldots y_{l-1}u_l$ in $w$. Since every reduced word of length 3 now occurs in $w$, by
Proposition~\ref{p:ChM} $\gamma$ is filling in $F(b_1,\dots,b_r)=\pi_1(\Gamma,\ast)$.

\end{proof}

We are now in a position to prove a key proposition that is used in the proofs of  Theorem~\ref{thm:A} and Theorem~\ref{thm:B}:

\begin{prop}\label{p:five}
Let $N\ge 2$. Then there exists a constant $c_1=c_1(N)>0$ with the following property.
Let $(\Gamma,\ast)$ be a connected $d$-fold cover of the $N$-rose $R_N$, where $d\ge 1$ and let $T\subseteq \Gamma$ be a maximal subtree of $\Gamma$. Then there exists a freely reduced word $w=w(\Gamma,T)$ with $|w|\le c_1d^5$ such that for every vertex $x\in V\Gamma$ the path $p(x,w)$ from $x$ labeled by $w$ in $\Gamma$ contains $\beta(\Gamma,T)$ as a subpath.
\end{prop}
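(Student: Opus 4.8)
The plan is to mimic almost verbatim the concatenation argument used in the proof of Proposition~\ref{p:v}, replacing the path $\alpha(\Gamma,T)$ by the path $\beta(\Gamma,T)$ and keeping track of the (now larger) length bounds. First enumerate the vertices $V\Gamma=\{x_1,\dots,x_d\}$, fix the maximal tree $T$, the dual basis $S_T=\{b_1,\dots,b_r\}$ with $r=d(N-1)+1$, and recall from Definition~\ref{d:beta} that $|\beta(\Gamma,T)|\le 500 d^4 N^3$. Let $e$ be the first edge of $\beta(\Gamma,T)$. Since $\Gamma$ is a $d$-fold cover of $R_N$ with $N\ge 2$, the group $\pi_1(\Gamma)$ has rank $r=d(N-1)+1\ge 2$, so Lemma~\ref{l:rp} applies: between any two edges of $\Gamma$ there is a reduced connecting path of length $\le 3d$.

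Next, build the word $w$ inductively, exactly as in Proposition~\ref{p:v}. Start at $x_1$, take the tree path $[x_1,\ast]_T$ of length $\le d-1$ ending in some edge $e_1$, then insert the reduced bridge $p'$ of length $\le 3d-2$ from $e_1$ to $e$ provided by Lemma~\ref{l:rp}, then append $\beta(\Gamma,T)$; call the label of this reduced path $w_1$, so $|w_1|\le (d-1)+(3d-2)+500d^4N^3$. Now suppose $w_1,\dots,w_{i-1}$ have been defined as labels of reduced paths $p_1,\dots,p_{i-1}$; starting at $x_i$, follow the path $p_{i-1}'$ labeled $w_1\cdots w_{i-1}$, let $e_i$ be its terminal edge, use Lemma~\ref{l:rp} to get a reduced bridge from $e_i$ to $e$ of length $\le 3d$ (so the interior part has length $\le 3d-2$), and append $\beta(\Gamma,T)$; let $w_i$ be the label of this new reduced path $p_i$. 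As in Proposition~\ref{p:v}, the concatenation $p_{i-1}'p_i$ is reduced because the bridge begins with the edge $e_i$ already traversed (so no backtracking occurs at the splice) — this is precisely the role played by the fact that $p(e_i,e)$ in Lemma~\ref{l:rp} starts at $e_i$. Put $w:=w_1w_2\cdots w_d$. By construction, for each $i$ the reduced path starting at $x_i$ with label $w_1\cdots w_i$ already contains $\beta(\Gamma,T)$ as a subpath, hence so does the path starting at $x_i$ labeled $w$.

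Finally, estimate $|w|$. Each $|w_i|\le (3d-2)+500d^4N^3$ for $2\le i\le d$ (and $|w_1|$ is bounded by the same plus $d-1$), so $|w|\le d\bigl((d-1)+(3d-2)+500d^4N^3\bigr)\le c_1 d^5$ for a suitable constant $c_1=c_1(N)$, say $c_1=501N^3$. This gives the desired bound, completing the proof. The one point requiring genuine (if routine) care is the verification that every splice $p_{i-1}'p_i$ is reduced: this rests on the output of Lemma~\ref{l:rp} being a path that \emph{begins with the prescribed edge} $e_i$ and \emph{ends with the prescribed edge} $e$ (the first edge of $\beta$), so that no cancellation occurs at either junction, exactly as in Proposition~\ref{p:v}; I expect this bookkeeping, rather than any new idea, to be the main thing to get right.
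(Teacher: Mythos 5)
Your proof is correct and follows exactly the same strategy as the paper's: enumerate the vertices, use Lemma~\ref{l:rp} to build short reduced bridges to the first edge of $\beta(\Gamma,T)$, and splice copies of $\beta(\Gamma,T)$ together so that the label read from $x_i$ already passes through $\beta(\Gamma,T)$ after the first $i$ blocks. The only (harmless) difference is a slightly different constant in the final bound ($501N^3$ versus the paper's $1000N^3$), stemming from a marginally more careful accounting of the tree-path length.
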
 
\begin{proof}
Let us begin by enumerating the vertices of $V\Gamma= \{x_1, x_2, \ldots, x_d\}$. Let $H \leq F_N$ be the subgroup of index $d$ that is represented by $(\Gamma,\ast)$.
We have seen above that $|\beta(\Gamma,T)|\le 500d^4N^3$.

Let $e$ be the first edge of the path $\beta(\Gamma,T)$. Starting at the vertex $x_1 \in V\Gamma$, there exists a unique path $[x_1, \ast]_T$ of length $\leq d-1$ with terminal edge $e_1$ (say). Lemma \ref{l:rp} then gives us a reduced path $p(e_1,e)=e_1p'e$ of length $\leq 3d$. Let the word $w_1$ be the label of the path $p_1=[x_1, \ast]_Tp'\beta(\Gamma, T)$. Note that $|w_1|\le 500d^4N^3 +3d$.

 Starting at the vertex $x_2$ we follow a path $p_1'$ that has label $w_1$. Let $e_2$ be the terminal edge of the path $p_1'$. Then from Lemma \ref{l:rp}, there is a reduced path $p(e_2,e)=e_2p_1''e$  with $|p(e_2,e)| \leq 3d$ and $|p_1''|\leq 3d-2$. Let the word $w_2$ be the label of the path $p_2=p_1''\beta(\Gamma,T)$.  Thus $|w_2|=|p_2| \le 500d^4N^3 +3d$.
 
 Now the path $p_1'p_2=p_1'p_1''\beta(\Gamma,T)$ is reduced, starts at $x_2$, ends in $\beta(\Gamma, T)$, has label $w_1w_2$ and has length
\[
|p_1'p_2|=|w_1w_2|\le 2(500d^4N^3 +3d).
\]
 
We proceed inductively as follows.  For $2\le i\le d$ suppose that we have already constructed freely reduced words $w_1, \dots, w_{i-1}\in F_N=F(A)$ of length $|w_j|\le   500d^4N^3 +3d$ such that the word $w_1\dots w_{i-1}$ is freely reduced and such that reading $w_1\dots w_{i-1}$ from the vertex $x_{i-1}$ gives a reduced path in $\Gamma$  ending in $\beta(\Gamma,T)$.
 
 Starting at vertex $x_i$ we follow the path $p_{i-1}'$ labeled by the word $w_1w_2 \ldots w_{i-1}$. Let $e_{i}$ be the terminal edge of the path $p_{i-1}'$. Then the path $p(e_{i}, e)=e_{i}p_{i-1}''e$ is reduced with $|p_{i-1}''| \leq 3d-2$. Let the word $w_i$ be the label of the reduced path $p_i= p_{i-1}''\beta(\Gamma,T)$.  We again have $|w_i|\le   500d^4N^3 +3d$. 
Now the path $p_{i-1}'p_i=p_{i-1}'p_{i-1}''\beta(\Gamma,T)$ is reduced, starts with $x_i$ and ends in $\beta(\Gamma,T)$, completing the inductive step.

Finally let  $w:= w_1w_2 \ldots w_d$. Then $w$ is freely reduced, has $|w|\le 500d^5N^3 +3d^2\le 1000N^3 d^5$.
By construction $w$ has the property that for $i=1,\dots, d$ reading $w$ from $x_i$ gives a path in $\Gamma$  containing $\beta(\Gamma,T)$ as a subpath. We put $w(\Gamma,T):=w$ and $c_1=1000N^3$. The conclusion of the proposition now holds.
\end{proof} 

The freely reduced word  $w=w(\Gamma,T)$ in $F(A)$  can be viewed as a ``filling forcing" word for the elements of the fundamental group of a $d$-fold cover $\Gamma$ of $R_N$.

 \section{A lower bound for the non-filling index function}

\begin{rk}
Let $F_N = F(a_1, \ldots, a_N)$ be free of rank $N\ge 2$, as before. 
It is well-known (see, for example, \cite{LS}) that for an integer $d\ge 1$ there are $\leq (d!)^N$ subgroups of index $d$ in $F_N$.
Indeed, every subgroup of index $d$ in $F_N$ can be uniquely represented by a finite connected folded $2N$-regular $A$-graph on vertices $1,\dots, d$, where $1$ is viewed as a base-vertex. 
Every such graph $\Gamma$ is uniquely specified by choosing an ordered $N$-tuple of permutations in $S_d$. Indeed, if $\sigma_1,\dots, \sigma_N\in S_d$, we construct $\Gamma$ with $V\Gamma=\{1,\dots, d\}$ by putting an edge from $j$ to $\sigma_i(j)$ labeled by $a_i$ for $1\le i\le N$,  and $1\le j\le d$. 

Thus indeed $F_N$ has $\leq (d!)^N$ subgroups of index $d$ and it has $\leq d(d!)^N$ subgroups of index $\le d$.  
\end{rk}

\begin{thm} \label{t:newmain}
Let $N\geq2$ and let $F_N = F(A)$ where $A = a_1, \ldots, a_N$. Then there exists a constant $c > 0$ and an integer $M\ge 1$ such that 
for all $n \geq M$ we have
\[
\fp(n) \geq \fs(n) \geq \ff(n) \ge  c \frac{\log n}{ \log \log n}.
\]
\end{thm}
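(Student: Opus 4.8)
The plan is to produce, for each large $n$, a single root-free element $g\in F_N$ with $|g|_A\le n$ that is filling in every finite-index subgroup of sufficiently small index containing it, which forces $\dfi(g)$, and hence $\ff(n)$, to be at least of order $\log n/\log\log n$. The two ingredients are Proposition~\ref{p:five}, which for every connected $d$-fold cover $\Gamma$ of $R_N$ with a chosen maximal tree $T$ provides a ``filling-forcing'' word $w(\Gamma,T)$ with $|w(\Gamma,T)|\le c_1 d^5$ such that reading $w(\Gamma,T)$ from \emph{any} vertex of $\Gamma$ traces a path that contains $\beta(\Gamma,T)$ as a subpath; and Proposition~\ref{p:T}, which guarantees that any element of $\pi_1(\Gamma,\ast)$ represented by a cyclically reduced circuit containing $\beta(\Gamma,T)$ is filling in $\pi_1(\Gamma,\ast)$.

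Fix $d\ge 1$. Every subgroup $H\le F_N$ of index $\le d$ corresponds to a finite connected cover $\Gamma_H\to R_N$ of degree $[F_N:H]$; choose once and for all a maximal tree $T_H\subseteq\Gamma_H$. Let $g_0$ be the concatenation, in any fixed order, of the words $w(\Gamma_H,T_H)$ over all subgroups $H$ of index $\le d$, where between consecutive blocks (and, if needed, wrapping around from the last block to the first) one inserts a single letter so that $g_0$ is freely and cyclically reduced. As recalled in the remark just before the theorem, $F_N$ has at most $d(d!)^N$ subgroups of index $\le d$, so $|g_0|_A\le d(d!)^N(c_1 d^5+1)\le B\,d^6(d!)^N$ for some $B=B(N)>0$.

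Now let $H$ be any subgroup of index $\le d$ with $g_0\in H$, with cover $\Gamma_H$ and basepoint $\ast$. Since $g_0\in H$ is cyclically reduced over $A$ and $\Gamma_H$ is folded, reading $g_0$ from $\ast$ yields a cyclically reduced circuit representing $[g_0]\in\pi_1(\Gamma_H,\ast)\cong H$. As $w(\Gamma_H,T_H)$ occurs as a subword of $g_0$, the subpath of this circuit that reads $w(\Gamma_H,T_H)$ --- starting at whatever vertex it happens to reach --- contains $\beta(\Gamma_H,T_H)$ by Proposition~\ref{p:five}; hence the whole circuit does. By Proposition~\ref{p:T}, $[g_0]$ is filling in $\pi_1(\Gamma_H,\ast)$, i.e. $g_0$ is filling in $H$. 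Thus no subgroup of index $\le d$ witnesses non-filling of $g_0$, so $\dfi(g_0)\ge d+1$. Letting $g$ be the root-free element with $g_0=g^k$ for some $k\ge 1$, Lemma~\ref{l:aut}(4) gives $\dfi(g)\ge\dfi(g^k)=\dfi(g_0)\ge d+1$, while $|g|_A\le |g_0|_A$. Finally, given $n$ I choose $d=d(n)$ maximal with $B\,d^6(d!)^N\le n$; then $g$ is root-free, $|g|_A\le n$, and $\ff(n)\ge\dfi(g)\ge d+1$. Maximality of $d$ gives $B(d+1)^6((d+1)!)^N>n$, and using $\log((d+1)!)\le(d+1)\log(d+1)$ this yields $(d+1)\log(d+1)\ge\tfrac{1}{2N}\log n$ for all large $n$; the elementary bound ``$x\log x\ge L\Rightarrow x\ge L/\log L$'' then gives $d+1\ge c\,\frac{\log n}{\log\log n}$ for a suitable $c>0$ and all $n\ge M$. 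Since $\fp(n)\ge\fs(n)\ge\ff(n)$ by Lemma~\ref{l:aut}(2), the theorem follows.

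The one step carrying genuine content is the third one: that a single block $w(\Gamma_H,T_H)$ lying somewhere inside the long word $g_0$ still forces $\beta(\Gamma_H,T_H)$ into the circuit of $g_0$ read in $\Gamma_H$. This works precisely because Proposition~\ref{p:five} is uniform over the starting vertex, so it is irrelevant where in $\Gamma_H$ the reading of that block begins. Everything else is bookkeeping: the $\le d(d!)^N$ count of small-index subgroups, and the Stirling-type computation converting $d^6(d!)^N\le n$ into $d\gtrsim\log n/\log\log n$. The minor technical points --- that $g_0$ can be made cyclically reduced by the choice of connecting letters, and that passing to the root $g$ of $g_0$ keeps $g$ cyclically reduced and does not increase length --- are routine.
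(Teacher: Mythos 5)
Your proof is correct and follows essentially the same route as the paper's own argument: you enumerate all $\le d(d!)^N$ subgroups of index $\le d$, concatenate the filling-forcing words $w(\Gamma_H,T_H)$ from Proposition~\ref{p:five} with interleaved padding letters to obtain a single cyclically reduced word $z_d$ of length $O(d^6(d!)^N)$, invoke Proposition~\ref{p:T} to show $z_d$ is filling in every index-$\le d$ subgroup containing it, and then run the Stirling estimate to turn $d^6(d!)^N\lesssim n$ into $d\gtrsim\log n/\log\log n$. The one place you go slightly beyond what the paper writes down is the passage to the root $g$ of $z_d$ via Lemma~\ref{l:aut}(4), which is needed because $\ff$ is defined as a supremum over root-free elements; this is a correct and useful detail that the paper's proof leaves implicit.
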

\begin{proof}

Let $d\ge 1$ be an integer. Denote  $m(d)= m := d(d)!^N$. Enumerate all the subgroups of $F_N$ of index $\le d$ as $H_1,\dots, H_m$ (we do allow repetitions in this list since the actual number of such distinct subgroups is $<m(d)$. Let $\Gamma_1,\dots, \Gamma_m$ be the base-pointed finite covers of the rose $R_N$ representing the subgroups $H_1,\dots, H_m$.

For $i=1,\dots, m$ let $w_i\in F(A)$ be the freely reduced ``filling forcing" word with $|w_i|\le c_1d^5$ corresponding to $\Gamma_i$ as provided by Proposition~\ref{p:five}. 
We can now construct a freely reduced  and cyclically reduced word
\[
z_d:= w_1u_1w_2u_2\ldots u_{m-1}w_mu_m
\] 
where each $u_i$ is either the empty word or $u_i\in\{a_1, \ldots, a_N\}^{\pm 1}$. 
Then 
\[
||z_d||\le c_1 md^5=c_1 d^6 (d!)^N.
\]

We claim that $\dfi(z_d)> d$. Indeed, suppose not, that is suppose that $\dfi(z_d)\le d$. Then there exists $1\le i\le m$ such that $z_d\in H_i$ and that $z_d$ is a non-filling element of $H_i=\pi_1(\Gamma_i,\ast)$. Let $\gamma$ be the path in $\Gamma_i$ from $\ast$ to $\ast$ labeled by $z_d$. By Proposition~\ref{p:five} the fact that $z_d$ is cyclically reduced and contains $w_i$ as subword implies that $\gamma$ contains the path $\beta(\Gamma_i,T)$ as a subword. Hence, by Proposition~\ref{p:T}, $\gamma$ is a filling element in  $\pi_1(\Gamma_i,\ast)$, yielding a contradiction. Thus indeed $\dfi(z_d)> d$.

Now for $d\ge 1$ let $n_d:=c_1 d^6 (d!)^N$.  We also put $n_0=1$. Then for every integer $d\ge 0$ we have $\ff(n_d)>d$.
By Stirling's formula, there is $C>0$ such that for all sufficiently large $d\ge 1$ we have 
\[
d\ge C \frac{\log n_d}{\log \log n_d} \tag{$\dag$}
\]
Similarly, using a standard calculus argument we see that for all sufficiently large $d$ we have  
\[
\displaystyle \frac{\log(n_{d-1})}{\log\log(n_{d-1})} \ge \frac{1}{2} \frac{\log(n_d)}{\log\log(n_d)}. \tag{$\ddag$}
\]
Let $d_0\ge 2$ be such that for  all $d\ge d_0$ the inequalities $(\dag)$ and $(\ddag)$ hold and that the function  the function $\displaystyle \frac{\log x}{\log\log x}$
is monotone increasing on the interval $[n_{d_0-1},\infty)$.

Now let $n\ge n_{d_0+1}$ be an arbitrary integer.  There exists a unique $d\ge 0$ such that $n_{d-1}< n \le n_d$.
Since $\ff(n)$ is a non-decreasing function, we get that $\ff(n) \geq \ff(n_{d-1}) > d-1$ and $d-1\ge d_0$. 

Then
\begin{gather*}
\ff(n)\ge  \ff(n_{d-1}) > d-1 \ge C \frac{\log (n_{d-1})}{\log \log (n_{d-1}) }\ge \frac{C}{2} \frac{\log(n_d)}{\log\log(n_d)} \ge  \frac{C}{2} \frac{\log n}{\log \log n},
\end{gather*}
and the conclusion of the theorem follows.
\end{proof}

\color{black}

\section{Non-backtracking simple random walk on $F_N$}\label{sect:ttdrw}

\begin{conv}\label{conv:O}
In this paper we use the standard big-O and big-$\Theta$ conventions. For functions $f,g: \mathbb N\to \mathbb R$ we write $f=O(g)$ (or sometimes $f(n)=O(g(n))$) if there exist an integer $n_0\ge 1$ and a constant $C>0$ such that for all integers $n\ge n_0$ we have $|f(n)|\le C |g(n)|$. For such $f,g$ we write $f=\Theta(g)$ if $f=O(g)$ and $g=O(f)$. 
In particular, if $f(n)=O(g(n))$ and $\lim_{n\to\infty} g(n)=0$ then $\lim_{n\to\infty} f(n)=0$.
\end{conv}

Recall that we set for the free group $F_N=F(A)=F(a_1,\dots, a_N)$ (where $N\ge 2$) a distinguished free basis $A=\{a_1,\dots, a_N\}$.
Put $\Upsilon=A\cup A^{-1}$.

\begin{df}\label{d:X}
We consider the following finite-state Markov chain $\mathcal X$.  The set of states for $\mathcal X$ is $\Upsilon$. For $x,y\in \Upsilon$, the transition probability $P_{x,y}$ from $x$ to $y$ is defined as:
\[
P_{x,y}:=\begin{cases}\frac{1}{2N-1}, \quad \text{ if } y\ne x^{-1}\\
0, \quad \text{if } y=x^{-1}\end{cases}.
\]
\end{df}

Let $M$ be the transition matrix of $\mathcal X$. That is, $M$ is a $2N\times 2N$ matrix with columns and rows indexed by $\Upsilon$ where for $x,y\in \Upsilon$ the entry $m_{x,y}$ in $M$ is equal to $1$ if $y\ne x^{-1}$ and is equal to $0$ if $y=x^{-1}$.

We summarize the following elementary properties of $\mathcal X$, which easily follow from the definitions:

\begin{lem}
Let $N\ge 2$ and $\mathcal X$ be as in Definition~\ref{d:X}. Then:
\begin{enumerate}
\item $\mathcal X$ is an irreducible aperiodic finite-state Markov chain.
\item The uniform probability distribution $\mu_1$ on $\Upsilon$ is stationary for $\mathcal X$.
\item The matrix $M$ is an irreducible aperiodic nonnegative matrix with the Perron-Frobenius eigenvalue $\lambda=2N-1$.
\end{enumerate}
\end{lem}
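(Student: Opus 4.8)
The plan is to verify the three assertions in order; all of them follow from elementary combinatorics of the transition pattern of $\mathcal X$. For irreducibility and aperiodicity I would first note that for every $x\in\Upsilon$ one has $x\ne x^{-1}$ in $F_N$, so $P_{x,x}=\frac{1}{2N-1}>0$; thus $\mathcal X$ has a loop at every state, which immediately gives aperiodicity. For irreducibility, given $x,y\in\Upsilon$ with $y\ne x^{-1}$ we have $P_{x,y}>0$ outright, while to pass from $x$ to $x^{-1}$ I would use that $|\Upsilon|=2N\ge 4$, so there is some $z\in\Upsilon$ with $z\ne x$ and $z\ne x^{-1}$; then $P_{x,z}>0$ and $P_{z,x^{-1}}>0$ (the latter since $x^{-1}\ne z^{-1}$), so $x^{-1}$ is reachable from $x$ in two steps. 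Hence every state is reachable from every other, proving (1).

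For (2), I would observe that the probability transition matrix $P=(P_{x,y})$ is doubly stochastic: each row sums to $\sum_{y\ne x^{-1}}\frac{1}{2N-1}=1$, and each column sums to $\sum_{x\ne y^{-1}}\frac{1}{2N-1}=1$, because the condition $y\ne x^{-1}$ is equivalent to $x\ne y^{-1}$. Writing $\mu_1(x)=\frac{1}{2N}$ for all $x\in\Upsilon$, it then follows that $\sum_{x}\mu_1(x)P_{x,y}=\frac{1}{2N}\sum_{x}P_{x,y}=\frac{1}{2N}=\mu_1(y)$, so $\mu_1 P=\mu_1$ and $\mu_1$ is stationary.

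For (3), the matrix $M$ of the lemma is exactly $(2N-1)P$, so it has precisely the same zero/nonzero pattern as $P$ and hence is nonnegative, irreducible and aperiodic by the argument for (1). The all-ones vector $\mathbf 1$ satisfies $M\mathbf 1=(2N-1)\mathbf 1$, since every row of $M$ contains exactly $2N-1$ entries equal to $1$; thus $2N-1$ is an eigenvalue of $M$ with a strictly positive eigenvector. By the Perron--Frobenius theorem for irreducible nonnegative matrices, the Perron--Frobenius eigenvalue is the only eigenvalue admitting a positive eigenvector, whence $\lambda=2N-1$. (Equivalently: $P$ is an irreducible stochastic matrix, so its spectral radius is $1$, and therefore that of $M=(2N-1)P$ equals $2N-1$.)

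There is essentially no obstacle in this proof; the only point that deserves a moment's attention is the role of the hypothesis $N\ge 2$, which guarantees $|\Upsilon|\ge 4$ and hence the existence of the intermediate state $z$ used in the irreducibility argument — for $N=1$ the chain fails to be irreducible.
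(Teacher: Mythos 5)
Your proof is correct and follows essentially the same route as the paper: aperiodicity from the self-loop $P_{x,x}>0$, irreducibility from a one- or two-step connection between any pair of states, stationarity of the uniform distribution by direct verification (you add the useful observation that the transition matrix is doubly stochastic), and the Perron--Frobenius eigenvalue read off from the all-ones eigenvector. The only cosmetic difference is that the paper phrases irreducibility uniformly — for any $x,y\in\Upsilon$ there is a $z$ with $xzy$ freely reduced, so $P_{x,z}P_{z,y}>0$ — whereas you treat the case $y\neq x^{-1}$ in one step and $y=x^{-1}$ in two; both are fine.
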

\begin{proof}
For any $x,y\in \Upsilon$ there exists $z\in \Upsilon$ such that $xzy$ is a freely reduced word. Hence $P_{x,z}P_{z,y}>0$, which means that $\mathcal X$ is an irreducible Markov chain. The fact that for every $x\in \Upsilon$, we have $P_{x,x}>0$ implies that $\mathcal X$ is aperiodic. Thus (1) is verified.

Part (2) easily follows from the definition of $\mathcal X$ by direct verification.

Part (1) implies that $M$ is an irreducible aperiodic nonnegative matrix. Therefore, by the basic Perron-Frobenius theory, the spectral radius $\lambda:=\max\{|\lambda_\ast| : \lambda_\ast\in \mathbb C \text{ is an eigenvalue of } M\}$ is a positive real number which is itself an eigenvalue of $M$ called the Perron-Frobenius eigenvalue of $M$. It is also known that $\lambda$ admits an eigenvector with strictly positive coordinates, and that any other eigenvalue of $M$ admitting such an eigenvector is equal to $\lambda$. It is easy to see from the definition of $M$ that for the vector $v$ with all entries equal to $1$ we have $Mv=(2N-1)v$. Therefore $\lambda=2N-1$, as claimed.
\end{proof}

Let $\Omega=\Upsilon^\N=\{\omega=x_1,x_2,\dots | x_i\in \Upsilon\}$. We put the discrete topology on $\Upsilon$ and the product topology on $\Omega$ so that $\Omega$ becomes a compact Hausdorff space.  For every finite word $\sigma\in \Upsilon^\ast$ the \emph{cylinder} $Cyl(\sigma)\subseteq \Omega$ consists of all sequences $\omega\in \Omega$ with $\sigma$ as the initial segment. For each $\sigma\in \Upsilon^\ast$ the set $Cyl(\sigma)$ is compact and open in $\Omega$ and the sets $\{Cyl(\sigma)| \sigma\in \Upsilon^\ast\}$ provide a basis for the product topology on $\Omega$.

By using the uniform distribution $\mu_1$ on $\Upsilon$ as the initial distribution for $\mathcal X$, the Markov chain $\mathcal X$ defines a Borel probability measure $\mu$ on $\Omega$ via the standard convolution formula: 

For $\sigma=x_1\dots x_n\in \Upsilon^\ast$, 
\[
\mu(Cyl(\sigma))=\mu_1(x_1)P_{x_1,x_2}\dots P_{x_{n-1},x_n}.
\] 

Note that the support of $\mu$ is exactly $\partial F_N$, that is, the set of all semi-infinite freely reduced words $\omega=x_1,x_2,\dots $ over $\Upsilon$.

\begin{conv}
For $\sigma\in \Upsilon^\ast$ (where $\Upsilon^\ast$ is the set of all words over the alphabet $\Upsilon$) we denote $\mu(\sigma):=\mu(Cyl(\sigma))$. Also, for the remainder of this section we denote $\lambda:=2N-1$.
\end{conv}

The following is a direct corollary of the definitions:

\begin{lem}
Let $\sigma=x_1\dots x_n\in \Upsilon^{\ast}$, where $n\ge 1$.
Then
\[
\mu(\sigma)=\begin{cases} \frac{1}{2N(2N-1)^{n-1}}, \quad \text{ if $\sigma$ is freely reduced},\\
0, \quad \text{ if $\sigma$ is not freely reduced.}
 \end{cases}
\]
\end{lem}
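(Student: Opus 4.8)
The plan is simply to unwind the convolution formula that defines $\mu$ on cylinders,
\[
\mu(\sigma)=\mu(Cyl(\sigma))=\mu_1(x_1)\,P_{x_1,x_2}\cdots P_{x_{n-1},x_n},
\]
and substitute the explicit values of $\mu_1$ and of the transition probabilities coming from the Markov chain $\mathcal X$.

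First I would note that $\mu_1$ is the uniform distribution on the $2N$-element set $\Upsilon$, so $\mu_1(x_1)=\frac{1}{2N}$ whatever $x_1$ is. Then, recalling from Definition~\ref{d:X} that $P_{x,y}=\frac{1}{2N-1}$ when $y\ne x^{-1}$ and $P_{x,y}=0$ when $y=x^{-1}$, I would split into the two cases of the statement. If $\sigma=x_1\dots x_n$ is freely reduced, then by the very definition of a freely reduced word we have $x_{i+1}\ne x_i^{-1}$ for every $i\in\{1,\dots,n-1\}$, so each of the $n-1$ factors $P_{x_i,x_{i+1}}$ equals $\frac{1}{2N-1}$, and the product above evaluates to $\frac{1}{2N}\left(\frac{1}{2N-1}\right)^{n-1}=\frac{1}{2N(2N-1)^{n-1}}$. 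If, on the other hand, $\sigma$ is not freely reduced, there is an index $i\in\{1,\dots,n-1\}$ with $x_{i+1}=x_i^{-1}$, whence $P_{x_i,x_{i+1}}=0$ and the entire product, and therefore $\mu(\sigma)$, vanishes.

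There is no real obstacle here: the lemma is a direct corollary of the definitions, exactly as stated. The only point worth a moment's care is the degenerate case $n=1$, where the product of transition probabilities is empty and a one-letter word is vacuously freely reduced; there $\mu(\sigma)=\mu_1(x_1)=\frac{1}{2N}=\frac{1}{2N(2N-1)^{0}}$, so the formula still holds in that case as well.
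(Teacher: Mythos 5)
Your proof is correct and follows exactly the route the paper intends: the paper states the lemma as ``a direct corollary of the definitions'' without writing out the computation, and your argument is precisely that computation (unwinding $\mu(Cyl(\sigma))=\mu_1(x_1)P_{x_1,x_2}\cdots P_{x_{n-1},x_n}$ and substituting the values of $\mu_1$ and $P_{x,y}$), including the correct handling of the $n=1$ case.
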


\begin{notation}
Let $v,w\in \Upsilon^\ast$. We denote by $\langle v,w\rangle$ the number of times the word $v$ occurs as a subword of $w$.

For $n\ge 1$ let $S(n)$ be the set of all freely reduced words of length $n$ in $\Upsilon^\ast$ (so that $\#(S(n))=2N(2N-1)^{n-1}=\frac{2N}{2N-1}\lambda^n$), and let $\mu_n$ be the uniform probability distribution on $S(n)$.
\end{notation}

The following statement is a special case, when applied to $\mathcal X$, of Proposition~3.13 in~\cite{CM} (which in turn is based on the proof of the main result of Dinwoodie~\cite{Din}).

\begin{prop}\label{p:CM}
Let $\epsilon>0$ and $0<\ell <1$. Then there exist constants $C_1>1$ and $C_2>0$, depending on $\epsilon$ and $\ell$, with the following property. Let  $n\ge 1$ and $\sigma\in \Upsilon^\ast$ be a freely reduced word be such that $|\sigma|=\ell \log_\lambda n=\ell \log n/\log \lambda$. Then for $w_n\in S(n)$ we have
\[
1-P_{\mu_n}(\left|\langle \sigma, w_n\rangle -n \mu(\sigma)\right|<n^{\epsilon +(1-\ell)/2}) =O(C_1^{-n^{C_2}} ), 
\]
and therefore, since $\lambda=2N-1$ and $\mu(\sigma)=\frac{2N-1}{2N} \lambda^{-|\sigma|}=\frac{2N-1}{2N} n^{-\ell}$, 
\[
1-P_{\mu_n}(\left|\langle \sigma, w_n\rangle -\frac{2N-1}{2N} n^{1-\ell}\right|<n^{\epsilon +(1-\ell)/2}) =O(C_1^{-n^{C_2}} ), 
\]

\end{prop}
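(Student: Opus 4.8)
The plan is to deduce Proposition~\ref{p:CM} directly from Proposition~3.13 of~\cite{CM}, which is a concentration estimate for the number of occurrences of a slowly growing pattern along a random trajectory of a finite-state irreducible aperiodic Markov chain. The only work is to verify that the hypotheses apply to $\mathcal X$ and to translate the conclusion into the notation used here; no part of the probabilistic estimate itself needs to be reproved.

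First I would record the input data. By the Lemma above, $\mathcal X$ is a finite-state irreducible aperiodic Markov chain with stationary distribution $\mu_1$ (the uniform distribution on $\Upsilon$), and its transition matrix $M$ has Perron--Frobenius eigenvalue $\lambda=2N-1$; these are exactly the standing assumptions under which Proposition~3.13 of~\cite{CM} is stated. Next I would identify the measure: for a freely reduced word $\sigma$ of length $n$ one has $\mu(Cyl(\sigma))=\frac{1}{2N(2N-1)^{n-1}}=\frac{1}{\#S(n)}$, and $\mu(Cyl(\sigma))=0$ otherwise, so the law of the first $n$ letters $x_1\dots x_n$ of a $\mu$-random sequence $\omega\in\Omega$ is precisely the uniform measure $\mu_n$ on the sphere $S(n)$. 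Thus a word $w_n\in S(n)$ chosen according to $\mu_n$ is literally the length-$n$ prefix of a $\mu$-random trajectory of $\mathcal X$, and $\langle\sigma,w_n\rangle$ is the occurrence count to which Proposition~3.13 of~\cite{CM} applies.

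Then I would compute the relevant quantities for the pattern $\sigma$ with $|\sigma|=\ell\log_\lambda n$. Since $\sigma$ is freely reduced, $\mu(\sigma)=\mu_1(x_1)\prod_i P_{x_i,x_{i+1}}=\frac{1}{2N}(2N-1)^{-(|\sigma|-1)}=\frac{2N-1}{2N}\lambda^{-|\sigma|}=\frac{2N-1}{2N}n^{-\ell}$, hence $n\mu(\sigma)=\frac{2N-1}{2N}n^{1-\ell}$, which is exactly the centering constant appearing in the second displayed inequality. Proposition~3.13 of~\cite{CM} gives, for any fixed $\epsilon'>0$, that with probability $1-O(C_1^{-n^{C_2}})$ the count $\langle\sigma,w_n\rangle$ differs from $n\mu(\sigma)$ by at most a constant multiple of $n^{\epsilon'}\sqrt{n\mu(\sigma)}$; since $\sqrt{n\mu(\sigma)}\le n^{(1-\ell)/2}$, choosing $\epsilon'$ slightly smaller than $\epsilon$ absorbs the constant and yields the bound $n^{\epsilon+(1-\ell)/2}$, giving the first displayed inequality, and the second follows by substituting $n\mu(\sigma)=\frac{2N-1}{2N}n^{1-\ell}$. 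The hypothesis $0<\ell<1$ is what ensures both that $|\sigma|$ grows only logarithmically (so the pattern is admissible in the sense of~\cite{CM}) and that $n\mu(\sigma)\to\infty$, so that the fluctuation exponent $(1-\ell)/2$ is meaningful.

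The one genuinely delicate point — and the step I would flag as the main obstacle — is matching conventions with~\cite{CM}: confirming that Proposition~3.13 there is formulated with precisely this normalization (a trajectory of $n$ states versus $n$ transitions is a harmless reindexing, but one must check the admissible growth rate of the pattern is $\ell\log_\lambda n$ with $0<\ell<1$, that the centering is $n\,\mu(\sigma)$ in the stationary measure, and that the error probability has exactly the form $1-O(C_1^{-n^{C_2}})$), so that the substitutions above are legitimate. Once the translation is pinned down, the proposition is immediate.
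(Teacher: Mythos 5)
Your proposal is correct and matches the paper's approach exactly: the paper offers no proof beyond observing that the statement is a special case of Proposition~3.13 of~\cite{CM} applied to the chain $\mathcal X$, and you do the same, with the additional (sound) bookkeeping of checking irreducibility/aperiodicity, identifying $\mu_n$ with the law of the length-$n$ prefix, and computing $\mu(\sigma)=\frac{2N-1}{2N}\lambda^{-|\sigma|}$. Your caveat about matching conventions with~\cite{CM} is appropriate, since that is indeed the only content here.
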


\begin{cor}\label{c:CM}
Let $\epsilon>0$ and $0<\ell <1$. Let constants $C_1=C_1(\epsilon, \ell)>1$ and $C_2=C_2(\epsilon, \ell)>0$ be the constants provided by Proposition~\ref{p:CM}.

\begin{enumerate}
\item Let $n\ge 1$ and let $E_n\subseteq S(n)$ consist of those $w_n\in S(n)$ such that for every  freely reduced $\sigma\in \Upsilon^\ast$ with $|\sigma|=\ell \log_\lambda n=\ell \log n/\log \lambda$
we have
\[
\left|\langle \sigma, w_n\rangle -\frac{2N-1}{2N} n^{1-\ell} \right|<n^{\epsilon +(1-\ell)/2},
\]
Then \[1-P_{\mu_n}(w_n\in E_n)=O\left( n^\ell C_1^{-n^{C_2}}\right).\]
\item Suppose that $\epsilon>0, 0<\ell<1$ are chosen so that $\ell<1-2\epsilon$, and thus $1-\ell>\epsilon +(1-\ell)/2$.
Let $H_n\subseteq S(n)$ consist of all $w_n\in S(n)$ such that for every freely reduced $\sigma$ with $|\sigma|=\ell \log_\lambda n$ we have
\[\langle \sigma, w_n\rangle \ge \frac{2N-1}{4N}n^{1-\ell}.\]  


Then for  $n\ge n_0$ we have
\[
1-P_{\mu_n}(w_n\in H_n) =O(n^\ell C_1^{-n^{C_2}} ).
\]

\end{enumerate}
\end{cor}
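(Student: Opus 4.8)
The plan is to obtain both parts of Corollary~\ref{c:CM} from Proposition~\ref{p:CM} by a union bound followed by an elementary comparison of exponents; no new probabilistic input is needed.

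For part (1), I would first count the freely reduced words $\sigma\in\Upsilon^\ast$ of length $|\sigma|=\ell\log_\lambda n$: there are exactly $2N(2N-1)^{|\sigma|-1}=\frac{2N}{2N-1}\lambda^{|\sigma|}=\frac{2N}{2N-1}\,n^\ell$ of them, hence $O(n^\ell)$ many. The key point is that the constants $C_1,C_2$ and the implied constant in the $O$-notation of Proposition~\ref{p:CM} depend only on $\epsilon$ and $\ell$, not on the particular word $\sigma$ of the prescribed length. So, writing $B_\sigma\subseteq S(n)$ for the complementary ``bad'' event $\{\,w_n: |\langle\sigma,w_n\rangle-\tfrac{2N-1}{2N}n^{1-\ell}|\ge n^{\epsilon+(1-\ell)/2}\,\}$, Proposition~\ref{p:CM} gives $P_{\mu_n}(B_\sigma)=O(C_1^{-n^{C_2}})$ uniformly over such $\sigma$. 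Since $S(n)\smsm E_n=\bigcup_\sigma B_\sigma$, subadditivity of $P_{\mu_n}$ over the $O(n^\ell)$ summands yields $P_{\mu_n}(S(n)\smsm E_n)=O(n^\ell C_1^{-n^{C_2}})$, which is precisely the assertion of part (1). The polynomial factor $n^\ell$ is harmless since $C_1^{-n^{C_2}}$ decays faster than any power of $n$ grows.

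For part (2), I would show $E_n\subseteq H_n$ whenever $n\ge n_0$ and then invoke part (1). Suppose $w_n\in E_n$; then for every freely reduced $\sigma$ with $|\sigma|=\ell\log_\lambda n$ we have $\langle\sigma,w_n\rangle>\frac{2N-1}{2N}n^{1-\ell}-n^{\epsilon+(1-\ell)/2}$. The hypothesis $\ell<1-2\epsilon$ is equivalent to $\epsilon+(1-\ell)/2<1-\ell$, so $n^{1-\ell}$ strictly dominates $n^{\epsilon+(1-\ell)/2}$; by the choice of $n_0$ one has $n^{\epsilon+(1-\ell)/2}\le\frac{2N-1}{4N}n^{1-\ell}$ for all $n\ge n_0$, whence $\langle\sigma,w_n\rangle>\frac{2N-1}{2N}n^{1-\ell}-\frac{2N-1}{4N}n^{1-\ell}=\frac{2N-1}{4N}n^{1-\ell}$. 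As this holds for every $\sigma$ of the prescribed length, $w_n\in H_n$. Therefore $P_{\mu_n}(H_n)\ge P_{\mu_n}(E_n)\ge 1-O(n^\ell C_1^{-n^{C_2}})$ for $n\ge n_0$.

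The only points requiring care — bookkeeping rather than genuine obstacles — are the uniformity of the constants in Proposition~\ref{p:CM} over all words $\sigma$ of a fixed length, which is what keeps the union-bound cost down to a polynomial factor, and the exponent inequality $\epsilon+(1-\ell)/2<1-\ell$ coming from $\ell<1-2\epsilon$, which is exactly what makes the fluctuation term negligible against the mean $\tfrac{2N-1}{2N}n^{1-\ell}$ for large $n$.
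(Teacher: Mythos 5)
Your proposal is correct and matches the paper's argument: part (1) is the same union bound over the $O(n^\ell)$ words $\sigma$ of the prescribed length using Proposition~\ref{p:CM} uniformly, and your explicit demonstration that $E_n\subseteq H_n$ for $n\ge n_0$ is exactly what the paper compresses into ``Part~(2) now directly follows from part~(1).''
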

\begin{proof}
For every freely reduced $\sigma$ with $|\sigma|=\ell \log_\lambda n$ let $E'_{n,\sigma}$ consist of all $w_n\in S(n)$ such that $\left|\langle \sigma, w_n\rangle -n \mu(\sigma)\right|\ge n^{\epsilon +(1-\ell)/2}$.
Thus, by Proposition~\ref{p:CM}, for every such $\sigma$ we have $P_{\mu_n}(E'_{n,\sigma})= O(C_1^{-n^{C_2}} )$.

Suppose $w_n\not\in E_n$. Then there exists freely reduced $\sigma\in \Upsilon^\ast$ with $|\sigma|=\ell \log_\lambda n$ such that $w_n\in E'_{n,\sigma}$. Since there are $O(n^\ell)$ freely reduced words $\sigma$ with $|\sigma|=\ell \log_\lambda n$, it follows that $P_{\mu_n}(S(n)\setminus E_n)=O\left( n^\ell C_1^{-n^{C_2}}\right)$. Hence $1-P_{\mu_n}(E_n)=O\left( n^\ell C_1^{-n^{C_2}}\right)$, as required, and part (1) of Corollary~\ref{c:CM} is verified.

Part (2) now directly follows from part (1).
\end{proof}

\begin{notation}
For a freely reduced word $w\in \Upsilon^\ast$ let $\iota(w)$ be the
maximal initial segment of $w$ such that $(\iota(w))^{-1}$ is a
terminal segment of $w$. Let $\tilde w$ be the word obtained by
removing the initial and terminal segments of $w$ of length
$|\iota(w)|$. Thus $\tilde w$ is the cyclically reduced form of $w$.
\end{notation}

The following facts are well-known and easy to check by a direct
counting argument; see \cite{AO} for details:

\begin{lem}\label{lem:cr} The following hold:
\begin{enumerate}
\item For every $0<\epsilon_0 <1$ there exists $C_0>1$ such that for $w_n\in S(n)$
\[
1-P_{\mu_n} (|\iota(w_n)|\le \epsilon_0 n) = O(C_0^{-n}). 
\]
\item There is $C>1$ such that for $w_n\in S(n)$
\[
1-P_{\mu_n}(w_n \text{ is not a proper power in } F_N)=O(C^{-n}). 
\]
\end{enumerate}
\end{lem}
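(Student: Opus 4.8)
The plan is to prove both estimates by elementary counting over $S(n)$. Since $\mu_n$ is uniform on $S(n)$ and $\#S(n)=2N(2N-1)^{n-1}$, it suffices in each case to bound the number of ``bad'' words in $S(n)$ by $O(n\,(2N-1)^{-\theta n})$ for some fixed $\theta>0$; as $2N-1\ge 3$, such a bound is $O(C^{-n})$ for any $C$ with $1<C<(2N-1)^{\theta}$, the polynomial factor being harmless.

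For part (1), fix $0<\epsilon_0<1$. If $\epsilon_0\ge 1/2$ there is nothing to prove, since a freely reduced word of length $n$ always satisfies $|\iota(w_n)|\le (n-1)/2$; so assume $\epsilon_0<1/2$ and set $k=\lceil\epsilon_0 n\rceil$. Writing $w_n=x_1\cdots x_n$, the condition $|\iota(w_n)|\ge k$ forces $x_{n+1-i}=x_i^{-1}$ for $i=1,\dots,k$, so such a word is determined by its freely reduced length-$k$ prefix (at most $2N(2N-1)^{k-1}$ choices) together with a middle block of length $n-2k$ (at most $(2N-1)^{n-2k}$ further choices, the first letter of the block being constrained by reducedness); hence there are at most $2N(2N-1)^{n-k-1}$ of them. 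Dividing by $\#S(n)$ gives $P_{\mu_n}(|\iota(w_n)|> \epsilon_0 n)\le (2N-1)^{-k}\le (2N-1)^{-\epsilon_0 n}$, so $C_0=(2N-1)^{\epsilon_0}>1$ works.

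For part (2), one first records the elementary structural fact that a nontrivial $g\in F_N$ is a proper power if and only if its cyclically reduced form $\tilde g$ is a literal concatenation $\bar u^{\,m}$ for some nontrivial cyclically reduced word $\bar u$ and some $m\ge 2$ (if $g=v^m$ then $\tilde g=(\tilde v)^m$; conversely write $g=p\,\tilde g\,p^{-1}$). Thus every $w_n\in S(n)$ that is a proper power in $F_N$ can be written $w_n=p\,\bar u^{\,m}\,p^{-1}$, reduced as written, with $t=|p|$, $s=|\bar u|\ge1$, $m\ge 2$ and $2t+ms=n$, so in particular $t<n/2$. The number of freely reduced words of length $L$ of the form $\bar u^{\,m}$ with $\bar u$ cyclically reduced and $m\ge 2$ is at most $\sum_{s=1}^{\lfloor L/2\rfloor}2N(2N-1)^{s-1}=O((2N-1)^{L/2})$, and the number of freely reduced words of length $t$ is at most $2N(2N-1)^{t}$. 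Summing over the admissible $t$ and using that the summand $2N(2N-1)^{t}\cdot O((2N-1)^{(n-2t)/2})=O((2N-1)^{n/2})$ is independent of $t$,
\[
\#\{\,w_n\in S(n):\ w_n\ \text{is a proper power in}\ F_N\,\}\ \le\ O\big(n\,(2N-1)^{n/2}\big).
\]
Dividing by $\#S(n)$ yields $P_{\mu_n}(w_n\ \text{is a proper power})=O\big(n\,(2N-1)^{-n/2}\big)$, and any $C$ with $1<C<\sqrt{2N-1}$ (available since $N\ge 2$) absorbs the polynomial factor.

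The only step needing a little care is the reduction at the start of part (2): a freely reduced word can represent a proper power in $F_N$ without being a visible concatenation of a shorter word, because of cancellation inside the conjugating prefix $p$, so one must first pass to the cyclically reduced core before counting. Once that is done, both bounds are routine geometric-series estimates of the same flavor as those used elsewhere in the paper (e.g.\ in the Remark preceding Theorem~\ref{t:newmain}); see \cite{AO} for full details.
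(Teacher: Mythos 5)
Your proof is correct, and since the paper itself only notes that these estimates follow by a ``direct counting argument'' (citing Arzhantseva--Ol'shanskii rather than giving details), your counting-based argument is exactly the expected one. Both parts are sound: part (1) correctly bounds the number of words with a long conjugating prefix by $2N(2N-1)^{n-k-1}$, and part (2) correctly reduces to the cyclically reduced core, over-counts by summing a geometric series over period lengths and prefix lengths, and absorbs the resulting polynomial factor $n$ into the exponential.
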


\section{Bounding below the simplicity  and the non-filling index for random elements}

Recall that for a nontrivial element $g\in F_N$ we denote by $\dsi(g)$ the smallest $d\ge 1$ such that there exists a subgroup $H\le F_N$ with $[F_N:H]\le d$ such that $g\in H$ and, moreover, that $g$ belongs to a proper free factor of $H$. Similarly, for $g\in F_N-\{1\}$ we denote by $\dpr(g)$ the smallest $d\ge 1$ such that there exists a subgroup $H\le F_N$ with $[F_N:H]\le d$ such that $g\in H$ and, moreover, that $g$ is primitive in $H$.  As we have seen, for every $g\in F_N-\{1\}$ we have $\dsi(g)\le \dpr(g)\le ||g||_A$, where $A=\{a_1,\dots, a_n\}$ is a free basis of $F_N$. Recall that for $n\ge 1$ we denote by $\mu_n$  the uniform probability distribution on the sphere $S(n)\subseteq F(A)=F_N$.

For the remainder of the paper we adopt the convention that whenever we mention a word of length $t\ge 0$ where $t$ is not necessarily an integer, we actually mean a word of length $\lfloor t\rfloor$. 

We can now prove Theorem~\ref{thm:B} from the Introduction:

\setcounter{section}{1}
\setcounter{thm}{1}

\begin{thm}\label{t:main}
Let $N\ge 2$ and let $F_N=F(A)$ where $A=\{a_1,\dots, a_N\}$.  

Then there exist  constants $c(N)>0$, $D_1(N)>1$, $1>D_2(N)>0$, such that for $n\ge 1$ and for a freely reduced word $w_n\in F(A)$ of length $n$ chosen uniformly at random from the sphere $S(n)$ of radius $n$ in $F(A)$ we have
\[
1-P_{\mu_n}\left(\dsi(w_n)\ge c\log^{1/3} n\right)  = O\left((D_1)^{-n^{D_2}}\right)
\]
and 
\[
1-P_{\mu_n}\left(\dfi(w_n)\ge c\log^{1/5} n\right)  = O\left((D_1)^{-n^{D_2}}\right)
\]
so that
\[
\lim_{n\to\infty} P_{\mu_n}\left(\dsi(w_n)\ge c\log^{1/3} n\right) =1
\]
and 
\[
\lim_{n\to\infty} P_{\mu_n}\left(\dfi(w_n)\ge c\log^{1/5} n\right) =1
\]
\end{thm}

\begin{proof}

Choose $\epsilon>0$ and $0<\ell<1$ such that $\ell<1-2\epsilon$ (for concreteness we can take $\ell=1/2$ and $\epsilon=1/5$). 
Thus $1-\ell> \epsilon +(1-\ell)/2>0$.
Let $n_0\ge 1$ be such that for all $n\ge n_0$ we have \[\frac{2N-1}{4N}(0.99n)^{1-\ell}\ge (0.99n)^{\epsilon +(1-\ell)/2}\ge 1.\]  
(The choice of the number $0.99$ here is essentially arbitrary, and the argument would also work if $0.99$ is replaced by any other number sufficiently close to $1$.)
Let $C_1>1$ and $C_2>0$ be the constants provided by Corollary~\ref{c:CM}.  Note that we can assume that $0<C_2<1$ since decreasing $C_2$ preserves the validity of the conclusion of Corollary~\ref{c:CM}.

For $w_n\in S(n)$ denote by $w_n'$ the subword of $w_n$ obtained by removing the initial and terminal segments of length $0.005n$ from $w_n$. Then $|w_n'|=0.99n$ so that $w_n'\in S(0.99n)$. 
Since the uniform distribution on $A^{\pm 1}$ is stationary for the Markov chain $\mathcal X$, it follows that under the map $S(n)\to S(0.99n)$, $w_n\mapsto w_n'$ the uniform distribution $\mu_n$ on $S(n)$ projects to the uniform distribution $\mu_{0.99n}$ on $S(0.99n)$.

Let $H_n'$ be the event that for $w_n\in S(n)$ the word $w_n'$ satisfies the property that for every freely reduced word $\sigma\in F(A)$ with $|\sigma|=\ell\log_\lambda (0.99n)$ we have
\[\langle \sigma, w_n'\rangle \ge 1.\] 
Since for $n\ge n_0$ we have $\frac{2N-1}{4N}(0.99n)^{1-\ell}\ge (0.99n)^{\epsilon +(1-\ell)/2}\ge 1$, Corollary~\ref{c:CM} implies that
\[
1-P_{\mu_n}(H_n') =O((0.99n)^\ell C_1^{-(0.99n)^{C_2}} )= O\left( n^\ell (C_1)^{-0.99^{C_2}n^{C_2}} \right) =O\left((C_1')^{-n^{C_2'}}\right),
\]
where $C_1'=(C_1+1)/2$ and $C_2'=C_2/2$ (for the last inequality we use the fact that $0<C_2<1$). Note that $C_1'>1$ and $1>C_2'>0$.

Let $Q_n\subseteq S(n)$ be the event that for $w_n\in S(n)$ we have  $\iota(w_n)\le 0.001 n$. 
Lemma~\ref{lem:cr} implies that $P_{\mu_n}(Q_n)\ge 1-O(C_0^{-n})$ for some constant $C_0>1$.
Now let $H_n''$ be the set of all $w_n\in H_n'$ such that $\iota(w_n)\le 0.001 n$, that is, $H_n''=H_n'\cap Q_n$.

Then

\[
1-P_{\mu_n}(H_n'')=O\left((C_1')^{-n^{C_2'}}\right)-O(C_0^{-n})\ge_{n\to\infty} = O\left((D_1)^{-n^{D_2}}\right),
\]
where $D_1=\min\{C_0,C_1'\}$ and $D_2=\min\{C_2',1\}=C_2'$, so that $D_1>1$ and $1>D_2>0$.

We choose $c>0$ such that $c_0 c^3 \le \frac{\ell}{2\log(2N-1)}$, where $c_0>0$ is the constant provided by Proposition~\ref{p:v}.

Let $n\ge n_0$ and let $w_n\in S(n)$ be such that $w_n\in H_n''$.

Since $\iota(w_n)\le 0.001n$ and since $w_n'$ is the subword of $w_n$ obtained by removing the initial and terminal segments of length $0.005n$ from $w_n$, it follows that $w_n'$ is a subword of the cyclically reduced form $\tilde w_n$ of $w_n$. 

Let $d=\dsi(w_n)=\dsi(\tilde w_n)$.  We claim that $d \ge c\log^{1/3} n$.

Indeed, suppose not, that is, suppose that $d < c\log^{1/3} n$. Let $(\Gamma,x_0)$ be a $d$-fold cover of the $N$-rose $(R_N,\ast)$ such that $\tilde w_n$ lifts to a loop $\gamma_n$ from $x_0$ to $x_0$ in $\Gamma$ such that $\gamma_n$ belongs to a proper free factor of $\pi_1(\Gamma,x_0)$.  Note that since $\tilde w_n$ is cyclically reduced, the closed path $\gamma_n$ is also cyclically reduced.

Let $T$ be a maximal subtree of $\Gamma$ and let $v=v(\Gamma,T)$ be the freely reduced word in $F(A)$ with $|v|\le c_0d^3$ provided by Proposition~\ref{p:v}.  Thus $|v|\le c_0d^3\le c_0 c^3 \log n$.   

By definition of $H_n''$, the fact that $w_n\in H_n''$ implies that the word $w_n'$ contains as subwords all freely reduced words in $F(A)$ of length 
\[
\ell\log_\lambda (0.99n)=\frac{\ell}{\log(2N-1)} (\log n -|\log 0.99|)
\]
There is $n_1\ge n_0$ such that for all $n\ge n_1$ we have
\[
\frac{\ell}{\log(2N-1)} (\log n -|\log 0.99|)\ge \frac{\ell}{2\log(2N-1)} \log n.
\]
Hence for $n\ge n_1$ the word $w_n'$ contains as subwords all freely reduced words of length $\frac{\ell}{2\log(2N-1)} \log n$.
Since $|v|\le c_0 c^3 \log n\le \frac{\ell}{2\log(2N-1)} \log n$, it follows that $w_n'$ contains $v$ as a subword.

Recall that $w_n'$ is a subword of the cyclically reduced form $\tilde w_n$ of $w_n$. Therefore, by Proposition~\ref{p:v}, the path $\gamma_n$ in $\Gamma$, labeled by $\tilde w_n$, contains $\alpha(\Gamma,T)$ as a subpath.  Hence, by Corollary~\ref{cor:v}, $\gamma_n$ does not belong to a proper free factor of $\pi_1(\Gamma,x_0)$, yielding a contradiction. Thus $d=\dsi(w_n)\ge c\log^{1/3} n$, as claimed.

We have verified that for every $w_n\in H_n''$, where $n\ge n_1$, we have $\dsi(w_n)\ge c\log^{1/3} n$, and we also know that
\[
1-P_{\mu_n}(H_n'')=O\left((D_1)^{-n^{D_2}}\right).
\]
The conclusion of Theorem~\ref{t:main} regarding $\dsi(w_n)$ is established.

The proof of the conclusion of Theorem~\ref{t:main} regarding $\dfi(w_n)$ is identical, with Proposition~\ref{p:T} and Proposition~\ref{p:five} used instead of  Proposition~\ref{p:v} and Corollary~\ref{cor:v}. We leave the details to the reader.

\end{proof}

\setcounter{section}{8}

\section{Untangling closed geodesics on hyperbolic surfaces}\label{s:surfaces}

\subsection{Lower bounds for $\deg_{\Sigma,\rho}$ and $f_{\Sigma,\rho}$ for hyperbolic surfaces.}

We need the following well-known fact:
\begin{lem}\label{lem:simple}
Let $S$ be a compact connected surface with $b\ge 2$ boundary components such that $\pi_1(S)$ is free of rank $\ge 2$. Let $\gamma$ be an essential simple closed curve (possible peripheral) on $S$ and let $x\in S$ be a base-point for  $S$. Then the element of $\pi_1(S,x)$ given by any loop at $x$ corresponding to $\gamma$ belongs to a proper free factor of $\pi_1(S,x)$. 
\end{lem}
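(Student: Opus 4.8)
The idea is to cut $S$ along $\gamma$ and convert the resulting surface decomposition into a free‑product decomposition of $\pi_1(S,x)$ by van Kampen's theorem. Since the loop at $x$ ``corresponding to $\gamma$'' is only well defined up to conjugacy (it depends on the choice of a path from $x$ to $\gamma$), and since being contained in a proper free factor is invariant under conjugation — indeed under all automorphisms — it suffices to exhibit a proper free factor of $\pi_1(S,x)$ whose conjugacy closure contains $[\gamma]$. Two elementary facts will be used repeatedly. \emph{Topological fact:} if $T$ is a compact connected surface with at least two boundary circles, then every boundary circle represents a primitive element of the free group $\pi_1(T)$; this is seen from a standard free presentation $\pi_1(T)=\langle a_1,b_1,\dots,a_g,b_g,c_1,\dots,c_{m-1}\rangle$, in which the $m$ boundary curves are $c_1,\dots,c_{m-1}$ (which are free generators) and $c_m:=([a_1,b_1]\cdots[a_g,b_g]c_1\cdots c_{m-1})^{-1}$ (primitive because, with $m\ge 2$, it equals $W c_{m-1}$ for a word $W$ not involving $c_{m-1}$, so replacing $c_{m-1}$ by $Wc_{m-1}$ is a Nielsen transformation); the same computation works for non‑orientable $T$. \emph{Algebraic fact:} if $G=P\ast_C Q$ with $C\cong\Z$ generated by an element that is primitive in $Q$, then, writing $Q=C\ast Q'$, a Tietze computation gives $G\cong P\ast Q'$, and the free factor $P$ is proper as soon as $Q'\ne 1$, i.e.\ as soon as $Q\ne C$ (equivalently $Q$ is not infinite cyclic).

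\textbf{Case 1: $\gamma$ is separating} (this includes the peripheral case). Write $S=S_1\cup_\gamma S_2$ with $\gamma$ a boundary circle of each $S_i$ and $\partial S=(\partial S_1\smsm\gamma)\sqcup(\partial S_2\smsm\gamma)$. Because $\gamma$ is essential neither $S_i$ is a disk, so each $\pi_1(\gamma)\cong\Z\to\pi_1(S_i)$ is injective and van Kampen (with base point on $\gamma$) gives $\pi_1(S,x)=\pi_1(S_1)\ast_{\langle\gamma\rangle}\pi_1(S_2)$. I will then check that at least one $S_i$, say $S_2$, has $\ge 2$ boundary circles and is not an annulus: if $S_1$ is an annulus then $S_2$ has $1+(b-1)=b\ge 2$ boundary circles and is not an annulus (else $S$ would have free rank $1$); symmetrically if $S_2$ is an annulus; and if neither is an annulus, then since a non‑disk surface with a single boundary circle has positive genus, the alternative ``both $S_i$ have one boundary circle'' would give $\partial S=\emptyset$, contradicting $b\ge 2$. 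For that $S_2$ the topological fact makes $\gamma$ primitive in $\pi_1(S_2)$, and the algebraic fact yields $\pi_1(S,x)\cong\pi_1(S_1)\ast K_2$ with $K_2\ne 1$ (as $\pi_1(S_2)$ is not infinite cyclic). Hence $\pi_1(S_1)$ is a proper free factor of $\pi_1(S,x)$ containing a conjugate of $[\gamma]$, since $\gamma$ is a boundary curve of $S_1$.

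\textbf{Case 2: $\gamma$ is non‑separating} (then $\gamma$ is automatically non‑peripheral; note also that $S$ cannot be a pair of pants, whose simple closed curves are all peripheral, so $\pi_1(S)$ has rank $\ge 3$, whence $\chi(S)=1-\mathrm{rank}\le -2$). Assume first $\gamma$ is two‑sided. Choose a simple closed curve $\delta$ with $i(\gamma,\delta)=1$ (such a dual curve exists for every non‑separating $\gamma$), isotoped into the interior, and let $N$ be a regular neighborhood of $\gamma\cup\delta$: it is a one‑holed torus with $\pi_1(N)=\langle\gamma,\delta\rangle\cong F_2$, so $\gamma$ is primitive in $\pi_1(N)$. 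Put $\eta=\partial N$, a single circle. Since $N$ and $S\smsm N$ are not disks, both edge maps of $\pi_1(\eta)$ are injective, and van Kampen gives $\pi_1(S,x)=\pi_1(N)\ast_{\langle\eta\rangle}\pi_1(S\smsm N)$; moreover $S\smsm N$ is connected (its frontier in $S$ is the single circle $\eta$, so a second component would be a component of the connected surface $S$) with $1+b\ge 3$ boundary circles, so $\eta$ is primitive in $\pi_1(S\smsm N)$ by the topological fact, and $\pi_1(S\smsm N)$ is not infinite cyclic. The algebraic fact then gives $\pi_1(S,x)\cong\pi_1(N)\ast K$ with $K\ne 1$, so $\pi_1(N)$ is a proper free factor containing $[\gamma]$. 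If instead $\gamma$ is one‑sided, run the same argument with $N$ a Möbius‑band neighborhood of $\gamma$: $\pi_1(N)=\langle\gamma\rangle$, $\eta=\partial N=\gamma^2$ in $\pi_1(N)$, $S\smsm N$ again has $\ge 3$ boundary circles so $\eta$ is primitive there, and $\pi_1(S,x)\cong\langle\gamma\rangle\ast K$ with $K\ne 1$; thus $[\gamma]$ is even primitive in $\pi_1(S,x)$.

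\textbf{Main obstacle.} The surface geometry (van Kampen, cutting along curves, existence of a dual curve, one‑holed‑torus neighborhoods) is routine. The point that needs genuine care is the case analysis in Case 1: verifying in each degenerate configuration — a side being an annulus (i.e.\ $\gamma$ peripheral), or a side having a single boundary circle — that one of the two vertex groups really does contain $\gamma$ as a primitive element \emph{and} has nontrivial complementary free factor. This is exactly the step where the hypothesis $b\ge 2$ is essential, and it is worth isolating the ``algebraic fact'' above so that all cases are dispatched uniformly.
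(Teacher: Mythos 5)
Your proof is correct and its core mechanism is the same as the paper's: boundary circles of a surface with at least two boundary components are primitive, and one can then Tietze-eliminate the amalgamating generator from a curve-induced splitting. The organization differs in two useful ways. First, you fold the peripheral case into the separating case by allowing $S_1$ to be an annulus, whereas the paper handles ``$\gamma$ peripheral'' separately by observing directly that $\gamma$ is then primitive in $\pi_1(S)$; these are equivalent, and your version has the virtue of dispatching all of Case~1 with a single ``algebraic fact.'' Second, and more substantively, your treatment of non-separating $\gamma$ is genuinely different from the route the paper implicitly suggests: the paper cuts along $\gamma$ itself, producing an HNN splitting over $\langle\gamma\rangle$ (and then says ``similar, details left to the reader''), whereas you take a one-holed-torus or M\"obius-band neighborhood $N$ of $\gamma$ and amalgamate over $\partial N$, thereby reducing the non-separating case to the same amalgamated-product calculus as the separating case. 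That uniformization is a real simplification: you never have to write out the HNN Tietze move, and you get the one-sided subcase for free. The price is that you need the existence of a dual curve and some point-set care about $S\smsm N$ being connected, both of which you handle correctly.

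One minor slip, which does not affect the argument: in Case~2 you assert that because $S$ is not a pair of pants, $\pi_1(S)$ has rank $\ge 3$. That inference is false as stated: a non-orientable genus-one surface with two boundary circles (a M\"obius band with two holes) has rank $2$, has $b\ge 2$, and carries a one-sided non-separating curve. Fortunately you never actually use ``rank $\ge 3$'' anywhere; what you use is that $S\smsm N$ has $\ge 3$ boundary circles, which is correct since $\partial(S\smsm N)$ consists of $\eta$ together with all $b\ge 2$ components of $\partial S$. So delete or correct the parenthetical; the proof stands.
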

\begin{proof}

Without loss of generality we may assume that $x\in\gamma$.

By assumption, we have $\pi_1(S,x)=F_m$ with $m\ge 2$. Since $S$ has $b\ge 2$ boundary components, it follows that every boundary component (when realized as a loop at $x$) represents a primitive element of $F_m$.

Let $\gamma$ be an essential simple closed curve on $S$. If $\gamma$ is peripheral, then $\gamma$ is a primitive element of $F_m$ and thus belongs to a proper free factor of $F_m$.

Suppose now that $\gamma$ is non-peripheral.  Then cutting $S$ along $\gamma$ yields a nontrivial splitting of  $F_m=\pi_1(S)$ as an amalgamated product (if $\gamma$ is separating) or as an HNN-extension (if $\gamma$ is non-separating) over $\langle \gamma\rangle=\mathbb Z$. 
Suppose that $\gamma$ is separating, and it cuts $S$ into two compact surfaces $S_1$ and $S_2$ with $S_1\cap S_2=\gamma$ and $S_1\cup S_2=S$, each of $\pi_1(S_1), \pi_1(S_2)$ is free of rank $\ge 2$.  Thus $F_m=\pi_1(S,x)=\pi_1(S_1,x)\ast_{\gamma} \pi_1(S_2,x)$. The fact that $b\ge 2$ means that at least one of $S_1,S_2$ has $\ge 2$ boundary components. Assume for concreteness that $S_1$ has $\ge 2$ boundary components. Then $\gamma$ is primitive in $\pi_1(S_1,x)$. Thus we can find a free basis $a_1,\dots,a_m$ of $\pi_1(S_1,x)$ such that $m\ge 2$ and $\gamma=a_m$. Also choose a free basis $b_1,\dots, b_k$ of $\pi_1(S_2,x)$, where $k\ge 2$. Let $v\in F(b_1,\dots,b_k)=\pi_1(S_2,x)$ be the freely reduced word equal to $\gamma$ in $\pi_1(S_2,x)$.
Then the above splitting of $\pi_1(S,x)$ can be written as $\pi_1(S,x)=F(a_1,\dots,a_m)\ast_{a_m=v} F(b_1,\dots, b_k)$. By eliminating the generator $a_m$ from this presentation, we see that
$\pi_1(S,x)=F(a_1,\dots,a_{m-1},b_1,\dots, b_k)$. Thus $\gamma=v(b_1,\dots,b_k)$ belongs to a proper free factor $F(b_1,\dots,b_k)$ of $\pi_1(S,x)$, as required.
The case where $\gamma$ is non-separating is similar, and we leave the details to the reader.

Note that there is a general result (see, for example, \cite[Lemma 4.1]{BF} and \cite[Proposition 5.1]{Solie}) which says that whenever the free group $F_N$ (with $N\ge 2$) splits nontrivially as an amalgamated free product or an HNN-extension over a maximal infinite cyclic subgroup $\langle g\rangle$, then $g$ belongs to a proper free factor of $F_N$.
\end{proof}

The following proposition relates the degree function $\deg_{\Sigma,\rho}(\gamma)$ for curves in hyperbolic surfaces discussed in the Introduction, with the simplicity index $\dsi$ in free groups for curves contained in suitable subsurfaces: 

\begin{prop}\label{p:conn}
Let $(\Sigma,\rho)$ be a compact connected hyperbolic surface with (possibly empty) geodesic boundary. Let $\Sigma_1\subseteq \Sigma$ be a compact connected subsurface with $\ge 3$ boundary components, each of which is a geodesic in $(\Sigma,\rho)$.  Let $x\in \Sigma_1$ be a base-point. Then for every nontrivial element $g\in \pi_1(\Sigma_1,x)$ represented by a closed geodesic $\gamma_g$ on $\Sigma$ we have 
\[
\deg_{\Sigma,\rho}(\gamma_g)\ge \dsi(g; \pi_1(\Sigma_1, x)).
\]
\end{prop}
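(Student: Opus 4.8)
The statement to prove is $\deg_{\Sigma,\rho}(\gamma_g)\ge \dsi(g;\pi_1(\Sigma_1,x))$. The key point is a covering-space translation: a finite cover of $\Sigma$ in which $\gamma_g$ lifts to a simple closed geodesic restricts, over the subsurface $\Sigma_1$, to a finite cover of $\Sigma_1$ of no larger degree, and in that restricted cover the lift of $\gamma_g$ is still simple, hence (by Lemma~\ref{lem:simple}) belongs to a proper free factor of the fundamental group of the relevant component; that component carries $g$ as a simple element. So the first step is to fix $\hat\Sigma\to\Sigma$ a cover of degree $d=\deg_{\Sigma,\rho}(\gamma_g)$ in which $\gamma_g$ lifts to a simple closed geodesic $\hat\gamma$.

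Next I would let $\hat\Sigma_1\subseteq \hat\Sigma$ be the full preimage of $\Sigma_1$ under $\hat\Sigma\to\Sigma$ (with the pulled-back metric, so its boundary is still geodesic), and let $Y$ be the connected component of $\hat\Sigma_1$ containing the image of the base-point $x$ chosen by lifting $\gamma_g$ appropriately. Then $Y\to\Sigma_1$ is a connected finite cover of degree $\le d$ — here one uses that $\gamma_g$ lies in $\Sigma_1$, so a chosen lift $\hat\gamma$ of $\gamma_g$ already lies in $\hat\Sigma_1$, and by basing things at a point of $\hat\gamma$ we land in a single component $Y$, which receives $\hat x$ and $\hat\gamma$. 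Since $\hat\gamma$ is simple in $\hat\Sigma$, it is in particular simple as a curve in $Y$. Now I would want $\pi_1(Y)$ to be free of rank $\ge 2$: the cover $Y\to\Sigma_1$ is a cover of a surface with $\ge 3$ geodesic boundary components, so $Y$ itself has $\ge 3$ boundary components and thus $\pi_1(Y)$ is free of rank $\ge 2$; this is where the hypothesis on the number of boundary components of $\Sigma_1$ gets used, feeding into Lemma~\ref{lem:simple}.

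Then apply Lemma~\ref{lem:simple} to $Y$ and the essential simple closed curve $\hat\gamma$ (note $\hat\gamma$ is essential because $\gamma_g$ is a nontrivial closed geodesic, hence non-nullhomotopic, and lifting preserves this): the loop $\hat\gamma$ at $\hat x$ lies in a proper free factor of $\pi_1(Y,\hat x)$. Under the covering injection $\pi_1(Y,\hat x)\hookrightarrow \pi_1(\Sigma_1,x)$, which realizes $\pi_1(Y,\hat x)$ as a subgroup $H\le \pi_1(\Sigma_1,x)$ of index $[\,\pi_1(\Sigma_1,x):H\,]=\deg(Y\to\Sigma_1)\le d$, the element $\hat\gamma$ maps to (a conjugate of) $g$. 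So $g$ — up to conjugacy, which does not affect $\dsi$ by Lemma~\ref{l:aut}(3) — lies in a proper free factor of a subgroup $H$ of index $\le d$ in $\pi_1(\Sigma_1,x)$. By definition of $\dsi$ this gives $\dsi(g;\pi_1(\Sigma_1,x))\le d = \deg_{\Sigma,\rho}(\gamma_g)$, which is the claim.

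**Main obstacle.**
The routine checks (geodesic boundary pulls back to geodesic boundary, simplicity is inherited by a subsurface, essential-ness is preserved) are harmless. The one point requiring genuine care is the bookkeeping of base-points and the identification of the relevant component $Y$: one must choose the lift $\hat\gamma$ of $\gamma_g$ and the base-point $\hat x$ compatibly so that the subgroup $H=\pi_1(Y,\hat x)$ of $\pi_1(\Sigma_1,x)$ actually contains the specified element $g$ (not merely some conjugate in the ambient $\pi_1(\Sigma,x)$), and to be sure $\deg(Y\to\Sigma_1)$ does not exceed $\deg(\hat\Sigma\to\Sigma)$ — this is automatic since $Y\hookrightarrow\hat\Sigma_1\to\Sigma_1$ and $\hat\Sigma_1$ is the \emph{full} preimage of $\Sigma_1$, so the fiber of $Y\to\Sigma_1$ over any point is contained in the fiber of $\hat\Sigma\to\Sigma$ over that point. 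Invoking the conjugacy-invariance of $\dsi$ (Lemma~\ref{l:aut}(3)) lets one sidestep the base-point subtlety cleanly, so in the end even this step is more notational than substantive.
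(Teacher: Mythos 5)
Your proof is correct and follows essentially the same route as the paper's: pass to the component $Y$ of $p^{-1}(\Sigma_1)$ containing the lifted geodesic, apply Lemma~\ref{lem:simple} to the essential simple closed curve $\hat\gamma\subseteq Y$, and deduce free-factor membership in the index-$\le d$ subgroup $H=\pi_1(Y)\le\pi_1(\Sigma_1,x)$, using conjugacy-invariance of $\dsi$ (Lemma~\ref{l:aut}(3)) to dispose of base-point ambiguity. The one step you invoke but do not justify is that the closed geodesic $\gamma_g$ is actually contained in $\Sigma_1$; this is not part of the hypotheses, and the paper earns it by observing that since $\partial\Sigma_1$ is geodesic the universal cover of $\Sigma_1$ is a convex $\pi_1(\Sigma_1)$-invariant subset of $\mathbb H^2$, so the axis of $g$ lies in it and hence the closed geodesic in the free-homotopy class of $g$ lies in $\Sigma_1$.
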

\begin{proof}
By assumption $\pi_1(\Sigma_1,x)\cong F_m$ is free of rank $m\ge 2$. 
The fact that $\Sigma_1$ is a subsurface of $\Sigma$ with geodesic boundary implies that if $g\in \pi_1(\Sigma_1,x)$ is a nontrivial element, then the shortest geodesic in $\Sigma$ in the free homotopy class of $g$ is contained in $\Sigma_1$.  Indeed, the universal cover $X:=\widetilde{(\Sigma_1, x)}$ is a convex $\pi_1(\Sigma_1,\ast)$-invariant subset of  $\widetilde{(\Sigma, x)}=\mathbb H^2$. Therefore for every nontrivial element $g\in  \pi_1(\Sigma_1,x)$ the axis $Axis(g)$ of $g$ in $\mathbb H^2$ is contained in $X$. The image of $Axis(g)$ in $\Sigma$ is the unique closed geodesic in the free homotopy class of $g$; the fact that $Axis(g)\subseteq X$ implies that this closed geodesic is contained in $\Sigma_1$, as claimed.

Now let $1\ne g\in \pi_1(\Sigma_1,x)$ and $\gamma_g$ be as in the assumptions of the proposition. Thus $\gamma_g$ is contained in $\Sigma_1$.

Let $d=\deg_{\Sigma,\rho}(\gamma_g)$. Let $p:\hat\Sigma\to\Sigma$ be a $d$-fold cover of $\Sigma$ such that $\gamma_g$ lifts to a simple closed geodesic $\hat\gamma_g$ in $\hat\Sigma$. Let $\hat\Sigma_1\subseteq \hat\Sigma$ be the connected component of the full preimage $p^{-1}(\Sigma_1)$ of $\Sigma_1$ containing $\hat\gamma_g$. Then $p:\hat\Sigma_1\to\Sigma_1$ is a $d'$-fold cover of $\Sigma_1$ with $d'\le d$. Pick a base-point $x'\in \hat\Sigma_1$ such that $p(x')=x$.

The cover $p:(\hat\Sigma_1,x')\to (\Sigma_1,x)$ corresponds to a subgroup $H\le \pi_1(\Sigma_1,x)$ of index $d'$, such that $p_\#(\pi_1(\hat\Sigma_1,x))=H$, and that $p_\#$ maps $\pi_1(\hat\Sigma_1,x')$ isomorphically to $H$.

Since $\hat\Sigma_1$ is a cover of $\Sigma_1$, the surface $\hat\Sigma_1$ has $\ge 2$ boundary components and $\pi_1(\hat\Sigma_1)$ is free of rank $\ge 2$.
By Lemma~\ref{lem:simple}, the fact that $\hat\gamma_g$ is an essential simple closed curve on $\hat\Sigma_1$ implies that $\hat\gamma_g$ corresponds an element $w\in \pi_1(\hat\Sigma_1,x')$ which belongs to a proper free factor of $\pi_1(\hat\Sigma_1,x')$.  Since $p(\hat\gamma_g)=\gamma_g$, we have $p_\#(w)=g\in H$. Since $p_\#$ maps  $\pi_1(\hat\Sigma_1,x')$ isomorphically to $H$, we conclude that $g$ belongs to a proper free factor of $H$.
Thus $H\le \pi_1(\Sigma_1,x)$, $[\pi_1(\Sigma_1,x):H]=d'$ and $g$ belongs to a proper free factor of $H$.  Therefore $d'\ge \dsi(g; \pi_1(\Sigma_1,x))$. 
Therefore
\[
\deg_{\Sigma,\rho}(\gamma_g)=d\ge d' \ge \dsi(g; \pi_1(\Sigma_1,x)),
\]
as required.
 
\end{proof}

\begin{thm}\label{t:main'}
Let $\Sigma$ be a compact  connected surface  with a hyperbolic structure $\rho$ and with (possibly empty) geodesic boundary.
Let $\Sigma_1\subseteq \Sigma$ be a compact connected subsurface with  $\ge 3$ boundary components, each of which is a geodesic in $(\Sigma,\rho)$. 
Let $x\in \Sigma_1$ and let $A$ be a free basis of $\pi_1(\Sigma_1,x)$.

Let $w_n\in F(A)=\pi_1(\Sigma_1,x)$ be a freely reduced word of length $n$ over $A^{\pm 1}$ generated by a simple non-backtracking random walk on $F(A)=\pi_1(\Sigma_1,x)$.
Let $\gamma_n$ be the closed geodesic on $(\Sigma,\rho)$ in the free homotopy class of $w_n$.

Then there exist  constants $c>0, K'\ge 1$ such that

\[
\lim_{n\to\infty} Pr( \deg_{\Sigma,\rho}(\gamma_n) \ge c \log^{1/3} n) =1
\]
and such that with probability tending to $1$ as $n\to\infty$
we have that $w_n\in \pi_1(\Sigma,x)$ is not a proper power and that $n/K'\le \ell_\rho(\gamma_n)\le K'n$.

\end{thm}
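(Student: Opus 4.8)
The plan is to combine Theorem~\ref{t:main} (the lower bound for $\dsi(w_n)$ on random words) with Proposition~\ref{p:conn} (which compares $\deg_{\Sigma,\rho}(\gamma_g)$ to $\dsi(g;\pi_1(\Sigma_1,x))$), together with the standard fact that a non-backtracking simple random walk on $F(A)$ generates exactly the uniform distribution $\mu_n$ on the sphere $S(n)$. So the first step is to identify the random walk: reading the steps of a simple non-backtracking random walk of length $n$ starting at the identity produces a freely reduced word $w_n$, and since at each step after the first we choose uniformly among the $2N-1$ letters that do not cancel the previous one, the resulting distribution on $S(n)$ is precisely the uniform distribution $\mu_n$. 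This is exactly the measure to which Theorem~\ref{t:main} applies.

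Next I would invoke Theorem~\ref{t:main}: there are constants $c_2>0$, $D_1>1$, $1>D_2>0$ such that
\[
P_{\mu_n}\left(\dsi(w_n;\pi_1(\Sigma_1,x))\ge c_2\log^{1/3} n\right)\ge 1-O\left(D_1^{-n^{D_2}}\right)\xrightarrow[n\to\infty]{}1.
\]
Here I am using that $\pi_1(\Sigma_1,x)\cong F(A)$ is free of rank $\ge 2$ (which holds because $\Sigma_1$ is compact connected with $\ge 3$ boundary components). Then, for each $w_n$ that is not a proper power in $\pi_1(\Sigma_1,x)$ — hence realized by a genuine closed geodesic $\gamma_n$ on $(\Sigma,\rho)$ that, by the convexity argument in Proposition~\ref{p:conn}, lies inside $\Sigma_1$ — Proposition~\ref{p:conn} gives $\deg_{\Sigma,\rho}(\gamma_n)\ge \dsi(w_n;\pi_1(\Sigma_1,x))$. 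Combining the two displayed facts and setting $c:=c_2$ yields $\lim_{n\to\infty}Pr(\deg_{\Sigma,\rho}(\gamma_n)\ge c\log^{1/3} n)=1$. Strictly speaking one also needs $w_n\ne 1$, which holds with probability $1$, and the "not a proper power" condition, handled below.

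For the two auxiliary claims: the fact that $w_n$ is not a proper power in $\pi_1(\Sigma,x)$ with probability tending to $1$ follows from part (2) of Lemma~\ref{lem:cr} applied to $\pi_1(\Sigma_1,x)=F(A)$ — which gives that $w_n$ is not a proper power in $F(A)$ with probability $1-O(C^{-n})$ — combined with the observation that $\pi_1(\Sigma_1,x)$ is a retract (indeed, for a subsurface with geodesic boundary, a convex subgroup) of $\pi_1(\Sigma,x)$, so a non-proper-power in $\pi_1(\Sigma_1,x)$ cannot become a proper power in $\pi_1(\Sigma,x)$: if $w_n=u^k$ in $\pi_1(\Sigma,x)$ then by malnormality/convexity of $\pi_1(\Sigma_1,x)$ the root $u$ already lies in $\pi_1(\Sigma_1,x)$. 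The length estimate $n/K'\le \ell_\rho(\gamma_n)\le K'n$ follows from the bi-Lipschitz equivalence, on $\pi_1(\Sigma_1,x)$, between the word metric with respect to $A$ and the restriction of the hyperbolic metric of $\Sigma$: the orbit map $\pi_1(\Sigma_1,x)\to \mathbb H^2$ is a quasi-isometric embedding (again because $\Sigma_1$ has geodesic boundary, so its universal cover is convex in $\mathbb H^2$), hence the translation length $\ell_\rho(\gamma_n)$ is comparable to $\|w_n\|_A$, and $\|w_n\|_A\ge |\iota(w_n)|$-corrected length is comparable to $n$ with probability tending to $1$ by part (1) of Lemma~\ref{lem:cr}. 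Taking $K'$ to absorb both the quasi-isometry constants and the cyclic-reduction loss gives the claim.

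The main obstacle is purely bookkeeping rather than conceptual: one must make sure the geodesic $\gamma_n$ genuinely lives in $\Sigma_1$ so that Proposition~\ref{p:conn} applies (handled by the convexity argument already in the proof of Proposition~\ref{p:conn}), and one must carefully juggle the several high-probability events — "$w_n$ not a proper power", "$|\iota(w_n)|\le \epsilon_0 n$", and the event from Theorem~\ref{t:main} — intersecting them and noting that a finite intersection of events each of probability $1-o(1)$ still has probability $1-o(1)$, indeed $1-O(D_1^{-n^{D_2}})$ after adjusting constants. Since each ingredient is already available in the excerpt, assembling the proof is routine, and I would keep the write-up short by citing Theorem~\ref{t:main}, Proposition~\ref{p:conn}, and Lemma~\ref{lem:cr} and leaving the elementary quasi-isometry estimate to the reader.
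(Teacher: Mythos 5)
Your proposal is correct and follows essentially the same route as the paper: identify the non-backtracking walk's distribution with $\mu_n$, invoke Theorem~\ref{t:main} and Lemma~\ref{lem:cr}, pass to $\deg_{\Sigma,\rho}(\gamma_n)\ge \dsi(w_n;F(A))$ via Proposition~\ref{p:conn}, and obtain the length comparison from the quasi-isometric (convex) embedding of $\widetilde{\Sigma_1}$ into $\mathbb{H}^2$. You in fact handle one point more carefully than the published proof, which only remarks that $w_n$ is not a proper power in $F(A)=\pi_1(\Sigma_1,x)$: your malnormality/convexity observation is exactly what is needed to upgrade this to the theorem's stated conclusion that $w_n$ is not a proper power in the larger group $\pi_1(\Sigma,x)$.
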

\begin{proof}
As we have seen in the proof of Proposition~\ref{p:conn}, the fact that $\Sigma_1$ is a subsurface of $\Sigma$ with geodesic boundary implies that if $g\in \pi_1(\Sigma_1,\ast)$ is a nontrivial element, then the shortest geodesic in $\Sigma$ in the free homotopy class of $g$ is contained in $\Sigma_1$.

By Theorem~\ref{t:main} and Lemma~\ref{lem:cr}, there exist an integer $n_0\ge 1$ such that  for $n\ge n_0$, with probability tending to $1$ as $n\to \infty$ we have that $w_n$ is not a proper power in $F(A)$, that $0.99n\le ||w_n||_A\le n=|w_n|_A$ and $\dsi(w_n; F(A))\ge c\log^{1/3} n$,  where $c=c(A)>0$ is the constant provided by Theorem~\ref{t:main} for the free group $F_m=F(A)$.

Proposition~\ref{p:conn} now implies that with probability tending to $1$ as $n\to\infty$ we have
\[
 \deg_{\Sigma,\rho}(\gamma_n) \ge \dsi(w_n; F(A))\ge c\log^{1/3} n.
\]

Finally, the fact that $\Sigma_1$ has geodesic boundary in $(\Sigma,\rho)$ also implies that there exists a constant $K\ge 1$ such that for every nontrivial element $g\in\pi_1(\Sigma_1,x)$ represented by a closed geodesic $\gamma$ on $(\Sigma,\rho)$ we have  $||g||_A/K\le \ell_\rho(\gamma)\le K||g||_A$.
Since with probability tending to $1$ as $n\to\infty$ we have $0.99n\le ||w_n||_A\le n=|w_n|_A$, it follows that for all sufficiently large $n$ with with probability tending to $1$ as $n\to\infty$ we have
$0.99 n/K \le \ell_\rho(\gamma_n)\le Kn$, as required.

\end{proof}

\begin{rk}
Theorem~\ref{t:main'} directly implies (e.g. by taking $\Sigma_1$ to be a suitable pair-of-pants subsurface) that if $(\Sigma,\rho)$ is a compact connected hyperbolic surface of genus $\ge 2$ with (possibly empty) geodesic boundary, then there exists $c'=c'(\Sigma)>0$ such that for every $L\ge sys(\rho)$ we have $f_\rho(L)\ge c' (\log L)^{1/3}$.
\end{rk}

\subsection{Lower bounds for $\deg_{\Sigma,\rho}^{fill}$ and $f_{\Sigma,\rho}^{fill}$ for hyperbolic surfaces.}

Our results about the behavior of $\dfi$ in free groups can also be used to obtain information about $\deg_{\Sigma,\rho}^{fill}$ for compact hyperbolic surfaces.

\begin{lem}\label{l:nonfill}
Let $(\Sigma,\rho)$ be a compact connected hyperbolic surface with $b\ge 1$ geodesic boundary components. Then the following hold:
\begin{enumerate}
\item If $\gamma$ is a non-filling closed geodesic on $(\Sigma,\rho)$ , then $\gamma$ represents a non-filling element of the free group $\pi_1(\Sigma)$.
\item For any closed geodesic $\gamma$ on $(\Sigma,\rho)$ we have $\deg_{\Sigma,\rho}^{fill}(\gamma)\ge \dfi(\gamma, \pi_1(\Sigma))$.
\end{enumerate}
\end{lem}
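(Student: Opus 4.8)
The plan is to reduce both statements to the surface-topological fact that a non-filling curve (on a surface, in the topological sense) corresponds to a non-filling group element, and then combine this with a covering-space argument, exactly paralleling the proofs of Lemma~\ref{lem:simple} and Proposition~\ref{p:conn} above. For part (1), the key observation is that since $b\ge 1$, the group $\pi_1(\Sigma)$ is free, and a closed geodesic $\gamma$ that fails to fill $\Sigma$ must miss some essential simple closed curve (or essential arc) $\delta$ on $\Sigma$, so that $\gamma$ lies in the subsurface $\Sigma'\subseteq\Sigma$ obtained by cutting along $\delta$. Cutting along $\delta$ yields a splitting of $\pi_1(\Sigma)$ as an amalgamated free product or HNN-extension over $\langle\delta\rangle$ (which is either trivial, when $\delta$ is a boundary-parallel arc whose complement disconnects off a disk, or a maximal cyclic subgroup), with $\gamma$ conjugate into one vertex group. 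This is precisely the structure required in Definition~\ref{df:nonfill}, so $\gamma$ is a non-filling element of $\pi_1(\Sigma)$. I would flag that the only subtlety is making precise what ``non-filling curve on $\Sigma$'' means — the natural reading is that $\Sigma\smsm\gamma$ has a component that is not a disk or, in the presence of boundary, not a disk or annulus/half-disk neighborhood of the boundary — and checking that in every such case one genuinely obtains a splitting over a \emph{maximal} cyclic (or trivial) subgroup. The point about maximality is handled exactly as in the parenthetical remark at the end of the proof of Lemma~\ref{lem:simple}, invoking \cite[Lemma 4.1]{BF} / \cite[Proposition 5.1]{Solie}.

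For part (2), I would argue as in Proposition~\ref{p:conn}. Let $d=\deg_{\Sigma,\rho}^{fill}(\gamma)$ and let $p:\hat\Sigma\to\Sigma$ be a $d$-fold cover in which $\gamma$ lifts to a non-filling closed geodesic $\hat\gamma$ in $\hat\Sigma$. Since $\Sigma$ has $b\ge 1$ geodesic boundary components, so does $\hat\Sigma$, hence $\pi_1(\hat\Sigma)$ is again free. Picking a base-point $x'\in\hat\Sigma$ with $p(x')=x$ (where $x$ is a base-point on $\gamma$), the cover corresponds to a subgroup $H=p_\#(\pi_1(\hat\Sigma,x'))\le\pi_1(\Sigma,x)$ of index $d$, with $p_\#$ mapping $\pi_1(\hat\Sigma,x')$ isomorphically onto $H$ and sending the loop $\hat\gamma$ to $\gamma\in H$. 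By part (1) applied to $\hat\Sigma$, the element $\hat\gamma$ is non-filling in $\pi_1(\hat\Sigma,x')$; transporting through the isomorphism $p_\#$, the element $\gamma$ is non-filling in $H$. Therefore $H$ is a finite-index subgroup of $\pi_1(\Sigma,x)$ of index $d$ containing $\gamma$ as a non-filling element, which gives $\dfi(\gamma,\pi_1(\Sigma))\le d=\deg_{\Sigma,\rho}^{fill}(\gamma)$, as required. One must note that the lift $\hat\gamma$ is a closed geodesic in $\hat\Sigma$ (not merely a non-filling loop) because the hyperbolic metric on $\hat\Sigma$ is the pull-back of $\rho$ and $\gamma$ is a closed geodesic.

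The main obstacle I expect is purely expository rather than mathematical: stating cleanly, once and for all, the dictionary between ``$\gamma$ does not fill the surface $S$'' and ``$\gamma$ lies in a vertex group of a splitting of $\pi_1(S)$ over a trivial or maximal cyclic subgroup'' in the presence of boundary, where one has to deal with both essential simple closed curves and essential properly embedded arcs in the complement, and be careful about the degenerate cases (a disk cut off by an arc gives a trivial-edge-group splitting, i.e. a free-factor splitting; a once-punctured disk or annulus cut off gives a cyclic-edge-group splitting). Once that dictionary is in place, both parts follow by the standard covering-space bookkeeping already used in Proposition~\ref{p:conn}, and I would keep the write-up short by explicitly referring back to that proof for the repeated arguments.
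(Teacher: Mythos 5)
Your proposal takes essentially the same approach as the paper: part (1) by producing a splitting of $\pi_1(\Sigma)$ over a trivial or maximal cyclic subgroup from a curve system disjoint from $\gamma$ and invoking Definition~\ref{df:nonfill}, and part (2) by the same covering-space argument you use, paralleling Proposition~\ref{p:conn} and reducing to part (1) applied to the lift. The only cosmetic difference is that the paper frames part (1) as cutting $\Sigma$ along the geodesic boundary of a proper compact geodesic subsurface $\Sigma_1$ containing $\gamma$, so the decomposing curves are automatically simple closed geodesics and one never has to discuss arcs or the degenerate arc cases you flag as the ``main obstacle.''
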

\begin{proof}
To see that (1) holds, let $\gamma$ be a non-filling closed geodesic on $(\Sigma,\rho)$. Then either $\gamma$ is contained in a proper compact connected subsurface $\Sigma_1$ of $(\Sigma,\rho)$ with geodesic boundary or $\Sigma-\gamma$ is a union of disks, peripheral annuli and non-peripheral annuli $A_1,\dots, A_k$ (where $k\ge 1$). In the latter case the simple closed geodesics $\alpha_1,\dots, \alpha_k$ homotopic to the core curves of $A_1,\dots, A_k$ are disjoint from $\gamma$, and we put $\Sigma_1$ to be the surface obtained by cutting $\Sigma$ open along the curves $\alpha_1,\dots, \alpha_k$.

In either case, cutting $\Sigma$ open along the boundary of $\Sigma_1$ provides a nontrivial graph-of-groups decomposition of  $\pi_1(\Sigma)$ with maximal cyclic edge groups and such that $\gamma$ belongs to a vertex group of this decomposition. Hence $\gamma$ is non-filling in $\pi_1(\Sigma)$. Thus (1) holds.

For (2), let $\gamma$ be a closed geodesic on $(\Sigma,\rho)$. Let $d=\deg_{\Sigma,\rho}^{fill}(\gamma)$ and let $\hat\Sigma\to\Sigma$ be a degree-$d$ cover such that $\gamma$ lifts to a closed non-filling geodesic $\hat\gamma$ on $\hat\Sigma$. This cover corresponds to a subgroup $H=\pi_1(\Sigma_1)\le \pi_1(\Sigma)$ of index $d$ containing the element $\gamma$.  The fact that $\hat\gamma$ is a non-filling  curve in $\Sigma_1$ implies, by part (1) of this lemma, that $\gamma$ is a non-filling element of $H=\pi_1(\Sigma_1)$. Therefore, by definition, $\dfi(\gamma, \pi_1(\Sigma))\le d=\deg_{\Sigma,\rho}^{fill}(\gamma)$, as required.
\end{proof}

\begin{thm}\label{t:sigmafill}
Let $(\Sigma,\rho)$ be a compact connected hyperbolic surface with $b\ge 1$ geodesic boundary components. Then there exists $C'>0$ such that for all sufficiently large $L$ we have
\[
f_{\Sigma,\rho}^{fill}(L)\ge C' \frac{\log L}{\log\log L}.
\]
\end{thm}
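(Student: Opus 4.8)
The plan is to deduce Theorem~\ref{t:sigmafill} from the free-group lower bound of Theorem~\ref{thm:A} (that is, Theorem~\ref{t:newmain}), together with a geometric comparison in the spirit of Proposition~\ref{p:conn} and Lemma~\ref{l:nonfill}, but now using $\ff$ rather than a statement about random words. First I would fix a proper compact connected subsurface $\Sigma_1\subseteq\Sigma$ with at least $3$ geodesic boundary components (such a $\Sigma_1$ exists because $\Sigma$ has nonempty geodesic boundary, so $\pi_1(\Sigma)$ is free of rank $\ge 2$; if the rank is too small to fit $3$ boundary curves one instead takes $\Sigma_1=\Sigma$, or enlarges via a finite cover — I would handle the small-rank/genus corner cases by the same device used after Theorem~\ref{t:main'}). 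Fix a base-point $x\in\Sigma_1$ and a free basis $A$ of $\pi_1(\Sigma_1,x)=F_m$. As in the proof of Proposition~\ref{p:conn}, because $\Sigma_1$ has geodesic boundary in $(\Sigma,\rho)$, the closed geodesic in $(\Sigma,\rho)$ representing any nontrivial $g\in\pi_1(\Sigma_1,x)$ lies entirely in $\Sigma_1$, and there is a constant $K\ge 1$ with $\|g\|_A/K\le \ell_\rho(\gamma_g)\le K\|g\|_A$.

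Next I would establish the surface analogue of Proposition~\ref{p:conn} for the non-filling degree: for a nontrivial $g\in\pi_1(\Sigma_1,x)$ represented by a closed geodesic $\gamma_g$ on $\Sigma$,
\[
\deg_{\Sigma,\rho}^{fill}(\gamma_g)\ge \dfi(g;\pi_1(\Sigma_1,x)).
\]
The argument mirrors Proposition~\ref{p:conn} combined with Lemma~\ref{l:nonfill}: given a degree-$d$ cover $p:\hat\Sigma\to\Sigma$ in which $\gamma_g$ lifts to a non-filling closed geodesic $\hat\gamma_g$, let $\hat\Sigma_1$ be the component of $p^{-1}(\Sigma_1)$ containing $\hat\gamma_g$; it is a cover of $\Sigma_1$ of some degree $d'\le d$ corresponding to a subgroup $H\le\pi_1(\Sigma_1,x)$ of index $d'$ with $g\in H$. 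Since $\hat\gamma_g$ is non-filling in $\hat\Sigma$ it lies in a proper subsurface of $\hat\Sigma$ with geodesic boundary, hence (cutting along that boundary) $\hat\gamma_g$ is non-filling in $\pi_1(\hat\Sigma_1,\cdot)\cong H$ by part~(1) of Lemma~\ref{l:nonfill} applied to $\hat\Sigma_1$. Therefore $\dfi(g;\pi_1(\Sigma_1,x))\le d'\le d$, proving the displayed inequality. (One small point to be careful about: $\hat\gamma_g$ being non-filling in $\hat\Sigma$ gives non-filling in $\hat\Sigma_1$ only after intersecting the separating subsurface with $\hat\Sigma_1$; since $\hat\Sigma_1$ is $\pi_1$-injective with geodesic boundary, this is routine.)

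Finally I would combine these. By Theorem~\ref{t:newmain} applied to $F_m=\pi_1(\Sigma_1,x)$, there are $c>0$ and $M\ge 1$ so that $\ff(n;F_m)\ge c\,\frac{\log n}{\log\log n}$ for all $n\ge M$; fix for each large $n$ a root-free $g_n\in F_m$ with $|g_n|_A\le n$ and $\dfi(g_n)=\ff(n;F_m)$. Let $\gamma_n$ be the closed geodesic representing $g_n$; then $\ell_\rho(\gamma_n)\le K\|g_n\|_A\le Kn$, so $\gamma_n$ has length $L\le Kn$, i.e. $n\ge L/K$. Using the displayed inequality and monotonicity of $t\mapsto \frac{\log t}{\log\log t}$ for large $t$,
\[
f_{\Sigma,\rho}^{fill}(L)\ \ge\ \deg_{\Sigma,\rho}^{fill}(\gamma_n)\ \ge\ \dfi(g_n;\pi_1(\Sigma_1,x))\ =\ \ff(n;F_m)\ \ge\ c\,\frac{\log n}{\log\log n}\ \ge\ c\,\frac{\log(L/K)}{\log\log(L/K)}.
\]
For $L$ large, $\frac{\log(L/K)}{\log\log(L/K)}\ge \tfrac12\frac{\log L}{\log\log L}$, so with $C'=c/2$ the theorem follows. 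The only genuine subtlety — the ``main obstacle'' — is the bookkeeping in the second paragraph: making sure that the non-filling witness subsurface inside $\hat\Sigma$ restricts correctly to $\hat\Sigma_1$ and that the index bound $d'\le d$ is legitimate (this is exactly the mechanism already used in Proposition~\ref{p:conn} and Lemma~\ref{l:nonfill}, so it can be cited/adapted rather than redone from scratch). Everything else is a length comparison and a calculus estimate.
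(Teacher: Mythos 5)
Your proposal takes a genuinely different route from the paper, and the route has a real gap. The paper's proof is substantially simpler: since $\Sigma$ is assumed to have $b\ge1$ geodesic boundary components, $\pi_1(\Sigma)$ is already a free group $F(A)$ of rank $\ge 2$, so there is no need to pass to a subsurface at all. The paper applies Theorem~\ref{t:newmain} to $F(A)=\pi_1(\Sigma)$ directly, invokes Lemma~\ref{l:nonfill}(2) to get $\deg_{\Sigma,\rho}^{fill}(\gamma)\ge\dfi(\gamma;\pi_1(\Sigma))$, and closes with the standard quasi-isometry estimate $\|w\|_A/K\le\ell_\rho(\gamma)\le K\|w\|_A$.

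The gap in your proposal is in the ``non-filling analogue of Proposition~\ref{p:conn}'': the inequality
\[
\deg_{\Sigma,\rho}^{fill}(\gamma_g)\ \ge\ \dfi\bigl(g;\pi_1(\Sigma_1,x)\bigr)
\]
is \emph{false} in general when $\Sigma_1\subsetneq\Sigma$ is a proper subsurface. For a concrete failure: take $g\in\pi_1(\Sigma_1,x)$ filling in $\pi_1(\Sigma_1,x)$, so $\dfi(g;\pi_1(\Sigma_1,x))>1$. The geodesic $\gamma_g$ lies inside $\Sigma_1$, which is itself a proper subsurface of $\Sigma$ with geodesic boundary; hence $\gamma_g$ is already non-filling in $\Sigma$ and $\deg_{\Sigma,\rho}^{fill}(\gamma_g)=1$, contradicting the claimed inequality. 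The mechanism of Proposition~\ref{p:conn} works for the \emph{simplicity} degree because ``simple closed curve'' is an intrinsic property of the curve, so simplicity in $\hat\Sigma$ passes down to simplicity in $\hat\Sigma_1$. But ``non-filling'' is a property \emph{relative to the ambient surface}: a curve can fill $\hat\Sigma_1$ while being non-filling in $\hat\Sigma$ (indeed $\hat\Sigma_1$ itself witnesses its non-filling-ness in $\hat\Sigma$). Your parenthetical remark that the restriction ``is routine'' is precisely where the argument fails: $S\cap\hat\Sigma_1$ can be all of $\hat\Sigma_1$.

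The fix is just to set $\Sigma_1=\Sigma$, which you list as a corner case but which is in fact the only case that works (and the only one needed, since the theorem's hypothesis $b\ge1$ already makes $\pi_1(\Sigma)$ free). With $\Sigma_1=\Sigma$ your argument collapses to the paper's and is correct. The requirement of $\ge3$ boundary components you imported from Proposition~\ref{p:conn} is irrelevant here; it was needed there so that Lemma~\ref{lem:simple} applies in the covers, but the non-filling case uses Lemma~\ref{l:nonfill} instead, which only needs $\hat\Sigma$ to have boundary. Note also that this is exactly why the paper states the non-filling degree lower bound only for surfaces \emph{with boundary}: the subsurface trick that handles closed surfaces in the simplicity case (Theorem~\ref{t:main'}) is not available for the filling degree, for the reason above.
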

\begin{proof}
Let $\pi_1(\Sigma)=F_N=F(A)$ where $A=\{a_1,\dots, a_N\}$ with $N\ge 2$. The universal cover $X=(\tilde \Sigma,\tilde \rho)$ is a convex $\pi_1(\Sigma)$-invariant subset of $\mathbb H^2$. Therefore the orbit map $F(A)\to \mathbb H^2$, $w\mapsto w\ast$ (where $\ast\in \mathbb H^2$ is some basepoint) is a $\pi_1(\Sigma)$-equivariant quasi-isometry. Hence there exists $K\ge 1$ such that for every closed geodesic $\gamma$ on $(\Sigma,\rho)$ representing an element $w\in \pi_1(\Sigma)$ we have $||w||_A/K\le \ell_\rho(\gamma)\le K||w||_A$.

By Theorem~\ref{t:newmain} there exists a sequence of nontrivial cyclically reduced elements $w_n\in F(A)$ such that $||w_n||_A=n$ and that for all sufficiently large $n$ we have
\[
\dfi(w_n,F(A))\ge C \frac{\log n}{\log \log n},
\]
where $C>0$ is the constant provided by Theorem~\ref{t:newmain}. By Lemma~\ref{l:nonfill}, it follows that for all sufficiently large $n$ we have
\[
\deg_{\Sigma,\rho}^{fill}(\gamma) \ge \dfi(w_n,F(A))\ge C \frac{\log n}{\log \log n}.
\]
Since $||w||_A/K\le \ell_\rho(\gamma)\le K||w||_A$, the statement of the theorem now follows. 
\end{proof}

\begin{thm}
Let $(\Sigma,\rho)$ be a compact connected hyperbolic surface with $b\ge 1$ geodesic boundary components.  Let $A=\{a_1,\dots, a_N\}$ be a free basis of $\pi_1(\Sigma,x)$, so that $\pi_1(\Sigma)=F(A)$. 
Let $w_n\in F(A)=\pi_1(\Sigma,x)$ be a freely reduced word of length $n$ over $A^{\pm 1}$ generated by a simple non-backtracking random walk on $F(A)$.
Let $\gamma_n$ be the closed geodesic on $(\Sigma,\rho)$ in the free homotopy class of $w_n$.

Then there exist  constants $c_1>0, K_1\ge 1$ such that

\[
\lim_{n\to\infty} Pr( \deg_{\Sigma,\rho}^{fill}(\gamma_n) \ge c_1 \log^{1/5} n) =1
\]
and such that with probability tending to $1$ as $n\to\infty$
we have that $w_n\in \pi_1(\Sigma,x)$ is not a proper power and that $n/K_1\le \ell_\rho(\gamma_n)\le K_1 n$.

\end{thm}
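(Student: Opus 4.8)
The plan is to package together three ingredients that are already available: the random-walk lower bound for $\dfi$ from Theorem~\ref{t:main} (the same as Theorem~\ref{thm:B}), the surface-to-free-group comparison of Lemma~\ref{l:nonfill}, and the quasi-isometry estimate comparing $||w||_A$ with $\ell_\rho$, following the scheme of the proof of Theorem~\ref{t:sigmafill} but now in the probabilistic setting (and parallel to Theorem~\ref{t:main'}, with $\dfi$ in place of $\dsi$). The key structural simplification here is that $\pi_1(\Sigma)=F(A)=F_N$ is the whole free group, so there is no auxiliary subsurface $\Sigma_1$ to pass to; since $\Sigma$ is compact hyperbolic with $b\ge 1$ geodesic boundary components, $\pi_1(\Sigma)$ is free of rank $N\ge 2$, and a simple non-backtracking random walk of length $n$ on $F(A)$ produces a word $w_n$ distributed according to the uniform distribution $\mu_n$ on the sphere $S(n)\subseteq F(A)$, so that Theorem~\ref{t:main} and Lemma~\ref{lem:cr} apply to $w_n$ verbatim.

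First I would record the metric comparison exactly as in the proof of Theorem~\ref{t:sigmafill}: the universal cover $(\widetilde\Sigma,\widetilde\rho)$ is a convex $\pi_1(\Sigma)$-invariant subset of $\mathbb H^2$, so the orbit map $F(A)\to\mathbb H^2$, $w\mapsto w\ast$, is a $\pi_1(\Sigma)$-equivariant quasi-isometry. This yields a constant $K\ge 1$ such that every closed geodesic $\gamma$ on $(\Sigma,\rho)$ representing $w\in\pi_1(\Sigma)$ satisfies $||w||_A/K\le\ell_\rho(\gamma)\le K||w||_A$. I would then set $K_1:=2K$ and $c_1:=c$, where $c=c(A)>0$ is the constant supplied by Theorem~\ref{t:main} for the free group $F(A)$.

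The core of the argument is to intersect the following events, each of which has probability tending to $1$ as $n\to\infty$: (i) $w_n$ is not a proper power in $F(A)$ (Lemma~\ref{lem:cr}(2)); (ii) $|\iota(w_n)|\le 0.005n$, whence $0.99n\le ||w_n||_A\le n$ (Lemma~\ref{lem:cr}(1) with $\epsilon_0=0.005$); and (iii) $\dfi(w_n;F(A))\ge c\log^{1/5}n$ (Theorem~\ref{t:main}). On the intersection, Lemma~\ref{l:nonfill}(2) gives
\[
\deg_{\Sigma,\rho}^{fill}(\gamma_n)\ge \dfi(\gamma_n,\pi_1(\Sigma))=\dfi(w_n;F(A))\ge c_1\log^{1/5}n,
\]
while the quasi-isometry estimate combined with (ii) gives $n/K_1\le 0.99n/K\le ||w_n||_A/K\le\ell_\rho(\gamma_n)\le K||w_n||_A\le Kn\le K_1 n$. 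Since a finite intersection of events whose probabilities tend to $1$ again has probability tending to $1$, both conclusions of the theorem follow.

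I do not expect a real obstacle here: the statement is essentially bookkeeping on top of Theorem~\ref{t:main}, Lemma~\ref{l:nonfill} and the quasi-isometry comparison. The only points deserving a line of justification are the identification of the simple non-backtracking random walk on $F(A)$ with the uniform distribution $\mu_n$ on $S(n)$ (so that Theorem~\ref{t:main} applies without modification), and choosing $K_1$ large enough to absorb simultaneously the quasi-isometry constant $K$ and the factor $0.99$ lost in passing from $n$ to $||w_n||_A$ under cyclic reduction.
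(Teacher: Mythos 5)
Your proof is correct and follows essentially the same route the paper intends: the paper's own proof is simply the remark that the argument is ``essentially identical to the proof of Theorem~\ref{t:main'}'', and your write-up spells out precisely that argument, with $\dfi$, $\log^{1/5}$, and Lemma~\ref{l:nonfill}(2) substituted for $\dsi$, $\log^{1/3}$, and Proposition~\ref{p:conn}, and with the harmless simplification that no auxiliary subsurface $\Sigma_1$ is needed since $\pi_1(\Sigma)$ is already free. The bookkeeping (intersecting the three high-probability events, the quasi-isometry constant, the choice $K_1=2K$, and the conjugacy-invariance of $\dfi$ from Lemma~\ref{l:aut}(3) implicit in $\dfi(\gamma_n,\pi_1(\Sigma))=\dfi(w_n;F(A))$) is all sound.
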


\begin{proof}
The proof is essentially identical to the proof of Theorem~\ref{t:main'}, and we leave the details to the reader.
\end{proof}

\subsection{Degree and index functions based on the geometric intersection number}

Let $\Sigma$ be a compact connected surface admitting some hyperbolic structure (so that $\pi_1(\Sigma)$ is free of rank $\ge 2$). 
If $\rho$ is a hyperbolic metric on $\Sigma$ and $\gamma$ is a closed geodesic with respect to $\rho$ on $\Sigma$, we denote by $d_\rho(\gamma)$ the smallest degree of a finite cover of $\Sigma$ such that $\gamma$ lifts to a simple closed geodesic in that cover. 

We adopt the following conventions regarding the geometric intersection number for curves on surfaces. Let $S$ is a compact surface and $\alpha, \beta:\mathbb S^1\to S$ be homotopically nontrivial closed curves on $S$.
Then the geometric intersection number  $i([\alpha],[\beta])$ is defined as the minimum cardinality $|(\alpha_1\times \beta_1)^{-1}(\Delta)|$ where $\Delta \subseteq S\times S$ is the diagonal and  where $\alpha_1, \beta_1$ vary over all closed curves in the free homotopy classes $[\alpha], [\beta]$ respectively.  It is well-know that if $\rho$ is a hyperbolic structure on $S$ (where we always assume that the boundary curves of $S$, if any, are geodesic with respect to $\rho$) and if $\alpha,\beta$ are distinct closed primitive (i.e. not proper powers) geodesics on $S$ with respect to $\rho$ then $i([\alpha], [\beta])=|(\alpha\times \beta)^{-1}(\Delta)|$.  See \cite{CB} for a proof in the case of simple closed geodesics, and see p. 143 in \cite{Bon} and p. 99 in \cite{Bon86} for the general case.

Denote by $\mathcal C_\Sigma$ the set of free homotopy classes of essential closed curves on $\Sigma$ that are not proper powers in $\pi_1(\Sigma)$.  For $[\gamma]\in \mathcal C_\Sigma$ denote by $d_\Sigma([\gamma])$  the smallest degree of a finite cover of $\Sigma$ such that a representative of $[\gamma]$ lifts to a simple closed curve in that cover.  Note that if $\rho$ is a hyperbolic metric on $\Sigma$, then for every $[\gamma]\in \mathcal C_\Sigma$ there exists a unique closed $\rho$-geodesic $\gamma\in [\gamma]$ and $d_\rho(\gamma)=d_\Sigma([\gamma])$. Moreover, as noted above, in this case the geometric intersection number $i([\gamma],[\gamma])$ is realized by $\gamma$.

For an integer $m\ge 1$ we define $f_\Sigma(m)$ as the maximum of $d_\Sigma([\gamma])$ where $[\gamma]$ varies over all elements of $\mathcal C_\Sigma$ with $i([\gamma],[\gamma])\le m$. 
Similarly, for $[\gamma]\in \mathcal C_\Sigma$ denote by $d_\Sigma^{fill}([\gamma])$  the smallest degree of a finite cover of $\Sigma$ such that a representative of $[\gamma]$ lifts to a non-filling closed curve in that cover.  Then define $f_\Sigma^{fill}(m)$ as the maximum of $d_\Sigma^{fill}([\gamma])$ where $[\gamma]$ varies over all elements of $\mathcal C_\Sigma$ with $i([\gamma],[\gamma])\le m$.  Since simple curves are non-filling, we always have $d_\Sigma([\gamma])\ge d_\Sigma^{fill}([\gamma])$ and hence $f_\Sigma(m)\ge f_\Sigma^{fill}(m)$.

A result of Basmajian~\cite[Theorem 1.1]{Bas} (which also can be derived from the results of Bonahon~\cite{Bon}) states:
\begin{prop}\label{p:Basm}
Let $(\Sigma,\rho)$ be a connected compact hyperbolic surface with a (possibly empty) geodesic boundary. Then there exists a constant $K=K(\Sigma,\rho)\ge 1$ such that
for every closed geodesic $\gamma$ on $(\Sigma,\rho)$ we have
\[
i([\gamma],[\gamma])\le K \ell_\rho (\gamma)^2.
\]

\end{prop}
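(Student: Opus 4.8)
The plan is to bound the number of self-intersection points of the geodesic representative $\gamma$ directly, by a covering and double-counting argument, using the classical fact that the geodesic representative is in minimal self-position, so that $i([\gamma],[\gamma])$ is at most the number of unordered pairs $\{t_1,t_2\}$, $t_1\ne t_2$, with $\gamma(t_1)=\gamma(t_2)$. (Alternatively one can deduce the statement from Bonahon's continuity of the geometric intersection form on geodesic currents together with the fact that the current of $\gamma$ has mass comparable to $\ell_\rho(\gamma)$, but the elementary argument below is cleaner and self-contained.)

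First I would fix a scale $\epsilon_0=\epsilon_0(\Sigma,\rho)>0$ such that every nontrivial deck transformation of $\mathbb{H}^2$ in $\pi_1(\Sigma)$ moves every point a distance $>2\epsilon_0$, and such that every metric ball $B(p,2\epsilon_0)$ in $\Sigma$ lifts isometrically to a convex subset of $\mathbb{H}^2$ (a round disk in the interior, a half-disk at a boundary point). Such an $\epsilon_0$ exists because $\Sigma$ is compact; near the geodesic boundary one may, if desired, pass to the double $D\Sigma$, which is a closed hyperbolic surface with injectivity radius bounded below. The basic geometric input is then: two geodesic arcs lying in a common ball $B(p,2\epsilon_0)$ either meet in at most one point or lie on a common geodesic of $\Sigma$, since their lifts are straight segments in a convex subset of $\mathbb{H}^2$ and two distinct geodesic lines of $\mathbb{H}^2$ meet at most once. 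I would also record the standard fact that every self-intersection of a \emph{primitive} closed geodesic is transversal: if $\gamma(t_1)=\gamma(t_2)$ with $t_1\ne t_2$ and $\gamma'(t_1)=\pm\gamma'(t_2)$, then uniqueness of geodesics forces $\gamma$ to be a proper power (in the $+$ case, it becomes periodic of period $\gcd(t_1-t_2,L)<L$) or to possess a point with vanishing velocity (in the $-$ case), both impossible.

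Next I would set $L=\ell_\rho(\gamma)$ and cut $\gamma$ into $k=\lceil 2L/\epsilon_0\rceil$ consecutive geodesic sub-arcs $\alpha_1,\dots,\alpha_k$ of length $\le\epsilon_0/2$, chosen so that no subdivision point is a self-intersection point of $\gamma$ (finitely many such points, so this is possible). Each $\alpha_j$ has no self-intersection, since its lift to $\mathbb{H}^2$ is a straight segment of length $<\epsilon_0$ and a nontrivial deck transformation identifying two of its points would move a point less than $2\epsilon_0$. Hence every self-intersection pair $\{t_1,t_2\}$ of $\gamma$ has $t_1\in I_j$, $t_2\in I_{j'}$ with $j\ne j'$ (here $I_j$ is the parameter interval of $\alpha_j$), and $\alpha_j\cap\alpha_{j'}$ then contains the corresponding point of $\Sigma$. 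I would then show each unordered pair $\{j,j'\}$ accounts for at most one self-intersection pair: if two self-intersection points $x\ne y$ both lay on $\alpha_j$ and $\alpha_{j'}$, then, after lifting $x$ and using that the injectivity radius exceeds $2\epsilon_0$ to match up the lifts of $y$, the arcs $\alpha_j,\alpha_{j'}$ would lift to geodesic segments through two common points of $\mathbb{H}^2$, hence lie on a common geodesic of $\Sigma$; but then at $x$ the two strands of $\gamma$ would be tangent to that geodesic, so $\gamma'(t_1)=\pm\gamma'(t_2)$, contradicting transversality. Therefore
\[
i([\gamma],[\gamma]) \;\le\; \binom{k}{2} \;<\; \tfrac12 k^2 \;\le\; \tfrac12\left(\frac{2L}{\epsilon_0}+1\right)^2 ,
\]
and since every closed geodesic satisfies $L\ge sys(\rho)>0$, the linear and constant terms on the right may be absorbed into the quadratic term, yielding $i([\gamma],[\gamma])\le K\,\ell_\rho(\gamma)^2$ for a suitable $K=K(\Sigma,\rho)\ge 1$.

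The step I expect to be the main obstacle is the "each pair counted at most once" argument, that is, ruling out two short sub-arcs of $\gamma$ that cross it transversally in two distinct points; this is the one place where primitivity of $\gamma$ (via transversality of all self-intersections) is genuinely used, and where the convexity of small balls and the lower bound on the injectivity radius must be combined carefully. Everything else — the packing bound on the number of sub-arcs, the absence of self-intersections inside a single short sub-arc, and the final arithmetic — is routine bookkeeping.
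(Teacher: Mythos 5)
Your proof is correct, but note that the paper does not actually prove Proposition~\ref{p:Basm}: it simply quotes it as Theorem~1.1 of Basmajian's paper~\cite{Bas}, with a remark that it also follows from Bonahon's work on geodesic currents~\cite{Bon}. So what you have supplied is new, self-contained content rather than a reconstruction of the paper's argument. Your elementary covering/double-counting argument is sound. In particular the delicate step you flag — ruling out two sub-arcs $\alpha_j,\alpha_{j'}$ crossing $\gamma$ transversally in two distinct points — does work: since $|\alpha_j|,|\alpha_{j'}|\le\epsilon_0/2$, the lifts through a lift $\tilde x$ of $x$ stay within $\epsilon_0/2$ of $\tilde x$, so the two lifts of $y$ are within $\epsilon_0$ of each other and differ by a deck transformation that must then be trivial (as every nontrivial one displaces points by more than $2\epsilon_0$), forcing the two lifted segments to share two distinct points of $\mathbb H^2$ and hence lie on one geodesic line, which contradicts transversality at $x$; and transversality itself is correctly justified by primitivity, which the paper's convention (``closed geodesic'' means not a proper power) guarantees. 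The trade-off between the two routes is this: Basmajian's theorem yields a \emph{universal} quadratic bound (constant independent of the surface), and Bonahon's continuity of the intersection form on geodesic currents gives a soft but less quantitative route; your packing argument is elementary and self-contained, at the cost of producing only a surface-dependent constant $K(\Sigma,\rho)$ through the injectivity-radius lower bound $\epsilon_0$. That weaker conclusion is, however, exactly what the stated proposition asks for, so your proof suffices for the paper's purposes.
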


Theorem~\ref{t:sigmafill}  can be used to derive a lower bound for $f_\Sigma$:

\begin{thm}\label{t:intersect}
Let $\Sigma$ be a compact connected surface admitting some hyperbolic structure. 
Then there exist a constant $c=c(\Sigma)>0$ and an integer $m_0\ge 1$ such that for all $m\ge m_0$ we have 
\[
f_\Sigma(m)\ge f_\Sigma^{fill}(m)\ge c \frac{\log m}{\log\log m}.
\]
\end{thm}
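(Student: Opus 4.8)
The plan is to reduce Theorem~\ref{t:intersect} to Theorem~\ref{t:sigmafill} by converting the hyperbolic length bound there into a self-intersection bound, using Proposition~\ref{p:Basm}. First I would fix a hyperbolic metric $\rho$ on $\Sigma$ (which exists since $\Sigma$ admits one, and $\pi_1(\Sigma)$ is free of rank $N\ge 2$ because $\Sigma$ is compact with nonempty boundary; note that if $\Sigma$ were closed the statement would need a separate argument, but in the context of this section we may take $\Sigma$ to have geodesic boundary after passing to a subsurface or simply assume $b\ge 1$). As observed in the excerpt, for $[\gamma]\in\mathcal C_\Sigma$ with $\rho$-geodesic representative $\gamma$ we have $d_\Sigma([\gamma])=d_\rho(\gamma)$, $d_\Sigma^{fill}([\gamma])=\deg_{\Sigma,\rho}^{fill}(\gamma)$, and $i([\gamma],[\gamma])=i(\gamma,\gamma)$.

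Next I would invoke Theorem~\ref{t:sigmafill}: there is $C'>0$ such that for all sufficiently large $L$, $f_{\Sigma,\rho}^{fill}(L)\ge C'\frac{\log L}{\log\log L}$. Unwinding the definition of $f_{\Sigma,\rho}^{fill}$, this produces, for each large $L$, a closed geodesic $\gamma_L$ with $\ell_\rho(\gamma_L)\le L$ and $\deg_{\Sigma,\rho}^{fill}(\gamma_L)\ge C'\frac{\log L}{\log\log L}$. (Concretely one can use the sequence $w_n$ from the proof of Theorem~\ref{t:newmain}/\ref{t:sigmafill} with $\|w_n\|_A=n$ and $\deg^{fill}\ge C\frac{\log n}{\log\log n}$, and $\ell_\rho(\gamma_n)\le Kn$ for the quasi-isometry constant $K$.) By Proposition~\ref{p:Basm}, $i([\gamma_L],[\gamma_L])\le K_0\,\ell_\rho(\gamma_L)^2\le K_0 L^2=:m$. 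Since the curves $w_n$ are not proper powers (part (2) of Lemma~\ref{lem:cr}, or the explicit construction), we have $[\gamma_L]\in\mathcal C_\Sigma$.

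Then I would feed this into the definition of $f_\Sigma^{fill}(m)$: for $m=K_0L^2$ we get $f_\Sigma^{fill}(m)\ge d_\Sigma^{fill}([\gamma_L])=\deg_{\Sigma,\rho}^{fill}(\gamma_L)\ge C'\frac{\log L}{\log\log L}$. Solving $m\asymp L^2$, i.e. $L\asymp\sqrt m$, gives $\log L=\tfrac12\log m+O(1)$ and $\log\log L=\log\log m+O(1)$, so $\frac{\log L}{\log\log L}\ge c'\frac{\log m}{\log\log m}$ for a suitable $c'>0$ and all large $m$. Using monotonicity of $f_\Sigma^{fill}$ in $m$ (an immediate consequence of the definition, since enlarging the bound on $i([\gamma],[\gamma])$ enlarges the set over which the maximum is taken) lets us pass from the discrete values $m=K_0L^2$ to all sufficiently large integers $m$: given $m$, pick the largest admissible $L$ with $K_0L^2\le m$, apply the bound there, and absorb the resulting constant loss into $c$. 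Finally $f_\Sigma(m)\ge f_\Sigma^{fill}(m)$ holds since simple curves are non-filling, giving the full chain of inequalities.

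The only mildly delicate point — really the "main obstacle," though it is routine — is the bookkeeping that turns the per-$L$ existence statement from Theorem~\ref{t:sigmafill} into a clean lower bound valid for \emph{all} large $m$: one must choose $L$ as a function of $m$, verify the "sufficiently large" hypotheses of Theorem~\ref{t:sigmafill} and Proposition~\ref{p:Basm} simultaneously, and check that the function $\frac{\log L}{\log\log L}$ is eventually monotone so that replacing $L$ by $c\sqrt m$ only costs a constant. None of this requires new ideas; it is a change-of-variables argument of the same flavor as the end of the proof of Theorem~\ref{t:newmain}. I would also remark, as the paper does for the length-based function, that the same argument with the non-filling statement replaced by the simplicity statement from Theorem~\ref{t:main'} would only give the weaker $\log^{1/3}$-type bound for $f_\Sigma$, so it is essential here to use the non-filling index function $\ff(n)$ and its better $\frac{\log n}{\log\log n}$ lower bound from Theorem~\ref{t:newmain}.
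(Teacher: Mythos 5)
Your proposal is correct and follows essentially the same route as the paper's proof: fix a hyperbolic metric, take the geodesics $\gamma_n$ with $\ell_\rho(\gamma_n)\le n$ and $\deg_{\Sigma,\rho}^{fill}(\gamma_n)\ge C'\frac{\log n}{\log\log n}$ furnished by Theorem~\ref{t:sigmafill}, bound $i([\gamma_n],[\gamma_n])\le Kn^2$ via Proposition~\ref{p:Basm}, and substitute $n\asymp\sqrt{m/K}$ to convert the length-based bound into a self-intersection-based one, absorbing the square root into the constant since $\frac{\log\sqrt m}{\log\log\sqrt m}\asymp\frac{\log m}{\log\log m}$. The paper carries out exactly this change of variables (with an explicit $n_1$ ensuring $(n+1)^2\le 2n^2$) rather than appealing abstractly to monotonicity, but the arguments are the same in substance.
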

\begin{proof}
Fix a hyperbolic metric $\rho$ on $\Sigma$.  By Proposition~\ref{p:Basm},  there exists a constant $K=K(\rho)>0$ such that for every $[\gamma]\in \mathcal C_\Sigma$ we have $i([\gamma],[\gamma])\le K \ell_\rho ([\gamma])^2$.
Let $C'=C'(\Sigma,\rho)>0$ be the constant provided by Theorem~\ref{t:sigmafill}. Then  Theorem~\ref{t:sigmafill} implies that there exist a sequence of closed geodesics $\gamma_n$ on $(\Sigma,\rho)$ and an integer $n_0\ge 1$ such that for every $n\ge n_0$ we have $ \ell_\rho(\gamma_n)\le n$ and $d_\Sigma^{fill}([\gamma_n])\ge C'\frac{\log n}{\log\log n}$.  Therefore $i(\gamma_n,\gamma_n)\le K \ell_\rho(\gamma_n)^2\le Kn^2$ for all $n\ge n_0$.  

Fix an integer $n_1\ge n_0$ such that for all integers $n\ge n_1$ we have $(n+1)^2\le 2n^2$.

Let $m\ge Kn_1^2$ be an integer. Choose an integer $n\ge n_1$ such that $Kn^2  \le m\le K(n+1)^2$.  Then \[  i([\gamma_n],[\gamma_n]) \le Kn^2 \le m\le  K(n+1)^2\le 2Kn^2\] and $n\ge \frac{\sqrt{m}}{\sqrt{2K}}$.

Therefore $i([\gamma_n],[\gamma_n])\le m$ and 
\[
d_\Sigma^{fill}([\gamma_n])\ge C' \frac{\log n}{\log\log n}\ge C' \frac{\log \frac{\sqrt{m}}{\sqrt{2K}}}{\log \log \frac{\sqrt{m}}{\sqrt{2K}}}=C'\frac{\frac{1}{2}\log m -\log \sqrt{2K} }{\log\left( \frac{1}{2}\log m -\log \sqrt{2K}\right)},
\]
and the statement of Theorem~\ref{t:intersect} follows.
\end{proof}

\begin{rk}

Note that the linear upper bound for $f_{\Sigma,\rho}(m)$, obtained by Patel~\cite{Patel} does not directly imply any upper bound for  $f_\Sigma(m)$. The reason is that  on a fixed hyperbolic surface there are arbitrarily long simple closed geodesics (which thus have self-intersection number $0$).  The lower bound for $f_\Sigma$ given by Theorem~\ref{t:intersect} was the first bound (upper or lower) known for $f_\Sigma$. Subsequent to our paper and in part motivated by it, Aougab, Gaster, Patel and Sapir~\cite{AGPS} proved that $f_\Sigma(m)=\Theta(m)$, that is $f_\Sigma(m)$ has precisely linear growth in $m$.

\end{rk}

\appendix

\section{Estimating the primitivity index function from below by the residual finiteness growth function function}\label{s:appendix}
\centerline{\emph{by Khalid Bou-Rabee}}
\centerline{City College of the City University of New York}
\bigskip

In this appendix we relate the primitivity index function $\fp(n;F_N)$ to the residual finiteness growth function introduced in \cite{BR1}.
Applying deep results of Gady Kozma and Andreas Thom \cite{KoTh} then improves the lower bounds for the primitivity index function to almost linear.

We first recall the residual finiteness growth function.
Let $G$ be a finitely generated, residually finite group. The \emph{divisibility function} $\D(g) = \D(g; G)$ is the minimum $[G:H]$ where $H$ varies over all subgroups of finite index in $G$ with $g \notin H$. For a fixed finite generating set $A \subset G$ the \emph{residual finiteness growth function} is
$\RF_{G,A}(n) := \max\{ \D(g; G) \;:\;g \in G, |g|_A \leq n, g \neq 1 \}.$ Here $|g|_A$ is the word-length of $g$ with respect to the word metric on $G$ corresponding to $A$.
In the case where $G$ is a nonabelian free group $F_N$ with word-length $|\cdot |_A$ given by a free basis $A$, we simply use this basis and denote the function by $\RF_G(n)$.

Next, we recall the primitivity index function introduced by Gupta and Kapovich above. 
Fix a free group $F_N$ of finite rank $N\ge 2$ with a free basis $A=\{a_1,\dots, a_N\}$.
The \emph{primitivity index} $\dpr(g) = \dpr(g; F_N)$ of an element $g \in F_N \setminus \{1 \}$ is the minimum $[F_N: H]$ where $H$ varies over all subgroups of finite index in $F_N$ containing $g$ as a primitive element.
Recall that the \emph{primitivity index function} is
\[
\fp(n;F_N)=\fp(n) := \max \{ \dpr(g) : g \in G, |g|_A \leq n, g \neq 1, g \text{  is not a proper power } \}.
\]

\begin{thm}\label{t:a1}
Let $G=F_N$ be a free group of finite rank $N\ge 2$.
Then $\RF_G(n) \leq \fp(4n+4)$ for all $n\ge 1$.
\end{thm}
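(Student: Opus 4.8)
The plan is to take a nontrivial element $g \in F_N$ with $|g|_A = n \le n$ witnessing the residual finiteness growth, so that $\D(g; F_N) = \RF_G(n)$, and to manufacture from $g$ a root-free element $g'$ with $|g'|_A \le 4n+4$ such that $\dpr(g') \ge \D(g; F_N)$. Then $\fp(4n+4) \ge \dpr(g') \ge \RF_G(n)$, which is the claim. The point of the ``homological trick'' alluded to in the main text is that if $H \le F_N$ has finite index and contains $g'$ as a \emph{primitive} element, then $H$ in particular does not contain certain elements closely tied to $g$; one then extracts from such an $H$ a finite-index subgroup avoiding $g$ of index no larger than $[F_N:H]$, forcing $[F_N:H] \ge \D(g;F_N)$.

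The key steps, in order, are as follows. First, given $g$ with $|g|_A = n$, replace $g$ by a conveniently chosen word of length roughly $4n$: the natural candidate is something like $g' = a_1 g a_1 g^{-1}$ or $g' = a g b g^{-1}$ for suitable basis letters (after possibly relabeling so that no illegal cancellation occurs), arranged so that $g'$ is cyclically reduced, not a proper power, and has $|g'|_A \le 4n+4$; the ``$+4$'' absorbs the extra letters and any padding needed to guarantee freely reduced and root-free. Second, observe that $g'$ lies in the commutator-type situation so that its image in $H^{ab}$ (for any subgroup $H$ through which one wants to pass) records the class of $g$: concretely, if $H \le F_N$ is finite-index and $g' \in H$ is primitive in $H$, then there is a free basis of $H$ containing $g'$, hence a retraction (homomorphism) $H \to \mathbb{Z}$ sending $g'$ to $1$ and the other basis elements to $0$. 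Third, use this retraction together with the structure $g' = a g b g^{-1}$ to produce, inside $H$, a finite-index subgroup $K$ with $g \notin K$: composing the retraction $H \to \mathbb{Z}$ with $\mathbb{Z} \to \mathbb{Z}/m$ for a suitable small $m$ (here one exploits that the retraction applied to $g$, or to $a g b g^{-1}$'s building blocks, is nonzero or controllable), one gets $K \le H \le F_N$ of index $[F_N:H]\cdot m$ or, better, by choosing $g'$ so that the retraction already separates $g$, simply index $[F_N:H]$, with $g \notin K$. Fourth, conclude $[F_N:H] \ge \D(g;F_N) = \RF_G(n)$, so $\dpr(g') \ge \RF_G(n)$, and finish as above.

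The main obstacle is the second and third steps: arranging $g'$ of length $\le 4n+4$ so that \emph{primitivity of $g'$ in $H$ genuinely forces a $\mathbb{Z}$-quotient of $H$ in which $g$ survives nontrivially}, with no loss in the index. One must be careful that the retraction $H \twoheadrightarrow \mathbb{Z}$ coming from a basis containing $g'$ really does detect $g$ — this is why $g'$ should be built as a product involving $g$ and basis letters in a pattern (like $a g b g^{-1}$) whose abelianization in $H$ is $g'$ itself while its constituent pieces let one read off $g$. Verifying that $g'$ is root-free and that $|g'|_A \le 4n+4$ after ensuring freely-reduced form is routine bookkeeping (choosing $a,b \in A^{\pm1}$ not cancelling the ends of $g$ or $g^{-1}$, with at most a couple of extra buffer letters), and the final index comparison is immediate once the quotient is in hand. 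I expect the bulk of the work to be pinning down the exact word $g'$ and checking the ``survival of $g$'' in the quotient cleanly.
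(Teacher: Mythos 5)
There is a genuine gap in your proposal, and it lies exactly where you flag the difficulty: your steps two and three never actually go through for the candidate word $g' = a g b g^{-1}$, and the retraction machinery you set up is both insufficiently specified and more elaborate than what is needed. The fact that $g'$ is primitive in $H$ gives you a free basis of $H$ containing $g'$ and hence a retraction $H\twoheadrightarrow\mathbb{Z}$ killing the other basis elements, but you have no control over where that retraction sends $g$, nor do you even know that $g$ lies in $H$; so there is no reason the quotient ``detects'' $g$, and no way to produce a subgroup $K\leq H$ with $g\notin K$ and $[F_N:K]=[F_N:H]$ along these lines. The appeal to composing with $\mathbb{Z}\to\mathbb{Z}/m$ further inflates the index unless $m=1$, in which case you have gained nothing.

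The clean way to make the homological idea bite is to choose $g'$ to be a bona fide \emph{commutator} whose two entries are conjugates of $g$, namely $g' = [w, w^a] = w\,w^a\,w^{-1}\,(w^a)^{-1}$ where $w=g$ and $a\in A$ is chosen so that $[w,a]\ne 1$ (such $a$ exists because commutativity is transitive on nontrivial elements of a free group, and then $w,a$ freely generate a rank-$2$ subgroup, so $[w,w^a]\ne 1$). This word has $|g'|_A\le 4n+4$, and it is root-free by Sch\"utzenberger's theorem on nontrivial commutators. Now the homology argument is immediate and does not require building any auxiliary quotient: if $H\le F_N$ has finite index and $g'$ is primitive in $H$, then $g'\notin[H,H]$ since a primitive element survives in $H^{\mathrm{ab}}$; but if both $w\in H$ and $w^a\in H$ then $[w,w^a]\in[H,H]$, a contradiction. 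Hence $w\notin H$ or $w^a\notin H$; in the second case $w\notin aHa^{-1}$, a subgroup of the same index. Either way $[F_N:H]\ge\D(w;F_N)=\RF_G(n)$, and since $H$ was arbitrary, $\dpr(g')\ge\RF_G(n)$, giving $\fp(4n+4)\ge\RF_G(n)$. Your proposal has the right general flavor (primitivity is an abelianization obstruction), but without the commutator-of-two-conjugates form of $g'$ and without the ``one of the two factors must leave $H$'' observation, the argument does not close.
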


\begin{proof}
For each $n\ge 1$ let $w_n$ be an element in $F_N$ with $|w_n|_A\le n$ such that $\D_G(w_n) = \RF_G(n)$.
In the free group $F_N$ commutativity is a transitive relation on the set of all nontrivial elements, and therefore there exists $a\in A$ such that $[w_n,a]\ne 1$.
Also, in a free group any two non-commuting elements freely generate a free subgroup of rank two. Thus $w_n,a$ freely generate a free subgroup of rank $2$ in $F_N$, and hence $\gamma_n := [w_n, w_n^a] \neq 1$. (In \cite{BRMR2,BRMR4} the property, that for every nontrivial $w\in F_N$ there exists $a\in A$ such that $[w,w^a]\ne 1$, is referred to as $F_N$ being \emph{1-malabelian}).
Note that $| [w_n, w_n^a]|_A\le 4n+4$. Since $\gamma_n$ is a nontrivial commutator in $F_N$, a result of Sch\"utzenberger~\cite{Sz} then implies that $\gamma_n$ is not a proper power in $F_N$.

Let $H$ be a finite-index subgroup of $G$ with $\gamma_n$ primitive in $H$.
If $w_n \in H$ and $w_n^a \in H$, then $[w_n, w_n^a] \in [H,H]$, and thus $[w_n, w_n^a]$ cannot be primitive in $H$.
Hence, $w_n$ or $w_n^a$ is not in $H$.
In either case, it follows that $[G:H] \geq D_G(w_n) =\RF_G(n)$.
Since $H$ was an arbitrary finite-index subgroup for which $[w_n, w_n^a]$ is primitive, it follows that
$\RF_G(n) \leq \fp(4n+4),$
as desired.
\end{proof}

A result of Kozma and Thom \cite{KoTh} about lower bounds for $\RF_{F_N}(n)$ now directly implies:
\begin{cor}\label{c:a2}
Let $G=F_N$ be free of finite rank $N\ge 2$. There exists a constant $C>0$ such that for all sufficiently large $n$ we have
\[
\fp(4n+4) \geq \exp \left( \left( \frac{\log(n)}{C \log\log(n)} \right)^{1/4} \right).
\]
If we assume Babai's Conjecture on the diameter of Cayley graphs of permutation groups, then for all sufficiently large $n$ we have
$
\fp(4n+4) \geq n^{\frac{1}{C\log\log(n)}}.
$
\end{cor}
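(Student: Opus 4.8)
The final statement to prove is Corollary~\ref{c:a2}, which follows from Theorem~\ref{t:a1} by plugging in the Kozma--Thom lower bounds for $\RF_{F_N}(n)$.

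\textbf{Approach.} The plan is to combine Theorem~\ref{t:a1}, which gives $\RF_G(n)\le \fp(4n+4)$ for all $n\ge 1$, with the two lower bounds for the residual finiteness growth function of a nonabelian free group established by Kozma and Thom in \cite{KoTh}. Concretely, Kozma and Thom prove that there is a constant $C>0$ such that for all sufficiently large $n$,
\[
\RF_{F_N}(n)\ge \exp\left(\left(\frac{\log n}{C\log\log n}\right)^{1/4}\right),
\]
and that, conditionally on Babai's Conjecture on the diameter of Cayley graphs of finite symmetric (more generally, permutation) groups, one gets the stronger bound $\RF_{F_N}(n)\ge n^{1/(C\log\log n)}$ for all sufficiently large $n$. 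Since $N\ge 2$ is fixed and the constant $C$ may be enlarged as needed, both bounds hold with a single constant.

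\textbf{Key steps.} First I would invoke Theorem~\ref{t:a1}: for every $n\ge 1$, $\fp(4n+4;F_N)\ge \RF_{F_N}(n)$. Second, I would quote the unconditional Kozma--Thom estimate for $\RF_{F_N}(n)$ and substitute it directly into the inequality from the first step, obtaining
\[
\fp(4n+4)\ge \RF_{F_N}(n)\ge \exp\left(\left(\frac{\log n}{C\log\log n}\right)^{1/4}\right)
\]
for all sufficiently large $n$, which is exactly the first assertion of the corollary. Third, assuming Babai's Conjecture, I would instead substitute the conditional Kozma--Thom bound $\RF_{F_N}(n)\ge n^{1/(C\log\log n)}$ into the inequality from the first step, yielding $\fp(4n+4)\ge n^{1/(C\log\log n)}$ for all sufficiently large $n$, which is the second assertion. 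That is essentially the entire argument: the corollary is a formal consequence of Theorem~\ref{t:a1} together with two external input results.

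\textbf{Main obstacle.} There is no real obstacle here — the corollary is a one-line deduction once Theorem~\ref{t:a1} and the Kozma--Thom theorems are in hand. The only points requiring minor care are bookkeeping ones: making sure the ``for all sufficiently large $n$'' qualifier is handled consistently (the Kozma--Thom bounds are asymptotic, while Theorem~\ref{t:a1} holds for all $n\ge 1$, so the composite statement inherits the asymptotic qualifier), and absorbing any constant discrepancies between the two bounds into a single constant $C$ by taking the larger of the relevant constants. All the genuine mathematical content is either in Theorem~\ref{t:a1}, proved above via the $1$-malabelianity trick and Sch\"utzenberger's theorem on commutators not being proper powers, or in the deep work of Kozma and Thom, which is cited rather than reproved.
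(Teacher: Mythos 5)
Your proposal is correct and is precisely the argument the paper uses: the corollary is stated as a direct consequence of Theorem~\ref{t:a1} combined with the Kozma--Thom lower bounds (unconditional and, under Babai's Conjecture, conditional) for $\RF_{F_N}(n)$. The only minor points you raised (asymptotic qualifiers, absorbing constants) are the correct ones, and the paper treats them implicitly in the same spirit.
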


At the time of this writing, for a nonabelian free group $G$, the best upper and lower bounds for $\fp(n)$ and $\RF_G(n)$ have the same asymptotic behavior. Is it true that $\fp(n)$ and $\RF_G(n)$ have the same asymptotic behavior?

\end{document}